\newtheorem{theorem}{Theorem}
\newtheorem{definition}{Definition}
\newtheorem{proposition}{Proposition}
\newtheorem{lemma}{Lemma}
\newtheorem{fact}{Fact}
\newtheorem{remark}{Remark}
\newtheorem{corollary}{Corollary}
\newtheorem*{rep@theorem}{\rep@title}
\newcommand{\newreptheorem}[2]{%
\newenvironment{rep#1}[1]{%
 \def\rep@title{#2 \ref{##1}}%
 \begin{rep@theorem}}%
 {\end{rep@theorem}}}
\title{A
proof that square ice entropy 
is $\frac{3}{2}\log_2 (4/3)$}
\author{Silvère Gangloff \\
LIP, ENS Lyon, 46 all\'ee d'Italie, 69342 Lyon  \\ 
 \href{mailto:silvere.gangloff@ens-lyon.fr}{silvere.gangloff@ens-lyon.fr} \\
 +33602386355}
\newcounter{quest}
\newcounter{squest}
        {\begin{list}
        {\bfseries #1.\arabic{quest}.}
        {\usecounter{quest}
        \setlength{\listparindent}{0cm}
        \setlength{\labelwidth}{1.5cm}
        \setlength{\leftmargin}{1cm}
        \setlength{\parsep}{0.5ex plus0.2ex minus0.1ex}
        \setlength{\itemsep}{0.7ex plus0.2ex minus0.1ex}}
        }{\end{list}}
       {\begin{list}
        {\bfseries \alph{squest}.}
        {\usecounter{squest}}
        }{\end{list}}
\def\N{\mathbb N}
\def\Z{\mathbb Z}
\renewcommand{\vec}[1]{\textbf{#1}}
\newtheorem{notation}{Notation}
\newtheorem{computation}{Computation}
\begin{document}
\maketitle

\begin{abstract}
In this text, we provide a fully rigorous and complete proof of E.H.Lieb's 
statement that (topological) entropy 
of square ice (or six vertex model, XXZ spin chain for anisotropy parameter $\Delta=1/2$) 
is equal to 
$\frac{3}{2}\log_2 (4/3)$. For this purpose, 
we gather and expose in full
detail various arguments dispersed in 
the literature on the subject, and complete several of them that were left partial. 
\end{abstract}

\section{Introduction}

\begin{center}
\textit{Separate the earth from the fire, the subtle from the raw, sweetly with great industry. } - \textbf{The Emeralt tablet}
\end{center}

\subsection{Computing entropy of multidimensional 
subshifts of finite type}

This work is the consequence of a renewal 
of interest from the field of symbolic dynamics to entropy
computation methods developped in quantum and statistical physics for lattice models. This interest 
comes from \textbf{constructive methods} for multidimensional subshifts of finite type (some equivalent formulation in symbolic dynamics of lattice models), 
that are involved in the characterization 
by M. Hochman and T. Meyerovitch~\cite{Hochman-Meyerovitch-2010} of the possible values 
of topological entropy for these dynamical systems
(where the dynamics are provided by the 
action of the $\mathbb{Z}^2$ shift action) with 
a recursion-theoretic criterion.
The consequences of this theorem are 
not only that entropy may be algorithmically uncomputable for a multidimensional subshift of finite type, 
which was previously known \cite{HurdKariCulik}, 
but also a strong evidence that the study 
of these systems as a class is intertwined 
with computability theory. Moreover, it 
is an important tool in order to localize 
sub-classes for which the entropy is 
computable in a uniform way, as ones 
defined by strong dynamical 
constraints~\cite{Pavlov-Schraudner-2014}.
Some current research attempts to understand the 
\textbf{frontier between the uncomputability 
and the computability} of entropy 
for multidimensional SFT. 
For instance, approaching 
the frontier from the uncomputable, the author, together 
with M. Sablik~\cite{GS17} proved 
that the characterization 
of M. Hochman and T. Meyerovitch stands under a relaxed form 
of the constraint studied in~\cite{Pavlov-Schraudner-2014}, which includes notably all 
square lattice models considered exactly solvable 
in quantum and statistical physics. Solvable means 
here that some exact (not necessary proved) 
values are 
provided for some caracteristics of the 
model, such as entropy. In 
order to approach the frontier from the computable, 
it is natural to attempt understanding (in particular proving)
and extending the computation methods developped for these models.

\subsection{Content of this text}
 
Our study in the present text focuses on square ice (or equivalently the 
six vertex model, or the XXZ spin chain for anisotropy parameter $\Delta=1/2$). Since it 
is central amongst 
quantum solvable models, 
this work will serve as a ground for further 
connections between entropy computation methods 
and constructive methods 
coming from symbolic dynamics. The entropy of 
square ice was argued by E.H. Lieb~\cite{Lieb67} to be exactly $\frac{3}{2} \log_2 \left( \frac{4}{3} \right)$. However, his 
proof was not complete, as it relied on 
a non verified hypothesis (the condensation 
of Bethe roots, defined in the text, 
according to a density function, proved in 
Section~\ref{section.asymptotics}). Moreover, 
some arguments of an article of C.N Yang and C.P Yang~\cite{YY66} on which it relied were left partial (the analycity of the roots according 
to the anisotropy parameter, proved in 
Section~\ref{section.existence}). We propose a rigorous proof of E.H.Lieb's statement:

\begin{theorem}
\label{theorem.main}
The entropy of square ice is equal 
to $\frac{3}{2} \log_2 \left( \frac{4}{3} \right)$.
\end{theorem}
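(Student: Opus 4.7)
The plan is to follow the classical transfer-matrix / Bethe ansatz route, while filling in the analytic gaps flagged in the introduction. First I would realize square ice as a two-dimensional SFT whose local configurations correspond to the six admissible vertex types, and express its topological entropy as
\[
  h \;=\; \lim_{n\to\infty} \frac{1}{n}\log_2 \Lambda_n,
\]
where $\Lambda_n$ is the largest eigenvalue of the row-to-row transfer matrix $T_n$ acting on arrow configurations of a row of $n$ vertical edges. Arrow conservation makes $T_n$ block-diagonal with respect to the number $M$ of down-arrows, and the Perron sector is the half-filled one $M\approx n/2$; this reduces the problem to diagonalizing a single block.

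Next I would diagonalize that block by the Bethe ansatz specialized to $\Delta = 1/2$: eigenvectors are parametrized by $M$ spectral parameters $z_1,\dots,z_M$ satisfying the coupled Bethe equations, and the corresponding eigenvalue is an explicit symmetric function of the roots. To make this step rigorous I need two inputs: (i) a verification that the Bethe construction actually produces the Perron eigenvector in the half-filled sector, and (ii) the analyticity of the roots in the anisotropy parameter $\Delta$, to be proved in Section~\ref{section.existence}. The latter is what lets me track a canonical family of solutions from a generic, well-understood value of $\Delta$ down to the ice point $\Delta = 1/2$ without losing control of the relevant branch.

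Taking the thermodynamic limit, the selected Bethe roots should condense on a curve with a limit density $\rho$, so that
\[
  \lim_{n\to\infty}\frac{1}{n}\log\Lambda_n \;=\; \int f(z)\,\rho(z)\,dz
\]
for an explicit $f$ read off the Bethe eigenvalue formula at $\Delta=1/2$. The condensation statement and the explicit form of $\rho$ are precisely what Section~\ref{section.asymptotics} is designed to establish; this is the second gap Lieb left open. Once $\rho$ is in hand, the integral at the ice point reduces to a trigonometric / contour computation whose closed evaluation yields exactly $\frac{3}{2}\log_2(4/3)$, completing the proof after a final change of base from the natural logarithms implicit in the integral to $\log_2$.

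The main obstacle, as already signalled by the introduction, is this condensation step together with the uniform control needed to exchange the $n\to\infty$ limit with the Bethe sum: the Bethe equations at finite $n$ are a transcendental system in $\Theta(n)$ variables, and passing to a continuous density requires showing that the roots approach the prescribed curve with vanishing spacing fluctuations and that any exceptional roots contribute negligibly to $\log\Lambda_n$. Everything else — the SFT encoding, the block decomposition of $T_n$, the diagonalization by Bethe ansatz, and the final evaluation of the integral — is well-trodden territory once these two analytic inputs, which are precisely the contents of Sections~\ref{section.existence} and~\ref{section.asymptotics}, are available.
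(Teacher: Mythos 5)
Your outline follows the same route as the paper: transfer matrix with block decomposition by arrow count, coordinate Bethe ansatz, analytic continuation in the anisotropy parameter from a well-understood reference point, condensation of Bethe roots to a limit density, and a contour evaluation of the resulting integral. Two small remarks worth noting against the paper's execution: first, your assertion that ``the Perron sector is the half-filled one'' is not taken as given in the paper --- each sector $\Omega_N^{(n)}$ has its own Perron eigenvalue $\lambda_{n,N}$, and that the maximum occurs near $n \approx N/2$ is only established at the very end, by showing the limiting density functional $F(d)$ over filling fractions $d\in[0,1/2]$ is maximized at $d=1/2$; second, the ``well-understood value of $\Delta$'' from which the paper analytically continues is specifically the free-fermion point $\Delta=0$ (i.e.\ $t=\sqrt2$), where $V_N(\sqrt2)$ commutes with a Heisenberg Hamiltonian that can be completely diagonalized à la Lieb--Schultz--Mattis, and this exact diagonalization is what lets one certify, via Perron--Frobenius, that the Bethe vector is the top eigenvector before transporting that identification along the Lieb path to $t=1$.
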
 

This proof relies on the argumentation of 
E.H.Lieb and on some 
ideas, developped in order 
to prove the hypothesis of E.H.Lieb,
that one can find in~\cite{kozlowski}. Besides partial arguments, various hurdles prevent the readers (in particuar with mathematical background) to have an overview of the subject in reasonable time. That is why we include some exposition of what can be 
considered as background material.
The proof is thus self-contained, except that it relies on the 
coordinate Bethe ansatz, exposed 
in a clear way by H. Duminil-Copin et al.~\cite{Duminil-Copin.ansatz}.
In proving Theorem~\ref{theorem.main}, an important difficulty was at first  
to connect and assemble the 
arguments that were found in the literature;
we encountered many obstacles,
that are related to the form 
of the literature itself.
This could be explained by 
the fact that mathematics 
and mathematical physics, 
although they share the same 
language, are different 
\textit{discursive formations},
notion introduced in 
the \textit{Archeology of 
knowledge} by M.Foucault. 
The mathematical 
text is (in our view) 
in particular serve 
a neat separation between statements 
that have different natures (theorem and proof, or comments); 
the hermetic axiom quoted 
above reflects this separation, 
where the earth could represents 
the ground in thought and 
statements (as proven theorems), 
and the fire the continuing transformation.
In Section~\ref{section.comments} one can 
find a short analysis on this matter and also some comments on the limits 
of the computing method presented in 
this text. We hope this short analysis 
could enlight the nature of the difficulties 
of this work, and will be followed by 
further developments.
\bigskip

One can find a summary of the proof of 
Theorem~\ref{theorem.main} in Section~\ref{section.overview}, after some recall 
of definitions related to symbolic dynamics 
and representations of square ice in Section~\ref{section.background}. \bigskip

\noindent \textbf{Aknowledgements:}
The author was funded by the ANR project
CoCoGro (ANR-16-CE40-0005) and 
is thankful to K.K.Kozlowski
for helpful discussions.

\section{\label{section.background} Background: square ice and its entropy}

\subsection{Subshifts of finite type}

\subsubsection{Definitions}
Let $\mathcal{A}$ be some finite set, called
\textbf{alphabet}. For all $d \ge 1$,
the set $\mathcal{A}^{\Z^d}$, 
whose elements are called \textbf{configurations}, is a topological 
space with the infinite power of the discrete 
topology on $\mathcal{A}$. 
Let us denote $\sigma$ the \textbf{shift} action 
of $\Z^d$ on this space defined 
by the following equality for all $\vec{u} \in 
\Z^d$ and $x$ element of the space:
$\left(\sigma^{\vec{u}}.x\right)_{\vec{v}} 
= x_{\vec{v}+\vec{u}}.$
A compact subset 
$X$ of this space is called a $d$-dimensional \textbf{subshift} 
when this subset is stable 
under the action of the shift, which means 
that for all $\vec{u} \in \Z^d$: 
$\sigma^{\vec{u}}.X \subset X.$
For any finite subset $\mathbb{U}$ 
of $\Z^d$, an element $p$ 
of $\mathcal{A}^{\mathbb{U}}$ 
is called a \textbf{pattern} on the 
alphabet $\mathcal{A}$ and on \textbf{support}
$\mathbb{U}$. We say that this 
pattern \textbf{appears} in 
a configuration $x$ when there 
exists a translate $\mathbb{V}$ of $\mathbb{U}$ 
such that $x_{\mathbb{V}}=p$. We say 
that it appears in another pattern 
$q$ on support containing $\mathbb{U}$ 
such that the restriction of $q$ 
on $\mathbb{U}$ is $p$.
We say that it appears in a subshift $X$ when it appears 
in a configuration of $X$. Such a 
pattern is also called \textbf{ globally admissible} 
for $X$.
For all $d \ge 1$, the number of patterns on 
support $\mathbb{U}^{(d)}_N \equiv 
\llbracket 1,N \rrbracket ^d$ that 
appear in a $d$-dimensional subshift 
$X$ is denoted $\mathcal{N}_N (X)$. 
When $d=2$, the number of patterns on support 
$\mathbb{U}^{(2)}_{M,N} \equiv \llbracket 1,M \rrbracket \times\llbracket 1,N \rrbracket $ that appear in $X$ 
is denoted $\mathcal{N}_{M,N}(X)$.
A $d$-dimensional subshift $X$ defined by forbidding patterns 
in some finite set $\mathcal{F}$ 
to appear in the configurations, 
formally: 
\[X= \left\{ x \in \mathcal{A}^{\Z^d} : \forall \mathbb{U} \subset \mathbb{Z}^d, 
x_{\mathbb{U}} \notin \mathcal{F} \right\}\]
is called 
a subshift of \textbf{finite type} (SFT).
In a context where the 
set of forbidden patterns 
defining the SFT is fixed, 
a pattern is called \textbf{locally admissible} for this SFT
when no forbidden pattern appears in it.
A \textbf{morphism} between two $\Z^d$-subshifts $X,Z$ is 
a continuous map $\varphi : X \rightarrow Z$ such that 
$\varphi \circ \sigma^{\vec{v}} =  \sigma^{\vec{v}} \circ \varphi$ 
for all $\vec{v} \in \Z^d$ (the map commutes with the shift action). An \textbf{isomorphism} is an invertible morphism.

\subsubsection{Topological entropy}

\begin{definition}
Let $X$ be a $d$-dimensional subshift. The \textbf{topological entropy}  
of $X$ is defined as:
\[h (X)
\equiv \inf_{N \ge 1} \frac{\log_2 
(\mathcal{N}_N (X))}{N^d}.
\]
\end{definition} 

\noindent It is a well known fact in topological dynamics
that this infimum is a limit: 
\[\boxed{h (X)
= \lim_{N \ge 1} \frac{\log_2 
(\mathcal{N}_N (X))}{N^d}}
\]

It is a topological invariant, meaning 
that when there is an isomorphism 
between two subshifts, these 
two subshifts have the same entropy~\cite{Lind-Marcus-1995}. \bigskip

\begin{definition}
\label{definition.stripes.subshift}
Let $X$ be a bidimensional subshift ($d=2$).
For all $n \ge 1$, 
we denote $X_N$ the subshift obtained from $X$ 
by restricting to the width $N$ infinite strip
$\{1,...,N\} \times \mathbb{Z}$. 
Formally, this subshift is defined
on alphabet $\mathcal{A}^{N}$ and by that 
$z \in X_N$ if and only if 
there exists $x \in X$ such 
that for all $k \in \mathbb{Z}$, 
$z_k = (x_{1,k},...,x_{N,k})$. See Figure~\ref{figure.stripes.subshift}.
\end{definition}

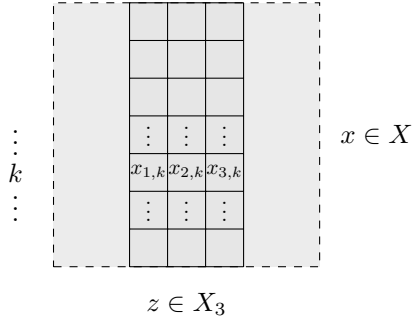
\begin{figure}[!h]
\[\begin{tikzpicture}[scale=0.5]
\fill[gray!15] (0,0) rectangle (7,7);
\fill[gray!20] (2,0) rectangle (5,7);
\draw (2,0) grid (5,7);
\draw[dashed] (0,0) rectangle (7,7);
\node at (8.5,3.5) {$x \in X$};
\node at (3.5,-1) {$z \in X_3$};
\node at (-1,2.5) {$k$};
\node[scale=0.8] at (2.5,2.5) {$x_{1,k}$};
\node[scale=0.8] at (3.5,2.5) {$x_{2,k}$};
\node[scale=0.8] at (4.5,2.5) {$x_{3,k}$};
\node at (-1,3.5) {$\vdots$};
\node at (-1,1.75) {$\vdots$};

\node[scale=0.8] at (2.5,3.625) {$\vdots$};
\node[scale=0.8] at (2.5,1.625) {$\vdots$};
\node[scale=0.8] at (3.5,3.625) {$\vdots$};
\node[scale=0.8] at (3.5,1.625) {$\vdots$};
\node[scale=0.8] at (4.5,3.625) {$\vdots$};
\node[scale=0.8] at (4.5,1.625) {$\vdots$};
\end{tikzpicture}\]
\caption{\label{figure.stripes.subshift} Illustration of Definition~\ref{definition.stripes.subshift} for $N=3$.}
\end{figure}

\begin{proposition} \label{proposition.stripes}
The entropy of $X$ can be computed 
through the sequence $(h(X_N))_N$:
\[\boxed{h(X)= \lim_{N} \frac{h(X_N)}{N}}\]
\end{proposition}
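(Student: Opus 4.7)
The plan is to reduce everything to estimates on the two-variable counting function $\mathcal{N}_{N,M}(X)$ and exploit its subadditivity in each coordinate, which follows from the usual fact that the restriction of a globally admissible pattern is itself globally admissible.

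First I would record the elementary identification $\mathcal{N}_M(X_N) = \mathcal{N}_{N,M}(X)$. Indeed, a pattern of length $M$ on the alphabet $\mathcal{A}^N$ appears in $X_N$ exactly when the corresponding $N\times M$ pattern over $\mathcal{A}$ is globally admissible for $X$, by the very definition of $X_N$. Consequently $h(X_N) = \lim_{M} \log_2 \mathcal{N}_{N,M}(X) / M$, and in fact by subadditivity $\mathcal{N}_{N,M_1+M_2}(X) \le \mathcal{N}_{N,M_1}(X) \mathcal{N}_{N,M_2}(X)$ this limit equals $\inf_M \log_2 \mathcal{N}_{N,M}(X)/M$.

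For the upper bound $\limsup_N h(X_N)/N \le h(X)$, I would slice. An $N\times kN$ globally admissible pattern restricts to $k$ globally admissible $N\times N$ patterns stacked vertically, and distinct patterns give distinct $k$-tuples of slices, so $\mathcal{N}_{N,kN}(X) \le \mathcal{N}_{N,N}(X)^k$. Dividing by $kN$ and letting $k \to \infty$ gives $h(X_N) \le \log_2 \mathcal{N}_{N,N}(X)/N$, whence $h(X_N)/N \le \log_2 \mathcal{N}_{N,N}(X)/N^2 \to h(X)$.

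For the lower bound $\liminf_N h(X_N)/N \ge h(X)$, I would use the same slicing in both directions. Writing $L = NM$, an $L\times L$ globally admissible pattern splits into $M\cdot N$ globally admissible $N\times M$ sub-patterns, so $\mathcal{N}_{L,L}(X) \le \mathcal{N}_{N,M}(X)^{NM}$. Dividing by $L^2 = (NM)^2$ and using that $h(X) = \inf_L \log_2 \mathcal{N}_{L,L}(X)/L^2 \le \log_2 \mathcal{N}_{L,L}(X)/L^2$, we obtain $h(X) \le \log_2 \mathcal{N}_{N,M}(X)/(NM)$. Letting $M \to \infty$ at fixed $N$ yields $h(X) \le h(X_N)/N$ for every $N$, which is the desired lower bound. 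Combining the two directions gives $\lim_N h(X_N)/N = h(X)$.

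There is no real obstacle here; the argument is a standard Fekete-in-two-variables manipulation. The only point requiring a bit of care is the legitimacy of the decomposition step, namely that the restriction of a globally admissible pattern to any sub-rectangle remains globally admissible — which is immediate from the definition since any configuration of $X$ witnessing global admissibility of the big pattern also witnesses it for each sub-pattern.
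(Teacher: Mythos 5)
Your proof is correct and follows essentially the same route as the paper's: both hinge on the two-variable subadditivity $\mathcal{N}_{aN,bM}(X)\le \mathcal{N}_{N,M}(X)^{ab}$ obtained by slicing a large rectangle into translates of a small one, together with the fact that entropy is an infimum along squares. Your upper bound is in fact marginally cleaner than the paper's, since slicing the $N\times kN$ strip into $N\times N$ squares gives $h(X_N)\le \log_2\mathcal{N}_{N,N}(X)/N$ directly and sidesteps the monotonicity argument (via containment of supports) that the paper implicitly needs to treat widths $N$ that are not multiples of the slicing parameter $k$.
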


We include a proof of this statement, for completeness:

\begin{proof}
From the definition of 
$X_N$:
\[h(X)= \lim_{N} \lim_M \frac{\log_2(\mathcal{N}_{M,N} (X))}{NM}.\]
We prove this by an upper bound on 
the $\limsup_N$ and a lower bound on 
the $\liminf_N$ of the sequence in this formula.

\begin{itemize}

\item \textbf{Upper bound by cutting squares in rectangles:}

Since for any $M,N,k$, the set $\mathbb{U}^{(2)}_{kM,kN}$ 
is the union of $MN$ translates of $\mathbb{U}^{(2)}_{k}$,
a pattern on support $\mathbb{U}^{(2)}_{kM,kN}$ can 
be seen as an array of patterns on $\mathbb{U}^{(2)}_{k}$. 
As a consequence, 
\[\mathcal{N}_{kM,kN} (X)\le (\mathcal{N}_{k,k} (X)) ^{MN},\]
and using this inequality, we get:
\[\lim_{M} \frac{\log_2(\mathcal{N}_{M,N} (X))}{NM} 
= \lim_M \frac{\log_2(\mathcal{N}_{kM,kN} (X))}{k^2 NM} 
\le \lim_M \frac{\log_2(\mathcal{N}_{k,k} (X))}{k^2} = 
\frac{\log_2 (\mathcal{N}_{k,k} (X))}{k^2}.\]
As a consequence, for all $k$, 
\[\limsup_{N} \lim_M \frac{\log_2 (\mathcal{N}_{M,N} (X))}{NM} \le \frac{\log_2 (\mathcal{N}_{k,k} (X))}{k^2},\]
and this implies 
\[\limsup_{N} \lim_M \frac{\log_2 (\mathcal{N}_{M,N} (X))}{NM} \le h(X),\]
by taking $ k \rightarrow + \infty$ in the last 
inequality.

\item \textbf{Lower bound by cutting rectangles in squares:}
For all $M,N$, by considering a pattern on $\mathbb{U}^{(2)}_{MN,NM}$ 
as a union of translates of $\mathbb{U}^{(2)}_{M,N}$, 
we get that:
\[\mathcal{N}_{MN,NM} (X)\le (\mathcal{N}_{M,N} (X)) ^{MN}.\]
Thus, 
\[h(X) = \lim_M \frac{\log_2 (\mathcal{N}_{MN,NM} (X))}{M^2 N^2}
\le \lim_M \frac{\log_2(\mathcal{N}_{M,N} (X))}{M N}.\]
As a consequence, 
\[h(X) \le \liminf_{N} \lim_M 
\frac{\log_2 (\mathcal{N}_{M,N} (X))}{NM}.\]
\end{itemize}
These two inequalities prove that 
the sequence $\left( \frac{h(X_N)}{N} \right)_N$ 
converges, and that the limit is $h(X)$. 
\end{proof}

\noindent In the following, for all $N$ and $M$, 
we assimilate patterns on $\mathbb{U}^{(1)}_M$ 
of $X_N$ with patterns of $X$ on $\mathbb{U}^{(2)}_{M,N}$.

\subsection{ \label{section.representations} Representations of square ice}

The square ice can be defined as an isomorphic class 
of subshifts of finite type, whose elements 
can be thought as its representations.
The most widely used is the six vertex model 
(whose name derives from that the elements 
of the alphabet represent vertices of a regular 
grid) and is presented in Section~\ref{section.six.vertex}. In this text, we 
will use another representation, presented in 
Section~\ref{section.discrete.curves}, 
whose configurations consist of drifting 
discrete curves, representing possible 
particle trajectories.
In Section~\ref{section.toroidal}, 
we provide a proof that one can restrict 
to a particular subset of patterns 
in order to compute entropy of square ice.

\subsubsection{\label{section.six.vertex} The six vertex model}

The \textbf{six vertex model} is the subshift of finite type 
described as follows: \bigskip

\noindent \textit{\textbf{Symbols:}}
$\begin{tikzpicture}[scale=0.6,baseline=2mm]
\draw (0,0) rectangle (1,1);
\draw[-latex,line width=0.2mm] (0.5,0) -- (0.5,0.5);
\draw[-latex,line width=0.2mm] (0.5,0.5) -- (0.5,1); 
\draw[-latex,line width=0.2mm] (0,0.5) -- (0.5,0.5);
\draw[-latex,line width=0.2mm] (0.5,0.5) -- (1,0.5); 
\end{tikzpicture}, \begin{tikzpicture}[scale=0.6,baseline=2mm]
\draw (0,0) rectangle (1,1);
\draw[latex-,line width=0.2mm] (0.5,0) -- (0.5,0.5);
\draw[latex-,line width=0.2mm] (0.5,0.5) -- (0.5,1); 
\draw[-latex,line width=0.2mm] (0,0.5) -- (0.5,0.5);
\draw[-latex,line width=0.2mm] (0.5,0.5) -- (1,0.5); 
\end{tikzpicture}, \begin{tikzpicture}[scale=0.6,baseline=2mm]
\draw (0,0) rectangle (1,1);
\draw[latex-,line width=0.2mm] (0.5,0) -- (0.5,0.5);
\draw[-latex,line width=0.2mm] (0.5,0.5) -- (0.5,1); 
\draw[-latex,line width=0.2mm] (0,0.5) -- (0.5,0.5);
\draw[latex-,line width=0.2mm] (0.5,0.5) -- (1,0.5); 
\end{tikzpicture}, \begin{tikzpicture}[scale=0.6,baseline=2mm]
\draw (0,0) rectangle (1,1);
\draw[-latex,line width=0.2mm] (0.5,0) -- (0.5,0.5);
\draw[latex-,line width=0.2mm] (0.5,0.5) -- (0.5,1); 
\draw[latex-,line width=0.2mm] (0,0.5) -- (0.5,0.5);
\draw[-latex,line width=0.2mm] (0.5,0.5) -- (1,0.5); 
\end{tikzpicture}, \begin{tikzpicture}[scale=0.6,baseline=2mm]
\draw (0,0) rectangle (1,1);
\draw[latex-,line width=0.2mm] (0.5,0) -- (0.5,0.5);
\draw[latex-,line width=0.2mm] (0.5,0.5) -- (0.5,1); 
\draw[latex-,line width=0.2mm] (0,0.5) -- (0.5,0.5);
\draw[latex-,line width=0.2mm] (0.5,0.5) -- (1,0.5); 
\end{tikzpicture}, 
\begin{tikzpicture}[scale=0.6,baseline=2mm]
\draw (0,0) rectangle (1,1);
\draw[-latex,line width=0.2mm] (0.5,0) -- (0.5,0.5);
\draw[-latex,line width=0.2mm] (0.5,0.5) -- (0.5,1); 
\draw[latex-,line width=0.2mm] (0,0.5) -- (0.5,0.5);
\draw[latex-,line width=0.2mm] (0.5,0.5) -- (1,0.5); 
\end{tikzpicture}$. \bigskip

\noindent \textit{\textbf{Local rules:}} 
Considering two adjacent positions in 
$\mathbb{Z}^2$, the arrows corresponding 
to the common edge of the symbols on the 
two positions have to be directed the same way. 
For instance, the pattern $\begin{tikzpicture}[scale=0.6,baseline=2mm]
\begin{scope}
\draw (0,0) rectangle (1,1);
\draw[-latex,line width=0.2mm] (0.5,0) -- (0.5,0.5);
\draw[-latex,line width=0.2mm] (0.5,0.5) -- (0.5,1); 
\draw[-latex,line width=0.2mm] (0,0.5) -- (0.5,0.5);
\draw[-latex,line width=0.2mm] (0.5,0.5) -- (1,0.5); 
\end{scope} 

\begin{scope}[xshift=1cm]
\draw (0,0) rectangle (1,1);
\draw[latex-,line width=0.2mm] (0.5,0) -- (0.5,0.5);
\draw[latex-,line width=0.2mm] (0.5,0.5) -- (0.5,1); 
\draw[-latex,line width=0.2mm] (0,0.5) -- (0.5,0.5);
\draw[-latex,line width=0.2mm] (0.5,0.5) -- (1,0.5); 
\end{scope}
\end{tikzpicture}$ is allowed, 
while $\begin{tikzpicture}[scale=0.6,baseline=2mm]
\begin{scope}
\draw (0,0) rectangle (1,1);
\draw[-latex,line width=0.2mm] (0.5,0) -- (0.5,0.5);
\draw[-latex,line width=0.2mm] (0.5,0.5) -- (0.5,1); 
\draw[-latex,line width=0.2mm] (0,0.5) -- (0.5,0.5);
\draw[-latex,line width=0.2mm] (0.5,0.5) -- (1,0.5); 
\end{scope} 

\begin{scope}[xshift=1cm]
\draw (0,0) rectangle (1,1);
\draw[-latex,line width=0.2mm] (0.5,0) -- (0.5,0.5);
\draw[latex-,line width=0.2mm] (0.5,0.5) -- (0.5,1); 
\draw[latex-,line width=0.2mm] (0,0.5) -- (0.5,0.5);
\draw[-latex,line width=0.2mm] (0.5,0.5) -- (1,0.5); 
\end{scope}
\end{tikzpicture}$ is not. \bigskip

\noindent \textit{\textbf{Global behavior:}} 
The symbols draw a lattice whose 
edges are oriented in such a way 
that all the vertices have two incoming 
arrows and two outgoing ones. This 
is called an Eulerian orientation 
of the square lattice. See an example 
of admissible pattern 
on Figure~\ref{figure.six.vertex}.

\begin{figure}[h!]
\[\begin{tikzpicture}[scale=0.6]

\begin{scope}
\draw (0,0) rectangle (1,1);
\draw[latex-,line width=0.2mm] (0.5,0) -- (0.5,0.5);
\draw[latex-,line width=0.2mm] (0.5,0.5) -- (0.5,1); 
\draw[-latex,line width=0.2mm] (0,0.5) -- (0.5,0.5);
\draw[-latex,line width=0.2mm] (0.5,0.5) -- (1,0.5); 
\end{scope} 

\begin{scope}[xshift=1cm]
\draw (0,0) rectangle (1,1);
\draw[latex-,line width=0.2mm] (0.5,0) -- (0.5,0.5);
\draw[-latex,line width=0.2mm] (0.5,0.5) -- (0.5,1); 
\draw[-latex,line width=0.2mm] (0,0.5) -- (0.5,0.5);
\draw[latex-,line width=0.2mm] (0.5,0.5) -- (1,0.5); 
\end{scope}

\begin{scope}[xshift=2cm]
\draw (0,0) rectangle (1,1);
\draw[latex-,line width=0.2mm] (0.5,0) -- (0.5,0.5);
\draw[latex-,line width=0.2mm] (0.5,0.5) -- (0.5,1); 
\draw[latex-,line width=0.2mm] (0,0.5) -- (0.5,0.5);
\draw[latex-,line width=0.2mm] (0.5,0.5) -- (1,0.5); 
\end{scope}

\begin{scope}[xshift=3cm]
\draw (0,0) rectangle (1,1);
\draw[latex-,line width=0.2mm] (0.5,0) -- (0.5,0.5);
\draw[latex-,line width=0.2mm] (0.5,0.5) -- (0.5,1); 
\draw[latex-,line width=0.2mm] (0,0.5) -- (0.5,0.5);
\draw[latex-,line width=0.2mm] (0.5,0.5) -- (1,0.5); 
\end{scope}

\begin{scope}[yshift=1cm]
\draw (0,0) rectangle (1,1);
\draw[latex-,line width=0.2mm] (0.5,0) -- (0.5,0.5);
\draw[latex-,line width=0.2mm] (0.5,0.5) -- (0.5,1); 
\draw[-latex,line width=0.2mm] (0,0.5) -- (0.5,0.5);
\draw[-latex,line width=0.2mm] (0.5,0.5) -- (1,0.5); 
\end{scope} 

\begin{scope}[xshift=1cm,yshift=1cm]
\draw (0,0) rectangle (1,1);
\draw[-latex,line width=0.2mm] (0.5,0) -- (0.5,0.5);
\draw[-latex,line width=0.2mm] (0.5,0.5) -- (0.5,1); 
\draw[-latex,line width=0.2mm] (0,0.5) -- (0.5,0.5);
\draw[-latex,line width=0.2mm] (0.5,0.5) -- (1,0.5); 
\end{scope}

\begin{scope}[xshift=2cm,yshift=1cm]
\draw (0,0) rectangle (1,1);
\draw[latex-,line width=0.2mm] (0.5,0) -- (0.5,0.5);
\draw[-latex,line width=0.2mm] (0.5,0.5) -- (0.5,1); 
\draw[-latex,line width=0.2mm] (0,0.5) -- (0.5,0.5);
\draw[latex-,line width=0.2mm] (0.5,0.5) -- (1,0.5); 
\end{scope}

\begin{scope}[xshift=3cm,yshift=1cm]
\draw (0,0) rectangle (1,1);
\draw[latex-,line width=0.2mm] (0.5,0) -- (0.5,0.5);
\draw[latex-,line width=0.2mm] (0.5,0.5) -- (0.5,1); 
\draw[latex-,line width=0.2mm] (0,0.5) -- (0.5,0.5);
\draw[latex-,line width=0.2mm] (0.5,0.5) -- (1,0.5); 
\end{scope}

\begin{scope}[yshift=2cm]
\draw (0,0) rectangle (1,1);
\draw[latex-,line width=0.2mm] (0.5,0) -- (0.5,0.5);
\draw[-latex,line width=0.2mm] (0.5,0.5) -- (0.5,1); 
\draw[-latex,line width=0.2mm] (0,0.5) -- (0.5,0.5);
\draw[latex-,line width=0.2mm] (0.5,0.5) -- (1,0.5); 
\end{scope}

\begin{scope}[xshift=1cm,yshift=2cm]
\draw (0,0) rectangle (1,1);
\draw[-latex,line width=0.2mm] (0.5,0) -- (0.5,0.5);
\draw[latex-,line width=0.2mm] (0.5,0.5) -- (0.5,1); 
\draw[latex-,line width=0.2mm] (0,0.5) -- (0.5,0.5);
\draw[-latex,line width=0.2mm] (0.5,0.5) -- (1,0.5); 
\end{scope}

\begin{scope}[xshift=2cm,yshift=2cm]
\draw (0,0) rectangle (1,1);
\draw[-latex,line width=0.2mm] (0.5,0) -- (0.5,0.5);
\draw[-latex,line width=0.2mm] (0.5,0.5) -- (0.5,1); 
\draw[-latex,line width=0.2mm] (0,0.5) -- (0.5,0.5);
\draw[-latex,line width=0.2mm] (0.5,0.5) -- (1,0.5); 
\end{scope}

\begin{scope}[xshift=3cm,yshift=2cm]
\draw (0,0) rectangle (1,1);
\draw[latex-,line width=0.2mm] (0.5,0) -- (0.5,0.5);
\draw[-latex,line width=0.2mm] (0.5,0.5) -- (0.5,1); 
\draw[-latex,line width=0.2mm] (0,0.5) -- (0.5,0.5);
\draw[latex-,line width=0.2mm] (0.5,0.5) -- (1,0.5); 
\end{scope}

\begin{scope}[xshift=0cm,yshift=3cm]
\draw (0,0) rectangle (1,1);
\draw[-latex,line width=0.2mm] (0.5,0) -- (0.5,0.5);
\draw[-latex,line width=0.2mm] (0.5,0.5) -- (0.5,1); 
\draw[-latex,line width=0.2mm] (0,0.5) -- (0.5,0.5);
\draw[-latex,line width=0.2mm] (0.5,0.5) -- (1,0.5); 
\end{scope}

\begin{scope}[xshift=1cm,yshift=3cm]
\draw (0,0) rectangle (1,1);
\draw[latex-,line width=0.2mm] (0.5,0) -- (0.5,0.5);
\draw[-latex,line width=0.2mm] (0.5,0.5) -- (0.5,1); 
\draw[-latex,line width=0.2mm] (0,0.5) -- (0.5,0.5);
\draw[latex-,line width=0.2mm] (0.5,0.5) -- (1,0.5); 
\end{scope}

\begin{scope}[xshift=2cm,yshift=3cm]
\draw (0,0) rectangle (1,1);
\draw[-latex,line width=0.2mm] (0.5,0) -- (0.5,0.5);
\draw[-latex,line width=0.2mm] (0.5,0.5) -- (0.5,1); 
\draw[latex-,line width=0.2mm] (0,0.5) -- (0.5,0.5);
\draw[latex-,line width=0.2mm] (0.5,0.5) -- (1,0.5); 
\end{scope}

\begin{scope}[xshift=3cm,yshift=3cm]
\draw (0,0) rectangle (1,1);
\draw[-latex,line width=0.2mm] (0.5,0) -- (0.5,0.5);
\draw[-latex,line width=0.2mm] (0.5,0.5) -- (0.5,1); 
\draw[latex-,line width=0.2mm] (0,0.5) -- (0.5,0.5);
\draw[latex-,line width=0.2mm] (0.5,0.5) -- (1,0.5); 
\end{scope}

\end{tikzpicture}\]
\caption{\label{figure.six.vertex} An example of locally and thus globally 
admissible pattern of the six vertex model.}
\end{figure}
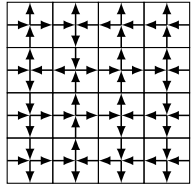

\begin{remark}
The name of square ice of the class of SFT appears 
clearly when considering the following application 
on the alphabet of the six vertex model to local
configurations of dihydrogen monoxide: 

\[\begin{tikzpicture}[scale=0.6]
\begin{scope}
\draw (0,0) rectangle (1,1);
\draw[-latex,line width=0.2mm] (0.5,0) -- (0.5,0.5);
\draw[-latex,line width=0.2mm] (0.5,0.5) -- (0.5,1); 
\draw[-latex,line width=0.2mm] (0,0.5) -- (0.5,0.5);
\draw[-latex,line width=0.2mm] (0.5,0.5) -- (1,0.5);
\end{scope}

\begin{scope}[xshift=1.5cm]
\draw (0,0) rectangle (1,1);
\draw[latex-,line width=0.2mm] (0.5,0) -- (0.5,0.5);
\draw[latex-,line width=0.2mm] (0.5,0.5) -- (0.5,1); 
\draw[-latex,line width=0.2mm] (0,0.5) -- (0.5,0.5);
\draw[-latex,line width=0.2mm] (0.5,0.5) -- (1,0.5); 
\end{scope}

\begin{scope}[xshift=3cm]
\draw (0,0) rectangle (1,1);
\draw[latex-,line width=0.2mm] (0.5,0) -- (0.5,0.5);
\draw[-latex,line width=0.2mm] (0.5,0.5) -- (0.5,1); 
\draw[-latex,line width=0.2mm] (0,0.5) -- (0.5,0.5);
\draw[latex-,line width=0.2mm] (0.5,0.5) -- (1,0.5); 
\end{scope}

\begin{scope}[xshift=4.5cm]
\draw (0,0) rectangle (1,1);
\draw[-latex,line width=0.2mm] (0.5,0) -- (0.5,0.5);
\draw[latex-,line width=0.2mm] (0.5,0.5) -- (0.5,1); 
\draw[latex-,line width=0.2mm] (0,0.5) -- (0.5,0.5);
\draw[-latex,line width=0.2mm] (0.5,0.5) -- (1,0.5); 
\end{scope} 

\begin{scope}[xshift=6cm]
\draw (0,0) rectangle (1,1);
\draw[latex-,line width=0.2mm] (0.5,0) -- (0.5,0.5);
\draw[latex-,line width=0.2mm] (0.5,0.5) -- (0.5,1); 
\draw[latex-,line width=0.2mm] (0,0.5) -- (0.5,0.5);
\draw[latex-,line width=0.2mm] (0.5,0.5) -- (1,0.5); 
\end{scope}

\begin{scope}[xshift=7.5cm]
\draw (0,0) rectangle (1,1);
\draw[-latex,line width=0.2mm] (0.5,0) -- (0.5,0.5);
\draw[-latex,line width=0.2mm] (0.5,0.5) -- (0.5,1); 
\draw[latex-,line width=0.2mm] (0,0.5) -- (0.5,0.5);
\draw[latex-,line width=0.2mm] (0.5,0.5) -- (1,0.5); 
\end{scope}

\begin{scope}[yshift=-1.5cm]
\draw (0,0) rectangle (1,1);
\fill[red] (0.5,0.5) circle (4pt);
\draw (0.5,0.5) circle (4pt);
\draw[fill=white] (0.25,0.5) circle (2.6pt);
\draw[fill=white] (0.5,0.25) circle (2.6pt);
\end{scope}

\begin{scope}[yshift=-1.5cm,xshift=1.5cm]
\draw (0,0) rectangle (1,1);
\fill[red] (0.5,0.5) circle (4pt);
\draw (0.5,0.5) circle (4pt);
\draw[fill=white] (0.25,0.5) circle (2.6pt);
\draw[fill=white] (0.5,0.75) circle (2.6pt);
\end{scope}

\begin{scope}[yshift=-1.5cm,xshift=3cm]
\draw (0,0) rectangle (1,1);
\fill[red] (0.5,0.5) circle (4pt);
\draw (0.5,0.5) circle (4pt);
\draw[fill=white] (0.25,0.5) circle (2.6pt);
\draw[fill=white] (0.75,0.5) circle (2.6pt);
\end{scope}

\begin{scope}[yshift=-1.5cm,xshift=4.5cm]
\draw (0,0) rectangle (1,1);
\fill[red] (0.5,0.5) circle (4pt);
\draw (0.5,0.5) circle (4pt);
\draw[fill=white] (0.5,0.75) circle (2.6pt);
\draw[fill=white] (0.5,0.25) circle (2.6pt);
\end{scope}

\begin{scope}[yshift=-1.5cm,xshift=6cm]
\draw (0,0) rectangle (1,1);
\fill[red] (0.5,0.5) circle (4pt);
\draw (0.5,0.5) circle (4pt);
\draw[fill=white] (0.5,0.75) circle (2.6pt);
\draw[fill=white] (0.75,0.5) circle (2.6pt);
\end{scope}

\begin{scope}[yshift=-1.5cm,xshift=7.5cm]
\draw (0,0) rectangle (1,1);
\fill[red] (0.5,0.5) circle (4pt);
\draw (0.5,0.5) circle (4pt);
\draw[fill=white] (0.5,0.25) circle (2.6pt);
\draw[fill=white] (0.75,0.5) circle (2.6pt);
\end{scope}
\end{tikzpicture}\]

\end{remark}

\subsubsection{\label{section.discrete.curves} Drifting discrete curves}

From the six vertex model, we derive another 
representation of square ice
through an isomorphism, 
which consist in transforming 
the letters via an application $\pi_s$ 
on the alphabet of the six vertex model, 
described as follows:

\[\begin{tikzpicture}[scale=0.6]
\begin{scope}
\draw (0,0) rectangle (1,1);
\draw[-latex,line width=0.2mm] (0.5,0) -- (0.5,0.5);
\draw[-latex,line width=0.2mm] (0.5,0.5) -- (0.5,1); 
\draw[-latex,line width=0.2mm] (0,0.5) -- (0.5,0.5);
\draw[-latex,line width=0.2mm] (0.5,0.5) -- (1,0.5);
\end{scope}

\begin{scope}[xshift=1.5cm]
\draw (0,0) rectangle (1,1);
\draw[latex-,line width=0.2mm] (0.5,0) -- (0.5,0.5);
\draw[latex-,line width=0.2mm] (0.5,0.5) -- (0.5,1); 
\draw[-latex,line width=0.2mm] (0,0.5) -- (0.5,0.5);
\draw[-latex,line width=0.2mm] (0.5,0.5) -- (1,0.5); 
\end{scope}

\begin{scope}[xshift=3cm]
\draw (0,0) rectangle (1,1);
\draw[latex-,line width=0.2mm] (0.5,0) -- (0.5,0.5);
\draw[-latex,line width=0.2mm] (0.5,0.5) -- (0.5,1); 
\draw[-latex,line width=0.2mm] (0,0.5) -- (0.5,0.5);
\draw[latex-,line width=0.2mm] (0.5,0.5) -- (1,0.5); 
\end{scope}

\begin{scope}[xshift=4.5cm]
\draw (0,0) rectangle (1,1);
\draw[-latex,line width=0.2mm] (0.5,0) -- (0.5,0.5);
\draw[latex-,line width=0.2mm] (0.5,0.5) -- (0.5,1); 
\draw[latex-,line width=0.2mm] (0,0.5) -- (0.5,0.5);
\draw[-latex,line width=0.2mm] (0.5,0.5) -- (1,0.5); 
\end{scope} 

\begin{scope}[xshift=6cm]
\draw (0,0) rectangle (1,1);
\draw[latex-,line width=0.2mm] (0.5,0) -- (0.5,0.5);
\draw[latex-,line width=0.2mm] (0.5,0.5) -- (0.5,1); 
\draw[latex-,line width=0.2mm] (0,0.5) -- (0.5,0.5);
\draw[latex-,line width=0.2mm] (0.5,0.5) -- (1,0.5); 
\end{scope}

\begin{scope}[xshift=7.5cm]
\draw (0,0) rectangle (1,1);
\draw[-latex,line width=0.2mm] (0.5,0) -- (0.5,0.5);
\draw[-latex,line width=0.2mm] (0.5,0.5) -- (0.5,1); 
\draw[latex-,line width=0.2mm] (0,0.5) -- (0.5,0.5);
\draw[latex-,line width=0.2mm] (0.5,0.5) -- (1,0.5); 
\end{scope}

\begin{scope}[yshift=-1.5cm]
\draw (0,0) rectangle (1,1);
\end{scope}

\begin{scope}[yshift=-1.5cm,xshift=1.5cm]
\draw[line width = 0.5mm,color=gray!80]
(0.5,0) -- (0.5,1);
\draw (0,0) rectangle (1,1);
\end{scope}

\begin{scope}[yshift=-1.5cm,xshift=3cm]
\draw[line width = 0.5mm,color=gray!80]
(0.5,0) -- (0.5,0.5) -- (1,0.5);
\draw (0,0) rectangle (1,1);
\end{scope}

\begin{scope}[yshift=-1.5cm,xshift=4.5cm]
\draw[line width = 0.5mm,color=gray!80]
(0,0.5) -- (0.5,0.5) -- (0.5,1);
\draw (0,0) rectangle (1,1);
\end{scope}

\begin{scope}[yshift=-1.5cm,xshift=6cm]
\draw[line width = 0.5mm,color=gray!80]
(0,0.55) -- (0.45,0.55) -- (0.45,1);
\draw (0,0) rectangle (1,1);
\draw[line width = 0.5mm,color=gray!80] (0.55,0) -- (0.55,0.45) -- (1,0.45);
\end{scope}

\begin{scope}[yshift=-1.5cm,xshift=7.5cm]
\draw[line width = 0.5mm,color=gray!80]
(0,0.5) -- (1,0.5);
\draw (0,0) rectangle (1,1);
\end{scope}
\end{tikzpicture}\]

The pattern on Figure~\ref{figure.six.vertex} can 
be represented as on Figure~\ref{figure.six.vertex.curves}. 
In this SFT, the local rules are that 
any outgoing segment of curve in a 
non-blank symbol is extended 
in its direction on the next position.

\begin{figure}[h!]
\[\begin{tikzpicture}[scale=0.6]
\begin{scope}
\draw[color=gray!80,line width=0.4mm] (0.5,0) -- 
(0.5,1);
\draw (0,0) rectangle (1,1);
\end{scope} 

\begin{scope}[xshift=1cm]
\draw[line width = 0.5mm,color=gray!80]
(0.55,0) -- (0.55,0.55) -- (1,0.55);
\draw (0,0) rectangle (1,1);
\end{scope}

\begin{scope}[xshift=2cm]
\draw[line width = 0.5mm,color=gray!80]
(0,0.55) -- (0.45,0.55) -- (0.45,1);
\draw (0,0) rectangle (1,1);
\draw[line width = 0.5mm,color=gray!80] (0.55,0) -- (0.55,0.55) -- (1,0.55);
\end{scope}
\begin{scope}[xshift=3cm]
\draw[line width = 0.5mm,color=gray!80]
(0,0.55) -- (0.45,0.55) -- (0.45,1);
\draw (0,0) rectangle (1,1);
\draw[line width = 0.5mm,color=gray!80] (0.55,0) -- (0.55,0.55) -- (1,0.55);
\end{scope}

\begin{scope}[yshift=1cm]
\draw[color=gray!80,line width=0.4mm] (0.5,0) -- 
(0.5,1);
\draw (0,0) rectangle (1,1);
\end{scope} 

\begin{scope}[xshift=1cm,yshift=1cm]
\draw (0,0) rectangle (1,1);
\end{scope}

\begin{scope}[xshift=2cm,yshift=1cm]
\draw[line width = 0.5mm,color=gray!80]
(0.45,0) -- (0.45,0.55) -- (1,0.55);
\draw (0,0) rectangle (1,1);
\end{scope}

\begin{scope}[xshift=3cm,yshift=1cm]
\draw[line width = 0.5mm,color=gray!80]
(0,0.55) -- (0.45,0.55) -- (0.45,1);
\draw (0,0) rectangle (1,1);
\draw[line width = 0.5mm,color=gray!80] (0.45,0) -- (0.45,0.45) -- (1,0.45);
\end{scope}

\begin{scope}[xshift=0cm,yshift=2cm]
\draw[line width = 0.5mm,color=gray!80]
(0.5,0) -- (0.5,0.5) -- (1,0.5);
\draw (0,0) rectangle (1,1);
\end{scope}

\begin{scope}[xshift=1cm,yshift=2cm]
\draw[line width = 0.5mm,color=gray!80]
(0,0.5) -- (0.5,0.5) -- (0.5,1);
\draw (0,0) rectangle (1,1);
\end{scope}

\begin{scope}[xshift=2cm,yshift=2cm]
\draw (0,0) rectangle (1,1);
\end{scope}

\begin{scope}[xshift=3cm,yshift=2cm]
\draw[line width = 0.5mm,color=gray!80]
(0.45,0) -- (0.45,0.5) -- (1,0.5);
\draw (0,0) rectangle (1,1);
\end{scope}

\begin{scope}[xshift=0cm,yshift=3cm]
\draw (0,0) rectangle (1,1);
\end{scope}

\begin{scope}[xshift=1cm,yshift=3cm]
\draw[line width = 0.5mm,color=gray!80]
(0.5,0) -- (0.5,0.5) -- (1,0.5);
\draw (0,0) rectangle (1,1);
\end{scope}

\begin{scope}[xshift=2cm,yshift=3cm]
\draw[line width = 0.5mm,color=gray!80]
(0,0.5) -- (1,0.5);
\draw (0,0) rectangle (1,1);
\end{scope}

\begin{scope}[xshift=3cm,yshift=3cm]
\draw[line width = 0.5mm,color=gray!80]
(0,0.5) -- (1,0.5);
\draw (0,0) rectangle (1,1);
\end{scope}

\end{tikzpicture}\]
\caption{\label{figure.six.vertex.curves} 
Representation of pattern on Figure~\ref{figure.six.vertex}.}
\end{figure}
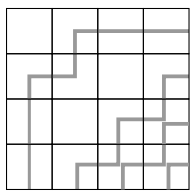

In the following, we denote $X^s$ this 
SFT.

\begin{remark}
One can see straightforwardly that locally admissible patterns 
of this SFT are always globally admissible, 
since any locally admissible pattern 
can be extended into a configuration by 
extending the curves in a straight way. 
\end{remark}

\subsubsection{\label{section.toroidal} Entropy of $X^s$ and cylindric stripes subshifts of square ice}

Consider some alphabet $\mathcal{A}$, and $X$ 
a bidimensional subshift of finite type on 
this alphabet.
For all $N \ge 1$, we denote $\Pi_{N}= 
\mathbb{Z} / (N\mathbb{Z}) \times \mathbb{Z}$.
Let us also denote $\pi_N : \llbracket 1,N\rrbracket \times \mathbb{Z} \rightarrow \Pi_{N}$ 
the canonical projection, 
and $\phi_{N} : \mathcal{A} ^{\llbracket 1,N\rrbracket \times \mathbb{Z} } \rightarrow \mathcal{A}^{\Pi_{N}}$
the application that wraps configurations of $X_N$ on the infinite cylinder $\Pi_{N}$. Formally, for all $\vec{u} \in \llbracket 1,N\rrbracket \times \mathbb{Z}$ and $x \in \mathcal{A}^{\llbracket 1,N\rrbracket \times \mathbb{Z}}$,
\[(\phi_{N} (x))_{\pi_{N} (\vec{u})} = x_{\vec{u}}. \]

We say that a pattern $p$ 
on support $\mathbb{U} \subset 
\llbracket 1,N\rrbracket \times \mathbb{Z} $ 
appears in a configuration $\overline{x}$ on 
$\Pi_{N}$ when there exists 
a configuration in $X_N$ whose image by 
$\pi_{N}$ is $\overline{x}$ and 
there exists an element 
$\vec{u} \in \Pi_{N}$ 
such that for 
all $\vec{v} \in \mathbb{U},
{\overline{x}}_{\vec{u}+\pi_{N} (\vec{v})}=x_{\vec{v}}$.
 
\begin{notation}
Let us denote $\overline{X}_{N}$
the set of configurations in $X_{N}$ 
whose image by $\phi_{N}$ does not contain 
any forbidden pattern for $X$ (in other words 
this pattern can be wrapped 
on an infinite cylinder without breaking 
the rules defining $X$).
\end{notation}

We also call $(M,N)$-\textbf{cylindric pattern}
of $X$ a pattern on $\mathbb{U}_{M,N}$ that can be wrapped on a finite cylinder $\mathbb{Z}/N\mathbb{Z} \times \{1,...,M\}$.
Let us prove a preliminary 
result on entropy of square ice, which 
relates the entropy of $X^s$ to the sequence 
$(h(\overline{X}^s_N))_N$: 

\begin{lemma} \label{lemma.toroidal.ice}
The subshift $X^s$ has 
entropy equal to \[\boxed{h (X^s) = \lim_N  \frac{h(\overline{X}^s_N)}{N}}.\]
\end{lemma}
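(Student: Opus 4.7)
The plan is to prove the two inequalities $\limsup_N h(\overline{X}^s_N)/N \leq h(X^s)$ and $\liminf_N h(\overline{X}^s_N)/N \geq h(X^s)$ separately, and combine them with Proposition~\ref{proposition.stripes}.

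The upper bound is immediate. The inclusion $\overline{X}^s_N \subseteq X^s_N$ holds because the cylindric wrap condition is an added restriction, not a relaxation: any configuration in $\overline{X}^s_N$ is in particular a stripe configuration. Thus $\mathcal{N}_M(\overline{X}^s_N) \leq \mathcal{N}_M(X^s_N)$ for every $M$, hence $h(\overline{X}^s_N) \leq h(X^s_N)$, and dividing by $N$ and applying Proposition~\ref{proposition.stripes} gives $\limsup_N h(\overline{X}^s_N)/N \leq h(X^s)$.

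For the lower bound, the strategy is to exhibit a \emph{gluing construction} turning stripe patterns into cylindric ones with small overhead. Given an $X^s$-admissible pattern $p$ on $\{1,\dots,N\}\times\{1,\dots,M\}$, encode its boundary arrow data as two sequences $\alpha_L(p), \alpha_R(p) \in \{L,R\}^M$ giving the horizontal arrows on the left edge of column $1$ and the right edge of column $N$ respectively. The pattern is already cylindric iff $\alpha_L(p)=\alpha_R(p)$. I would append a block of $c$ extra columns on the right, chosen so that the block has left boundary $\alpha_R(p)$ and right boundary $\alpha_L(p)$; wrapping the resulting $M\times(N+c)$ pattern yields a cylindric pattern. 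In the drifting curves representation, this becomes a local construction: by the remark that every locally admissible pattern is globally admissible, it suffices to fill $c\cdot M$ cells with valid curve tiles realizing the prescribed horizontal boundaries, with the vertical arrows left free and adjusted to ensure local validity. The map $p \mapsto p \cdot g$ is injective (the first $N$ columns recover $p$), and the number of possible gluing blocks is bounded by $K^{cM}$ for some constant $K$, giving
\[\log_2 \mathcal{N}_{M,N}(X^s) \leq cM\log_2 K + \log_2 \mathcal{N}^{\circ}_{M,N+c}.\]
Dividing by $M$ and letting $M\to\infty$ yields $h(X^s_N) \leq c\log_2 K + h(\overline{X}^s_{N+c})$. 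Dividing then by $N$ with $c$ kept $o(N)$, and taking $N\to\infty$, Proposition~\ref{proposition.stripes} gives $h(X^s) \leq \liminf_N h(\overline{X}^s_N)/N$.

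The main obstacle is constructing the gluing block with $c$ sublinear in $N$. A width-one gluing fails in general because the local $2$-in-$2$-out vertex constraint creates conflicts between the forced vertical arrow values at different rows, so one cannot always connect arbitrary boundary sequences $\alpha_L(p)$ and $\alpha_R(p)$ with a single column. The flux identity at each vertex, however, implies that the only global obstruction between $\alpha_L(p)$ and $\alpha_R(p)$ is a net-flux imbalance compensated by the vertical arrows on the top and bottom of the block. By routing curves in the drifting curves picture appropriately — using the freedom in the vertical arrows throughout the block — one can arrange such a gluing with $c$ of moderate size (at worst growing sublinearly in $N$). Making this routing explicit and verifying the bound on $c$ is the technical core of the lemma.
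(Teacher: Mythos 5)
Your upper bound ($\limsup_N h(\overline{X}^s_N)/N \leq h(X^s)$ via the inclusion $\overline{X}^s_N \subseteq X^s_N$ and Proposition~\ref{proposition.stripes}) is exactly the paper's argument and is fine.

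The lower bound is where there is a genuine gap, and you have correctly identified it yourself as ``the technical core'' — but the claim you defer, that a gluing block of width $c = o(N)$ always exists, is in fact \emph{false}. The flux (net arrow count) balance you invoke is necessary but far from sufficient; the real obstruction is the monotone, non-crossing nature of the curves in the drifting representation. Concretely, choose $p$ on $\{1,\dots,N\}\times\{1,\dots,M\}$ with $M\approx 2N$ so that $\alpha_R(p)$ has its curves exiting at rows $1,\dots,N$ (fed from the bottom of $p$) and $\alpha_L(p)$ has curves entering at rows $N+1,\dots,2N$ (exiting through the top of $p$); such patterns exist. The gluing block then has curves entering its left side at rows $1,\dots,N$ and exiting its right side at rows $N+1,\dots,2N$. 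Since curves only drift up and to the right and cannot cross, at height $y\approx N+\tfrac12$ all $N$ curves are simultaneously active inside the block, each occupying a distinct column, so the block must satisfy $c\geq N$. More generally, if $D(y)$ denotes the running difference $\#\{j\le y: j\in\alpha_R(p)\}-\#\{j\le y: j\in\alpha_L(p)\}$, the block must have $c \geq \max_y D(y)-\min_y D(y)$, and this oscillation can reach $N$. With $c=\Theta(N)$ the overhead term $c\log_2 K$ no longer vanishes after dividing by $N$, and the inequality you derive becomes vacuous. So a direct ``close the loop with a thin collar'' strategy cannot work for square ice.

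The paper avoids this by a reflection (Friedland-type) argument rather than a closure block: it defines $\tau$ (horizontal symmetry composed with arrow reversal) and $\mathcal{T}_N$, and glues $p$ not to a small collar but to $\mathcal{T}_N(p')$ for \emph{another} pattern $p'$ sharing the same left and right boundary words as $p$. Because $\mathcal{T}_N$ reverses the curve flux, the concatenation $p\cdot\mathcal{T}_N(p')$ is automatically a valid $(M,2N)$-cylindric pattern with no filler needed. Stratifying by the boundary pair $\vec{w}=(\vec{w}^l,\vec{w}^r)$ gives $\mathcal{N}_M(\overline{X}^s_{2N})\geq (\mathcal{N}^{\vec{w}}_{M,N})^2$ for each $\vec{w}$, and summing over the $2^{2M}$ boundary classes yields $2^{2M}\,\mathcal{N}_M(\overline{X}^s_{2N})^{1/2}\geq \mathcal{N}_{M,N}(X^s)$, hence $2+\tfrac12 h(\overline{X}^s_{2N})\geq h(X^s_{2N})$, from which the desired $\liminf$ bound follows. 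The essential idea you are missing is precisely this doubling-by-reflection, which sidesteps the curve-accumulation obstruction entirely.
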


\begin{remark}
In order to prove this lemma, we use a technique 
that first appeared in a work of S.Friedland~\cite{F97}, that relies on a symmetry of 
the alphabet and rules of the SFT.
\end{remark}

\begin{proof}

\begin{enumerate}
\item \textbf{Lower bound:}
Since for all $N$, $\overline{X}^s_N \subset X^s_{N}$, then $h(\overline{X}^s_N) \le h(X^s_{N})$. We deduce by Proposition~\ref{proposition.stripes}, 
that \[\limsup_N \frac{h(\overline{X}^s_N)}{N} 
\le h(X^s).\]

\item \textbf{Upper bound:}

Consider the transformation $\tau$ on the six-vertex model alphabet 
that consists in a horizontal 
symmetry of the symbols and then the inversion 
of all the arrows. The symmetry can be represented 
as follows:

\[\begin{tikzpicture}[scale=0.6]
\begin{scope}
\draw (0,0) rectangle (1,1);
\draw[-latex,line width=0.2mm] (0.5,0) -- (0.5,0.5);
\draw[-latex,line width=0.2mm] (0.5,0.5) -- (0.5,1); 
\draw[-latex,line width=0.2mm] (0,0.5) -- (0.5,0.5);
\draw[-latex,line width=0.2mm] (0.5,0.5) -- (1,0.5);
\end{scope}

\begin{scope}[xshift=1.5cm]
\draw (0,0) rectangle (1,1);
\draw[latex-,line width=0.2mm] (0.5,0) -- (0.5,0.5);
\draw[latex-,line width=0.2mm] (0.5,0.5) -- (0.5,1); 
\draw[-latex,line width=0.2mm] (0,0.5) -- (0.5,0.5);
\draw[-latex,line width=0.2mm] (0.5,0.5) -- (1,0.5); 
\end{scope}

\begin{scope}[xshift=3cm]
\draw (0,0) rectangle (1,1);
\draw[latex-,line width=0.2mm] (0.5,0) -- (0.5,0.5);
\draw[-latex,line width=0.2mm] (0.5,0.5) -- (0.5,1); 
\draw[-latex,line width=0.2mm] (0,0.5) -- (0.5,0.5);
\draw[latex-,line width=0.2mm] (0.5,0.5) -- (1,0.5); 
\end{scope}

\begin{scope}[xshift=4.5cm]
\draw (0,0) rectangle (1,1);
\draw[-latex,line width=0.2mm] (0.5,0) -- (0.5,0.5);
\draw[latex-,line width=0.2mm] (0.5,0.5) -- (0.5,1); 
\draw[latex-,line width=0.2mm] (0,0.5) -- (0.5,0.5);
\draw[-latex,line width=0.2mm] (0.5,0.5) -- (1,0.5); 
\end{scope} 

\begin{scope}[xshift=6cm]
\draw (0,0) rectangle (1,1);
\draw[latex-,line width=0.2mm] (0.5,0) -- (0.5,0.5);
\draw[latex-,line width=0.2mm] (0.5,0.5) -- (0.5,1); 
\draw[latex-,line width=0.2mm] (0,0.5) -- (0.5,0.5);
\draw[latex-,line width=0.2mm] (0.5,0.5) -- (1,0.5); 
\end{scope}

\begin{scope}[xshift=7.5cm]
\draw (0,0) rectangle (1,1);
\draw[-latex,line width=0.2mm] (0.5,0) -- (0.5,0.5);
\draw[-latex,line width=0.2mm] (0.5,0.5) -- (0.5,1); 
\draw[latex-,line width=0.2mm] (0,0.5) -- (0.5,0.5);
\draw[latex-,line width=0.2mm] (0.5,0.5) -- (1,0.5); 
\end{scope}

\foreach \x in {0,1,2,3,4,5} {
\draw[dashed,-latex,gray!98] (\x*1.5+0.5,-0.25) -- (\x*1.5+0.5,-0.75);}

\begin{scope}[yshift=-0.5cm]

\begin{scope}[yshift=-1.5cm]
\draw (0,0) rectangle (1,1);
\draw[-latex,line width=0.2mm] (0.5,0) -- (0.5,0.5);
\draw[-latex,line width=0.2mm] (0.5,0.5) -- (0.5,1); 
\draw[latex-,line width=0.2mm] (0,0.5) -- (0.5,0.5);
\draw[latex-,line width=0.2mm] (0.5,0.5) -- (1,0.5);
\end{scope}

\begin{scope}[yshift=-1.5cm,xshift=1.5cm]
\draw (0,0) rectangle (1,1);
\draw[latex-,line width=0.2mm] (0.5,0) -- (0.5,0.5);
\draw[latex-,line width=0.2mm] (0.5,0.5) -- (0.5,1); 
\draw[latex-,line width=0.2mm] (0,0.5) -- (0.5,0.5);
\draw[latex-,line width=0.2mm] (0.5,0.5) -- (1,0.5); 
\end{scope}

\begin{scope}[yshift=-1.5cm,xshift=3cm]
\draw (0,0) rectangle (1,1);
\draw[latex-,line width=0.2mm] (0.5,0) -- (0.5,0.5);
\draw[-latex,line width=0.2mm] (0.5,0.5) -- (0.5,1); 
\draw[-latex,line width=0.2mm] (0,0.5) -- (0.5,0.5);
\draw[latex-,line width=0.2mm] (0.5,0.5) -- (1,0.5); 
\end{scope}

\begin{scope}[yshift=-1.5cm,xshift=4.5cm]
\draw (0,0) rectangle (1,1);
\draw[-latex,line width=0.2mm] (0.5,0) -- (0.5,0.5);
\draw[latex-,line width=0.2mm] (0.5,0.5) -- (0.5,1); 
\draw[latex-,line width=0.2mm] (0,0.5) -- (0.5,0.5);
\draw[-latex,line width=0.2mm] (0.5,0.5) -- (1,0.5);
\end{scope}

\begin{scope}[yshift=-1.5cm,xshift=6cm]
\draw (0,0) rectangle (1,1);
\draw[latex-,line width=0.2mm] (0.5,0) -- (0.5,0.5);
\draw[latex-,line width=0.2mm] (0.5,0.5) -- (0.5,1); 
\draw[-latex,line width=0.2mm] (0,0.5) -- (0.5,0.5);
\draw[-latex,line width=0.2mm] (0.5,0.5) -- (1,0.5);
\end{scope}

\begin{scope}[yshift=-1.5cm,xshift=7.5cm]
\draw (0,0) rectangle (1,1);
\draw[-latex,line width=0.2mm] (0.5,0) -- (0.5,0.5);
\draw[-latex,line width=0.2mm] (0.5,0.5) -- (0.5,1); 
\draw[-latex,line width=0.2mm] (0,0.5) -- (0.5,0.5);
\draw[-latex,line width=0.2mm] (0.5,0.5) -- (1,0.5);
\end{scope}
\end{scope}
\end{tikzpicture}\]

The inversion is represented: 

\[\begin{tikzpicture}[scale=0.6]
\begin{scope}
\draw (0,0) rectangle (1,1);
\draw[-latex,line width=0.2mm] (0.5,0) -- (0.5,0.5);
\draw[-latex,line width=0.2mm] (0.5,0.5) -- (0.5,1); 
\draw[-latex,line width=0.2mm] (0,0.5) -- (0.5,0.5);
\draw[-latex,line width=0.2mm] (0.5,0.5) -- (1,0.5);
\end{scope}

\begin{scope}[xshift=1.5cm]
\draw (0,0) rectangle (1,1);
\draw[latex-,line width=0.2mm] (0.5,0) -- (0.5,0.5);
\draw[latex-,line width=0.2mm] (0.5,0.5) -- (0.5,1); 
\draw[-latex,line width=0.2mm] (0,0.5) -- (0.5,0.5);
\draw[-latex,line width=0.2mm] (0.5,0.5) -- (1,0.5); 
\end{scope}

\begin{scope}[xshift=3cm]
\draw (0,0) rectangle (1,1);
\draw[latex-,line width=0.2mm] (0.5,0) -- (0.5,0.5);
\draw[-latex,line width=0.2mm] (0.5,0.5) -- (0.5,1); 
\draw[-latex,line width=0.2mm] (0,0.5) -- (0.5,0.5);
\draw[latex-,line width=0.2mm] (0.5,0.5) -- (1,0.5); 
\end{scope}

\begin{scope}[xshift=4.5cm]
\draw (0,0) rectangle (1,1);
\draw[-latex,line width=0.2mm] (0.5,0) -- (0.5,0.5);
\draw[latex-,line width=0.2mm] (0.5,0.5) -- (0.5,1); 
\draw[latex-,line width=0.2mm] (0,0.5) -- (0.5,0.5);
\draw[-latex,line width=0.2mm] (0.5,0.5) -- (1,0.5); 
\end{scope} 

\begin{scope}[xshift=6cm]
\draw (0,0) rectangle (1,1);
\draw[latex-,line width=0.2mm] (0.5,0) -- (0.5,0.5);
\draw[latex-,line width=0.2mm] (0.5,0.5) -- (0.5,1); 
\draw[latex-,line width=0.2mm] (0,0.5) -- (0.5,0.5);
\draw[latex-,line width=0.2mm] (0.5,0.5) -- (1,0.5); 
\end{scope}

\begin{scope}[xshift=7.5cm]
\draw (0,0) rectangle (1,1);
\draw[-latex,line width=0.2mm] (0.5,0) -- (0.5,0.5);
\draw[-latex,line width=0.2mm] (0.5,0.5) -- (0.5,1); 
\draw[latex-,line width=0.2mm] (0,0.5) -- (0.5,0.5);
\draw[latex-,line width=0.2mm] (0.5,0.5) -- (1,0.5); 
\end{scope}

\foreach \x in {0,1,2,3,4,5} {
\draw[dashed,-latex,gray!98] (\x*1.5+0.5,-0.25) -- (\x*1.5+0.5,-0.75);}

\begin{scope}[yshift=-0.5cm]

\begin{scope}[yshift=-1.5cm]
\draw (0,0) rectangle (1,1);
\draw[latex-,line width=0.2mm] (0.5,0) -- (0.5,0.5);
\draw[latex-,line width=0.2mm] (0.5,0.5) -- (0.5,1); 
\draw[latex-,line width=0.2mm] (0,0.5) -- (0.5,0.5);
\draw[latex-,line width=0.2mm] (0.5,0.5) -- (1,0.5);
\end{scope}

\begin{scope}[yshift=-1.5cm,xshift=1.5cm]
\draw (0,0) rectangle (1,1);
\draw[-latex,line width=0.2mm] (0.5,0) -- (0.5,0.5);
\draw[-latex,line width=0.2mm] (0.5,0.5) -- (0.5,1); 
\draw[latex-,line width=0.2mm] (0,0.5) -- (0.5,0.5);
\draw[latex-,line width=0.2mm] (0.5,0.5) -- (1,0.5); 
\end{scope}

\begin{scope}[yshift=-1.5cm,xshift=3cm]
\draw (0,0) rectangle (1,1);
\draw[-latex,line width=0.2mm] (0.5,0) -- (0.5,0.5);
\draw[latex-,line width=0.2mm] (0.5,0.5) -- (0.5,1); 
\draw[latex-,line width=0.2mm] (0,0.5) -- (0.5,0.5);
\draw[-latex,line width=0.2mm] (0.5,0.5) -- (1,0.5); 
\end{scope}

\begin{scope}[yshift=-1.5cm,xshift=4.5cm]
\draw (0,0) rectangle (1,1);
\draw[latex-,line width=0.2mm] (0.5,0) -- (0.5,0.5);
\draw[-latex,line width=0.2mm] (0.5,0.5) -- (0.5,1); 
\draw[-latex,line width=0.2mm] (0,0.5) -- (0.5,0.5);
\draw[latex-,line width=0.2mm] (0.5,0.5) -- (1,0.5);
\end{scope}

\begin{scope}[yshift=-1.5cm,xshift=6cm]
\draw (0,0) rectangle (1,1);
\draw[-latex,line width=0.2mm] (0.5,0) -- (0.5,0.5);
\draw[-latex,line width=0.2mm] (0.5,0.5) -- (0.5,1); 
\draw[-latex,line width=0.2mm] (0,0.5) -- (0.5,0.5);
\draw[-latex,line width=0.2mm] (0.5,0.5) -- (1,0.5);
\end{scope}

\begin{scope}[yshift=-1.5cm,xshift=7.5cm]
\draw (0,0) rectangle (1,1);
\draw[latex-,line width=0.2mm] (0.5,0) -- (0.5,0.5);
\draw[latex-,line width=0.2mm] (0.5,0.5) -- (0.5,1); 
\draw[-latex,line width=0.2mm] (0,0.5) -- (0.5,0.5);
\draw[-latex,line width=0.2mm] (0.5,0.5) -- (1,0.5);
\end{scope} \end{scope}
\end{tikzpicture}\]

As a consequence $\tau$ is: 

\[\begin{tikzpicture}[scale=0.6]
\begin{scope}
\draw (0,0) rectangle (1,1);
\draw[-latex,line width=0.2mm] (0.5,0) -- (0.5,0.5);
\draw[-latex,line width=0.2mm] (0.5,0.5) -- (0.5,1); 
\draw[-latex,line width=0.2mm] (0,0.5) -- (0.5,0.5);
\draw[-latex,line width=0.2mm] (0.5,0.5) -- (1,0.5);
\end{scope}

\begin{scope}[xshift=1.5cm]
\draw (0,0) rectangle (1,1);
\draw[latex-,line width=0.2mm] (0.5,0) -- (0.5,0.5);
\draw[latex-,line width=0.2mm] (0.5,0.5) -- (0.5,1); 
\draw[-latex,line width=0.2mm] (0,0.5) -- (0.5,0.5);
\draw[-latex,line width=0.2mm] (0.5,0.5) -- (1,0.5); 
\end{scope}

\begin{scope}[xshift=3cm]
\draw (0,0) rectangle (1,1);
\draw[latex-,line width=0.2mm] (0.5,0) -- (0.5,0.5);
\draw[-latex,line width=0.2mm] (0.5,0.5) -- (0.5,1); 
\draw[-latex,line width=0.2mm] (0,0.5) -- (0.5,0.5);
\draw[latex-,line width=0.2mm] (0.5,0.5) -- (1,0.5); 
\end{scope}

\begin{scope}[xshift=4.5cm]
\draw (0,0) rectangle (1,1);
\draw[-latex,line width=0.2mm] (0.5,0) -- (0.5,0.5);
\draw[latex-,line width=0.2mm] (0.5,0.5) -- (0.5,1); 
\draw[latex-,line width=0.2mm] (0,0.5) -- (0.5,0.5);
\draw[-latex,line width=0.2mm] (0.5,0.5) -- (1,0.5); 
\end{scope} 

\begin{scope}[xshift=6cm]
\draw (0,0) rectangle (1,1);
\draw[latex-,line width=0.2mm] (0.5,0) -- (0.5,0.5);
\draw[latex-,line width=0.2mm] (0.5,0.5) -- (0.5,1); 
\draw[latex-,line width=0.2mm] (0,0.5) -- (0.5,0.5);
\draw[latex-,line width=0.2mm] (0.5,0.5) -- (1,0.5); 
\end{scope}

\begin{scope}[xshift=7.5cm]
\draw (0,0) rectangle (1,1);
\draw[-latex,line width=0.2mm] (0.5,0) -- (0.5,0.5);
\draw[-latex,line width=0.2mm] (0.5,0.5) -- (0.5,1); 
\draw[latex-,line width=0.2mm] (0,0.5) -- (0.5,0.5);
\draw[latex-,line width=0.2mm] (0.5,0.5) -- (1,0.5); 
\end{scope}

\foreach \x in {0,1,2,3,4,5} {
\draw[dashed,-latex,gray!98] (\x*1.5+0.5,-0.25) -- (\x*1.5+0.5,-0.75);}

\begin{scope}[yshift=-0.5cm]

\begin{scope}[yshift=-1.5cm,xshift=7.5cm]
\draw (0,0) rectangle (1,1);
\draw[latex-,line width=0.2mm] (0.5,0) -- (0.5,0.5);
\draw[latex-,line width=0.2mm] (0.5,0.5) -- (0.5,1); 
\draw[latex-,line width=0.2mm] (0,0.5) -- (0.5,0.5);
\draw[latex-,line width=0.2mm] (0.5,0.5) -- (1,0.5);
\end{scope}

\begin{scope}[yshift=-1.5cm,xshift=6cm]
\draw (0,0) rectangle (1,1);
\draw[-latex,line width=0.2mm] (0.5,0) -- (0.5,0.5);
\draw[-latex,line width=0.2mm] (0.5,0.5) -- (0.5,1); 
\draw[latex-,line width=0.2mm] (0,0.5) -- (0.5,0.5);
\draw[latex-,line width=0.2mm] (0.5,0.5) -- (1,0.5); 
\end{scope}

\begin{scope}[yshift=-1.5cm,xshift=3cm]
\draw (0,0) rectangle (1,1);
\draw[-latex,line width=0.2mm] (0.5,0) -- (0.5,0.5);
\draw[latex-,line width=0.2mm] (0.5,0.5) -- (0.5,1); 
\draw[latex-,line width=0.2mm] (0,0.5) -- (0.5,0.5);
\draw[-latex,line width=0.2mm] (0.5,0.5) -- (1,0.5); 
\end{scope}

\begin{scope}[yshift=-1.5cm,xshift=4.5cm]
\draw (0,0) rectangle (1,1);
\draw[latex-,line width=0.2mm] (0.5,0) -- (0.5,0.5);
\draw[-latex,line width=0.2mm] (0.5,0.5) -- (0.5,1); 
\draw[-latex,line width=0.2mm] (0,0.5) -- (0.5,0.5);
\draw[latex-,line width=0.2mm] (0.5,0.5) -- (1,0.5);
\end{scope}

\begin{scope}[yshift=-1.5cm,xshift=1.5cm]
\draw (0,0) rectangle (1,1);
\draw[-latex,line width=0.2mm] (0.5,0) -- (0.5,0.5);
\draw[-latex,line width=0.2mm] (0.5,0.5) -- (0.5,1); 
\draw[-latex,line width=0.2mm] (0,0.5) -- (0.5,0.5);
\draw[-latex,line width=0.2mm] (0.5,0.5) -- (1,0.5);
\end{scope}

\begin{scope}[yshift=-1.5cm]
\draw (0,0) rectangle (1,1);
\draw[latex-,line width=0.2mm] (0.5,0) -- (0.5,0.5);
\draw[latex-,line width=0.2mm] (0.5,0.5) -- (0.5,1); 
\draw[-latex,line width=0.2mm] (0,0.5) -- (0.5,0.5);
\draw[-latex,line width=0.2mm] (0.5,0.5) -- (1,0.5);
\end{scope} \end{scope}
\end{tikzpicture}\]

We define then a horizontal symmetry operation 
$\mathcal{T}_{N}$ (see Figure~\ref{figure.transformation.symmetry}
for an illustration) on patterns whose 
support is some $\mathbb{U}^{(2)}_{M,N}$, with $M \ge 1$, such that for all $M \ge 1$, 
$p$ having support  $\mathbb{U}^{(2)}_{M,N}$, $\mathcal{T}_{M} (p)$ has also 
support $\mathbb{U}^{(2)}_{M,N}$ and for all
$(i,j) \in \mathbb{U}^{(2)}_{M,N}$, 
\[\mathcal{T}_{N}(p)_{i,j} = \tau (p_{N-i,j}).\]

We define also the applications
$\partial^{r}_{N}$ (resp. $\partial^{l}_{N}$,  $\partial^{t}_{N}$) that acts on patterns of 
the six vertex model
whose support is some 
$\mathbb{U}^{(2)}_{M,N}$, $M\ge 1$ 
and such that for all $M \ge 1$ 
and $p$ on support $\mathbb{U}^{(2)}_{M,N}$, 
$\partial^{r}_{N} (p)$ (resp. $\partial^{l}_{N} (p)$) is a length $M$ (resp. $M$, $N$)
word and for all $j$ between $1$ and $M$ (resp. $M$), 
$\partial^{r}_{N} (p)_j$ (resp. $\partial^{l}_{N} (p)_j$) is the 
east (resp. west) arrow in the symbol $p_{N,j}$
(resp. $p_{1,j}$). 
For instance, if $p$ is the pattern on the left 
on Figure~\ref{figure.transformation.symmetry},
then $\partial^{r}_{N} (p)$ (resp. $\partial^{l}_{N} (p)$) is 
the word: 
\[\leftarrow  \leftarrow \rightarrow \rightarrow \quad (\text{resp.} \rightarrow \rightarrow \rightarrow \rightarrow)\]

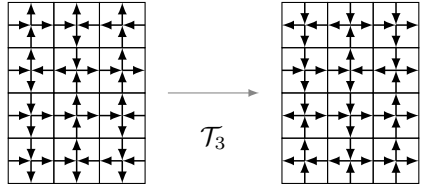
\begin{figure}[h!]

\[\begin{tikzpicture}[scale=0.6]

\begin{scope}

\begin{scope}
\draw (0,0) rectangle (1,1);
\draw[latex-,line width=0.2mm] (0.5,0) -- (0.5,0.5);
\draw[latex-,line width=0.2mm] (0.5,0.5) -- (0.5,1); 
\draw[-latex,line width=0.2mm] (0,0.5) -- (0.5,0.5);
\draw[-latex,line width=0.2mm] (0.5,0.5) -- (1,0.5); 
\end{scope} 

\begin{scope}[xshift=1cm]
\draw (0,0) rectangle (1,1);
\draw[latex-,line width=0.2mm] (0.5,0) -- (0.5,0.5);
\draw[-latex,line width=0.2mm] (0.5,0.5) -- (0.5,1); 
\draw[-latex,line width=0.2mm] (0,0.5) -- (0.5,0.5);
\draw[latex-,line width=0.2mm] (0.5,0.5) -- (1,0.5); 
\end{scope}

\begin{scope}[xshift=2cm]
\draw (0,0) rectangle (1,1);
\draw[latex-,line width=0.2mm] (0.5,0) -- (0.5,0.5);
\draw[latex-,line width=0.2mm] (0.5,0.5) -- (0.5,1); 
\draw[latex-,line width=0.2mm] (0,0.5) -- (0.5,0.5);
\draw[latex-,line width=0.2mm] (0.5,0.5) -- (1,0.5); 
\end{scope}

\begin{scope}[yshift=1cm]
\draw (0,0) rectangle (1,1);
\draw[latex-,line width=0.2mm] (0.5,0) -- (0.5,0.5);
\draw[latex-,line width=0.2mm] (0.5,0.5) -- (0.5,1); 
\draw[-latex,line width=0.2mm] (0,0.5) -- (0.5,0.5);
\draw[-latex,line width=0.2mm] (0.5,0.5) -- (1,0.5); 
\end{scope} 

\begin{scope}[xshift=1cm,yshift=1cm]
\draw (0,0) rectangle (1,1);
\draw[-latex,line width=0.2mm] (0.5,0) -- (0.5,0.5);
\draw[-latex,line width=0.2mm] (0.5,0.5) -- (0.5,1); 
\draw[-latex,line width=0.2mm] (0,0.5) -- (0.5,0.5);
\draw[-latex,line width=0.2mm] (0.5,0.5) -- (1,0.5); 
\end{scope}

\begin{scope}[xshift=2cm,yshift=1cm]
\draw (0,0) rectangle (1,1);
\draw[latex-,line width=0.2mm] (0.5,0) -- (0.5,0.5);
\draw[-latex,line width=0.2mm] (0.5,0.5) -- (0.5,1); 
\draw[-latex,line width=0.2mm] (0,0.5) -- (0.5,0.5);
\draw[latex-,line width=0.2mm] (0.5,0.5) -- (1,0.5); 
\end{scope}

\begin{scope}[yshift=2cm]
\draw (0,0) rectangle (1,1);
\draw[latex-,line width=0.2mm] (0.5,0) -- (0.5,0.5);
\draw[-latex,line width=0.2mm] (0.5,0.5) -- (0.5,1); 
\draw[-latex,line width=0.2mm] (0,0.5) -- (0.5,0.5);
\draw[latex-,line width=0.2mm] (0.5,0.5) -- (1,0.5); 
\end{scope}

\begin{scope}[xshift=1cm,yshift=2cm]
\draw (0,0) rectangle (1,1);
\draw[-latex,line width=0.2mm] (0.5,0) -- (0.5,0.5);
\draw[latex-,line width=0.2mm] (0.5,0.5) -- (0.5,1); 
\draw[latex-,line width=0.2mm] (0,0.5) -- (0.5,0.5);
\draw[-latex,line width=0.2mm] (0.5,0.5) -- (1,0.5); 
\end{scope}

\begin{scope}[xshift=2cm,yshift=2cm]
\draw (0,0) rectangle (1,1);
\draw[-latex,line width=0.2mm] (0.5,0) -- (0.5,0.5);
\draw[-latex,line width=0.2mm] (0.5,0.5) -- (0.5,1); 
\draw[-latex,line width=0.2mm] (0,0.5) -- (0.5,0.5);
\draw[-latex,line width=0.2mm] (0.5,0.5) -- (1,0.5); 
\end{scope}

\begin{scope}[xshift=0cm,yshift=3cm]
\draw (0,0) rectangle (1,1);
\draw[-latex,line width=0.2mm] (0.5,0) -- (0.5,0.5);
\draw[-latex,line width=0.2mm] (0.5,0.5) -- (0.5,1); 
\draw[-latex,line width=0.2mm] (0,0.5) -- (0.5,0.5);
\draw[-latex,line width=0.2mm] (0.5,0.5) -- (1,0.5); 
\end{scope}

\begin{scope}[xshift=1cm,yshift=3cm]
\draw (0,0) rectangle (1,1);
\draw[latex-,line width=0.2mm] (0.5,0) -- (0.5,0.5);
\draw[-latex,line width=0.2mm] (0.5,0.5) -- (0.5,1); 
\draw[-latex,line width=0.2mm] (0,0.5) -- (0.5,0.5);
\draw[latex-,line width=0.2mm] (0.5,0.5) -- (1,0.5); 
\end{scope}

\begin{scope}[xshift=2cm,yshift=3cm]
\draw (0,0) rectangle (1,1);
\draw[-latex,line width=0.2mm] (0.5,0) -- (0.5,0.5);
\draw[-latex,line width=0.2mm] (0.5,0.5) -- (0.5,1); 
\draw[latex-,line width=0.2mm] (0,0.5) -- (0.5,0.5);
\draw[latex-,line width=0.2mm] (0.5,0.5) -- (1,0.5); 
\end{scope}

\end{scope}

\draw[color=gray!95,-latex] (3.5,2) -- (5.5,2);

\begin{scope}[xshift=5cm]

\begin{scope}[xshift=3cm]
\draw (0,0) rectangle (1,1);
\draw[-latex,line width=0.2mm] (0.5,0) -- (0.5,0.5);
\draw[-latex,line width=0.2mm] (0.5,0.5) -- (0.5,1); 
\draw[-latex,line width=0.2mm] (0,0.5) -- (0.5,0.5);
\draw[-latex,line width=0.2mm] (0.5,0.5) -- (1,0.5); 
\end{scope} 

\begin{scope}[xshift=2cm]
\draw (0,0) rectangle (1,1);
\draw[-latex,line width=0.2mm] (0.5,0) -- (0.5,0.5);
\draw[latex-,line width=0.2mm] (0.5,0.5) -- (0.5,1); 
\draw[latex-,line width=0.2mm] (0,0.5) -- (0.5,0.5);
\draw[-latex,line width=0.2mm] (0.5,0.5) -- (1,0.5); 
\end{scope}

\begin{scope}[xshift=1cm]
\draw (0,0) rectangle (1,1);
\draw[-latex,line width=0.2mm] (0.5,0) -- (0.5,0.5);
\draw[-latex,line width=0.2mm] (0.5,0.5) -- (0.5,1); 
\draw[latex-,line width=0.2mm] (0,0.5) -- (0.5,0.5);
\draw[latex-,line width=0.2mm] (0.5,0.5) -- (1,0.5); 
\end{scope}

\begin{scope}[xshift=3cm,yshift=1cm]
\draw (0,0) rectangle (1,1);
\draw[-latex,line width=0.2mm] (0.5,0) -- (0.5,0.5);
\draw[-latex,line width=0.2mm] (0.5,0.5) -- (0.5,1); 
\draw[-latex,line width=0.2mm] (0,0.5) -- (0.5,0.5);
\draw[-latex,line width=0.2mm] (0.5,0.5) -- (1,0.5); 
\end{scope} 

\begin{scope}[xshift=2cm,yshift=1cm]
\draw (0,0) rectangle (1,1);
\draw[latex-,line width=0.2mm] (0.5,0) -- (0.5,0.5);
\draw[latex-,line width=0.2mm] (0.5,0.5) -- (0.5,1); 
\draw[-latex,line width=0.2mm] (0,0.5) -- (0.5,0.5);
\draw[-latex,line width=0.2mm] (0.5,0.5) -- (1,0.5); 
\end{scope}

\begin{scope}[xshift=1cm,yshift=1cm]
\draw (0,0) rectangle (1,1);
\draw[-latex,line width=0.2mm] (0.5,0) -- (0.5,0.5);
\draw[latex-,line width=0.2mm] (0.5,0.5) -- (0.5,1); 
\draw[latex-,line width=0.2mm] (0,0.5) -- (0.5,0.5);
\draw[-latex,line width=0.2mm] (0.5,0.5) -- (1,0.5); 
\end{scope}

\begin{scope}[xshift=3cm,yshift=2cm]
\draw (0,0) rectangle (1,1);
\draw[-latex,line width=0.2mm] (0.5,0) -- (0.5,0.5);
\draw[latex-,line width=0.2mm] (0.5,0.5) -- (0.5,1); 
\draw[latex-,line width=0.2mm] (0,0.5) -- (0.5,0.5);
\draw[-latex,line width=0.2mm] (0.5,0.5) -- (1,0.5); 
\end{scope}

\begin{scope}[xshift=2cm,yshift=2cm]
\draw (0,0) rectangle (1,1);
\draw[latex-,line width=0.2mm] (0.5,0) -- (0.5,0.5);
\draw[-latex,line width=0.2mm] (0.5,0.5) -- (0.5,1); 
\draw[-latex,line width=0.2mm] (0,0.5) -- (0.5,0.5);
\draw[latex-,line width=0.2mm] (0.5,0.5) -- (1,0.5); 
\end{scope}

\begin{scope}[xshift=1cm,yshift=2cm]
\draw (0,0) rectangle (1,1);
\draw[latex-,line width=0.2mm] (0.5,0) -- (0.5,0.5);
\draw[latex-,line width=0.2mm] (0.5,0.5) -- (0.5,1); 
\draw[-latex,line width=0.2mm] (0,0.5) -- (0.5,0.5);
\draw[-latex,line width=0.2mm] (0.5,0.5) -- (1,0.5); 
\end{scope}

\begin{scope}[xshift=3cm,yshift=3cm]
\draw (0,0) rectangle (1,1);
\draw[latex-,line width=0.2mm] (0.5,0) -- (0.5,0.5);
\draw[latex-,line width=0.2mm] (0.5,0.5) -- (0.5,1); 
\draw[-latex,line width=0.2mm] (0,0.5) -- (0.5,0.5);
\draw[-latex,line width=0.2mm] (0.5,0.5) -- (1,0.5); 
\end{scope}

\begin{scope}[xshift=2cm,yshift=3cm]
\draw (0,0) rectangle (1,1);
\draw[-latex,line width=0.2mm] (0.5,0) -- (0.5,0.5);
\draw[latex-,line width=0.2mm] (0.5,0.5) -- (0.5,1); 
\draw[latex-,line width=0.2mm] (0,0.5) -- (0.5,0.5);
\draw[-latex,line width=0.2mm] (0.5,0.5) -- (1,0.5); 
\end{scope}

\begin{scope}[xshift=1cm,yshift=3cm]
\draw (0,0) rectangle (1,1);
\draw[latex-,line width=0.2mm] (0.5,0) -- (0.5,0.5);
\draw[latex-,line width=0.2mm] (0.5,0.5) -- (0.5,1); 
\draw[latex-,line width=0.2mm] (0,0.5) -- (0.5,0.5);
\draw[latex-,line width=0.2mm] (0.5,0.5) -- (1,0.5); 
\end{scope}

\end{scope}

\node at (4.5,1) {$\mathcal{T}_{3}$};

\end{tikzpicture}\]
\caption{\label{figure.transformation.symmetry}
Illustration of the definition of 
$\mathcal{T}_{3}$: the pattern on 
left (on support $\mathbb{U}^{(2)}_{3,4}$)
is transformed into the pattern 
on the right via this transformation.}
\end{figure}

For the purpose of notation, we denote 
also $\pi_s$ the application that transforms 
patterns of the six vertex model into 
patterns of $X_s$ via the application of $\pi_s$ 
letter by letter.
Let us consider the transformation 
$ \mathcal{T}^s_{N} \equiv \pi_s \circ  \mathcal{T}_{N} \circ \pi_s ^{-1}$ 
on patterns of $X^s$ on some $\mathbb{U}^{(2)}_{M,N}$. We also denote 
$\partial_{N}^{l,s} 
\equiv  \partial_{N}^{l}  \circ \pi_s ^{-1}$, $\partial_{N}^{r,s} 
\equiv  \partial_{N}^{r}  \circ \pi_s ^{-1}$. Let us prove some properties of 
these transformations. For any word 
$\vec{w}$ on the alphabet $\{\leftarrow,\rightarrow\}$
(or $\{\uparrow,\downarrow\}$), we 
denote $\overline{\vec{w}}$ the word obtained by 
exchanging the two letters in the word $\vec{w}$.

\begin{enumerate}

\item \textbf{Preservation of global admissibility:}

For any $p$ globally admissible, 
$\mathcal{T}_{N} (p)$ is also locally admissible, 
and as a consequence globally admissible: 
indeed, it is sufficient to check that 
for all $u,v$ in the alphabet, 
if $uv$ is not a forbidden pattern in the 
six vertex model, then 
$\tau (v) \tau (u)$ is also not a forbidden pattern and that if $\begin{array}{c} u \\ v \end{array}$ 
is not forbidden, then $\begin{array}{c} \tau (u)  \\ \tau (v)  \end{array}$ is also not forbidden. 

The first assertion is verified because 
$u v$ is not forbidden if and only if 
the arrows of these symbols attached to their 
adjacent edge are pointing in the same direction, 
and this property is conserved when 
changing $uv$ into $\tau(v)\tau(u)$. The second 
one is verified for a similar reason. 

\item \textbf{Gluing patterns:}

Let us consider any $N,M \ge 1$ and 
$p,p'$ two patterns of $X^s$ on support 
$\mathbb{U}^{(2)}_{M,N}$, such that 
$\partial_{N}^{r,s} (p) = \partial_{N}^{r,s} (p') $ and $\partial_{N}^{l,s} (p) = \partial_{N}^{l,s} (p')$. Let us denote pattern $p''$ on support
$\mathbb{U}^{(2)}_{M,2N}$ such that the restriction 
of $p''$ on $\mathbb{U}^{(2)}_{M,N}$ is $p$ 
and the restriction on $(0,N) + 
\mathbb{U}^{(2)}_{M,N}$ is $\mathcal{T}_{N} (p')$.
\begin{itemize}
\item \textbf{This pattern is admissible} (locally and 
thus globally). Indeed, this is sufficient 
to check that gluing the two patterns 
$p$ and $p'$ does not make appear 
forbidden patterns, and this 
comes from that for all letter $u$, 
$ u \tau (u)$ is not forbidden. This can 
be checked directly, letter by letter.

\item \textbf{Moreover, $p''$ is in $\mathcal{N}_M (\overline{X}_{2N})$}. Indeed, this pattern can be \textit{wrapped 
on a cylinder}, and this comes from the fact 
that if $u$ is a symbol of the six 
vertex model, $\tau(u) u$ is not forbidden.
\end{itemize}
\end{enumerate}

\item \textbf{From the gluing property to an 
upper bound:} Given $\vec{w} = (\vec{w}^l,\vec{w}^r)$ some pair of words on $\{\rightarrow,\leftarrow\}$, we denote 
$\mathcal{N}^{\vec{w}}_{M,N}$ 
the number of patterns of $X^s$ on 
support $\mathbb{U}^{(2)}_{M,N}$ such that 
$\partial_{N}^{l,s}=\vec{w}^l$ and 
$\partial_{N}^{r,s}=\vec{w}^r$. Since $\mathcal{T}_{N}$ 
is a bijection, denoting $\overline{\vec{w}} = (\overline{\vec{w}^l},\overline{\vec{w}^r})$, we have 
\[\mathcal{N}_{M,N}^{\vec{w}} = \mathcal{N}_{M,N}^{\overline{\vec{w}}}.\] 

From last point, for all $\vec{w}$, 
\[\mathcal{N}_{M} (\overline{X}^s_{2N}) \ge \mathcal{N}_{M,N}^{\vec{w}} . \mathcal{N}_{M,N}^{\overline{\vec{w}}} =  \left(\mathcal{N}_{M,N}^{\vec{w}}\right)^2\]
\[\left(\mathcal{N}_{M} (\overline{X}^s_{2N})\right)^{\frac{1}{2}} 
\ge \mathcal{N}_{M,2N}^{\vec{w}}.\]
By summing over all possible $\vec{w}$: 

\[2^{2M} . \left(\mathcal{N}_{M}(\overline{X}^s_{2N})\right)^{\frac{1}{2}} = \sum_{\vec{w}} \left(\mathcal{N}_{M} (\overline{X}^s_{2N})\right)^{\frac{1}{2}} 
\ge \sum_{\vec{w}} \mathcal{N}_{M,N}^{\vec{w}}
= \mathcal{N}_{M,N} (X^s).\]

As a consequence for all $N$, 

\[2 + \frac{1}{2} h(\overline{X}^s_{2N}) \ge h(X^s_{2N}).\]

This implies that
\[\liminf_N \frac{h(\overline{X}^s_{2N})}{2N}
\ge \liminf_N \frac{h(X^s_{N})}{N} = h(X^s).\]

For similar reasons 

\[\liminf_N \frac{h(\overline{X}^s_{2N+1})}{2N+1}
\ge \liminf_N \frac{h(X^s_{N})}{N} = h(X^s),\]
and thus: 

\[\liminf_N \frac{h(\overline{X}^s_{N})}{N}
\ge \liminf_N \frac{h(X^s_{N})}{N} = h(X^s).\]

\end{enumerate}

\end{proof}

\section{\label{section.overview} Overview of the text}

In the following, we provide a complete 
proof of the following theorem: 

\begin{reptheorem}{theorem.main}
The entropy of square ice is equal 
to 
\[\boxed{h(X^s)= \frac{3}{2} \log_2 \left( \frac{4}{3} \right)}\]
\end{reptheorem} 

The proof of Theorem~\ref{theorem.main} can 
be overwieved as follows: 

\begin{itemize}
\item The strategy is primarily: 
\begin{enumerate}
\item  
to compute the entropies 
$h(\overline{X}^s_{N})$,
\item then 
to use Lemma~\ref{lemma.toroidal.ice} in order to compute $h(X^s)$. 
\end{enumerate}

\item 

The first point is derived from the \textbf{transfer matrix} method, which 
allows to express $h(\overline{X}^s_{N})$ with a formula 
involving a sequence of numbers defined implicity 
through a system of non-linear equations 
called \textbf{Bethe equations}.
This method 
is itself decomposed in several steps:

\begin{enumerate}
\item  \textbf{Formulation with 
transfer matrices [Section~\ref{section.lieb.path}]:} it is usual, when dealing 
with unidimensional subshifts of finite type, 
to express their entropy as the greatest 
eigenvalue of a matrix
which relates which couples of rows of symbols 
can be adjacent. 
In this text, we use the adjacent matrix $V_N^{*}$
of a factor subshift, thought as acting 
on $\Omega_{N} = \mathbb{C}^2 \otimes ... \otimes \mathbb{C}^2$. Lemma~\ref{lemma.toroidal.ice} tells that 
one can compute $h(\overline{X}^s_N)$ by computing 
the maximal eigenvalue of the 
adjacency matrix of the factor of $\overline{X}^s_{N}$. 

\item \textbf{Lieb path - transport of 
information through analycity [Section~\ref{section.lieb.path}]:}

In quantum physics, 
\textbf{transfer matrices}, which are 
complexifications of the adjacency matrix 
in a local way (in the sense that the coefficient 
relative to a couple of rows is a product 
of coefficients in $\mathbb{C}$ relative to the symbols in 
the two rows)
are used to derive properties of the system. 
In our study, 
the adjacency matrix is seen 
as a particular value of an analytic path of such transfer matrices,
$t \mapsto V_{N} (t)$ such that for all $t$, $V_{N} (t)$ is an irreducible non-negative and symmetric matrix, and such that $V_{N} (1) = V_{N}^{*}$.
Such a path is called (for the clarity of the exposition) 
a \textbf{Lieb path} 
in this text. In Section~\ref{section.lieb.path} 
we define the Lieb path
that will be used in the following. This part is a detailed exposition 
of notions defined in the article of E.H.Lieb~\cite{Lieb67}.

\item \textbf{Coordinate Bethe ansatz [Section~\ref{section.existence.identification}]:}

We use the coordinate Bethe ansatz (exposed 
in~\cite{Duminil-Copin.ansatz} and related in the present text), 
that provides candidate eigenvectors for the matrix $V_N(t)$ 
for all $t$ in each of spaces 
$\Omega^{(n)}_N$, $n \le N$ that form 
a decomposition of $\Omega_N$: 
\[\Omega_N = \bigoplus_{n=0}^N \Omega^{(n)}_N.\]
The candidate eigenvectors and 
eigenvalues depend each on 
a sequence $(p_j)_{j=1..n}$ that verifies 
a non-linear system of equations called Bethe equations.

It is shown that the system of Bethe equations on the parameters $p_j$
admits a unique solution for each $n$,$N$ and $t$, denoted $(\vec{p}_j (t))_j$ for all $t \in (0,\sqrt{2})$, in a context where $n,N$ are fixed, using 
convexity arguments on an auxiliary function.
The analycity of the two types of paths and the convexity of 
the auxiliary function 
ensures that $t \mapsto (\vec{p}_j (t))_j$ is analytic. This part completes the 
proof of an argument left uncomplete 
in~\cite{YY66}.
In order to identify the greatest eigenvalue, 
we use the fact that $V_N (\sqrt{2})$ 
commutes with some Hamiltonian $H_N$ that 
is completely diagonalised (following \cite{LSM61}). The vector is non zero and 
associated to the maximal eigenvalue of 
$H_N$ on $\Omega_N^{(n)}$. By Perron-Frobenius theorem, the vector has positive coordinates, 
and by the same theorem, the associated 
Bethe value is effectively an eigenvalue of the transfer matrix and it 
is equal to the greatest eigenvalue of 
the restriction to $\Omega_{N}^{(n)}$.
By continuity, this is true also 
for $t$ in a neighborhood of $\sqrt{2}$. By analycity, this identity is true 
for all $t \in (0,\sqrt{2})$. 
\end{enumerate}

\item The second point is derived in two steps:

\begin{enumerate}
\item \textbf{Asymptotic condensation of Bethe roots [Section~\ref{section.asymptotics}]:} 

The sequences $(\vec{p}_j (t))_j$ are transformed 
into sequences $(\boldsymbol{\alpha}_j (t))_j$ through 
an analytic bijection. The values of these 
sequences are called \textbf{Bethe roots}.

We first prove that the sequences of Bethe 
roots are condensed according to 
a density function $\rho_t$ over $\mathbb{R}$, relatively to any continuous decreasing and 
integrable function $f : (0,+\infty) \rightarrow 
(0,+\infty)$, 
which means that the Cesaro mean of 
the finite sequence $(f(\boldsymbol{\alpha}_j (t)))_j$ converges towards $\int \rho_t (x) f(x) dx$.
This part involves rigorous proofs, some simplifications and adaptations of arguments 
that appeared in~\cite{kozlowski}. 
The density $\rho_t$ is defined through Fredholm integral equation, 
corresponding the asymptotic version of the 
Bethe equations. This equation is solved through Fourier analysis, following a computation done  in~\cite{YY66II}. 

\item \textbf{Computation of integrals [Section~\ref{section.computation.entropy}]:}
The condensation property proved in the last point implies that the formula obtained for $\frac{1}{N} h(\overline{X}^{s}_{N})$ converges to an integral involving $\rho_1$. 
The formula obtained for $\rho_1$ 
allows the computation of this integral, through lace integrals techniques. This 
part is a detailed version of computations exposed in~\cite{Lieb67}.
\end{enumerate} 

\end{itemize}

\section{\label{section.lieb.path} A Lieb path for square ice}

In this section, we define 
the matrices $V_N^{*}$ [Section~\ref{section.interlacing.relation}] and 
define an example of Lieb path $t \mapsto V_N (t)$ for 
the discrete curves shift $X^s$ [Section~\ref{section.example.lieb.path}], 
and relate $h(X^s)$ to $V_N (1) = V^{*}_N$ [Section~\ref{section.weighted.entropy}].

\subsection{\label{section.interlacing.relation}The interlacing relation and the matrices $V_N^{*}$}

In the following, for a square matrix $M$, 
we will denote $M[u,v]$ its entry on $(u,v)$. Moreover, we denote $\{0,1\}^{*}_N$ the set of length 
$N$ words on $\{0,1\}$.

\begin{notation}
Consider $\vec{u},\vec{v}$ two words 
in $\{0,1\}^{*}_N$, and 
$w$ some $(N,1)$-cylindric pattern 
of the subshift $X$. We say that the pattern $w$ 
\textbf{connects} $\vec{u}$ to $\vec{v}$ (we denote 
this $\vec{u} \mathcal{R}[w] \vec{v}$), when 
for all 
$k \in \llbracket 1,N\rrbracket$,
$\vec{u}_k = 1$ (resp. $\vec{v}_k=1$) if and only if $w$ has an 
incoming (resp. outgoing) curve on the bottom (resp. top) of its $k$th symbol.
This notation 
is illustrated on Figure~\ref{figure.definition.relation}. 
\end{notation}

\begin{definition}
Let us denote $\mathcal{R} \subset 
\{0,1\}^N \times \{0,1\}^N$ the relation defined 
by $\vec{u} \mathcal{R} \vec{v}$ if and only 
if there exists a $(N,1)$-cylindric pattern $w$
of the discrete curves shift $X^s$ such that $\vec{u} \mathcal{R}[w] \vec{v}$. 
\end{definition}

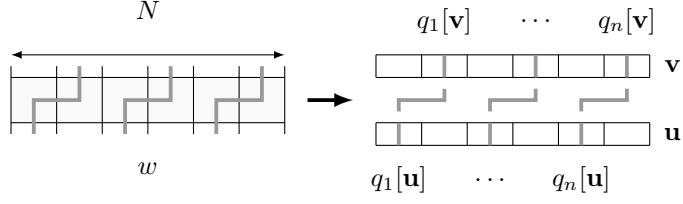
\begin{figure}[!h]
\[\begin{tikzpicture}[scale=0.6]

\begin{scope}

\fill[gray!05] (0,0) rectangle (6,1);

\draw[latex-latex] (0,1.5) -- (6,1.5);
\node at (3,2.5) {$N$};
\draw (0,0) -- (6,0); 
\draw (0,1) -- (6,1);
\foreach \x in {0,...,6}{
\draw (\x,-0.25) -- (\x,1.25);}
\draw[color=gray!80,line width=0.5mm] 
(2.5,-0.25) -- (2.5,0.5) -- (3.5,0.5) -- (3.5,1.25);
\draw[color=gray!80,line width=0.5mm] 
(0.5,-0.25) -- (0.5,0.5) -- (1.5,0.5) -- (1.5,1.25);
\draw[color=gray!80,line width=0.5mm] 
(4.5,-0.25) -- (4.5,0.5) -- (5.5,0.5) -- (5.5,1.25);

\draw[line width = 0.5mm,-latex] (6.5,0.5) -- (7.5,0.5);
\node at (3,-1) {$w$};
\end{scope}

\begin{scope}[xshift=8cm,yshift=-0.25cm]
\draw (0,0.25) -- (6,0.25);
\draw (0,-0.25) -- (6,-0.25);
\foreach \x in {0,...,6}{
\draw (\x,-0.25) -- (\x,0.25);}
\draw[line width=0.5mm,color=gray!80] 
(0.5,-0.25) -- (0.5,0.25);

\draw[line width=0.5mm,color=gray!80] 
(2.5,-0.25) -- (2.5,0.25);

\draw[line width=0.5mm,color=gray!80] 
(4.5,-0.25) -- (4.5,0.25);

\node at (6.5,0) {$\vec{u}$};
\node at (0.5,-1) {$q_1 [\vec{u}]$};
\node at (2.5,-1) {$\hdots$};
\node at (4.5,-1) {$q_n [\vec{u}]$};
\end{scope}

\begin{scope}[xshift=8cm,yshift=0.5cm]

\draw[line width=0.5mm,color=gray!80] 
(0.5,-0.25) -- (0.5,0) -- (1.5,0) -- (1.5,0.25);

\draw[line width=0.5mm,color=gray!80] 
(2.5,-0.25) -- (2.5,0) -- (3.5,0) -- (3.5,0.25);

\draw[line width=0.5mm,color=gray!80] 
(4.5,-0.25) -- (4.5,0) -- (5.5,0) -- (5.5,0.25);
\end{scope}

\begin{scope}[xshift=8cm,yshift=1.25cm]
\draw (0,0.25) -- (6,0.25);
\draw (0,-0.25) -- (6,-0.25);
\foreach \x in {0,...,6}{
\draw (\x,-0.25) -- (\x,0.25);}
\draw[line width=0.5mm,color=gray!80] 
(1.5,-0.25) -- (1.5,0.25);

\draw[line width=0.5mm,color=gray!80] 
(3.5,-0.25) -- (3.5,0.25);

\draw[line width=0.5mm,color=gray!80] 
(5.5,-0.25) -- (5.5,0.25);
\node at (6.5,0) {$\vec{v}$};
\node at (1.5,1) {$q_1 [\vec{v}]$};
\node at (3.5,1) {$\hdots$};
\node at (5.5,1) {$q_n [\vec{v}]$};
\end{scope}

\end{tikzpicture}\]
\caption{\label{figure.definition.relation} Illustration for the definition of the notation 
$\vec{u} \mathcal{R}[w] \vec{v}$.}
\end{figure}

\begin{notation}
For all $\vec{u} \in \{0,1\}_N^{*}$, 
we denote $|\vec{u}|_1$ the number of $k \in \llbracket 1, N\rrbracket$ 
such that $\vec{u}_k = 1$. If $|\vec{u}|_1 =n$, 
we denote $q_1 [\vec{u}]< ... < q_n [\vec{u}]$
the integers such that $\vec{u}_k = 1$ if 
and only if $k=q_i[\vec{u}]$ for some $i \in \llbracket 1,n\rrbracket$.
\end{notation}

Let us also notice that 
$\vec{u} \mathcal{R} \vec{v}$ 
implies that the number of $1$ symbols in 
$\vec{u}$ is equal to the number of $1$ symbols in $\vec{v}$.

\begin{definition}
We say that two words $\vec{u},\vec{v}$ in 
$\{0,1\}^*_N$ such that $|\vec{u}|_1=|\vec{v}|_1
\equiv n$
are \textbf{interlaced} when one of 
the two following conditions 
is satisfied: 
\[q_1 [\vec{u}] \le q_1 [\vec{v}] \le q_2 [\vec{u}] \le ... \le q_n [\vec{u}] \le q_n [\vec{v}]\] 
\[q_1 [\vec{v}] \le q_1 [\vec{u}] \le q_2 [\vec{v}] \le ... \le q_n [\vec{v}] \le q_n [\vec{u}].\]
\end{definition}

\begin{proposition} \label{proposition.interlacing}
For two length $N$ words $\vec{u},\vec{v}$, 
we have $\vec{u} \mathcal{R} \vec{v}$ if and only if $|\vec{u}|_1 = |\vec{v}|_1 \equiv n$ and $\vec{u},\vec{v}$ are interlaced. 
\end{proposition}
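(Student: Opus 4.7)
The plan is to interpret any $(N,1)$-cylindric pattern of $X^s$ as a system of non-crossing curves that drift rightward through a single row on the cylinder $\Z/N\Z\times\{1\}$, so that the combinatorial interlacing condition translates into the unique geometry of such a system. Inspection of the six symbols of $X^s$ shows that each curve fragment connects the bottom or left edge of its cell to the top or right edge; consequently, in a valid cylindric pattern $w$, every maximal curve enters the strip at a bottom position (where $\vec{u}_k=1$), propagates upward and rightward through horizontal and L-shaped cells (possibly wrapping around the cylinder), and exits at a top position (where $\vec{v}_k=1$). In particular, the assignment ``bottom entry $\mapsto$ top exit'' is a bijection, which already gives $|\vec{u}|_1=|\vec{v}|_1\equiv n$ in the forward direction and motivates the explicit construction in the converse.

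For the forward direction, the crucial point is that the matching $\sigma$ from bottom entries to top exits has winding at most one. Here I use that each of the $N$ vertical edges separating adjacent columns on the cylinder carries at most one horizontal curve traversal at its midpoint (a direct inspection of the symbols: the only cell with two curve segments is the ``both'' symbol, and its two segments exit through distinct edges). The seam edge between columns $N$ and $1$ therefore carries at most one crossing, and since every wrapping curve must traverse this seam exactly once, at most one curve can wrap. By the non-crossing classification of right-drifting arcs on the annulus, the matching $\sigma$ is then either the identity or the cyclic shift by one, corresponding to no wrap or one wrap respectively. In the first case one reads off $q_i[\vec{u}]\le q_i[\vec{v}]$ (right-drift of each curve) and $q_i[\vec{v}]\le q_{i+1}[\vec{u}]$ (non-crossing with the next curve), yielding interlacing condition~(a). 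In the second case the analogous chain $q_1[\vec{v}]\le q_1[\vec{u}]\le q_2[\vec{v}]\le\ldots\le q_n[\vec{v}]\le q_n[\vec{u}]$ arises from the same non-crossing/right-drift considerations applied to the unique wrapping curve $q_n[\vec{u}]\to q_1[\vec{v}]$, which is condition~(b).

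For the converse direction, I build $w$ cell by cell from the interlacing. Under condition~(a), I draw the curve $\gamma_i$ going from $q_i[\vec{u}]$ to $q_i[\vec{v}]$: a bottom-to-right L at $q_i[\vec{u}]$, horizontal cells in between, a left-to-top L at $q_i[\vec{v}]$ (or a single vertical cell if $q_i[\vec{u}]=q_i[\vec{v}]$), and blank cells elsewhere. The inequality $q_i[\vec{v}]\le q_{i+1}[\vec{u}]$ ensures that the horizontal supports of $\gamma_i$ and $\gamma_{i+1}$ are either disjoint or meet at a single column, in which case I place the ``both'' symbol. Condition~(b) is treated analogously: draw $\gamma_i$ from $q_i[\vec{u}]$ to $q_{i+1\bmod n}[\vec{v}]$, with the curve $\gamma_n$ wrapping once across the seam, and the dual interlacing inequalities again guarantee cell-by-cell compatibility, so the resulting $(N,1)$-cylindric pattern is admissible and realizes $\vec{u}\mathcal{R}\vec{v}$.

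The main obstacle is the winding bound in the forward direction. While the one-crossing-per-edge statement is a cell-by-cell check, converting it into the rigidity ``the matching is a cyclic shift by $0$ or $1$'' requires some care; I would verify it directly by induction on $n$ in this discrete setting rather than invoking the topological annulus classification as a black box. The remaining index bookkeeping in identifying condition~(b) with the one-wrap case is delicate but routine once the geometric picture above is in place.
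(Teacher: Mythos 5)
Your argument is correct, and it takes a genuinely different route from the paper's while relying on the same underlying local constraint. The paper proves the forward direction by a greedy, left-to-right induction: after the (harmless) normalization $q_1[\vec{u}]\le q_1[\vec{v}]$ it shows directly that the curve starting at $q_1[\vec{u}]$ must end at $q_1[\vec{v}]$ (else two horizontal fragments would have to occupy a single cell, which no symbol of $X^s$ permits), then propagates this column by column, deriving the interlacing chain as it goes. You instead package the same per-edge constraint into a global invariant: each of the $N$ vertical edges -- in particular the seam -- supports at most one horizontal traversal, so at most one curve wraps, hence the bottom-to-top matching has winding $0$ or $1$, and the two values correspond precisely to the two interlacing alternatives. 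This is conceptually cleaner in that it makes the dichotomy between the two interlacing conditions transparent from the outset (the paper's ``WLOG'' and the vestigial ``assume not interlaced'' leave this a bit implicit), but as you correctly flag, the rigidity step ``winding $\le 1$ plus non-crossing forces $\sigma\in\{\mathrm{id},\text{shift by }1\}$'' needs a short discrete induction rather than an appeal to annulus topology; once that is supplied, the bookkeeping is the same in both proofs. Your converse is actually more complete than the paper's: the paper only exhibits the construction for alternative (a) (connect $q_i[\vec{u}]$ to $q_i[\vec{v}]$), which requires $q_i[\vec{u}]\le q_i[\vec{v}]$ to avoid leftward drift, whereas you explicitly handle alternative (b) via the cyclic shift $q_i[\vec{u}]\mapsto q_{i+1\bmod n}[\vec{v}]$ with the single wrapping curve, which is what is needed when $\vec{u}\ne\vec{v}$ and (b) holds.
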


\begin{proof}
\begin{itemize}
\item $(\Rightarrow)$: assume that $\vec{u} \mathcal{R}[w] \vec{v}$ for some $w$.

First, since $w$ is a $(N,1)$-cylindric pattern, 
each of the curves that crosses its bottom side 
also crosses its top side, which implies 
that $|\vec{u}|_1 = |\vec{v}|_1$.

Let us assume that $\vec{u}$ and $\vec{v}$ 
are not interlaced. Without loss 
of generality, one can assume that 
$q_1 [\vec{u}] \le q_1 [\vec{v}]$.

\begin{enumerate}
\item \textbf{The position 
$q_1 [\vec{u}]$ is connected 
to $q_1 [\vec{v}]$:} 

Indeed, 
if it did not, another curve 
would connect another position $q_k [\vec{u}]$,
$k \neq 1$ 
of $\vec{u}$ to $q_1 [\vec{v}]$. 
Since $q_k [\vec{u}] > q_1 [\vec{u}]$ 
(by definition), this curve 
would cross the left border of 
$w$. It would imply that in the 
$q_1 [\vec{u}]$th symbol of $w$, two 
pieces of curves would appear: one horizontal, 
corresponding to the curve connecting the 
position $q_k [\vec{u}]$ to $q_1 [\vec{v}]$, 
and the one that connects
$q_1 [\vec{u}]$ to another position in $\vec{u}$, 
which is not possible, by the definition 
of the alphabet of $X^s$: this is illustrated on Figure~\ref{figure.crossing.curves}.

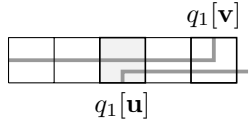
\begin{figure}
\[\begin{tikzpicture}[scale=0.6]
\fill[gray!10] (2,0) rectangle (3,1);
\draw[line width = 0.5mm,color=gray!80]
(0,0.5) -- (4.5,0.5) -- (4.5,1);
\draw[line width = 0.5mm,color=gray!80]
(2.5,0) -- (2.5,0.25) -- (5.25,0.25);
\draw (0,0) grid (5,1);
\draw[line width =0.2mm] (2,0) rectangle (3,1);
\draw[line width =0.2mm] (4,0) rectangle (5,1);
\node at (2.5,-0.5) {$q_1 [\vec{u}]$};
\node at (4.5,1.5) {$q_1 [\vec{v}]$};
\end{tikzpicture}\]
\caption{\label{figure.crossing.curves} Illustration of (impossible) 
crossing situation, which would imply 
non-authorized symbols.}
\end{figure}

\item \textbf{The position $q_2 [\vec{u}]$ 
is connected to $q_2 [\vec{v}]$:}

The curve crossing the position 
$q_1 [\vec{u}]$ at the bottom of $w$ can not
cross the position $q_2 [\vec{u}]+1$ of $w$ (since 
it would imply symbols that are not 
in the alphabet). Thus $q_1 [\vec{v}] \le q_2 [\vec{u}]$. Moreover, $q_2 [\vec{v}] \ge 
q_2 [\vec{u}]$, since if it was not the case, 
there would be a curve connecting some $q_k [\vec{u}] > q_2 [\vec{v}]$ to $q_2 [\vec{v}]$, 
thus crossing the left border of $w$, which 
would imply non-existant symbols in 
position $q_1 [\vec{u}]$ of $w$. 
Thus, for the same reason as in 
the first point 
argument, $w$ connects position $q_2 [\vec{u}]$ 
to $q_2 [\vec{v}]$.

\item \textbf{Repetition:}

We repeat the argument of the last point 
in order to obtain:
\[q_1 [\vec{u}] \le q_1 [\vec{v}] \le q_2 [\vec{u}] \le ... \le q_n [\vec{u}] \le q_n [\vec{v}],\]
meaning that $\vec{u}$ and $\vec{v}$ are 
interlaced.
\end{enumerate}

\item $(\Leftarrow)$: if $|\vec{u}|_1 
= \vec{v}_1$ and $\vec{u},\vec{v}$
are interlaced, then we define $w$ by connecting $q_i [\vec{u}]$ to $q_i 
 [\vec{v}]$ for all $i \in \llbracket 1 , n\rrbracket$. We thus have directly $\vec{u} \mathcal{R} [w] \vec{v}$.  
\end{itemize}
\end{proof}

\begin{proposition}
\label{proposition.unicity.connection}
When $\vec{u} \mathcal{R} \vec{v}$ and $\vec{u} \neq \vec{v}$, 
there exists a unique $w$ such that 
$\vec{u} \mathcal{R} [w] \vec{v}$. When $\vec{u} = \vec{v}$ there 
are exactly two possibilities, 
either the word $w$ that connects $q_i [\vec{u}]$ to itself for all $i$, or the one connecting $q_i [\vec{u}]$ to $q_{i+1} [\vec{u}]$ 
for all $i$. 
\end{proposition}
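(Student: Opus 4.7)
The plan is to parametrize each $(N,1)$-cylindric pattern $w$ connecting $\vec{u}$ to $\vec{v}$ by a cyclic sequence of binary variables $(h_k)_{k \in \mathbb{Z}/N\mathbb{Z}}$, where $h_k = 1$ iff the vertical edge shared between columns $k$ and $k+1$ is crossed by a horizontal piece of curve in $w$. Once the local alphabet constraints of $X^s$ are translated into constraints on $(h_k)_k$, counting patterns $w$ reduces to counting admissible cyclic sequences of this type.

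For the translation step, I would record each of the six symbols of $X^s$ by the quadruple (bottom, top, left, right) $\in \{0,1\}^4$ encoding which of its four edges are touched by a curve: $(0,0,0,0)$ for blank, $(1,1,0,0)$ for vertical, $(1,0,0,1)$ for bottom-to-right, $(0,1,1,0)$ for left-to-top, $(1,1,1,1)$ for the crossing, and $(0,0,1,1)$ for horizontal. A direct case check shows that the ten remaining quadruples correspond to no symbol of $X^s$. Since at column $k$ the bottom and top edges are prescribed by $\vec{u}_k$ and $\vec{v}_k$, and the left and right edges by $h_{k-1}$ and $h_k$, this list determines the symbol at column $k$ from the quadruple $(\vec{u}_k,\vec{v}_k,h_{k-1},h_k)$ and imposes the following rules on $(h_k)_k$: one has $h_{k-1}=h_k$ whenever $\vec{u}_k=\vec{v}_k$; $(h_{k-1},h_k)=(0,1)$ when $(\vec{u}_k,\vec{v}_k)=(1,0)$; and $(h_{k-1},h_k)=(1,0)$ when $(\vec{u}_k,\vec{v}_k)=(0,1)$.

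The two cases of the proposition then follow quickly. If $\vec{u}\neq \vec{v}$, pick any column $k_0$ with $\vec{u}_{k_0}\neq \vec{v}_{k_0}$: this column fixes both $h_{k_0-1}$ and $h_{k_0}$, and the rules above propagate these values around the cycle to produce a unique admissible $(h_k)_k$, hence a unique $w$; cyclic consistency is automatic because $|\vec{u}|_1=|\vec{v}|_1$ forces the same number of $(1,0)$ and $(0,1)$ transitions along the cycle, which the rules in turn force to alternate. If instead $\vec{u}=\vec{v}$, every column is of type $(0,0)$ or $(1,1)$, so $(h_k)_k$ must be constant and only the two sequences $h\equiv 0$ and $h\equiv 1$ remain. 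Inspecting the resulting $w$ in each case identifies them with the two patterns of the statement: $h\equiv 0$ puts a vertical symbol on every marked column and a blank elsewhere, yielding the identity matching $q_i[\vec{u}]\mapsto q_i[\vec{u}]$; whereas $h\equiv 1$ puts a crossing on every marked column and a horizontal symbol elsewhere, so that following any curve arriving at the top of column $q_i[\vec{u}]$ backwards through the left-to-top part of the crossing and the preceding horizontal symbols leads back to the bottom of the previous marked column $q_{i-1}[\vec{u}]$, yielding the cyclic shift matching $q_i[\vec{u}]\mapsto q_{i+1}[\vec{u}]$.

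The only non-routine step is the exhaustive inspection of the alphabet that produces the table of six allowed quadruples and rules out the other ten; once that is in place, the propagation argument and the identification of the two matchings in the $\vec{u}=\vec{v}$ case reduce to direct bookkeeping.
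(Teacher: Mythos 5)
Your proof is correct, and it takes a genuinely different and somewhat more systematic route than the paper. The paper's proof argues directly in terms of the curves themselves: it picks a position $q_i[\vec{u}]$ that is not equal to any $q_j[\vec{v}]$, observes that interlacing forces its image, and then propagates this choice along the curves, position by position. Your proof instead re-encodes a $(N,1)$-cylindric pattern $w$ as the cyclic binary sequence $(h_k)_k$ of horizontal-edge indicators, reads the six-letter alphabet of $X^s$ as a table of admissible quadruples $(\vec{u}_k,\vec{v}_k,h_{k-1},h_k)$, and then propagates $h$ around $\mathbb{Z}/N\mathbb{Z}$. This has the advantage of being fully local and mechanical: uniqueness becomes the statement that a column with $\vec{u}_k\neq\vec{v}_k$ pins both adjacent $h$-values, and every other column transports $h$ unchanged. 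In particular, your argument does not explicitly invoke the interlacing characterization of Proposition~\ref{proposition.interlacing} the way the paper does; the hypothesis $\vec{u}\mathcal{R}\vec{v}$ alone (i.e.\ existence of at least one $w$) is what you use. One small imprecision: your parenthetical that cyclic consistency ``is automatic because $|\vec{u}|_1=|\vec{v}|_1$ forces equal numbers of $(1,0)$ and $(0,1)$ transitions, which the rules force to alternate'' reads as if equal counts imply alternation, which is not true in general (it is the interlacing, not the equal counts, that yields alternation). But this does not affect the proof: consistency is guaranteed by the standing hypothesis $\vec{u}\mathcal{R}\vec{v}$, and uniqueness needs only the forward propagation. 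The identification of the two patterns in the $\vec{u}=\vec{v}$ case (constant $h\equiv 0$ giving the identity matching, constant $h\equiv 1$ giving the cyclic shift) is also correct and matches the statement.
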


\begin{proof}
Consider two words $\vec{u} \neq \vec{v}$ such that $\vec{u} \mathcal{R} \vec{v}$. There exists 
at least one $i$ such that $q_i [\vec{u}]$ is different from any $q_j [\vec{v}]$. 
This forces any $w$ such that $\vec{u} \mathcal{R}[w] \vec{v}$ to 
connect the position $q_i [\vec{u}]$ to: 
\begin{itemize}
\item $q_{j_0} [\vec{v}]$ minimal amongst the 
positions $q_j [\vec{v}] \ge q_i [\vec{u}]$,
if the set of $j$ such that this is verified 
is not empty;
\item else the position $q_1 [\vec{v}]$.
\end{itemize}
If $j < n$, then since $\vec{u}$ and $\vec{v}$ are interlaced (Proposition~\ref{proposition.interlacing}), $q_{i+1}[\vec{u}] \ge q_{j_0} [\vec{v}]$. 
Thus it has to be connected to $q_{j_0+1} [\vec{v}]$ if $j_0 < n$, 
else to $q_1 [\vec{v}]$. 
If $j=n$, then for the same reason 
$q_1 [\vec{u}]$ has to be connected to $q_{j_0+1}
[\vec{v}]$ if $j_0 < n$ , else to $q_1 [\vec{v}]$.
Repeating this argument, we get 
the unicity of $w$.
\end{proof}

\subsection{\label{section.example.lieb.path} 
The Lieb path $t \mapsto V_N (t)$}

\begin{notation}
Let $N \ge 1$ be an integer, and $t>0$. Let 
us denote $\Omega_N$ the space $\mathbb{C}^2 \bigotimes ... \bigotimes \mathbb{C}^2$, 
tensor product of $N$ copies 
of $\mathbb{C}^2$, whose canonical basis elements 
are denoted indifferently by $\boldsymbol{\epsilon}= \ket{\boldsymbol{\epsilon}_1 ... \boldsymbol{\epsilon}_N}$ or the words 
$\boldsymbol{\epsilon}_1 ... \boldsymbol{\epsilon}_N$,
for $(\boldsymbol{\epsilon}_1, ..., \boldsymbol{\epsilon}_N) \in \{0,1\}^N$, 
according to quantum mechanics 
notations, in order to distinguish them from 
the coordinate definition of 
vectors of $\Omega_N$. 
\end{notation}

\begin{notation}
For all $N$ and $(N,1)$-cylindric
pattern $w$, let $|w|$ denote the number 
of symbols 
\[\begin{tikzpicture}[scale=0.6]
\begin{scope}[yshift=-1.5cm,xshift=3cm]
\draw[line width = 0.5mm,color=gray!80]
(0.5,0) -- (0.5,0.5) -- (1,0.5);
\draw (0,0) rectangle (1,1);
\end{scope}
\end{tikzpicture}, \begin{tikzpicture}[scale=0.6]
\begin{scope}[yshift=-1.5cm,xshift=3cm]
\draw[line width = 0.5mm,color=gray!80]
(0,0.5) -- (0.5,0.5) -- (0.5,1);
\draw (0,0) rectangle (1,1);
\end{scope}
\end{tikzpicture}\]
in this pattern. For instance, for the word $w$ on Figure~\ref{figure.definition.relation}, $|w|=6$. 
\end{notation}

\begin{definition}
For all $t \ge 0$, 
let us define $V_N (t) \in \mathcal{M}_{2^N} (\mathbb{C})$ 
the matrix such that for all $\boldsymbol{\epsilon},\boldsymbol{\eta} \in \{0,1\}^{*}_N$, 

\[V_N (t) [\boldsymbol{\epsilon},\boldsymbol{\eta}] =\displaystyle{\sum_{\boldsymbol{\epsilon} \mathcal{R}[w] \boldsymbol{\eta}}} t^{|w|}\]
\end{definition}

For all $N$ and $n \le N$, let 
us denote $\Omega_N^{(n)} \subset \Omega_N$ 
the vector space generated by the 
$\boldsymbol{\epsilon}=\ket{\boldsymbol{\epsilon}_1 ... \boldsymbol{\epsilon}_N}$ such that $|\boldsymbol{\epsilon}|_1 = n$.

\begin{proposition}
For all $N$ and $n \le N$, the matrix 
$V_N (t)$ stabilizes
the vector subspaces $\Omega_N^{(n)}$:
\[V_N (t) . \Omega_N^{(n)} 
\subset \Omega_N^{(n)}.\]
\end{proposition}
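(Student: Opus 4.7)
The plan is to read off the claim directly from Proposition~\ref{proposition.interlacing}, which already contains the combinatorial content. Recall that $\Omega_N^{(n)}$ is spanned by the basis vectors $\ket{\boldsymbol{\eta}_1\ldots\boldsymbol{\eta}_N}$ with exactly $n$ ones; by linearity it suffices to show that $V_N(t)\ket{\boldsymbol{\eta}} \in \Omega_N^{(n)}$ for every such basis vector $\boldsymbol{\eta}$.

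Expanding in the canonical basis gives
\[
V_N(t)\ket{\boldsymbol{\eta}} = \sum_{\boldsymbol{\epsilon} \in \{0,1\}^{*}_N} V_N(t)[\boldsymbol{\epsilon},\boldsymbol{\eta}]\, \ket{\boldsymbol{\epsilon}} = \sum_{\boldsymbol{\epsilon}} \Bigl(\sum_{\boldsymbol{\epsilon}\mathcal{R}[w]\boldsymbol{\eta}} t^{|w|}\Bigr) \ket{\boldsymbol{\epsilon}},
\]
so the only basis vectors $\ket{\boldsymbol{\epsilon}}$ appearing with nonzero coefficient are those for which there exists at least one $(N,1)$-cylindric pattern $w$ with $\boldsymbol{\epsilon}\,\mathcal{R}[w]\,\boldsymbol{\eta}$, i.e.\ those with $\boldsymbol{\epsilon}\,\mathcal{R}\,\boldsymbol{\eta}$.

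The main (and only) step is then to invoke Proposition~\ref{proposition.interlacing}, which asserts that $\boldsymbol{\epsilon}\,\mathcal{R}\,\boldsymbol{\eta}$ forces $|\boldsymbol{\epsilon}|_1 = |\boldsymbol{\eta}|_1$. Since $|\boldsymbol{\eta}|_1 = n$ by assumption, every basis vector $\ket{\boldsymbol{\epsilon}}$ contributing to the expansion above satisfies $|\boldsymbol{\epsilon}|_1 = n$ and hence lies in $\Omega_N^{(n)}$. This proves $V_N(t)\ket{\boldsymbol{\eta}} \in \Omega_N^{(n)}$, and extending by linearity yields $V_N(t) \cdot \Omega_N^{(n)} \subset \Omega_N^{(n)}$.

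There is no real obstacle to anticipate; the statement is essentially a restatement of the conservation of the number of discrete curves through any horizontal slice of a configuration of $X^s$, which is precisely what the first half of Proposition~\ref{proposition.interlacing} establishes. The only minor care needed is to respect the matrix-action convention so that the sum is indexed by $\boldsymbol{\epsilon}$ rather than $\boldsymbol{\eta}$; once that is fixed, the argument is a single line.
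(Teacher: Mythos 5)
Your argument is correct and coincides with the paper's own one-line proof: both reduce the claim to the observation that $V_N(t)[\boldsymbol{\epsilon},\boldsymbol{\eta}]\neq 0$ forces $|\boldsymbol{\epsilon}|_1 = |\boldsymbol{\eta}|_1$ via Proposition~\ref{proposition.interlacing}. You merely spell out the basis expansion more explicitly, which is fine but adds nothing beyond the paper's argument.
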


\begin{proof}
This is a direct consequence of Proposition~\ref{proposition.interlacing}, since if
$V_N (t) [\boldsymbol{\epsilon},
\boldsymbol{\eta}] \neq 0$ 
for $\boldsymbol{\epsilon},\boldsymbol{\eta}$ 
two elements of the canonical basis of $\Omega_N$, 
then $|\boldsymbol{\epsilon}|_1 = 
|\boldsymbol{\eta}|_1$.
\end{proof}

Let us recall that a non-negative matrix $A$ is called 
\textbf{irreducible} when there exists some $k \ge 1$ 
such that all the coefficients of $A^k$ are positive. 
Let us also recall the Perron-Frobenius theorem 
for symmetric, non-negative and irreducible matrices.

\begin{theorem}[Perron-Frobenius]
\label{theorem.perron}
Let $A$ be a symmetric, non-negative and irreducible 
matrix. Then $A$ has a positive eigenvalue $\lambda$ 
such that any other eigenvalue $\mu$ of $A$ satisfies 
$|\mu| \le \lambda$. Moreover, there exists 
some eigenvector $u$ for the 
eigenvalue $\lambda$ with positive coordinates such that 
if $v$ is another eigenvector (not necessarily for 
$\lambda$) with 
positive coordinates, then $v= \alpha.u$ for
some $\alpha > 0$. 
\end{theorem}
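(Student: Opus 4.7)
The plan is to combine two independent features of $A$: symmetry, which provides a variational characterization of eigenvalues through the Rayleigh quotient, and irreducibility, which forces any non-negative eigenvector to be strictly positive.

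First I would set
\[\lambda = \sup_{\|x\|=1} \langle Ax,x\rangle,\]
which is attained on the compact unit sphere and, because $A$ is symmetric, is automatically the largest eigenvalue of $A$. To show $|\mu|\le \lambda$ for every other eigenvalue $\mu$, I would take an associated eigenvector $x$, form its coordinatewise modulus $|x|$, and use the non-negativity of the entries of $A$ to write
\[|\mu|\,\|x\|^2 \;=\; \bigl|\langle Ax,x\rangle\bigr| \;\le\; \langle A|x|,|x|\rangle \;\le\; \lambda\,\|x\|^2.\]
The same inequality applied to a maximizer shows that $|x|$ is itself a maximizer, so the supremum is realized at a vector $u$ with non-negative coordinates and eigenvalue $\lambda$.

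Next I would upgrade $u\ge 0$ to $u>0$ using irreducibility. By definition, there exists $k\ge 1$ such that every entry of $A^k$ is strictly positive; since $u$ is non-negative and non-zero, every coordinate of $A^k u = \lambda^k u$ is then strictly positive. This forces simultaneously $\lambda>0$ and $u>0$ entrywise. For the uniqueness statement, let $v$ be another eigenvector with positive coordinates and eigenvalue $\mu$: symmetry gives
\[\mu\,\langle u,v\rangle \;=\; \langle u,Av\rangle \;=\; \langle Au,v\rangle \;=\; \lambda\,\langle u,v\rangle,\]
and $\langle u,v\rangle>0$ because $u$ and $v$ are both strictly positive, so $\mu=\lambda$. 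Setting $c=\min_i v_i/u_i>0$, the vector $w=v-c\,u$ is a non-negative eigenvector for $\lambda$ that has at least one zero coordinate; the strict-positivity argument just applied to $u$ now forces $w=0$, hence $v=c\,u$, completing the uniqueness.

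I do not expect a genuine obstacle, the statement being classical. The only mildly delicate point is the inequality $|\langle Ax,x\rangle|\le \langle A|x|,|x|\rangle$, which for complex $x$ must be justified by applying the triangle inequality inside the double sum $\sum_{i,j}A_{ij}\overline{x_i}x_j$; it can be avoided altogether by noting that $A$ is real and symmetric, so one may restrict from the start to real eigenvectors when bounding $|\mu|$.
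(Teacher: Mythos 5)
Your argument is correct, but it is considerably more self-contained than what the paper actually writes. The paper states the theorem and then proves \emph{only} the uniqueness-up-to-scalar part, explicitly invoking ``the usual version of Perron--Frobenius'' both for the existence of the positive eigenvector $u$ and, more importantly, for the one-dimensionality of the $\lambda$-eigenspace (which is what lets it conclude $v=\alpha u$ once $\mu=\lambda$ is established). You instead derive everything from scratch: existence and extremality of $\lambda$ from the Rayleigh quotient, $\left|\mu\right|\le\lambda$ from the triangle inequality in the double sum, strict positivity of $u$ from $A^k u=\lambda^k u>0$, and the one-dimensionality from the $w=v-c\,u$ subtraction trick, where the strict-positivity argument forces $w=0$. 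The step you do share with the paper is the symmetry computation $\mu\langle u,v\rangle = \langle Au,v\rangle = \lambda\langle u,v\rangle$ to get $\mu=\lambda$; that part is essentially verbatim. Your approach has the advantage of not outsourcing the dimension count to an external theorem, at the cost of a longer argument; the paper's choice is reasonable since Perron--Frobenius is standard background.

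One small point to tighten: the sentence ``This forces simultaneously $\lambda>0$ and $u>0$ entrywise'' from $A^k u=\lambda^k u>0$ only directly gives $\lambda^k>0$, hence $\lambda\ne 0$, and if $k$ is even this alone does not rule out $\lambda<0$. You need to know $\lambda\ge 0$ first, which is immediate (e.g.\ evaluate the Rayleigh quotient at the uniform vector $\mathbf{1}/\sqrt{N}$ and use $A\ge 0$, or note that $\lambda\ge\left|\mu\right|\ge 0$ for any real eigenvalue $\mu$). With that observation inserted, the proof is complete.
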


Let us prove the unicity of the positive 
eigenvector up to a multiplicative constant:

\begin{proof}
Let us denote $u \in \Omega_N$ the Perron-Frobenius 
eigenvector and $v \in \Omega_N$ another vector 
whose coordinates are all positive, associated 
to the eigenvalue $\mu$. Then 
\[\mu u^t.v = (Au)^t . v = u^{t} A v = \lambda u^t.v\]
Thus, since $u^t.v>0$, then $\mu = \lambda$, 
and by (usual version of) Perron-Frobenius, there exists some $\alpha \in \mathbb{R}$ 
such that $v= \alpha.u$. Since $v$ has positive 
coordinates, $\alpha>0$.
\end{proof}

\begin{lemma} \label{lemma.properties.matrix.transfer}
The matrix $V_N (t)$ is symmetric, non-negative and for all $n \le N$, its restriction 
to $\Omega_N^{(n)}$ is irreducible whenever $t >0$.
\end{lemma}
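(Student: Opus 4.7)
The plan is to reduce all three properties to a case analysis of the six $X^s$-symbols according to the boundary pair $(\vec{u}_k, \vec{v}_k)$ at each column $k$. Inspecting the definitions of the six symbols, a cell has: a $(0,0)$-profile iff it is the blank symbol or the horizontal symbol; a $(1,0)$-profile iff it is the ``bottom-to-right'' corner; a $(0,1)$-profile iff it is the ``left-to-top'' corner; and a $(1,1)$-profile iff it is either the vertical symbol or the ``crossing'' symbol carrying two offset curves. Since only the two single-corner symbols are counted in $|w|$, every cylindric $w$ realizing $\vec{u}\mathcal{R}[w]\vec{v}$ satisfies
\[
|w| \;=\; \#\{k : \vec{u}_k = 1,\, \vec{v}_k = 0\} \;+\; \#\{k : \vec{u}_k = 0,\, \vec{v}_k = 1\},
\]
an expression that depends only on the pair $(\vec{u},\vec{v})$ and is manifestly symmetric under their exchange. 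Combined with the fact that the number of admissible $w$'s (either $1$ or $2$, by Proposition~\ref{proposition.unicity.connection}) is also symmetric in the pair, this yields $V_N(t)[\vec{u},\vec{v}] = V_N(t)[\vec{v},\vec{u}]$. Non-negativity is then immediate, since each entry is a sum of monomials $t^{|w|}$ with non-negative exponents.

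For the irreducibility of the restriction to $\Omega_N^{(n)}$ when $t>0$, I would first establish a ``local move'' lemma: whenever $\vec{u}\in\{0,1\}^N$ has $\vec{u}_k=1$ and $\vec{u}_{k+1}=0$, the word $\vec{u}'$ obtained by swapping these two entries satisfies $\vec{u}\mathcal{R}\vec{u}'$. This reduces to a direct check of the first interlacing clause of Proposition~\ref{proposition.interlacing}: the sequences $q_\bullet[\vec{u}]$ and $q_\bullet[\vec{u}']$ agree except at one index $i$, where $q_i[\vec{u}]=k$ is replaced by $q_i[\vec{u}']=k+1$, and the constraint $\vec{u}_{k+1}=0$ forces $q_{i+1}[\vec{u}]\ge k+2$, giving $q_i[\vec{u}]\le q_i[\vec{u}']\le q_{i+1}[\vec{u}]$. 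Positivity of $t$ then ensures the entry $V_N(t)[\vec{u},\vec{u}']$ is strictly positive.

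Iterating these local moves produces a path of $\mathcal{R}$-steps between any two words of $\Omega_N^{(n)}$: given a target $\vec{v}$, one transports the $1$'s of $\vec{u}$ to the positions of the $1$'s of $\vec{v}$ one at a time by such adjacent swaps, possibly shuffling intermediate $1$'s out of the way. For each pair this furnishes some integer $k$ with $V_N(t)^k[\vec{u},\vec{v}]>0$, establishing irreducibility. The delicate point I foresee is the case analysis underpinning the formula for $|w|$: one must carefully verify that the ``crossing'' symbol, which visually contains both corner pieces, is genuinely not among the two symbols counted in $|w|$, and that at each boundary profile the listed symbols are indeed the only ones permitted by the $X^s$ alphabet. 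Once that classification is secured, both symmetry and the formula for $|w|$ fall out, and irreducibility becomes a straightforward reachability argument.
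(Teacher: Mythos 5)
Your argument is correct and largely sound. On symmetry, your per-column classification by boundary profile $(\vec{u}_k,\vec{v}_k)$ and the resulting identity
\[
|w| \;=\; \#\{k:(\vec{u}_k,\vec{v}_k)=(1,0)\}\;+\;\#\{k:(\vec{u}_k,\vec{v}_k)=(0,1)\}
\]
is equivalent to the paper's expression $|w|=2\bigl(n-|\{k:\vec{u}_k=\vec{v}_k=1\}|\bigr)$ (both count non-$(1,1)$ and non-$(0,0)$ columns), so in substance the symmetry argument coincides with the paper's; yours makes the independence of $|w|$ from the choice of $w$ (relevant when $\vec{u}=\vec{v}$) visibly automatic. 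Your flagged ``delicate point'' is indeed the right thing to check, and it does check out: the crossing symbol sits in the $(1,1)$ class together with the vertical one and is correctly excluded from $|w|$.

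On irreducibility, you take a genuinely different route: reachability via adjacent $1\text{-}0$ swaps (a bubble-sort argument), versus the paper's explicit potential $\omega(\boldsymbol{\epsilon},\boldsymbol{\eta})$ that strictly decreases along a constructed $\mathcal{R}$-chain. Your local-move lemma and its verification via the first interlacing clause are correct, and since $\mathcal{R}$ is symmetric the reverse swaps are available too, so the reachability claim is fine in principle. There is, however, a small but real gap relative to the paper's stated definition of irreducibility, which asks for a \emph{single} $k$ with all entries of $V_N(t)^k$ positive. Your argument produces a pair-dependent $k$; to promote this you must observe that $V_N(t)[\boldsymbol{\epsilon},\boldsymbol{\epsilon}]>0$ for every $\boldsymbol{\epsilon}$ (the identity word connects $\boldsymbol{\epsilon}$ to itself), which lets you pad shorter paths with self-loops up to a uniform length; the paper builds this in by bounding the chain length by $N$ and padding, concluding $V_N(t)^N[\boldsymbol{\epsilon},\boldsymbol{\eta}]>0$ throughout $\Omega_N^{(n)}$. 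You should also be slightly more explicit about the bubble-sort step: the cleanest phrasing is to route every arrangement through a canonical word (say $1^n0^{N-n}$) rather than transporting $1$'s directly to the target while ``shuffling out of the way,'' which needs a bit of care to make rigorous. With the self-loop padding and this minor tidying, your proof is a valid alternative to the paper's.
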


\begin{proof}
\begin{itemize}
\item \textbf{Symmetry:} since the 
interlacing relation is symmetric, for all 
$\boldsymbol{\epsilon},\boldsymbol{\eta} \in \{0,1\}^{*}_N$, we have that 
$V_N (t) (\boldsymbol{\epsilon},\boldsymbol{\eta}) >0$ if and only if $V_N (t) [\boldsymbol{\eta},\boldsymbol{\epsilon}]>0$. When 
this is the case, and $\boldsymbol{\epsilon} \neq \boldsymbol{\eta}$ (the case $\boldsymbol{\epsilon}=\boldsymbol{\eta}$ 
is trivial), there exists a unique (Proposition~\ref{proposition.unicity.connection}) $w$ connecting 
$\boldsymbol{\epsilon}$ to $\boldsymbol{\eta}$. The coefficient of this word 
is exactly $t^{2(n-|\{k: \boldsymbol{\epsilon}_k = \boldsymbol{\eta}_k = 1\}|)}$, 
where $n= |\{k:\boldsymbol{\epsilon}_k=1\}|=|\{k:\boldsymbol{\eta}_k=1\}|$,
and this coefficient is indifferent to 
the exchange of $\boldsymbol{\epsilon}$ and $\boldsymbol{\eta}$.  

\item \textbf{Irreducibility:} Let $\boldsymbol{\epsilon}$, 
$\boldsymbol{\eta}$ 
be two elements of the canonical 
basis of $\Omega_N$ such that $|\boldsymbol{\epsilon}|_1 
= |\boldsymbol{\eta}|_1 = n$.
We shall prove that $V_N ^N (t) 
[\boldsymbol{\epsilon},\boldsymbol{\eta}] >0$. 
\begin{enumerate}
\item \textbf{Interlacing case:}

 If they are interlacing, 
$V_N (t) [\boldsymbol{\epsilon},\boldsymbol{\eta}] >0$. As a consequence, since $V_N (t) [\boldsymbol{\epsilon}',\boldsymbol{\epsilon}']>0$ for all $\boldsymbol{\epsilon}'$, one keeps the positivity by 
repeating the action of $V_N (t)$. Thus 
$V_N (t)^N [\boldsymbol{\epsilon},\boldsymbol{\eta}]>0$. 
\item \textbf{Non-interlacing case:} 

\begin{itemize}
\item \textbf{Decreasing the interlacing 
degree:}

If they are not interlaced, let us 
denote $\omega(\boldsymbol{\epsilon},\boldsymbol{\eta})$ the maximal number 
of $q_j [\boldsymbol{\eta}]$ that lie 
in some $\llbracket q_i [\boldsymbol{\epsilon}],q_{i+1}[\boldsymbol{\epsilon}]\llbracket$.
This number is greater or equal to $2$. 
Let us see that there exists some $\boldsymbol{\epsilon}'$ 
such that $\boldsymbol{\epsilon} \mathcal{R} \boldsymbol{\eta}'$ and 
$\omega(\boldsymbol{\epsilon}',\boldsymbol{\eta}) < \omega(\boldsymbol{\epsilon},\boldsymbol{\eta})$.
Let us consider some $i$ such that 
$\llbracket q_i [\boldsymbol{\epsilon}],q_{i+1}[\boldsymbol{\epsilon}]\llbracket
\cap \{q_j[\boldsymbol{\eta}] : j \in \llbracket 1 , n \rrbracket \}$ is empty and 
$\llbracket q_{i+1} [\boldsymbol{\epsilon}],q_{i+2}[\boldsymbol{\epsilon}]\llbracket
\cap \{ q_j[\boldsymbol{\eta}] : j \in \llbracket 1 , n \rrbracket \}$ has more than one element (this case happens because $\boldsymbol{\epsilon},\boldsymbol{\eta}$ are not 
interlaced and $|\boldsymbol{\epsilon}|_1 = |\boldsymbol{\eta}|_1$), 
and consider the word $w$ that connects
the curve crossing position $q_{i+1} [\boldsymbol{\epsilon}]$ to the maximal 
$q_j [\boldsymbol{\eta}] \in \llbracket q_{i+1} [\boldsymbol{\epsilon}],q_{i+2} [\boldsymbol{\epsilon}]\llbracket$ and 
fixes the other positions. 
Let us call $\boldsymbol{\epsilon}'$ the vector such that 
$w$ connects $\boldsymbol{\epsilon}$ to $\boldsymbol{\epsilon}'$. 
We have indeed that $\omega(\boldsymbol{\epsilon}',\boldsymbol{\eta}) 
< \omega(\boldsymbol{\epsilon},\boldsymbol{\eta})$.
\item \textbf{A sequence with decreasing interlacing degree:}
As a consequence, since $\omega(\boldsymbol{\epsilon},\boldsymbol{\eta}) \le N$, one can construct a finite sequence 
of words $\boldsymbol{\epsilon}^{(k)}$, $k=1...m$ such that $m \le N$,  
$\boldsymbol{\epsilon}^{(1)} = \boldsymbol{\epsilon}$, $\boldsymbol{\epsilon}^{(m)}$ 
and $\boldsymbol{\eta}$ are interlaced, and for all $k <m$, 
$\boldsymbol{\epsilon}^{(k)} \mathcal{R} \boldsymbol{\epsilon}^{(k+1)}$.
This means that for all $k<m$, 
$V_N[\boldsymbol{\epsilon}^{(k)}, \boldsymbol{\epsilon}^{(k+1)}]>0$ 
and $V_N[\boldsymbol{\epsilon}^{(m)},\boldsymbol{\eta}]>0$.
As a consequence, $V_N (t)^N [\boldsymbol{\epsilon},\boldsymbol{\eta}] >0$.
\end{itemize}
\end{enumerate}
\end{itemize}
Since for all $\boldsymbol{\epsilon},\boldsymbol{\eta}$ with same number of curves, $V_N (t)^N [\boldsymbol{\epsilon},\boldsymbol{\eta}]>0$, 
this means that $V_N (t)$ is irreducible on 
$\Omega_N^{(n)}$ for all $n \le N$.
\end{proof}

\subsection{\label{section.weighted.entropy} Relation between $h(X^s)$ and the matrices $V_N (1)$}

\begin{notation}
For all $N$ and $n \le N$, 
let us denote $\overline{X}^s_{n,N}$ 
the subset (which 
is also a subshift) of $\overline{X}^s_{N}$ 
which consists in the set of configurations 
of  $\overline{X}^s_{N}$ such that 
the number of curves that cross each of 
its rows is $n$, and $\overline{X}_{n,N}$
the subset of $\overline{X}_{N}$ 
such that the number of arrows 
pointing south in the south part of 
the symbols in any raw is $n$. 
\end{notation}

\begin{notation}
Let us denote, for all $N$ and $n \le N$, 
$\lambda_{n,N} (t)$ the greatest 
eigenvalue of $V_N (t)$ on $\Omega_{N}^{(n)}$.
\end{notation}

\begin{proposition}
For all $N$ and $n \le N$: $h(\overline{X}^s_{n,N}) = \log_2 (\lambda_{n,N} (1))$.
\end{proposition}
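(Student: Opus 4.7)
The strategy is the standard transfer matrix computation: encode admissible length-$M$ stripes of $\overline{X}^s_{n,N}$ as sequences of row-to-row transitions, identify the counting as a bilinear form in $V_N(1)^M$, and extract the exponential growth rate via Perron--Frobenius.

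First I would identify $\mathcal{N}_M(\overline{X}^s_{n,N})$ with a matrix power sandwich. A pattern on $\mathbb{U}^{(2)}_{M,N}$ of $\overline{X}^s_N$ is a vertical stack of $M$ horizontal rows, each itself a $(N,1)$-cylindric pattern of $X^s$, with the constraint that the top interface of row $i$ agrees with the bottom interface of row $i+1$. Encoding each interface as a word $\boldsymbol{\epsilon}^{(i)} \in \{0,1\}^N$ (where $1$ marks the positions crossed by a curve), the definition of $V_N(1)[\boldsymbol{\epsilon},\boldsymbol{\eta}]$ at $t=1$ is exactly the number of $(N,1)$-cylindric rows realizing the transition $\boldsymbol{\epsilon} \mathcal{R} \boldsymbol{\eta}$. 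Since the interlacing relation (Proposition~\ref{proposition.interlacing}) preserves the count $|\cdot|_1$, the constraint that the configuration lies in $\overline{X}^s_{n,N}$ amounts to requiring each $\boldsymbol{\epsilon}^{(i)} \in \Omega_N^{(n)}$. Hence
\[\mathcal{N}_M(\overline{X}^s_{n,N}) \;=\; \sum_{\boldsymbol{\epsilon}^{(0)},\ldots,\boldsymbol{\epsilon}^{(M)} \in \Omega_N^{(n)}} \prod_{i=1}^{M} V_N(1)\bigl[\boldsymbol{\epsilon}^{(i-1)},\boldsymbol{\epsilon}^{(i)}\bigr] \;=\; \langle \mathbf{1}_n , V_N(1)^M \, \mathbf{1}_n \rangle,\]
where $\mathbf{1}_n := \sum_{\boldsymbol{\epsilon} \in \Omega_N^{(n)}} \ket{\boldsymbol{\epsilon}}$.

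Next I would apply Perron--Frobenius to the restriction of $V_N(1)$ to $\Omega_N^{(n)}$, which by Lemma~\ref{lemma.properties.matrix.transfer} is symmetric, non-negative, and irreducible. Theorem~\ref{theorem.perron} provides a dominant eigenvalue $\lambda_{n,N}(1) > 0$ with a strictly positive eigenvector $u$. Since the matrix is symmetric, the spectral theorem gives an orthonormal eigenbasis $\{u,v_1,v_2,\ldots\}$ with real eigenvalues $\lambda_{n,N}(1), \mu_1, \mu_2,\ldots$, satisfying $|\mu_k| \le \lambda_{n,N}(1)$. Expanding $\mathbf{1}_n = \alpha u + \sum_k \beta_k v_k$ and substituting into the formula of the previous paragraph yields
\[\mathcal{N}_M(\overline{X}^s_{n,N}) \;=\; \alpha^{2}\,\lambda_{n,N}(1)^M \;+\; \sum_k \beta_k^{\,2}\,\mu_k^M.\]
The coefficient $\alpha = \langle \mathbf{1}_n , u \rangle$ is strictly positive because both $\mathbf{1}_n$ and $u$ have all entries strictly positive. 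Taking logarithms and dividing by $M$ then gives the desired limit $\log_2\lambda_{n,N}(1)$.

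The main obstacle is the delicate handling of the lower bound on $\mathcal{N}_M$ in case the matrix $V_N(1)$ has a negative eigenvalue of modulus equal to $\lambda_{n,N}(1)$ (which could cancel the dominant term periodically). I would resolve this by observing that $V_N(1)|_{\Omega_N^{(n)}}$ is in fact primitive: by Proposition~\ref{proposition.unicity.connection}, every diagonal entry $V_N(1)[\boldsymbol{\epsilon},\boldsymbol{\epsilon}]$ equals $2$ (for $n \ge 1$), so each vertex carries a self-loop in the underlying graph and irreducibility upgrades to aperiodicity; consequently $|\mu_k| < \lambda_{n,N}(1)$ strictly for all $k$. Alternatively, one can avoid this point by working along the subsequence of even $M$, using the identity $\mathcal{N}_{2M} = \|V_N(1)^M\mathbf{1}_n\|^2 \ge \alpha^2\|u\|^2\,\lambda_{n,N}(1)^{2M}$ (orthogonality of the spectral decomposition), and combining it with the trivial spectral upper bound $\mathcal{N}_M \le \dim\Omega_N^{(n)}\cdot\|\mathbf{1}_n\|^2\,\lambda_{n,N}(1)^M$ to sandwich $\log_2\mathcal{N}_M / M$ around $\log_2\lambda_{n,N}(1)$; the existence of the limit (Proposition~\ref{proposition.stripes} applied to the 1D subshift) then lets one conclude.
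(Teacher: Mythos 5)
Your proposal is correct and follows essentially the same route as the paper: establish the transfer-matrix identity $\mathcal{N}_M(\overline{X}^s_{n,N}) = \langle \mathbf{1}_n , V_N(1)^M \mathbf{1}_n \rangle$ (the paper phrases it as the entrywise $\ell^1$-norm of $V_N(1)^M$ restricted to $\Omega_N^{(n)}$), then extract the growth rate $\lambda_{n,N}(1)$. The only difference is in the last step: the paper invokes Gelfand's formula to get the spectral radius directly (which equals $\lambda_{n,N}(1)$ by Perron--Frobenius for symmetric non-negative irreducible matrices), whereas you spell out the spectral decomposition and check primitivity via the positive diagonal entries given by Proposition~\ref{proposition.unicity.connection}; your version is more self-contained but reaches the same conclusion.
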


\begin{proof}
\begin{itemize}

\item \textbf{Correspondance 
between $\overline{X}^s_{n,N}$ patterns 
and trajectories under action of $V_N(1)$:}

Since for all $N$, $n \le N$ 
and $\boldsymbol{\epsilon},\boldsymbol{\eta}$
in the canonical basis of $\Omega_N^{(n)}$, 
$V_{N} (1) [\boldsymbol{\epsilon},\boldsymbol{\eta}]$ 
is the number of ways to connect $\boldsymbol{\epsilon}$ to $\boldsymbol{\eta}$
by a $(N,1)$-cylindric pattern, 
and that there is a natural invertible map from 
the set of $(M,N)$-cylindric patterns to 
the sequences $(w_i)_{i=1...M}$ of $(N,1)$-cylindric patterns such that there exists some 
$\left(\boldsymbol{\epsilon}_i\right)_{i=1...M+1}$ 
such that for all $i$, $|\boldsymbol{\epsilon}_i|=n$ and for all $i \le M$, $\boldsymbol{\epsilon}_i \mathcal{R}[w_i]\boldsymbol{\epsilon}_{i+1}$,
\[||(V_{N}(1)_{\Omega_N^{(n)}}^M)||_1=\mathcal{N}_M (\overline{X}^s_{n,N}).\] 

\item \textbf{Gelfand's formula:}

It is known (Gelfand's formula) that: 
$||(V_{N} (1)_{\Omega_N^{(n)}}^M)||_1^{1/M} \rightarrow 
\lambda_{n,N} (1).$

As a consequence of the first point:
$h (\overline{X}^s_{n,N}) = \log_2 (\lambda_{n,N} (1)) .$

\end{itemize}

\end{proof}

\begin{proposition}
For all $N$: $h (X^{s}) = \lim_N \frac{1}{N} \max_{n \le N} h(\overline{X}^s_{n,N})$.
\end{proposition}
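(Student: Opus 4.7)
The plan is to combine Lemma~\ref{lemma.toroidal.ice} with a decomposition of $\overline{X}^s_N$ according to the (row-invariant) number of curves crossing each row.

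First, I would observe that in any configuration of $\overline{X}^s_N$, the number of upward curve segments crossing a horizontal line is preserved from one row to the next: this follows from the local rules of the curves shift $X^s$ together with the cylindric wrapping (every curve entering from below must exit above, modulo the horizontal torus direction). Consequently, $\overline{X}^s_N$ decomposes as the disjoint union $\overline{X}^s_N = \bigsqcup_{n=0}^{N} \overline{X}^s_{n,N}$, and for every $M$,
\[
\max_{0 \le n \le N} \mathcal{N}_M(\overline{X}^s_{n,N}) \;\le\; \mathcal{N}_M(\overline{X}^s_N) \;\le\; (N+1)\,\max_{0 \le n \le N} \mathcal{N}_M(\overline{X}^s_{n,N}).
\]

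Next, taking $\log_2$, dividing by $M$ and letting $M \to +\infty$, the factor $\log_2(N+1)/M$ vanishes, so
\[
h(\overline{X}^s_N) \;=\; \max_{0 \le n \le N} h(\overline{X}^s_{n,N}).
\]
Dividing by $N$ and passing to the limit, Lemma~\ref{lemma.toroidal.ice} then yields
\[
h(X^s) \;=\; \lim_N \frac{h(\overline{X}^s_N)}{N} \;=\; \lim_N \frac{1}{N} \max_{0 \le n \le N} h(\overline{X}^s_{n,N}),
\]
which is the claimed identity.

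There is essentially no obstacle: the only point that requires a small verification is the conservation of the number of curves between adjacent rows, which is immediate from the fact that each symbol of $X^s$ has as many curve endpoints on its bottom edge as on its top edge (the six allowed vertex types, read through $\pi_s$, all satisfy this balance). Everything else is a routine sandwich estimate plus the previously established lemma.
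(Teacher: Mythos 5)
Your proof is correct and follows the same route as the paper: decompose $\overline{X}^s_N$ as the disjoint union of the subshifts $\overline{X}^s_{n,N}$, observe that the entropy of a finite disjoint union is the maximum of the entropies, and then invoke Lemma~\ref{lemma.toroidal.ice}. The paper states the identity $h(\overline{X}^s_N) = \max_n h(\overline{X}^s_{n,N})$ without detail, whereas you spell out the sandwich estimate and the curve-count conservation, but the underlying argument is identical.
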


\begin{proof}

We have the decomposition
\[\overline{X}^s_{N} = \displaystyle{\bigcup_{n=0}^N} \overline{X}^s_{n,N}.\]
Moreover, these subshifts 
are disjoint. As a consequence: 
\[h(\overline{X}^s_{N}) = 
\max_{n \le N} h(\overline{X}^s_{n,N}).\]
From this we deduce the statement.
\end{proof}

As a consequence of Lemma~\ref{lemma.toroidal.ice},  

\begin{lemma}
\label{lemma.symmetry.eigenvalues}
For all $N \ge 1$ and $n \le N$, $h(\overline{X}^s_{n,N}) = h(\overline{X}^s_{N-n,N})$.
\end{lemma}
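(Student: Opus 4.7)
The plan is to exhibit an explicit linear involution $U$ of $\Omega_N$ that conjugates the restriction of $V_N(1)$ on $\Omega_N^{(n)}$ to its restriction on $\Omega_N^{(N-n)}$; since $h(\overline{X}^s_{n,N}) = \log_2 \lambda_{n,N}(1)$ and likewise for $N-n$, the equality of Perron--Frobenius eigenvalues so obtained will give the claim. Concretely, I would define $U$ on the canonical basis by
\[U \ket{\boldsymbol{\epsilon}_1 \ldots \boldsymbol{\epsilon}_N} \;=\; \ket{(1-\boldsymbol{\epsilon}_1) \ldots (1-\boldsymbol{\epsilon}_N)},\]
write $\bar{\boldsymbol{\epsilon}}$ for this bit-wise complement, and note that $U^2 = \mathrm{Id}$ and $U(\Omega_N^{(n)}) = \Omega_N^{(N-n)}$.

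The core step is then the entry-wise identity $V_N(1)[\boldsymbol{\epsilon}, \boldsymbol{\eta}] = V_N(1)[\bar{\boldsymbol{\epsilon}}, \bar{\boldsymbol{\eta}}]$ for every pair $\boldsymbol{\epsilon}, \boldsymbol{\eta}$. At $t = 1$ the weight $t^{|w|}$ collapses to $1$, so Propositions~\ref{proposition.interlacing} and~\ref{proposition.unicity.connection} imply each entry equals $0$, $1$, or $2$, depending only on whether the two words have the same number of $1$'s, whether they are interlaced, and whether they coincide. The first and third conditions are manifestly symmetric under joint complementation, so everything reduces to verifying that interlacing is preserved. For this I would introduce the signed partial sum
\[f(k) \;=\; |\{i \le k : \boldsymbol{\epsilon}_i = 1\}| - |\{i \le k : \boldsymbol{\eta}_i = 1\}|\]
and observe that the two forms of the interlacing property are exactly $f(\{1,\ldots,N\}) \subset \{0,1\}$ and $f(\{1,\ldots,N\}) \subset \{-1,0\}$ respectively; since $(\boldsymbol{\epsilon}, \boldsymbol{\eta}) \mapsto (\bar{\boldsymbol{\epsilon}}, \bar{\boldsymbol{\eta}})$ replaces $f$ by $-f$, the two cases are merely swapped and interlacing survives.

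Putting this together yields $U V_N(1) U = V_N(1)$, so the restriction of $U$ to $\Omega_N^{(n)}$ is an isomorphism $\Omega_N^{(n)} \to \Omega_N^{(N-n)}$ intertwining the corresponding restrictions of $V_N(1)$; these restrictions are therefore similar matrices, have identical spectra, and in particular $\lambda_{n,N}(1) = \lambda_{N-n,N}(1)$. The lemma follows. There is no real obstacle beyond identifying the right symmetry: once one sees that bit-wise complementation is the relevant involution, the entire argument collapses to the one-line remark that it negates $f$. A more conceptual route, closer perhaps to what is alluded to by invoking Lemma~\ref{lemma.toroidal.ice}, would be to work directly on the six-vertex side: simultaneous reversal of every arrow is an automorphism of the SFT (the ice rule is self-dual), and after tracking through $\pi_s$ one checks that it swaps rows with $n$ and $N-n$ curves, so that topological invariance of entropy closes the argument.
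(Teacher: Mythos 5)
Your proposal is correct, and it takes a genuinely different route from the paper. The paper's proof is precisely the "more conceptual" alternative you sketch at the very end: it considers the arrow-reversal map $\overline{\mathcal{T}}_{n,N}$ on the six-vertex representation, observes that it is a subshift isomorphism $\overline{X}_{n,N} \to \overline{X}_{N-n,N}$, transports it through $\pi_s$, and invokes the topological invariance of entropy. Your primary argument instead works entirely at the level of the transfer matrix $V_N(1)$, exhibiting the bit-wise complementation involution $U$ and verifying the entrywise identity $V_N(1)[\boldsymbol{\epsilon},\boldsymbol{\eta}] = V_N(1)[\bar{\boldsymbol{\epsilon}},\bar{\boldsymbol{\eta}}]$; the two arguments are really two faces of the same coin, since on the boundary encoding $\{0,1\}^N$ arrow reversal \emph{is} bit complementation, so $U$ is the transfer-matrix shadow of $\overline{\mathcal{T}}_{n,N}$. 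Both are sound. The subshift-isomorphism route is shorter and sidesteps the interlacing combinatorics entirely; your matrix route requires the observation that $V_N(1)[\boldsymbol{\epsilon},\boldsymbol{\eta}] \in \{0,1,2\}$ depends only on interlacing and equality, and that complementation negates your partial-sum invariant $f$, which is the nice new ingredient. One small caveat: you lean on Proposition~\ref{proposition.unicity.connection} for the diagonal value $2$, but as stated that proposition degenerates when $n \in \{0,1\}$ (for $n=0$ its two listed words coincide as the blank pattern, yet the all-horizontal wrap-around gives a second admissible cylindric pattern; for $n=1$ the two descriptions also superficially collapse). In fact $V_N(1)[\boldsymbol{\epsilon},\boldsymbol{\epsilon}]=2$ does hold for every $n$ from $0$ to $N$, so your conclusion is unaffected, but it would be worth a one-line remark that the identity connection and the cyclic shift (or, at $n=0$, the blank and the wrap-around horizontal loop) are always two distinct cylindric patterns.
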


\begin{proof}

For the 
purpose of notation, we also denote
$\pi_s$ the application from $\overline{X}_{n,N}$ to $\overline{X}^s_{n,N}$ that consists in 
an application of $\pi_s$ letter by letter. 
This map is invertible.
Let us consider the application $\overline{\mathcal{T}}_{n,N}$ from 
$\overline{X}_{n,N}$ to $\overline{X}_{N-n,N}$ 
that inverts all the arrows. This map 
is an isomorphism, and thus 
the map $\pi_s \circ\overline{X}_{n,N} \circ \pi_s^{-1}$ is also an isomorphism 
from $\overline{X}^s_{n,N}$ 
to $\overline{X}^s_{N-n,N}$.
As a consequence, the two subshifts 
have the same entropy: 
\[h(\overline{X}^s_{n,N}) = h(\overline{X}^s_{N-n,N}).\]
\end{proof}

The following corollary is a straightforward 
consequence of Lemma~\ref{lemma.symmetry.eigenvalues}.

\begin{corollary}
The entropy of $X^s$ is given by 
the following formula: 
\[h(X^s) = \lim_N \frac{1}{N}\displaystyle{\max_{n \le N/2+1}} \log_2 (\lambda_{n,N} (1)).\]
\end{corollary}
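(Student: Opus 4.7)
The plan is to combine three facts already proven in the excerpt: the formula $h(X^s) = \lim_N \frac{1}{N} \max_{n \le N} h(\overline{X}^s_{n,N})$ from the preceding proposition, the identification $h(\overline{X}^s_{n,N}) = \log_2(\lambda_{n,N}(1))$ via the transfer-matrix/Gelfand argument, and the symmetry $h(\overline{X}^s_{n,N}) = h(\overline{X}^s_{N-n,N})$ from Lemma~\ref{lemma.symmetry.eigenvalues}.

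First I would substitute the eigenvalue formula into the proposition to obtain
\[h(X^s) = \lim_N \frac{1}{N} \max_{n \le N} \log_2(\lambda_{n,N}(1)).\]
Next I would use the symmetry lemma to shrink the range of the maximum. Concretely, for any $n$ with $N/2 < n \le N$, set $n' = N - n$; then $n' < N/2$, so $n' \le \lfloor N/2 \rfloor \le N/2+1$, and Lemma~\ref{lemma.symmetry.eigenvalues} gives $\log_2(\lambda_{n,N}(1)) = h(\overline{X}^s_{n,N}) = h(\overline{X}^s_{n',N}) = \log_2(\lambda_{n',N}(1))$. Thus every value of $\log_2(\lambda_{n,N}(1))$ attained for $n > N/2$ is already attained for some $n' \le N/2+1$, which yields
\[\max_{n \le N} \log_2(\lambda_{n,N}(1)) = \max_{n \le N/2+1} \log_2(\lambda_{n,N}(1)),\]
and injecting this equality into the previous limit produces exactly the stated formula.

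There is no real obstacle: the three ingredients are all already in hand, and the only thing to verify is the book-keeping argument that a maximum over a symmetric range collapses to a maximum over half of it. The mild asymmetry between $N/2$ and $N/2+1$ in the bound just absorbs the parity of $N$ (so that the middle index $n = \lceil N/2 \rceil$ is included), and this is exactly the form in which the formula is stated in the corollary.
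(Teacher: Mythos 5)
Your argument is correct and matches what the paper intends: it combines the proposition $h(X^s)=\lim_N\frac{1}{N}\max_{n\le N}h(\overline{X}^s_{n,N})$, the identification $h(\overline{X}^s_{n,N})=\log_2(\lambda_{n,N}(1))$, and Lemma~\ref{lemma.symmetry.eigenvalues} to fold the range of the maximum over $n\mapsto N-n$. The paper merely declares the corollary a ``straightforward consequence'' of Lemma~\ref{lemma.symmetry.eigenvalues} without spelling out the book-keeping; your proposal supplies exactly that missing routine step.
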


\begin{lemma}
We deduce that:
\[\boxed{h(X^s) = \lim_N \frac{1}{N}\displaystyle{\max_{n \le (N-1)/4}} \log_2 (\lambda_{2n+1,N} (1))}.\]
\end{lemma}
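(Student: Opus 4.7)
The plan is to sandwich the claimed expression between two bounds, both deduced from the preceding corollary.

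For the upper bound, I observe that the set of odd integers $m$ with $m\le (N+1)/2$ is a subset of $\{0,1,\ldots,\lfloor N/2\rfloor+1\}$, so for each $N$,
\[\max_{n \le (N-1)/4} \log_2 \lambda_{2n+1,N}(1) \;\le\; \max_{n \le N/2+1} \log_2 \lambda_{n,N}(1).\]
Dividing by $N$ and invoking the preceding corollary gives
$\limsup_N \frac{1}{N}\max_{n \le (N-1)/4} \log_2 \lambda_{2n+1,N}(1) \le h(X^s)$.

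For the matching lower bound, I would define, for every $n$ and $N$, two combinatorial injections
\[\iota_0 : \mathcal{N}_M(\overline{X}^s_{n,N-1}) \hookrightarrow \mathcal{N}_M(\overline{X}^s_{n,N}), \qquad \iota_1 : \mathcal{N}_M(\overline{X}^s_{n,N-1}) \hookrightarrow \mathcal{N}_M(\overline{X}^s_{n+1,N}),\]
each realized by inserting one extra column between two adjacent columns of the cylinder $\Pi_{N-1}$. The inserted symbol at each row must split the ambient horizontal edge into two edges of the same direction, so it must lie among the four symbols of $X^s$ whose left and right arrows coincide, namely empty, vertical curve, horizontal curve, and cross. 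The two ``up-type'' symbols (empty, horizontal curve) realize $\iota_0$ without changing the number of down-arrows in any row; the two ``down-type'' symbols (vertical curve, cross) realize $\iota_1$ while adding exactly one down-arrow per row. Vertical compatibility inside the new column is automatic because in each of these four symbols the top and bottom vertical arrows agree, so the inserted column is uniformly ``up'' or uniformly ``down''. The injections yield
\[h(\overline{X}^s_{n,N-1}) \le h(\overline{X}^s_{n,N}) \quad \text{and} \quad h(\overline{X}^s_{n,N-1}) \le h(\overline{X}^s_{n+1,N}).\]

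Now I apply Lemma~\ref{lemma.symmetry.eigenvalues} to pick an optimizer $n^*(N-1) \le (N-1)/2$ of the corollary for width $N-1$, and I set $m(N) = n^*(N-1)$ when this is odd and $m(N) = n^*(N-1)+1$ otherwise. In both cases $m(N)$ is odd and $m(N) \le (N+1)/2$, hence $m(N)=2n+1$ with $n \le (N-1)/4$. The injections above give $h(\overline{X}^s_{m(N),N}) \ge h(\overline{X}^s_{n^*(N-1),N-1})$, so
\[\frac{1}{N}\max_{n \le (N-1)/4} \log_2 \lambda_{2n+1,N}(1) \;\ge\; \frac{N-1}{N}\cdot\frac{1}{N-1}\,\log_2 \lambda_{n^*(N-1),N-1}(1) \;\xrightarrow[N\to\infty]{}\; h(X^s)\]
by the corollary, which finishes the matching lower bound.

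The only delicate step is the column-insertion construction: one has to verify that the four enumerated symbols exhaust the horizontal-compatibility cases, that the resulting column is globally admissible on the enlarged cylinder $\Pi_N$, and that the per-row curve count is preserved (for $\iota_0$) or incremented by exactly one (for $\iota_1$). Once this is done the rest is an application of the preceding corollary together with the symmetry lemma.
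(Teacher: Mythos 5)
Your proof is correct but uses a genuinely different mechanism from the paper for the delicate half of the argument. The paper keeps the width $N$ fixed and removes a curve: it defines a surjection from patterns of $\overline{X}^s_{n,N}$ to patterns of $\overline{X}^s_{n-1,N}$ by deleting the curve through the leftmost occupied bottom cell, and bounds the fibres by $N^M$, yielding $h(\overline{X}^s_{n-1,N}) + \log_2 N \ge h(\overline{X}^s_{n,N})$; the $\frac{\log_2 N}{N}$ correction then vanishes in the limit. You instead vary the width: you insert a column into the cylinder of width $N-1$ to produce a cylinder of width $N$, choosing at each row between $\{$empty, horizontal$\}$ (curve count preserved) or $\{$vertical, cross$\}$ (curve count incremented by one), according to the orientation of the split edge. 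This gives genuine injections with no correction term, so your inequality $h(\overline{X}^s_{n,N-1}) \le h(\overline{X}^s_{n,N})$ and its $\iota_1$-analogue are a bit cleaner than the paper's approximate bound. The construction is sound: the four symbols you list are exactly those with equal left and right arrows, and each of the two pairs $\{$empty, horizontal$\}$ and $\{$vertical, cross$\}$ consists of symbols with identical top and bottom arrows (all ``up'' for the first pair, all ``down'' for the second), so the inserted column is vertically self-consistent regardless of how the inserted symbols alternate along the column. The upper-bound half is trivial in both treatments. The only thing worth making fully explicit in your write-up is that the insertion position on the cylinder is fixed once and for all, so that the maps really are injections (recovered by deleting that column), and that the admissibility check on the enlarged cylinder reduces precisely to the horizontal and vertical compatibility you describe.
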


\begin{proof}
Let us fix some integer $N$ and for all $n$ between $1$ and $N/2+1$, and 
consider the application that 
from the set of patterns of 
$\overline{X}^s_{n,N}$ on $\mathbb{U}^{(1)}_M$
associates a pattern of  
$\overline{X}^s_{n-1,N}$ on $\mathbb{U}^{(1)}_M$
by suppressing the curve that crosses 
the leftmost symbol in the bottom row of the 
pattern crossed by a curve [See an schema on 
Figure~\ref{figure.transformation.curve.suppression}]

\begin{figure}[h!]
\[\begin{tikzpicture}[scale=0.6]

\begin{scope}[xshift=7cm]
\draw[color=gray!80,line width=0.5mm] 
(0,1.5) -- (0.5,1.5) -- (0.5,4);

\draw[color=gray!80,line width=0.5mm] 
(3.5,0) -- (3.5,1.5) -- (4,1.5);

\draw (0,0) grid (4,4);

\end{scope}

\fill[gray!10] (1,0) rectangle (2,1);

\draw[-latex] (4.5,2) -- (6.5,2);

\draw[color=gray!80,line width=0.5mm] 
(0,1.5) -- (0.5,1.5) -- (0.5,4);

\draw[color=gray!80,line width=0.5mm] 
(3.5,0) -- (3.5,1.5) -- (4,1.5);

\draw[color=gray!80,line width=0.5mm] 
(1.5,0) -- (1.5,4) ;

\draw (0,0) grid (4,4);

\end{tikzpicture}\]
\caption{\label{figure.transformation.curve.suppression} Illustration of the curve suppressing operation; 
the leftmost position of the bottom raw crossed by 
a curve is colored gray on the left pattern.}
\end{figure}

For each pattern of $\overline{X}^s_{n-1,N}$, 
the number of patterns in its pre-image 
by this transformation is bounded from 
above by $N^M$. As a consequence, for all $M$:

\[\mathcal{N}_M (\overline{X}^s_{n-1,N}) . N^M \ge \mathcal{N}_M (\overline{X}^s_{n,N}),\]
and thus 
\[h(\overline{X}^s_{n-1,N}) + \log_2 (N) 
\ge h(\overline{X}^s_{n,N})\]

As a consequence: 

\begin{align*}
h(X^s) & = \lim_N \frac{1}{N} \max \left( \max_{2n+1 \le N/2+1}
h(\overline{X}^s_{2n+1,N}), \max_{2n \le N/2+1}
h(\overline{X}^s_{2n,N}) \right)\\
& \le \lim_N \frac{1}{N} \max \left( \max_{2n+1 \le N/2+1}
h(\overline{X}^s_{2n+1,N}), \max_{2n-1 \le N/2+1}
h(\overline{X}^s_{2n-1,N}) + \log_2 (N) \right)\\
& \le \lim_N \frac{1}{N} \max_{2n+1 \le N/2+1}
h(\overline{X}^s_{2n+1,N})\\
& = \lim_N \frac{1}{N} \max_{n \le N/4}
h(\overline{X}^s_{2n+1,N})
\end{align*}

Moreover, 
\[h(X^s) \ge \lim_N \frac{1}{N} \max_{2n+1 \le N/2+1}
h(\overline{X}^s_{2n+1,N}),\]
thus we have the following equality: 

\[h(X^s) = \lim_N \frac{1}{N}\displaystyle{\max_{n \le N/4}} \log_2 (\lambda_{2n+1,N} (1)).\]

With a similar argument, we get: 

\[h(X^s) = \lim_N \frac{1}{N}\displaystyle{\max_{n \le (N-1)/4}} \log_2 (\lambda_{2n+1,N} (1)).\]

\end{proof}

\section{\label{section.existence.identification} Coordinate Bethe ansatz}

In this section, we recall 
the statement of the coordinate Bethe ansatz 
[Section~\ref{section.statement.ansatz}], 
after defining some auxiliary functions [Section~\ref{section.auxiliary.functions}]. We then prove the existence 
and analycity of solutions 
of the system of equations $(E_j) [t,n,N]$, 
$j \le n$. [Section~\ref{section.existence}]. 
Then, following~\cite{LSM61}, we diagonalise 
a Hamiltonian related to the transfer matrix 
$V_N (\sqrt{2})$ [Section~\ref{section.hamiltonian}]. 
In the end, we use this analysis in order 
to  identify the largest eigenvalue of
 the restriction of $V_N (t)$ to 
 $\Omega_{N}^{(n)}$ for $t \in (0,\sqrt{2})$
 and $n \le N/2+1$ [Section~\ref{subsection.identification}]. 

\subsection{\label{section.auxiliary.functions} Auxiliary functions}

\subsubsection{\label{section.notations} Notations} 

Let us denote $\mu : (-1,1) \rightarrow (0,\pi)$ the inverse of the function $\cos : 
(0,\pi) \rightarrow (-1,1)$.
For all $t \in (0,\sqrt{2})$, we will denote 
$\Delta_t = \frac{2-t^2}{2}$, $\mu_t 
= \mu(-\Delta_t)$, and $I_t = (-(\pi-\mu_t), 
(\pi-\mu_t))$. 

\begin{notation}
Let us denote 
$\Theta$ the unique analytic function $(t,x,y) 
\mapsto \Theta_t (x,y)$ 
from the set $\{(t,x,y): x,y \in I_t\}$ 
to $\mathbb{R}$ such that $\Theta_{\sqrt{2}} 
(0,0) = 0$ and for all $t,x,y$,

\[\boxed{\exp(-i\Theta_t (x,y)) = \exp(i(x-y)). \frac{e^{-ix} + e^{iy} -  2 \Delta_t}{e^{-iy} + e^{ix} -  2 \Delta_t}}.\]
\end{notation} 

\noindent By a unicity argument, one can see 
that for all $t,x,y$, $\Theta_t(x,y) = - \Theta_t(y,x).$ As a consequence, for all $x$, 
$\Theta_t (x,x) = 0$.
For the same reason, $\Theta_t(x,-y)= -\Theta_t(-x,y)$ and $\Theta_t (-x,-y) = 
- \Theta (x,y)$. Moreover, $\Theta_t$ and all 
its derivatives can be extended by 
continuity on $I_t ^2 \backslash \{(x,x): x \in \partial I_t\}$. For the purpose of 
notation, we will denote also $\Theta_t$ 
the extended function. We will use 
the following: 

\begin{computation}
\label{computation.theta.border}
For all $y \neq (\pi-\mu_t)$, 
$\Theta_t ((\pi-\mu_t),y) = 2\mu_t - \pi$. 
\end{computation}

\begin{proof}
From the definition of $\Theta_t$: 
\[\exp(-i\Theta_t ((\pi-\mu_t),y)) = e^{i(\pi-\mu_t-y)} . \frac{e^{iy} - e^{i\mu_t} -  2 \Delta_t}{e^{-iy} - e^{-i\mu_t} -  2 \Delta_t} 
= e^{i(\pi-\mu_t-y)} . \frac{e^{iy} + e^{-i\mu_t}}{e^{-iy} + e^{i\mu_t}} \]
As a consequence,
\[\exp(-i\Theta_t ((\pi-\mu_t),y)) = e^{i(\pi-\mu_t-y)}. \frac{e^{iy}}{e^{i\mu_t}} 
\frac{1+ e^{-i(\mu_t + y)}}{e^{-i(y+\mu_t)}+1} 
= e^{i(\pi-2\mu_t)}.\]
This yields the statement as a consequence. 
\end{proof}

\begin{notation}
Let us denote $\kappa$ the 
unique analytic map $(t,\alpha) \mapsto \kappa_t (\alpha)$ 
from $(0,\sqrt{2}) \times \mathbb{R}$ to $\mathbb{R}$
such that $\kappa_{\sqrt{2}/2} (0)=0$ 
and for all $t,\alpha$,
\[\boxed{e^{i\kappa_t (\alpha)} = \frac{e^{i\mu_t}-e^{\alpha}}{e^{i\mu_t+\alpha} - 1}}.\]
\end{notation}

\noindent With the argument of unicity, 
we have that for all $t,\alpha$, 
$\kappa_t (-\alpha) 
= -\kappa_t (\alpha)$, and as a consequence, 
$\kappa_t (0) = 0$. 
We also denote, for all $t,\alpha,\beta$, 
\[\theta_t (\alpha,\beta) = \Theta_t (\kappa_t (\alpha),\kappa_t (\beta)).\]

\subsubsection{\label{section.properties.auxiliary.functions} Properties of the auxiliary 
functions}

\paragraph{ \label{paragraph.computation} Computation of 
the derivative $\kappa'_t$:} \bigskip

\begin{computation} \label{computation.kprime}
Let ux fix some $t \in (0,\sqrt{2})$. 
For all $\alpha \in \mathbb{R}$, 
\[\boxed{\kappa'_t(\alpha) = \frac{\sin(\mu_t)}{\cosh(\alpha)-\cos(\mu_t)}.}\]
\end{computation}

\begin{proof}

\begin{itemize}
\item \textbf{Computation of $\cos(\kappa_t(\alpha))$ and $sin(\kappa_t(\alpha))$:}

\[e^{i\kappa_t (\alpha)} = 
 \frac{\left(e^{-i\mu_t +\alpha}-1\right)\left(e^{i\mu_t}-e^{\alpha}\right)}{\left|e^{i\mu_t+\alpha}-1\right|^2} = \frac{e^{\alpha}
+ e^{2\alpha} e^{-i\mu_t} - e^{i\mu_t} + e^{\alpha}}{(\cos(\mu_t)e^{\alpha}-1)^2+(\sin(\mu_t)e^{\alpha})^2}.\]

Thus by taking the real part,
\[\cos(\kappa_t(\alpha)) = \frac{2e^{\alpha} 
+ (e^{2\alpha}-1)\cos(\mu_t)}{\cos^2(\mu_t)e^{2\alpha} - 2\cos(\mu_t) e^{\alpha} + 1 + (1- \cos^2 (\mu_t))e^{2\alpha}}
\]

\[\cos(\kappa_t(\alpha)) = \frac{2e^{\alpha} 
+ (e^{2\alpha}-1)\cos(\mu_t)}{e^{2\alpha} - 2\cos(\mu_t) e^{\alpha} + 1}
= \frac{1
- \cos(\mu_t)\cosh(\alpha)}{\cosh(\alpha)-\cos(\mu_t)},\]
where we factorized by $2e^{\alpha}$ for
the second equality. As a consequence:

\[\cos(\kappa_t(\alpha)) =\frac{\sin^2(\mu_t)+\cos^2(\mu_t)-\cos(\mu_t) \cosh(\alpha)}{\cosh(\alpha)-\cos(\mu_t)} = \frac{\sin^2(\mu_t)}{\cosh(\alpha)-\cos(\mu_t)} 
- \cos(\mu_t).\]

A similar computation gives 

\[\sin(\kappa_t(\alpha)) = \frac{\sin(\mu_t) \sinh(\alpha)}{\cosh(\alpha)-\cos(\mu_t)}\]

\item \textbf{Deriving the expression $\cos(\kappa_t(\alpha))$:}

As a consequence, for all $\alpha$: 

\[-\kappa'_t(\alpha) \sin(\kappa_t(\alpha)) = - \frac{\sin^2 (\mu_t) \sinh(\alpha)}{(\cosh(\alpha)-\cos(\mu_t))^2} = -\frac{\sin(\kappa_t(\alpha))^2}{\sinh(\alpha)}.\]
\[\]

Thus, for all $\alpha$ but in 
a discrete subset of 
$\mathbb{R}$, 
\[\kappa'_t(\alpha) = \frac{\sin(\mu_t)}{\cosh(\alpha)-\cos(\mu_t)}.\]
This identity is thus verified 
on all $\mathbb{R}$, by continuity.
\end{itemize}

\end{proof}

\paragraph{Domain and invertibility} 

\begin{proposition}
\label{proposition.k}
For all $t$, $\kappa_t (\mathbb{R}) 
\subset I_t$. Moreover, 
$\kappa_t$ considered as a function 
from $\mathbb{R}$ to $I_t$ is bijective.
\end{proposition}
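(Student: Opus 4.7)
The plan is to establish both claims via a monotonicity-plus-asymptotics argument, leveraging Computation~\ref{computation.kprime} together with the defining exponential identity for $\kappa_t$.

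First I would fix $t \in (0,\sqrt{2})$ and note that $\Delta_t = 1 - t^2/2 \in (0,1)$, so $-\Delta_t \in (-1,0)$ and consequently $\mu_t = \mu(-\Delta_t) \in (\pi/2,\pi)$. This yields $\sin(\mu_t) > 0$ and $\cos(\mu_t) < 0$, so by Computation~\ref{computation.kprime}
\[
\kappa'_t(\alpha) = \frac{\sin(\mu_t)}{\cosh(\alpha) - \cos(\mu_t)} > 0 \qquad \text{for every } \alpha \in \mathbb{R}.
\]
Hence $\kappa_t$ is real-analytic and strictly increasing on $\mathbb{R}$, and in particular injective. Since $\kappa'_t(\alpha) = O(e^{-|\alpha|})$ at infinity, $\kappa'_t$ is integrable on $\mathbb{R}$, so the limits $\ell_\pm = \lim_{\alpha\to\pm\infty}\kappa_t(\alpha)$ exist and satisfy $\ell_- < 0 < \ell_+$ in view of the oddness $\kappa_t(-\alpha) = -\kappa_t(\alpha)$ and the normalization $\kappa_t(0)=0$ already noted after the definition.

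Next I would identify $\ell_\pm$ by taking limits in
\[
e^{i\kappa_t(\alpha)} = \frac{e^{i\mu_t} - e^\alpha}{e^{i\mu_t+\alpha} - 1}.
\]
As $\alpha \to +\infty$, the right-hand side tends to $-e^{-i\mu_t} = e^{i(\pi-\mu_t)}$, and as $\alpha \to -\infty$ it tends to $-e^{i\mu_t} = e^{-i(\pi-\mu_t)}$. Because $\pi - \mu_t \in (0,\pi/2)$ and $\kappa_t$ is real, continuous, odd, and strictly increasing with $\kappa_t(0)=0$, the only lift of these unimodular limits compatible with monotonicity is $\ell_\pm = \pm(\pi - \mu_t)$. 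In particular $\kappa_t(\mathbb{R}) \subset I_t$.

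Finally, the intermediate value theorem applied to the continuous strictly increasing function $\kappa_t$ shows that $\kappa_t(\mathbb{R}) = (\ell_-,\ell_+) = I_t$, giving the bijection onto $I_t$. The only conceptual subtlety is the branch choice when inverting the exponential, but monotonicity together with the normalization $\kappa_t(0)=0$ forces the correct lift, so I expect no genuine obstacle: the proposition follows essentially immediately once Computation~\ref{computation.kprime} is in hand.
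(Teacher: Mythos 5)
Your proof is incomplete at precisely the point you flag as a ``conceptual subtlety'' and then dismiss. Having shown $\kappa'_t>0$ and computed $\lim_{\alpha\to+\infty} e^{i\kappa_t(\alpha)} = e^{i(\pi-\mu_t)}$, you claim that ``monotonicity together with the normalization $\kappa_t(0)=0$ forces the correct lift,'' so that $\ell_+=\pi-\mu_t$. This is not so: a continuous, strictly increasing, odd function $\kappa$ with $\kappa(0)=0$ and $e^{i\kappa(\alpha)}\to e^{i(\pi-\mu_t)}$ is entirely compatible with $\kappa(\alpha)\to (\pi-\mu_t)+2k\pi$ for any $k\ge 0$ — the path $\alpha\mapsto e^{i\kappa_t(\alpha)}$ could in principle wind around the unit circle one or more extra times before settling near $e^{i(\pi-\mu_t)}$. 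Monotonicity guarantees uniqueness of the lift as a function, but the terminal value of that lift is \emph{not} determined by the terminal value of the exponential. Something more is needed to cap the winding.

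The paper closes this gap with a separate and essential step that your proposal omits: it shows directly from the defining relation that $\kappa_t(\alpha)=n\pi$ for an integer $n$ forces $\alpha=0$. (Setting $e^{i\kappa_t(\alpha)}=-1$ gives $e^\alpha+1 = e^{i\mu_t}(e^\alpha+1)$, hence $e^{i\mu_t}=1$, impossible; setting $e^{i\kappa_t(\alpha)}=1$ gives $(e^{i\mu_t}+1)(1-e^\alpha)=0$, hence $\alpha=0$ since $e^{i\mu_t}\neq -1$.) Since the image of $\kappa_t$ is an interval containing $0$ and avoiding every other integer multiple of $\pi$, it must lie in $(-\pi,\pi)$, and then the only consistent value of $\ell_+$ of the form $(2k+1)\pi-\mu_t$ with $\ell_+\in(0,\pi)$ is $\pi-\mu_t$. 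An alternative (which you could have reached with the material you already set up) is to bound the integral directly: $\cosh\alpha\ge 1+\alpha^2/2$ gives
\[
\int_0^\infty \kappa'_t(\alpha)\,d\alpha \le \sin(\mu_t)\int_0^\infty \frac{d\alpha}{(1-\cos\mu_t)+\alpha^2/2} = \pi\cos(\mu_t/2) < \pi,
\]
which caps $\ell_+$ below $\pi$ and again forces $k=0$. Either way, the branch identification requires an actual argument; as written your proof does not establish that $\kappa_t(\mathbb{R})\subset I_t$.
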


\begin{proof}
\begin{itemize}

\item \textbf{Injectivity:}

Since $\mu_t \in (0,\pi)$, then
$\sin(\mu_t) > 0$ and we have the inequality
$\cosh(\alpha) \ge 1 > \cos (\mu_t)$.
As a consequence, $\kappa_t$ is strictly 
increasing, and thus injective. 

\item \textbf{The equality $\kappa_t(\alpha)=n\pi$
imples $\alpha=0$:}

Assume that for some $\alpha$, 
$\kappa_t(\alpha) = n \pi$ for some integer $n$. 
If $n$ is odd, then: 
\[e^{\alpha} - e^{i\mu_t} = e^{i\mu_t+\alpha} -1.\]
\[e^{\alpha}+1 = e^{i\mu_t}. (e^{\alpha}+1),\]
and thus $e^{i\mu_t}=0$, which is impossible, 
since $\mu_t \in (0,\pi)$.
If $n$ is even, 
then \[-e^{\alpha} + e^{i\mu_t} = e^{i\mu_t+\alpha} -1.\]
As a consequence, since $e^{i\mu_t} \neq -1$,
we have $e^{\alpha}=1$, and thus $\alpha=0$.

\item \textbf{Extension of the images:}

Since when $\alpha$ tends towards 
$+\infty$ (resp. $-\infty$), the function 
tends towards $-e^{i\mu}$ (resp. 
$e^{i\mu}$), $\kappa_t(\alpha)$ tends 
towards some $n\pi-\mu_t$ (resp. $m\pi+\mu_t$). 
and from the 
above property, $n=1$ (reps. $m=-1$). 
Thus the image of $\kappa_t$ is 
the set  $I_t$. 

Thus $\kappa_t$ is an invertible map 
from $\mathbb{R}$ to $I_t$. 
\end{itemize}
\end{proof}

\paragraph{A relation between $\theta_t$ 
and $\kappa_t$:}

The following equality originates in~\cite{YY66}. We provide some details 
of a relatively simple way to compute it.

\begin{computation} \label{computation.derivative.theta}
For any numbers $t,\alpha,\beta$:
\[\boxed{ \frac{\partial \theta_t}{\partial \alpha}  (\alpha,\beta) = 
- \frac{\partial \theta_t}{\partial \beta}  (\alpha,\beta) = 
- \frac{\sin(2\mu_t)}{\cosh(\alpha-\beta) - \cos(2\mu_t)}}\]
\end{computation}

\begin{proof}

\begin{itemize}
\item \textbf{Deriving the equation 
that defines $\Theta_t$:}

Let us denote, 
for all $x,y$: 
\[G_t (x,y) = \frac{x(1-2\Delta_t y)+y}{x+y-2\Delta_t}.\]
Then we have that for all $x,y$
\[\frac{\partial G_t}{\partial x}  (x,y) = \frac{(1-2\Delta_t y).(x+y-2\Delta_t) - (x(1-2\Delta_t y) + y)}{(x+y-2\Delta_t)^2}\]

\[\frac{\partial G_t}{\partial x} (x,y) = -2\Delta_t \frac{1 +y^2 - 2\Delta_t y}{(x+y-2\Delta_t)^2}\]

For all $t,\alpha$, let use 
denote $\alpha_t \equiv \kappa_t (\alpha)$.
By definition of $\Theta_t$, 
for all $\alpha,\beta$,
\[\exp(-i \Theta_t (\alpha_t,\beta_t)) 
= G(e^{i\alpha_t},e^{-i\beta_t}).\]
Thus we have, by deriving this equality:
\[-i\frac{d}{d\alpha} (\Theta_t (\alpha_t,\beta_t))
\exp(-i \Theta_t (\alpha_t,\beta_t))
= i\kappa'_t(\alpha) e^{i\alpha_t} \frac{\partial}{\partial x} G_t (e^{i\alpha_t},e^{-i\beta_t}).\]

\[\frac{d}{d\alpha} (\Theta_t (\alpha_t,\beta_t))
= - \kappa'_t(\alpha)e^{i\alpha_t} \frac{\frac{\partial}{\partial x} G_t (e^{i\alpha_t},e^{-i\beta_t})}{G_t(e^{i\alpha_t},e^{-i\beta_t})}\]

\[\frac{d}{d\alpha} \Theta_t (\alpha_t,\beta_t)
= \frac{(\kappa'_t (\alpha) 2\Delta e^{i\alpha_t})(1+e^{-2i\beta_t}-2\Delta e^{-i\beta_t})}{(e^{i\alpha_t}+e^{-i\beta_t}-2\Delta_t)(e^{i\alpha_t}+e^{-i\beta_t}-2\Delta_t .e^{i(\alpha_t-\beta_t)})}\]

Factoring by $e^{i(\alpha_t-\beta_t)}$:

\[\frac{d}{d\alpha} \Theta_t (\alpha_t,\beta_t)
 = 2\Delta_t \kappa'_t(\alpha). \frac{e^{i\beta_t} + 
e^{-i\beta_t} - 2\Delta_t}{(e^{i\alpha_t}+e^{-i\beta_t}-2\Delta_t)(e^{i\beta_t}+e^{-i\alpha_t}-2\Delta_t)}.\]

\item \textbf{Simplification 
of a term $e^{i\alpha_t}+ 
e^{-i\beta_t} - 2\Delta_t$:}

Let us denote the function $F$ 
defined by 
\[F_t(\alpha,\beta) = e^{i\alpha_t}+ 
e^{-i\beta_t} - 2\Delta_t.\]
By definition of $\kappa_t$ and 
$-2\Delta_t=e^{-i\mu_t}+e^{i\mu_t}$
 we have: 

\[F_t(\alpha,\beta) = \frac{e^{i\mu_t}
-e^{\alpha}}{e^{i\mu_t+\alpha}
-1} + \frac{ e^{i\mu_t+\beta}
-1}{e^{i\mu_t}
-e^{\beta}} + e^{i\mu_t} + e^{-i\mu_t}.\]

\[F_t(\alpha,\beta) = \frac{(e^{i\mu_t}
-e^{\alpha})(e^{i\mu_t}
-e^{\beta}) + (e^{i\mu_t+\alpha}
-1)(e^{i\mu_t+\beta}
-1)+ (e^{i\mu_t}+e^{-i\mu_t}).(e^{i\mu_t+\alpha}
-1)(e^{i\mu_t}
-e^{\beta})}{(e^{i\mu_t+\alpha}
-1)(e^{i\mu_t}
-e^{\beta})}.\]

\[F_t(\alpha,\beta) = \frac{e^{3i\mu_t+\alpha} 
+e^{\beta-i\mu_t} - e^{i\mu_t}.(e^{\alpha}+e^{\beta})}{(e^{i\mu_t+\alpha}
-1)(e^{i\mu_t}
-e^{\beta})}.\]

\item \textbf{Simplification of 
$\Theta_t$'s derivative:}

For all $\alpha,\beta$, we have 
\[\frac{1}{\kappa'_t(\alpha)} \frac{d}{d\alpha} \Theta_t(\alpha,\beta) 
= 2\Delta \frac{F_t(\beta,\beta)}{
F_t(\alpha,\beta).F_t(\beta,\alpha)}.\]
As a consequence of last point, 

\[\frac{1}{\kappa'_t(\alpha)} \frac{d}{d\alpha} \Theta_t( \alpha_t,\beta_t) 
= 
\frac{-(e^{i\mu_t+\alpha} -1)(e^{i\mu_t}-e^{\alpha})(e^{-i\mu_t}+e^{i\mu_t})(e^{3i\mu_t+\beta} +e^{\beta-i\mu_t} - 2e^{i\mu_t}.e^{\beta})}{(e^{3i\mu_t+\alpha} 
+e^{\beta-i\mu_t} - e^{i\mu_t}.(e^{\alpha}+e^{\beta}))(e^{3i\mu_t+\beta} 
+e^{\alpha-i\mu_t} - e^{i\mu_t}.(e^{\beta}+e^{\alpha}))}.\]

\[\frac{1}{\kappa'_t(\alpha)}  \frac{d}{d\alpha} \Theta_t(\alpha_t,\beta_t) 
= -
\frac{(e^{i\mu_t+\alpha} -1)(e^{i\mu_t}-e^{\alpha})e^{\beta} .(e^{2i\mu_t}-1).(e^{2i\mu_t}-e^{-2i\mu_t})}{e^{2i\mu_t}(e^{2i\mu_t+\alpha} 
+e^{\beta-2i\mu_t} - (e^{\alpha}+e^{\beta}))(e^{2i\mu_t+\beta} 
+e^{\alpha-2i\mu_t} - (e^{\beta}+e^{\alpha}))}.\]
Since in the denominator of the 
fraction in square of the modulus of 
some number, we rewrite it. 

\[\frac{1}{\kappa'_t(\alpha)} \frac{d}{d\alpha} \Theta_t(\alpha_t,\beta_t) 
= -
\frac{(e^{i\mu_t+\alpha} -1)(e^{i\mu_t}-e^{\alpha}) e^{\beta} .(e^{2i\mu_t}-1).(e^{2i\mu_t}-e^{-2i\mu_t})}{e^{2i\mu_t} \left((e^{\alpha}+e^{\beta})^2 (\cos(2\mu_t)-1)^2 + (e^{\alpha}-e^{\beta})^2 
\sin^2 (2\mu_t)\right)}.\]
We rewrite also the other terms, by splitting the $e^{2i\mu_t}$ in the 
denominator in two parts, one 
makes appear $\sin(\mu_t)$, 
and the other one, the square modulus:

\[\frac{1}{\kappa'_t(\alpha)} \frac{d}{d\alpha} \Theta_t (\alpha_t,\beta_t) 
= -4|e^{i\mu_t+\alpha} -1|^2 
\frac{e^{\beta} .\sin(\mu_t).\sin(2\mu_t)}{(e^{\alpha}+e^{\beta})^2 (\cos(2\mu_t)-1)^2 + (e^{\alpha}-e^{\beta})^2 
\sin^2 (2\mu_t)}.\]

By writing $\sin^2(2\mu_t)=1-\cos^2(2\mu_t)$ 
and then factoring 
by $1-\cos(2\mu_t)$:

\[\frac{1}{\kappa'_t(\alpha)} \frac{d}{d\alpha} \Theta_t(\alpha_t,\beta_t) 
= -4\frac{|e^{i\mu_t+\alpha} -1|^2}{1-\cos(2\mu_t)}
\frac{e^{\beta} .\sin(\mu_t).\sin(2\mu_t)}{(e^{\alpha}+e^{\beta})^2 (1-\cos(2\mu_t)) + (e^{\alpha}-e^{\beta})^2 
(1+\cos(2\mu_t))}.\]

Developping the denominator 
and factoring it by $4e^{\alpha+\beta}$, 
we obtain:

\[\frac{1}{\kappa'_t(\alpha)} \frac{d}{d\alpha} \Theta_t(\alpha_t,\beta_t) 
= -\frac{|e^{i\mu_t+\alpha} -1|^2}{e^{\alpha}(1-\cos(2\mu_t))}.
\frac{\sin(\mu_t).\sin(2\mu_t)}{\cosh(\alpha-\beta)- \cos(2\mu_t)}.\]

We have left to see that 
\[\frac{\sin(\mu_t) \kappa'_t(\alpha).|e^{i\mu_t+\alpha} -1|^2}{e^{\alpha}(1-\cos(2\mu_t))} = 1.\]
This derives directly from 
$1-\cos(2\mu_t) = 2\sin^2(\mu_t)$ and 
the value of $\kappa'_t(\alpha)$ given by Computation~\ref{computation.kprime}.

\item \textbf{The other equality:}

We obtain the value of 
$\frac{d}{d\beta} (\Theta_t(\alpha_t,\beta_t))$ through 
the equality $\Theta(x,y) = -\Theta(y,x)$ 
for all $x,y$.
\end{itemize}

\end{proof}

\begin{lemma}
\label{lemma.theta.translative}
For all $t,\alpha,\beta$,
\[\theta_t (\alpha+\beta,\alpha) = \theta_t (\beta,0).\]
\end{lemma}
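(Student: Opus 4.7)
The plan is to deduce the translation invariance of $\theta_t$ directly from Computation~\ref{computation.derivative.theta}. The key observation is that the two partial derivatives computed there are negatives of each other, so they cancel along the diagonal direction $(1,1)$.

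Concretely, fix $\alpha,\beta \in \mathbb{R}$ and define the one-variable function $f(s) = \theta_t(\alpha+s,\beta+s)$ for $s \in \mathbb{R}$. By the chain rule and Computation~\ref{computation.derivative.theta},
\[
f'(s) = \frac{\partial \theta_t}{\partial \alpha}(\alpha+s,\beta+s) + \frac{\partial \theta_t}{\partial \beta}(\alpha+s,\beta+s) = 0,
\]
since the two partials are opposite (both depending only on the difference $\alpha-\beta$, which is preserved under the shift). Hence $f$ is constant, i.e.\ $\theta_t(\alpha+s,\beta+s) = \theta_t(\alpha,\beta)$ for every $s \in \mathbb{R}$.

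Applying this translation invariance with the pair $(\beta,0)$ and the shift $s=\alpha$ immediately gives
\[
\theta_t(\alpha+\beta,\alpha) = \theta_t(\beta,0),
\]
which is the claimed identity. No step here is a real obstacle; the only substantive ingredient is Computation~\ref{computation.derivative.theta}, and the proof is essentially a one-line chain rule argument once the derivative formula is in hand.
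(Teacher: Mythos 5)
Your proof is correct and follows essentially the same route as the paper: both arguments rest on Computation~\ref{computation.derivative.theta} and a matching value at a single point. The paper fixes $\alpha$ and compares the $\beta$-derivatives of $\beta\mapsto\theta_t(\alpha+\beta,\alpha)$ and $\beta\mapsto\theta_t(\beta,0)$, whereas you differentiate along the diagonal direction to establish full translation invariance and then specialize; these are cosmetic variants of the same one-line idea.
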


\begin{proof}
Let us fix some $\alpha \in \mathbb{R}$, 
By Computation~\ref{computation.derivative.theta}, 
the derivative of the function 
$\beta \mapsto \theta_t (\alpha+\beta,\alpha)$ 
is equal to the derivative of the 
function $\beta \mapsto \theta_t (\beta,0)$. 
As a consequence, these two functions 
differ by a constant. Since they have 
the same value in $\beta=0$, they are equal.
\end{proof}

 \subsection{\label{section.statement.ansatz} Statement of the ansatz}

 \begin{notation}
 \label{notation.vector.ansatz}
For all $(p_1,...,p_n) \in I_t^n$, let us denote $\psi_{\mu_t,n,N} (p_1,...,p_n)$ the 
 vector in $\Omega_N$ such that for all 
 $\boldsymbol{\epsilon} \in \{0,1\}^*_N$, 
 \[\psi_{\mu_t,n,N}(p_1,...,p_n)[\boldsymbol{\epsilon}]
 = \sum_{\sigma \in \Sigma_n} C_{\sigma} (t) [p_1,...,p_n] \prod_{k=1}^n e^{ip_{\sigma(k)} q_k [\boldsymbol{\epsilon}]},\]
 where (for $\epsilon(\sigma)$ denotion the 
 signature of $\sigma$):
\[C_{\sigma} (t) [p_1,...,p_n] = \epsilon(\sigma)\prod_{1 \le k <l\le n} \left( 1 + e^{i(p_{\sigma(k)}+p_{\sigma(l)})} - 2\Delta_t e^{ip_{\sigma(k)}}\right).\]
 \end{notation}
 
 \begin{theorem}
 \label{theorem.coordinate.ansatz}
 For all $N$ and $n \le N/2$, and $(p_1,...,p_n) \in I_t$ distinct such that for all $j$ the following equation is verified:
 
 \[(E_j)[t,n,N]: \qquad Np_j(t) = 2\pi j - (n+1)\pi - \sum_{k=1}^n \Theta_t (p_j (t),p_k(t)).\]
 Then we have: 
 \[V_N (t).\psi_{n,N} (p_1,...,p_n) = \Lambda_{n,N} (t) [p_1,...,p_n] \psi_{n,N} (p_1,...,p_n),\]
 where $\Lambda_{n,N} (t) [p_1,...,p_n]$ is 
 equal to 
 \[\prod_{k=1}^n L_t (e^{ip_k}) + \prod_{k=1}^n M_t (e^{ip_k})\]
 when all the $p_k$ are distinct from $0$. Else, it is equal to: 
\[\left( 2 + t^2 (N-1) + \sum_{k \neq l} 
\frac{\partial \Theta_t}{\partial x} (0,p_k)\right)\prod_{k=1}^n M_t (e^{ip_k})\]
for $l$ such that $p_l = 0$.
 \end{theorem}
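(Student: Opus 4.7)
The plan is to verify the eigenvalue equation directly by computing the coefficient of $(V_N(t) . \psi_{n,N}(p_1,\ldots,p_n))$ on each basis vector $\ket{\boldsymbol{\eta}}$ of $\Omega_N^{(n)}$ and matching it to $\Lambda_{n,N}(t)[p_1,\ldots,p_n] . \psi_{n,N}(p_1,\ldots,p_n)[\boldsymbol{\eta}]$. Fix $\boldsymbol{\eta}$ with particle positions $r_1 < \cdots < r_n$ (i.e.\ $q_k[\boldsymbol{\eta}] = r_k$). By Propositions~\ref{proposition.interlacing} and~\ref{proposition.unicity.connection}, the $\boldsymbol{\epsilon}$ contributing to $(V_N(t) . \psi)[\boldsymbol{\eta}]$ are exactly those whose particle positions $q_1 < \cdots < q_n$ are interlaced with $r_1 < \cdots < r_n$, and the connecting pattern $w$ is (essentially) unique with $|w|$ equal to the number of indices $k$ where $q_k \ne r_k$. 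I first rewrite $|w|$ as $\sum_k (r_k - q_k) \bmod N$ on each interlacing family, so that the weight $t^{|w|}$ distributes as a product over $k$.

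Next I substitute the explicit form of $\psi_{n,N}(p_1,\ldots,p_n)[\boldsymbol{\epsilon}]$ as a sum over $\sigma \in \Sigma_n$, and swap the order of summation so that the inner sums are over the free variables $q_k$. On each interlacing family, the constraints decouple: $q_k$ ranges over an interval $\llbracket r_{k-1}+1, r_k \rrbracket$ (or its cyclic shift) and the summand is a product $\prod_k (te^{-i p_{\sigma(k)}})^{r_k - q_k}$. Each factor is a finite geometric sum in $q_k$, whose evaluation produces two boundary terms at $q_k = r_{k-1}+1$ and $q_k = r_k$. The boundary term at the common endpoint $r_k$ appearing in the $k$-th and $(k+1)$-th factors combine, and the identity
\[\exp(-i\Theta_t(p,p')) = e^{i(p-p')}\frac{e^{-ip}+e^{ip'}-2\Delta_t}{e^{-ip'}+e^{ip}-2\Delta_t}\]
is precisely what allows one to absorb this combination by swapping $\sigma(k)$ and $\sigma(k+1)$ in $C_\sigma$, which converts the unwanted cross terms into a permuted copy of the original ansatz. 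After resumming over $\sigma$, the cross terms cancel pairwise, leaving a single contribution proportional to $\psi_{n,N}(p_1,\ldots,p_n)[\boldsymbol{\eta}]$.

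Once the cross terms cancel, the remaining sum factorises into one factor per particle, producing a product $\prod_k L_t(e^{ip_k}) + \prod_k M_t(e^{ip_k})$, where the two products correspond to the two interlacing families (the "identity" and the "cyclic shift"). The appearance of the two cyclic families is precisely what introduces the wrap-around phase $e^{iNp_j}$, and requiring this expression to coincide with $\Lambda_{n,N}(t)[p_1,\ldots,p_n] . \psi[\boldsymbol{\eta}]$ independently of $\boldsymbol{\eta}$ forces the quantisation condition $Np_j = 2\pi j - (n+1)\pi - \sum_k \Theta_t(p_j,p_k)$, which is exactly $(E_j)[t,n,N]$. The degenerate case where some $p_l = 0$ is handled separately: the corresponding geometric sum degenerates to a linear factor of length $\sim N$, and summing the contribution produces the alternate formula with the $\frac{\partial \Theta_t}{\partial x}(0,p_k)$ terms, reflecting the derivative limit $\lim_{p_l\to 0}\left(\frac{\text{scattering phase shift}}{p_l}\right)$.

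The main obstacle is the combinatorial bookkeeping of the boundary terms and the verification that, under the antisymmetrising sum over $\Sigma_n$, every "spurious" term carrying a factor $e^{iNp_{\sigma(k)}}$ either cancels against a transposed partner or collapses, via the Bethe equations, into a term already present in $\Lambda \cdot \psi$. Keeping careful track of the signs $\epsilon(\sigma)$, of the cyclic structure of the positions (so that indices $k=n$ wrap back to $k=1$ with an extra shift), and of the precise form of the scattering phase is the delicate part; once this accounting is in place the eigenvalue equation drops out. A clean presentation of this computation is given in~\cite{Duminil-Copin.ansatz}, and the proof proposed here follows that template with the minor modifications needed for the weighted transfer matrix $V_N(t)$ rather than the unweighted one.
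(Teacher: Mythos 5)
The paper does not actually prove Theorem~\ref{theorem.coordinate.ansatz}: immediately after the statement it cites \cite{Duminil-Copin.ansatz}, Theorem~2.2, and only supplies a dictionary of notations ($t \leftrightarrow c$, $V_N(t) \leftrightarrow V$, $\psi_{t,n,N} \leftrightarrow \psi$, etc.). Your sketch reconstructs the very coordinate Bethe ansatz computation that the paper delegates to that reference, so at the level of strategy it is the "right" argument and you correctly identify the key mechanism (geometric sums per particle, boundary terms at $q_k = r_{k-1}+1$ and $q_k = r_k$ absorbed by the $\Theta_t$ identity after transposing $\sigma(k),\sigma(k+1)$, two interlacing families producing the $L_t$ and $M_t$ products, and the wrap-around forcing the quantisation condition).

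There is, however, a concrete error in your description of the weight, and it would derail the computation if carried through literally. You first assert that $|w|$ equals the number of indices $k$ with $q_k \ne r_k$, and then that $|w| = \sum_k (r_k - q_k) \bmod N$; these two claims are mutually inconsistent, and both are wrong. The paper's own Lemma~\ref{lemma.properties.matrix.transfer} (symmetry part) records the correct value: for the unique $w$ connecting $\boldsymbol{\epsilon}$ to $\boldsymbol{\eta}$, one has $|w| = 2\bigl(n - |\{k: \boldsymbol{\epsilon}_k = \boldsymbol{\eta}_k = 1\}|\bigr)$, i.e.\ \emph{twice} the number of moved particles — each moved particle contributes one bottom-to-right corner and one left-to-top corner, and the straight pass-throughs and horizontals carry weight $1$. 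Consequently the $k$-th contribution to the sum over $q_k \in \llbracket r_{k-1}+1, r_k\rrbracket$ is not the pure geometric series $\sum_{q_k}(te^{-ip_{\sigma(k)}})^{r_k - q_k}$ that you write; it is
\[
e^{ip_{\sigma(k)} r_k} + t^2 \sum_{q_k = r_{k-1}+1}^{r_k - 1} e^{ip_{\sigma(k)} q_k},
\]
which is a constant term plus $t^2$ times a geometric series in $e^{ip_{\sigma(k)}}$, corresponding to the specialization $a=b=1$, $c=t$ of the general six-vertex weights. If you instead use $t^{\sum(r_k-q_k)}$ you are effectively computing with weights $a=1$, $b=t$, $c=1$ (or some permutation), which is a different transfer matrix with different functions $L$, $M$ and a different eigenvalue. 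The fix is local — replace your weight by the correct one and redo the geometric sums — and the rest of the scattering-cancellation argument goes through unchanged, but as written the sketch does not match the matrix $V_N(t)$ defined in Section~\ref{section.example.lieb.path}.
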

 
In~\cite{Duminil-Copin.ansatz} (Theorem 2.2), 
the equations (BE) are implied 
by the equations $(E_j)[t,n,N]$ in Theorem~\ref{theorem.coordinate.ansatz} by taking the 
exponential of the members of $(BE)$.
In order to make the connection easier with 
\cite{Duminil-Copin.ansatz}, here is a list 
of correspondances between the notations: 
in \cite{Duminil-Copin.ansatz}, the notation $t$ 
corresponds to $c$, and it is fixed 
in the formulation of the theorem. Thus, 
$\Delta_t$ corresponds to $\Delta$, 
$\mathcal{I}_t$ to $\mathcal{D}_{\Delta}$, 
$V_N (t)$ to $V$, $\psi_{t,n,N} (p_1,...,p_n)$ 
to $\psi$, $L_t$ and $M_t$ to $L$ and $M$, 
$\Theta_t$ to $\Theta$, $\Lambda_{n,N} (t)[p_1,...,p_n]$ to $\Lambda$, 
$C_{\sigma} (t) [p_1,...,p_n]$ to 
$A_{\sigma}$ and the sequence $(x_k)_k$ 
to the sequence $(q_k [\boldsymbol{\epsilon}])_k$ 
for some $\boldsymbol{\epsilon}$.

\subsection{\label{section.existence} Existence of solutions of Bethe 
equations and analycity}

In this section, we will prove the following, 
which is a rigorous and complete 
version of an argument in 
\cite{YY66}: 

\begin{theorem}
There exists a unique sequence 
of analytic functions 
$\vec{p}_j : (0,\sqrt{2}) \mapsto (-\pi,\pi)$ 
such that for all $t \in (0,\sqrt{2})$, 
$\vec{p}_j (t) \in  I_t $ and we have
the system of Bethe equations: 

\[(E_j)[t,n,N]: \qquad N p_j (t) = 2\pi j - (n+1)\pi - \sum_{k=1}^n \Theta_t (p_j(t) ,p_k(t)).\]

Moreover, for all $t$ and $j$, 
$\vec{p}_{n-j+1} (t) = -\vec{p}_j (t)$; 
for all $t$, the $\vec{p}_j (t)$ are 
all distinct.
\end{theorem}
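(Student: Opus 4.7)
The plan is to follow the convexity strategy of \cite{YY66}, by rewriting the Bethe equations as the vanishing of the gradient of a strictly convex action, and then to upgrade uniqueness to analyticity via the implicit function theorem. First, I would use Proposition~\ref{proposition.k} to perform the change of variables $p_j = \kappa_t(\alpha_j)$, so that the system $(E_j)[t,n,N]$ becomes
\[
N \kappa_t(\alpha_j) + \sum_{k=1}^n \theta_t(\alpha_j, \alpha_k) = c_j, \qquad c_j := 2\pi j - (n+1)\pi.
\]
Define an antiderivative $K_t$ of $\kappa_t$ and a symmetric antiderivative $\Psi_t$ of $\theta_t$ (i.e.\ $\partial_1 \Psi_t(\alpha,\beta) = \theta_t(\alpha,\beta)$ and $\Psi_t(\alpha,\beta) = \Psi_t(\beta,\alpha)$), and introduce the action
\[
S_t(\alpha_1,\dots,\alpha_n) := N \sum_{j=1}^n K_t(\alpha_j) + \sum_{1 \le j < k \le n} \Psi_t(\alpha_j,\alpha_k) - \sum_{j=1}^n c_j \alpha_j.
\]
Exploiting $\theta_t(\alpha,\beta) = -\theta_t(\beta,\alpha)$, a direct computation gives $\partial_{\alpha_j} S_t = N\kappa_t(\alpha_j) + \sum_k \theta_t(\alpha_j,\alpha_k) - c_j$, so that the critical points of $S_t$ are in bijection with the solutions of the transformed Bethe system.

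The core of the argument is to show that $S_t$ is strictly convex and coercive. Using Computation~\ref{computation.kprime} and Computation~\ref{computation.derivative.theta}, I would express the quadratic form of the Hessian as
\[
v^\top \mathrm{Hess}(S_t)\, v = N \sum_{j=1}^n \kappa_t'(\alpha_j)\, v_j^2 + \sum_{1 \le j<k \le n} \partial_\alpha \theta_t(\alpha_j, \alpha_k)\, (v_j - v_k)^2.
\]
For $t \in (0,\sqrt{2})$ one has $\mu_t \in (\pi/2,\pi)$, so $\sin\mu_t > 0$ and $\sin(2\mu_t) < 0$; the explicit formulas then give $\kappa_t'(\alpha) > 0$ and $\partial_\alpha \theta_t(\alpha,\beta) > 0$ everywhere, yielding strict positive definiteness. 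Coercivity is extracted from the boundary behaviour $\kappa_t(\alpha) \to \pm(\pi - \mu_t)$ and, via Computation~\ref{computation.theta.border}, $\theta_t(\alpha,\beta) \to \pm(2\mu_t - \pi)$ as $\alpha \to \pm \infty$, which gives the asymptotic linear growth rate $N(\pi - \mu_t) + (n-1)(2\mu_t - \pi)$ of $\partial_{\alpha_j} S_t$; dominating the linear term $c_j$ reduces to $|c_j| \le (n-1)\pi < N(\pi - \mu_t) + (n-1)(2\mu_t - \pi)$, which after simplification is equivalent to $2(n-1) < N$, matching the standing assumption $n \le N/2$. A unique global minimiser $\alpha(t) \in \mathbb{R}^n$ therefore exists, producing the unique sequence $\vec{p}_j(t) = \kappa_t(\alpha_j(t)) \in I_t$. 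Since $(t,\alpha) \mapsto \nabla S_t(\alpha)$ is jointly analytic and the Hessian is everywhere positive definite hence invertible, the analytic implicit function theorem promotes $t \mapsto \alpha(t)$, and thus $t \mapsto \vec{p}_j(t)$, to analytic maps on $(0,\sqrt 2)$.

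Finally, for the symmetry $\vec{p}_{n-j+1}(t) = -\vec{p}_j(t)$, I would observe that the involution $(p_1,\dots,p_n) \mapsto (-p_n,\dots,-p_1)$ preserves the Bethe system: using $\Theta_t(-x,-y) = -\Theta_t(x,y)$, substituting $q_j := -p_{n-j+1}$ into $(E_j)$ and reindexing reproduces exactly the equation $(E_j)$ for $(q_j)_j$; uniqueness then forces $q_j = \vec{p}_j(t)$. Distinctness is immediate: if $\vec{p}_j(t) = \vec{p}_k(t)$ with $j \neq k$, subtracting $(E_j)[t,n,N]$ from $(E_k)[t,n,N]$ yields $0 = 2\pi(k-j) \neq 0$, a contradiction.

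The main obstacle is the coercivity estimate. The strict convexity of $S_t$ follows cleanly from the positivity statements in Section~\ref{section.properties.auxiliary.functions}, but ensuring that the minimum of $S_t$ is attained in the interior of $\mathbb{R}^n$ (rather than being pushed to $\pm \infty$) requires a careful tracking of the asymptotic slopes of $K_t$ and $\Psi_t$ against the linear drift $-\sum c_j \alpha_j$; it is precisely this comparison that selects the admissible regime $n \le N/2$ in which the coordinate Bethe ansatz produces genuine eigenvectors, and links naturally to the condition under which Theorem~\ref{theorem.coordinate.ansatz} is stated.
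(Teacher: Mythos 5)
Your strategy is the same as the paper's: rewrite the Bethe system as the vanishing gradient of an auxiliary convex function (your $S_t$ is the paper's $\tilde\zeta_t$ up to the choice of antiderivative), prove strict convexity from the explicit positivity of $\kappa'_t$ and $\partial_\alpha\theta_t$, locate the minimizer in the interior of the domain, and finally derive antisymmetry from the invariance of the system under $p_j \mapsto -p_{n-j+1}$. Two of your steps are genuine simplifications over the paper. For analyticity, the paper constructs an auxiliary analytic ODE whose solution is shown to coincide with the minimizer, while you invoke the analytic implicit function theorem directly using invertibility of the Hessian; both work, and yours is cleaner. For distinctness, the paper shows that the function $\chi_t(\alpha)=N\kappa_t(\alpha)+\sum_k\theta_t(\alpha,\boldsymbol\alpha_k(t))$ is strictly increasing and takes the distinct values $\pi(2j-(n+1))$ at the roots, whereas you simply subtract $(E_j)$ from $(E_l)$ and observe that $p_j=p_l$ forces $2\pi(j-l)=0$; your argument is shorter and equally valid.

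However, there is a genuine gap in the coercivity step, and you have correctly identified it as the crux without resolving it. Your slope estimate $\partial_{\alpha_j} S_t \to N(\pi-\mu_t)+(n-1)(2\mu_t-\pi)-c_j$ presumes that $\alpha_j\to+\infty$ while the remaining $\alpha_k$ stay bounded, so that each $\theta_t(\alpha_j,\alpha_k)$, $k\neq j$, converges to the boundary value $2\mu_t-\pi$ from Proposition~\ref{proposition.limits.auxiliary.functions}. But when several coordinates escape to $+\infty$ together — which is exactly what must be ruled out — Lemma~\ref{lemma.theta.translative} shows that $\theta_t(\alpha_j,\alpha_k)=\theta_t(\alpha_j-\alpha_k,0)$ stays bounded away from $2\mu_t-\pi$ whenever $\alpha_j-\alpha_k$ stays bounded, so the slope estimate does not apply coordinate-by-coordinate. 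The paper circumvents this by a contradiction argument: it assumes a subsequence of constrained minimizers converges to a boundary point with coordinates $j\le j_0$ escaping to $\pi-\mu_t$, and then sums the inequalities $\partial_{p_j}\zeta_t\le 0$ over $j\le j_0$; by antisymmetry $\Theta_t(x,y)=-\Theta_t(y,x)$, the problematic terms $\Theta_t(\vec p_j,\vec p_{j'})$ with $j,j'\le j_0$ cancel pairwise in this sum, leaving only cross terms with $k>j_0$, for which the boundary limit $2\mu_t-\pi$ from Computation~\ref{computation.theta.border} can legitimately be used. This summation-plus-cancellation device is the missing idea in your proposal; without it, the linear-growth comparison $2(n-1)<N$ does not by itself exclude escape along diagonal directions where groups of roots diverge together.
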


\noindent \textbf{Idea of the proof:} 
\textit{Following C.N. Yang and C.P. Yang~\cite{YY66}, we use an auxiliary multivariate function $\zeta_t$ whose 
derivative is zero exactly when the equations 
$(E_j) [t,n,N]$ are verified. We prove 
that this function is convex, which means 
that it admits a minimum (this relies on 
the properties of $\theta_t$ and $\kappa_t$). 
Since we rule out the possibility that the 
minimum is on the border of the domain, 
this function admits a point where 
its derivative is zero, and thus the system 
of equations $(E_j)[t,n,N]$ admits 
a unique solution. In order 
to prove the analycity, we then define 
a function of $t$ that verifies an analytic 
differential equation (and thus is analytic), whose value in some point coincides with the minimum of $\zeta_t$. Since the differential equation ensures that $\zeta'_t$ is null 
on the values of this function, this means 
that for all $t$, its value in $t$ 
is the minimum of $\zeta_t$.}
 
\begin{proof}

\begin{itemize}

\item \textbf{The solutions 
are critical points of an auxiliary function 
$\zeta_t$:}

Let us denote, for all $t,p_1,...,p_n$:
\begin{align*}
\zeta_t (p_1,...,p_n) & = N \sum_{j=1}^n 
\int_{0}^{\kappa^{-1}_t (p_j)} \kappa_t (x) dx 
 + \pi (n+1-2j)\sum_{j=1}^n 
 \kappa_t^{-1} (p_j)\\
 & +  \sum_{k < j} \int_{0}^{\kappa_t ^{-1} (p_j)-\kappa_t ^{-1} (p_k)} 
 \theta_t (x,0)dx. 
\end{align*}

The interest of this function lies in 
the fact that for all $j$ (here the argument 
in each of the sums is $k$):

\begin{align*}
\frac{\partial  \zeta_t}{\partial p_j} (p_1, ..., p_n) &  = 
\left( \kappa_t^{-1} \right)' (p_j).
\left( N p_j - 2\pi j + (n+1)\pi -  
\sum_{k < j } \theta_t (\kappa_t ^{-1} (p_j)-\kappa_t ^{-1} (p_k),0) \right) \\
& + \sum_{j< k} \theta_t (\kappa_t ^{-1} (p_k)-\kappa_t ^{-1} (p_j),0) .\\
&  = 
\left( \kappa_t^{-1} \right)' (p_j).
\left( N p_j - 2\pi j + (n+1)\pi -  
\sum_{k < j } \theta_t (\kappa_t ^{-1} (p_k),\kappa_t ^{-1} (p_j)) \right) \\
& + \sum_{j< k} \theta_t (\kappa_t ^{-1} (p_j),\kappa_t ^{-1} (p_k)).\\
& = \left( \kappa_t^{-1} \right)' (p_j). 
\left( N p_j - 2\pi j + (n+1)\pi + \sum_{k}
\Theta_t (p_j,p_k)\right), 
\end{align*}
since for all $x,y$, $\Theta_t (x,y) = 
- \Theta_t (y,x)$. 

Hence, the system of Bethe equations is 
verified for the sequence $(p_j)_j$ if 
and only for all $j$, $ \frac{\partial \zeta_t }{\partial p_j} 
(p_1, ..., p_n )=0$.

\item \textbf{Convexity of $\zeta_t$:}

Let us denote $\tilde{\zeta}_t : \mathbb{R} ^n 
\rightarrow \mathbb{R}$ such that 
for all $\alpha_1, ... , \alpha_n$: 
\[\tilde{\zeta}_t (\alpha_1 , ... , \alpha_n)
= \zeta_t (\kappa_t (\alpha_1), ... , \kappa_t (\alpha_n)).\]
From the last point, we have that for all 
sequence $(\alpha_k)_k$ and all $j$: 
\[\frac{\partial \tilde{\zeta}_t}{\partial p_j}  (\alpha_1, ... , \alpha_n) = N \kappa_t (\alpha_j) 
- 2\pi j + (n+1)\pi + \sum_k \theta_t (\alpha_j,\alpha_k).\]
As a consequence, for all $k \neq j$: 
\[\frac{\partial ^2 \tilde{\zeta}_t}{\partial p_k \partial p_j}  (\alpha_1, ... , \alpha_n) =  \frac{\partial \theta_t}{\partial \beta} 
 (\alpha_j,\alpha_k) = 
\frac{\sin(2\mu_t)}{\cosh(\alpha_j-\alpha_k) 
- \cos(2\mu_t)}.\]

Moreover, for all $j$:
\[ \frac{\partial^2 \tilde{\zeta}_t}{\partial^2 p_j}  (\alpha_1, ... , \alpha_n , t) = N \kappa'_t (\alpha_j) + \sum_{k \neq j}  
\frac{\partial \theta_t}{\partial \alpha}  (\alpha_j, \alpha_k) = 
N \kappa'_t (\alpha_j) - \sum_{k \neq j}  
\frac{\partial \theta_t}{\partial \beta}  (\alpha_j, \alpha_k).\] 

Let us denote $\tilde{H}_t (\alpha_1 , ... , \alpha_n)$ 
the Hessian matrix of $\tilde{\zeta}_t$. For any 
$(x_1, ... , x_n) \in \mathbb{R}^n$, we 
have 
\begin{align*}
\tilde{H}_t (\alpha_1 , ... , \alpha_n ) & = 
N \sum_j \kappa'_t (\alpha_j) x_j ^2 
+ \sum_{j \neq k} \left( \frac{\partial \theta_t}{\partial \beta}  (\alpha_j, \alpha_k) x_j (x_j-x_k).\right)\\
& = N \sum_j \kappa'_t (\alpha_j) x_j ^2 
+ \sum_{j < k} \left( \frac{\partial \theta_t}{\partial \beta}  (\alpha_j, \alpha_k) x_j (x_j-x_k) \right)\\
& \qquad + \sum_{j < k} \left( \frac{\partial \theta_t}{\partial \beta}  (\alpha_k, \alpha_j) x_k (x_k-x_j) \right)\\
& = N \sum_j \kappa'_t (\alpha_j) x_j ^2 
+ \sum_{j < k} \left( \frac{\partial  \theta_t}{\partial \beta} (\alpha_j, \alpha_k) (x_j-x_k)^2 \right) > 0 
\end{align*}

As a consequence $\tilde{\zeta}_t$ 
is a convex function. As a consequence, 
if it has a (local) minimum, it is unique. 
Since $\kappa_t$ is increasing, this property 
is also true for $\zeta_t$. 

\item \textbf{The function $\zeta_t$ 
has a minimum in $I_t ^n$:}

Let us consider $(C_l)_l$ an increasing 
sequence of compact intervals such 
that $\bigcup_l C_l = I_t$.

Let us assume that $\zeta_t$ has no minimum 
in $I_t ^n$. As a consequence, for all $j$, 
the minimum $\vec{p}^{(l)}$ of $\zeta_t$ on $(C_l)^n$ 
is on its border. Without loss of generality, 
we can assume that there exists 
some $\vec{p}^{(\infty)} \in 
\overline{I_t} ^n$ such that $\vec{p}^{(l)} 
\rightarrow \vec{p}^{(\infty)} $. 

We can assume without loss of generality 
that there exists some $j_0 \in 
\llbracket 1,n\rrbracket$ such that 
$j \le j_0$ if and only if
$\vec{p}^{(\infty)}_j = \pi-\mu_t$. In 
this case, there exists $l_0$ 
such that for all $l$ and $j \le j_0$, 
$\vec{p}_j^{(l)} \ge 0$. 
Since $\tilde{\zeta}_t$ is convex 
and that $\vec{p}^{(l)}$ is a minimum 
for this function on the compact $(C_l)^n$, then 
for all $j \le j_0$, 
\[\frac{\partial \zeta_t}{\partial p_j}  (\vec{p}^{(l)}_1, ... , 
\vec{p}^{(l)}_n) \le 0.\]
This is a particular case of the fact that for 
a convex and continuously differentiable function $f : I \mapsto \mathbb{R}$, 
where $I$ is a compact interval of $\mathbb{R}$, if its minimum on $I$ is the maximal 
element of this compact, then $f'$ is negative on this point, as illustrated 
on Figure~\ref{figure.convex.minimum}. 

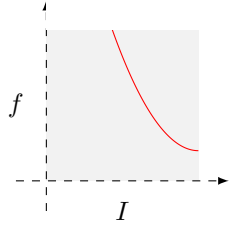
\begin{figure}[!h]
\[\begin{tikzpicture}[scale=0.4]
\fill[gray!10] (0,0) rectangle (5,5);
\draw[dashed,-latex] (-1,0) -- (6,0); 
\draw[dashed, -latex] (0,-1) -- (0,6);
\begin{scope}[xshift=5cm,yshift=1cm]
\draw[color=red,smooth, samples =100, domain=-3:0] plot(\x,0.5*\x*\x);
\end{scope}
\node at (-1,2.5) {$f$};
\node at (2.5,-1) {$I$};
\fill[white] (0,5) rectangle(5,7);
\end{tikzpicture}\]
\caption{\label{figure.convex.minimum} Illustration of the fact that 
the minimum of a convex continuously differentiable function 
on a real compact interval has non-positive derivative.}
\end{figure}

Since $\Theta_t$ cannot 
be defined on $\{(x,x): x \in \partial I_t\}$, 
in order to have an inequality that 
can be transformed by continuity 
into an inequality on $\vec{p}$, we sum 
these inequalities: 

\[\sum_{j=1}^{j_0} \frac{\partial \zeta_t}{\partial p_j}  (\vec{p}^{(l)}_1, ... , 
\vec{p}^{(l)}_n) \le 0.\]
According to the first point of the proof, 
this inequality can be re-writen: 
\[N \sum_{j=1}^{j_0} \vec{p}_j - 2\pi \sum_{j=1}^{j_0} j + j_0 (n+1) \pi + \sum_{j=1}^{j_0} \sum_k 
\Theta_t (\vec{p}_j^{(l)}, \vec{p}_k^{(l)}) \le 0.\]

For all $j,j' \le j_0$, 
the terms $\Theta_t (\vec{p}_j^{(l)},\vec{p}_{j'}^{(l)})$ and $\Theta_t (\vec{p}_{j'}^{(l)},\vec{p}_{j}^{(l)})$ cancel out in this sum. As a consequence: 

\[N \sum_{j=1}^{j_0} \vec{p}_j - 2\pi \sum_{j=1}^{j_0} j + j_0 (n+1) \pi + \sum_{j=1}^{j_0} \sum_{k>j_0} 
\Theta_t (\vec{p}_j^{(l)}, \vec{p}_k^{(l)}) \le 0.\]

This time, the inequality can be extended by 
continuity and we obtain: 

\[N \sum_{j=1}^{j_0} \vec{p}_j - 2\pi \sum_{j=1}^{j_0} j + j_0 (n+1) \pi + \sum_{j=1}^{j_0} \sum_{k>j_0} 
\Theta_t (\vec{p}_j^{(\infty)}, \vec{p}_k^{(\infty)}) \le 0.\]

From Computation~\ref{computation.theta.border}, we have: 

\[N j_0 (\pi-\mu_t) - 2\pi \sum_{j=1}^{j_0} 
j + j_0 (n+1) \pi  + j_0 (n-j_0). (2\mu_t - \pi) \le 0.\]

Since $\mu_t \le \pi$ and that 
$2j_0 (n-j_0) - Nj_0 = -2 j_0 ^2 < 0$, this 
last inequality implies: 

\[j_0 (n+1) \pi  + j_0 (n-j_0). \pi \le 2\pi \sum_{j=1}^{j_0} 
j.\]

On the other hand, we have: 
\[\sum_{j=1}^{j_0} j \le n j_0 - \sum_{j=1}^{j_0} j = nj_0 - \frac{j_0 (j_0+1)}{2}.\]

As a consequence: 

\begin{align*}
j_0 (n+1) \pi  + j_0 (n-j_0). \pi & \le 2\pi n j_0 - j_0 (j_0 +1) \pi\\
(2n+1) j_0 \pi - j_0 ^2 \pi & \le 2\pi n j_0 
- j_0 ^2 \pi - j_0 \pi \\
 j_0 \pi & \le -j_0 \pi
\end{align*}

Since this last inequality is impossible, 
this means that $\zeta_t$ has a minimum 
in $I_t^n$.

\item \textbf{Characterization of the solutions 
with an analytic differential equation:}

Let us denote $\vec{p} (t) 
= (\vec{p}_1 (t) , ... , \vec{p}_n (t))$, for all $t \in (0, \sqrt{2})$, 
the unique minimum of the function $\zeta_t$ 
in $I_t ^n$. 
Let us denote $t \mapsto \vec{s} (t)$ the unique solution 
of the differential equation: 

\[ 
{\vec{s}}'(t) = -\left( H_t (\vec{s}_1 (t) , ... , \vec{s}_n (t))\right) ^{-1} .\left( \frac{\partial^2 \zeta_t} {\partial t \partial p_j }  (\vec{s}_1 (t), ... , \vec{s}_n (t)) \right)_j\]
such that $\vec{s}(t)$ is the minimum 
of the function $\zeta_t$ when $t=\sqrt{2}/2$, 
where $H_t$ is the Hessian 
matrix of $\zeta_t$. Since this is an analytic 
differential equation, it solution $\vec{s}$ 
is analytic. 

Let us rewrite the equation:

\[H_t (\vec{s}_1 (t) , ... , \vec{s}_n (t)) .
\vec{s}'(t) = -\left(\frac{\partial^2 \zeta_t} {\partial t \partial p_j }   (\vec{s}_1 (t), ... , \vec{s}_n (t)) \right)_j \]

\[\frac{\partial^2 \zeta_t} {\partial t \partial p_j }   (\vec{s}_1 (t), ... , \vec{s}_n (t)) 
+ \sum_k \vec{s}'_k (t) . \frac{\partial^2 \zeta_t} {\partial p_k \partial p_j }  (\vec{s}_1 (t), ... , \vec{s}_n (t)) = 0 \]

This means that for all $j$, 
\[\frac{\partial \zeta_t} {\partial p_j }  (\vec{s}_1 (t), ... , 
\vec{s}_n (t) )\]
is a constant. Since $\vec{s}(t)$ 
is the minimum of $\zeta_t$ when $t=\sqrt{2}/2$, 
this constant is zero. As a consequence, 
by unicity of the minimum of $\zeta_t$ for 
all $t$, $\vec{s}(t) = \vec{p}(t)$. 
This means that $t \mapsto \vec{p} (t)$ 
is analytic. 

\item \textbf{Antisymmetry of 
the solutions:} 

For all $t,j$, since $\vec{p} (t)$ 
is the minimum of $\zeta_t$: 

\begin{align*}
N\vec{p}_{n-j+1} - 2\pi (n-j+1) + (n+1)\pi 
+\sum_k \Theta_k (\vec{p}_{n-j+1}, \vec{p}_{n-k+1}) & = 0.\\
N\vec{p}_{n-j+1} + 2\pi j - (n+1)\pi 
+\sum_k \Theta_k (\vec{p}_{n-j+1}, \vec{p}_{n-k+1}) & = 0.\\
-N\vec{p}_{n-j+1} - 2\pi j + (n+1)\pi 
-\sum_k \Theta_k (\vec{p}_{n-j+1}, \vec{p}_{n-k+1}) & = 0.\\
-N\vec{p}_{n-j+1} - 2\pi j + (n+1)\pi 
+\sum_k \Theta_k (-\vec{p}_{n-j+1}, -\vec{p}_{n-k+1}) & = 0.\\
\end{align*}
This means that the sequence $(-\vec{p}_{n-j+1} (t))_j$ is a minimum for $\zeta_t$, and 
as a consequence, for all $j$, 
$\vec{p}_{n-j+1} = - \vec{p}_j$.

\item \textbf{The numbers $\vec{p}_j(t)$ 
are all distinct:}

Let us consider the function 
\[\chi_t : \alpha \mapsto N \kappa_t (\alpha) + 
\sum_{k=1}^{n} \theta_t (\alpha, \alpha_k (t)),\]
where for all $k$, $\boldsymbol{\alpha}_k (t)$ 
is equal to $\kappa_t^{-1} (\vec{p}_k (t))$.
For all $j$, this function has value $\pi(2j-(n+1))$ in $\boldsymbol{\alpha}_j (t)$ (by 
the Bethe equations). The finite sequence 
$(\pi(2j-(n+1)))_j$ is increasing, thus 
it is sufficient to prove that the function $\chi_t$
is increasing. Its derivative is: 

\[\chi'_t : \alpha \mapsto N \kappa'_t (\alpha) 
+ \sum_{k=1}^n \frac{\partial \theta_t}{\partial \alpha} (\alpha, \boldsymbol{\alpha}_k (t)).\]

Since $t \in (0,\sqrt{2})$, $\sin(\mu_t)<0$, 
and thus this function is positive. As 
a consequence $\chi_t$ is increasing.

\end{itemize}

\end{proof}

\subsection{\label{section.hamiltonian} Diagonalisation of some Heisenberg Hamiltonian}

In this section, following the technique introduced by Lieb, Schultz and 
Mattis~\cite{LSM61}, we diagonalise some  Hamiltonian (which is a matrix acting  
on $\Omega_N$). 

\subsubsection{Bosonic creation and anihilation operators}

Let us recall that $\Omega_N = \mathbb{C}^2 \otimes ... 
\otimes \mathbb{C}^2$. In this section, for the purpose of 
notation, we identify $\{1,...,N\}$ with 
$\mathbb{Z}/N\mathbb{Z}$.

\begin{notation}
Let us denote $a$ and $a^{*}$ the matrices in $\mathcal{M}_{2} (\mathbb{C})$ 
equal to 
\[ a \equiv \left(\begin{array}{cc} 0 & 0 \\ 1 & 0 \end{array}\right), \quad  a^{*} \equiv \left(\begin{array}{cc} 0 & 1 \\ 0 & 0 \end{array}\right).\]
For all $j \in \mathbb{Z}/N\mathbb{Z}$, we denote 
$a_j$ (\textbf{creation} operator at position $j$) and $a^{*}_j$ (\textbf{anihilation} operator at position $j$) the matrices in $\mathcal{M}_{2^N}(\mathbb{C})$ equal 
to 
\[a_j \equiv id \otimes ... \otimes a \otimes ... \otimes id, \quad a^{*}_j \equiv id \otimes ... \otimes a^{*} \otimes ... \otimes id.\]
where $id$ denotes the identity, and $a$ acts on 
the $j$th copy of $\mathbb{C}^2$.
\end{notation}

In other words, the image 
of a vector $\ket{\boldsymbol{\epsilon}_1 ... \boldsymbol{\epsilon}_N}$
in the basis of $\Omega_N$ by $a_j$ (resp. $a^{*}_j$) is as follows:

\begin{itemize}
\item if $\boldsymbol{\epsilon}_j=0$ (resp. $\boldsymbol{\epsilon}_j = 1$), then the image vector is $\vec{0}$; 
\item if $\boldsymbol{\epsilon}_j=1$ (resp. $\boldsymbol{\epsilon}_j = 0$), then the image vector is $\ket{\boldsymbol{\eta}_1 ... \boldsymbol{\eta}_N}$ such that 
$\boldsymbol{\eta}_j = 0$ (resp. $\boldsymbol{\eta}_j = 1$) and 
for all $k \neq j$, $\boldsymbol{\eta}_k = \boldsymbol{\epsilon}_k$. 
\end{itemize}

\begin{remark}
The term creation (resp. anihilation) refer to 
the fact that for two elements $\boldsymbol{\epsilon}$, $\boldsymbol{\eta}$ of the 
basis of $\Omega_N$, $a_j [\boldsymbol{\epsilon},\boldsymbol{\eta}] \neq 0$ (resp. $a_j^{*} [\boldsymbol{\epsilon},\boldsymbol{\eta}] \neq 0$) implies that 
$|\boldsymbol{\eta}|_1 = |\boldsymbol{\eta}|+1$
(resp. $|\boldsymbol{\eta}|_1 = |\boldsymbol{\eta}|-1$). If we think of $1$ symbols 
as particles, this operator acts by creating (resp. anihilating)
a particle.
\end{remark}

\begin{lemma} \label{lemma.lowering.operators}
The matrices $a_j$ and $a^{*}_j$ verify the 
following properties, for all $j$ and $k \neq j$:
\begin{itemize}
\item $a_j a^{*}_j + a^{*}_j a_j = id$.
\item $a_j ^2 = {a^{*}_j}^2 = 0$.
\item $a_j$, $a^{*}_j$ 
commute both with $a_k$ and $a^{*}_k$.
\end{itemize}
\end{lemma}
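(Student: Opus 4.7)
The proof is a direct verification relying only on the tensor-product structure of $\Omega_N$, so the plan is to reduce each claim to an elementary computation on the single-site $2\times 2$ matrices and then lift it via the tensor product.

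First I would carry out the three single-site identities by brute multiplication of the $2\times 2$ matrices. Computing $aa^{*}$ and $a^{*}a$ gives two rank-one diagonal projectors whose sum is the $2\times 2$ identity, so $aa^{*} + a^{*}a = id_{\mathbb{C}^2}$. The products $a^2$ and $(a^{*})^2$ vanish because the only nonzero entry of $a$ (resp.\ $a^{*}$) is in row $2$, column $1$ (resp.\ row $1$, column $2$), and squaring such a matrix gives zero.

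Second, I would lift these single-site identities to $\mathcal{M}_{2^N}(\mathbb{C})$ using the product rule for tensor products, namely $(A_1 \otimes \cdots \otimes A_N)(B_1 \otimes \cdots \otimes B_N) = (A_1 B_1) \otimes \cdots \otimes (A_N B_N)$. Applied to $a_j$ and $a_j^{*}$, which differ from $id^{\otimes N}$ only in the $j$-th factor, this gives $a_j a_j^{*} + a_j^{*} a_j = id^{\otimes(j-1)} \otimes (aa^{*} + a^{*}a) \otimes id^{\otimes(N-j)} = id$, and similarly $a_j^2 = id^{\otimes(j-1)} \otimes a^2 \otimes id^{\otimes(N-j)} = 0$ and $(a_j^{*})^2 = 0$.

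Third, for the commutation assertions with $k \neq j$, the same product rule shows that both $a_j a_k$ and $a_k a_j$ equal the tensor product having $a$ in positions $j$ and $k$ and the identity elsewhere (the ordering in which the two $a$-factors are inserted is irrelevant, since they live in different tensor slots, with identities in between). The three analogous pairs $(a_j, a_k^{*})$, $(a_j^{*}, a_k)$, $(a_j^{*}, a_k^{*})$ are treated identically.

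There is no substantive obstacle — the whole lemma is bookkeeping on tensor factors, and the only thing to keep clearly in mind is that these are not fermionic operators: since each $a_j$ (respectively $a_j^{*}$) is the identity outside the $j$-th slot, operators at distinct sites simply commute, in contrast with the anticommutation relations one would obtain after a Jordan--Wigner transformation. Recording the relations now is useful precisely because they will serve as the sole algebraic input for the diagonalization of the Hamiltonian in the rest of Section~\ref{section.hamiltonian}.
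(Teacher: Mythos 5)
Your proof is correct and follows essentially the same route as the paper: explicit $2\times 2$ computations for $aa^{*}+a^{*}a$, $a^{2}$, $(a^{*})^{2}$, lifted to $\Omega_N$ via the tensor-product multiplication rule, with commutation at distinct sites following because each $a_j$, $a_j^{*}$ is the identity outside slot $j$. The remark contrasting with the fermionic case is apt but not part of the required argument.
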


\begin{proof}
\begin{itemize}
\item By straightforward computation, we get 
\[a a^{*} = \left( \begin{array}{cc} 0 & 0 \\ 1 & 0 \end{array}\right) \left( \begin{array}{cc} 0 & 1 \\ 0 & 0 \end{array}\right) = \left( \begin{array}{cc} 0 & 0 \\ 0 & 1 \end{array}\right)\]
and  
\[a^{*} a = \left( \begin{array}{cc} 0 & 1 \\ 0 & 0 \end{array}\right)\left( \begin{array}{cc} 0 & 0 \\ 1 & 0 \end{array}\right) = \left( \begin{array}{cc} 1 & 0 \\ 0 & 0 \end{array}\right)\]
Thus $a a ^{*} + a^{*} a$ is the indentity 
of $\mathbb{C}^2$. As a consequence, for all $j$, 
\[a_j a ^{*}_j + a^{*}_j a_j = id \otimes ... \otimes id,\]
which is the identity of $\Omega_N$. 
\item The second set of equalities comes directly from 
\[a ^2 = \left( \begin{array}{cc} 0 & 0 \\ 1 & 0 \end{array}\right) \left( \begin{array}{cc} 0 & 0 \\ 1 & 0 \end{array}\right) = \left( \begin{array}{cc} 0 & 0 \\ 0 & 0 \end{array}\right) \]
\[\left({a^{*}}\right) ^2 = \left( \begin{array}{cc} 0 & 1 \\ 0 & 0 \end{array}\right) \left( \begin{array}{cc} 0 & 1 \\ 0 & 0 \end{array}\right) =  \left( \begin{array}{cc} 0 & 0 \\ 0 & 0 \end{array}\right).\]
\item The last set derives from the fact that any 
operator on $\mathbb{C}^2$ commutes with the 
identity.
\end{itemize}
\end{proof}

\subsubsection{Definition and properties of the heisenberg hamiltonian}

\begin{notation}
Let us denote $H_N$ the matrix in $\mathcal{M}_{2^N} (\mathbb{C})$ defined as:
\[H_N = \displaystyle{\sum_{j \in 
\mathbb{Z}/N\mathbb{Z}} \left(a^{*}_{j} a_{j+1} + a_{j} a^{*}_{j+1}\right)}\]
\end{notation}

\begin{lemma}
\label{lemma.hamiltonian.properties}
This matrix $H_N$ is non-negative, symmetric
and for all $n$, its restriction to $\Omega_N^{(n)}$ is irreducible.
\end{lemma}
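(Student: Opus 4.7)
The plan is to verify the three properties in turn. Non-negativity is immediate: $a$ and $a^{*}$ have entries in $\{0,1\}$, and tensor products, matrix products, and sums of such matrices stay non-negative. For symmetry, I would use that $a^{T}=a^{*}$ and $(a^{*})^{T}=a$ (direct from the $2\times 2$ definitions), giving $a_{j}^{T}=a_{j}^{*}$ for every $j$; combined with the commutation of operators acting on distinct tensor factors from Lemma~\ref{lemma.lowering.operators}, this yields $(a_{j}^{*}a_{j+1})^{T}=a_{j+1}^{*}a_{j}=a_{j}a_{j+1}^{*}$, so each summand $a_{j}^{*}a_{j+1}+a_{j}a_{j+1}^{*}$ of $H_{N}$ is a matrix plus its transpose, and $H_{N}$ itself is therefore symmetric.

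Irreducibility is the heart of the matter. The key observation is that $a_{j}^{*}a_{j+1}$ (resp.\ $a_{j}a_{j+1}^{*}$) acts on a basis vector $\ket{\boldsymbol{\epsilon}}\in\Omega_{N}$ by hopping a particle from position $j$ to $j{+}1$ (resp.\ from $j{+}1$ to $j$) on the cycle $\mathbb{Z}/N\mathbb{Z}$, being non-zero exactly when the source is occupied and the target is empty. Thus $(H_{N}^{k})[\boldsymbol{\epsilon},\boldsymbol{\eta}]$ counts length-$k$ sequences of admissible adjacent hops that transform $\boldsymbol{\epsilon}$ into $\boldsymbol{\eta}$, so it suffices to show that any two basis vectors of $\Omega_{N}^{(n)}$ are connected by such a sequence, for then some power of $H_{N}$ has a positive $(\boldsymbol{\epsilon},\boldsymbol{\eta})$-entry for any chosen pair.

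To exhibit a route between arbitrary $\boldsymbol{\epsilon}$ and $\boldsymbol{\eta}$ in $\Omega_{N}^{(n)}$, I would pass through the canonical left-packed configuration $\boldsymbol{\omega}$ occupying exactly positions $1,\ldots,n$: drive $\boldsymbol{\epsilon}$ to $\boldsymbol{\omega}$ by an explicit sequence of hops, then run in reverse the analogous path from $\boldsymbol{\eta}$ to $\boldsymbol{\omega}$. Writing the occupied positions of $\boldsymbol{\epsilon}$ as $q_{1}[\boldsymbol{\epsilon}]<\ldots<q_{n}[\boldsymbol{\epsilon}]$, the packing step is organised stage by stage $k=1,\ldots,n$: at stage $k$ the $k$-th particle sits at $q_{k}[\boldsymbol{\epsilon}]\geq k$ and is hopped leftward one site at a time until it lands at position $k$. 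The main technical point---and where most of the care in a formal write-up lies---is to check that each hop in this process is admissible, but this is straightforward because after stage $k{-}1$ the occupied cells are exactly $\{1,\ldots,k{-}1\}\cup\{q_{k}[\boldsymbol{\epsilon}],\ldots,q_{n}[\boldsymbol{\epsilon}]\}$, so all cells $k,k{+}1,\ldots,q_{k}[\boldsymbol{\epsilon}]-1$ traversed by the $k$-th particle are empty. This produces the required irreducibility of $H_{N}$ restricted to $\Omega_{N}^{(n)}$ and so enables the Perron--Frobenius applications to come.
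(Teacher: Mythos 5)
Your proof is correct, and the packing argument for irreducibility is a clean way to establish connectivity of the hop graph on $\Omega_N^{(n)}$. The paper itself gives no detailed proof: it points to Remark~\ref{remark.hamiltonian} (your hopping interpretation, exactly) and then says the argument is ``similar to'' the one for $V_N(t)$ in Lemma~\ref{lemma.properties.matrix.transfer}, which proceeds by iteratively decreasing an interlacing degree. Your route through the canonical left-packed configuration $\boldsymbol{\omega}$ achieves the same thing more transparently, and the invariant you track (occupied set equals $\{1,\dots,k-1\}\cup\{q_k,\dots,q_n\}$ after stage $k-1$) makes the admissibility of every hop immediate.

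One subtlety is worth flagging, and it actually speaks in favour of your formulation. The paper's Lemma~\ref{lemma.properties.matrix.transfer} establishes that a \emph{single} power $V_N(t)^N$ has all entries positive, which hinges on $V_N(t)$ having a strictly positive diagonal (one can always pad a path by ``staying put''). That crutch is unavailable for $H_N$: every diagonal entry of $H_N$ on $\Omega_N^{(n)}$ (with $0<n<N$) vanishes, since a single hop always changes the basis vector. Worse, for even $N$ the quantity $\sum_j j\,\boldsymbol{\epsilon}_j \bmod 2$ flips under every hop (including the wrap-around), so the hop graph is bipartite and \emph{no} power of $H_N$ on $\Omega_N^{(n)}$ has all entries positive. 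Thus $H_N$ is not irreducible in the sense of the paper's own definition (which is really primitivity), and the suggestion that the proof is ``similar'' to that of Lemma~\ref{lemma.properties.matrix.transfer} cannot be taken literally. What you prove — that for each pair $(\boldsymbol{\epsilon},\boldsymbol{\eta})$ there is \emph{some} $k$ (depending on the pair) with $(H_N^k)[\boldsymbol{\epsilon},\boldsymbol{\eta}]>0$ — is the standard matrix-theoretic notion of irreducibility, which is precisely what is needed for the Perron--Frobenius conclusions (positivity and uniqueness of the leading eigenvector for a symmetric non-negative matrix) used later in the paper. So your argument is not only correct but also avoids a gap that a naive imitation of the paper's $V_N(t)$ proof would have run into. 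You might just add the (trivial) caveat that the lemma concerns $0<n<N$, since for $n\in\{0,N\}$ the restricted matrix is the $1\times1$ zero matrix.
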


The proof of Lemma~\ref{lemma.hamiltonian.properties} is similar to the one of Lemma~\ref{lemma.properties.matrix.transfer}, 
following the interpretation of 
the action of $H_N$ described in Remark~\ref{remark.hamiltonian}:

\begin{remark} \label{remark.hamiltonian}
For all $j$, $a^{*}_{j} a_{j+1} 
+ a_j a^{*}_{j+1}$ acts on a vector 
$\boldsymbol{\epsilon}$ in the basis of $\Omega_N$ 
by exchanging the symbols in positions $j$ 
and $j+1$ if they are different. If they are not, 
the image of $\boldsymbol{\epsilon}$ by 
this matrix is $\vec{0}$.
As a consequence, for two vectors $\boldsymbol{\epsilon}$ and 
$\boldsymbol{\eta}$ in the basis of $\Omega_N$, $H_N[\boldsymbol{\epsilon},\boldsymbol{\eta}]
\neq 0$ if and only if $\boldsymbol{\eta}$ 
is obtained from $\boldsymbol{\epsilon}$ by exchanging a $1$ 
symbol of $\boldsymbol{\epsilon}$ with a $0$ in its neighborhood. The Hamiltonian $H_N$ thus 
corresponds to $H$ in \cite{Duminil-Copin.ansatz} 
for $\Delta=0$.
\end{remark}

\subsubsection{Fermionic creation 
and anihilation operators}

\begin{notation}
Let us denote $\sigma$ the matrix of 
$\mathcal{M}_{2} (\mathbb{C})$ defined as: 
\[\sigma = \left( \begin{array}{cc} 1 & 0 \\ 0 & -1 \end{array} \right).\]

Let us denote, for all $j \in \mathbb{Z}/N\mathbb{Z}$, $c_j$ and $c^{*}_j$
the matrices
\[c_j = \sigma \otimes ... \sigma \otimes a \otimes id \otimes ... \otimes id, \quad c^{*}_j = \sigma \otimes ... \sigma \otimes a^{*} \otimes id \otimes ... \otimes id.\]
\end{notation}

Let us recall that two matrices $P,Q$ anticommute when 
$PQ = - QP$.

\begin{lemma} \label{lemma.creation.operators}
These operators verify the following properties 
for all $j$ and $k \neq j$: 
\begin{itemize}
\item $c_j c^{*}_j + c^{*}_j c_j = id$.
\item $c^{*}_j$ and $c_j$ anticommute with both $c^{*}_k$ 
and $c_k$.
\item $a^{*}_{j+1} a_j = - c^{*}_{j+1} c_j$ 
and $a^{*}_j a_{j+1} = - c^{*}_{j} c_{j+1}$.
\end{itemize}
\end{lemma}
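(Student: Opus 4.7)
The plan is to reduce everything to direct $2\times 2$ matrix computations, exploiting the factorwise behaviour of tensor products. First I would verify three pointwise identities for $\sigma$, $a$ and $a^*$: namely $\sigma^2 = id$ and the anticommutations $\sigma a = -a\sigma$ and $\sigma a^{*} = -a^{*}\sigma$. These are immediate from the explicit $2\times 2$ matrices: a short computation gives $\sigma a = -a$ and $a\sigma = a$, and similarly $\sigma a^{*} = a^{*}$ and $a^{*}\sigma = -a^{*}$. This anticommutation of $\sigma$ with $a$ and $a^{*}$ is the single algebraic ingredient that produces every sign appearing in the conclusions.

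For the first property, I multiply $c_j c_j^{*}$ and $c_j^{*} c_j$ factor by factor: on positions $<j$ both products produce $\sigma^2=id$, on position $j$ they produce $a a^{*}$ and $a^{*}a$ respectively, and on positions $>j$ both produce $id$. Summing and invoking the first item of Lemma~\ref{lemma.lowering.operators} on position $j$ yields $c_j c_j^{*} + c_j^{*} c_j = id$.

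For the anticommutation relations, I choose $j<k$ without loss of generality and expand $c_j c_k$ factor by factor: positions $<j$ contribute $\sigma^2=id$, position $j$ contributes $a\sigma$, positions strictly between $j$ and $k$ contribute $id\cdot\sigma=\sigma$, position $k$ contributes $id\cdot a=a$, and positions $>k$ contribute $id$. Doing the same expansion for $c_k c_j$, the only factor that changes is the one on position $j$, where one now gets $\sigma a = -a\sigma$, so $c_j c_k = -c_k c_j$. The same skeleton handles $c_j c_k^{*}$, $c_j^{*} c_k$ and $c_j^{*} c_k^{*}$, the lone sign always arising from anticommuting $\sigma$ past $a$ or $a^{*}$ on position $j$.

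For the third property I expand $c_{j+1}^{*} c_j$ factor by factor: positions $<j$ give $\sigma^2=id$, position $j$ gives $\sigma a = -a$, position $j+1$ gives $a^{*}\cdot id=a^{*}$, and positions $>j+1$ give $id$. Comparing with $a_{j+1}^{*} a_j$, which has $a$ on position $j$, $a^{*}$ on position $j+1$, and identities elsewhere, the two tensor products differ only by the sign introduced at position $j$, giving $a_{j+1}^{*} a_j = -c_{j+1}^{*} c_j$; the symmetric computation using $a^{*}\sigma = -a^{*}$ on position $j$ handles $a_j^{*} a_{j+1} = -c_j^{*} c_{j+1}$. None of these calculations is a genuine obstacle; the only point requiring care is the cyclic case $(j,j+1)=(N,1)$, where the $\sigma$ strings attached to the two operators overlap on almost every site, and one must verify that the identity still holds on the nose (or is corrected by a global parity factor) — this is the familiar Jordan--Wigner boundary subtlety that becomes relevant as soon as these identities are used inside the periodic Hamiltonian $H_N$.
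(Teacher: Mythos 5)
Your computation is correct and follows exactly the approach the paper takes: reduce to factorwise $2\times 2$ multiplications, with the single sign-producing fact being $\sigma a = -a\sigma$ and $\sigma a^{*}=-a^{*}\sigma$, and observe that passing from $c_j c_k$ to $c_k c_j$ (with $j<k$) only changes the factor at position $j$ from $a\sigma$ to $\sigma a$.

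Your closing caveat is more than a caution, though: with the paper's convention $\{1,\dots,N\}\cong\mathbb{Z}/N\mathbb{Z}$, the third bullet really does fail as an operator identity when $j=N$. For instance $c_1^{*}c_N = (a^{*}\sigma)\otimes\sigma^{\otimes(N-2)}\otimes a = -a^{*}\otimes\sigma^{\otimes(N-2)}\otimes a$, whereas $a_1^{*}a_N = a^{*}\otimes id^{\otimes(N-2)}\otimes a$; these agree up to sign only after inserting the parity string $\prod_{i=2}^{N-1}\sigma_i$, so on $\Omega_N^{(n)}$ one gets $c_1^{*}c_N=(-1)^{n}a_1^{*}a_N$ rather than a universal $-1$. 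The paper's own proof writes $c^{*}_{j+1}c_j = id\otimes\cdots\otimes\sigma a\otimes a^{*}\otimes\cdots\otimes id$, which silently assumes $j+1>j$, i.e.\ $j\le N-1$, and never treats the wrap-around term; since $H_N$ is then re-expanded as a circulant sum including $j=N$, the issue you flag is a genuine gap in the paper's argument, not a defect in yours. To patch it one either restricts the third bullet to $1\le j\le N-1$ and handles the boundary term of $H_N$ separately via the parity operator (noting that the spaces $\Omega_N^{(n)}$ used later all have $n$ odd), or states the $j=N$ case with the correct parity-dependent sign.
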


\begin{proof}
\begin{itemize}
\item Since $\sigma ^ 2 = id$, 
for all $j$, 
\[c_j c^{*}_j + c^{*}_j c_j = a_j a^{*}_j + a^{*}_j a_j.\]
From Lemma~\ref{lemma.lowering.operators}, we 
now that this operator is equal to identity.
\item We can assume without loss of generality that 
$j <k$. Let us 
prove that $c_j$ anticommutes with $c_k$ (the other 
cases are similar): 
\[c_j c_{k} = id \otimes ... \otimes a \sigma \otimes 
\sigma \otimes ... \otimes \sigma \otimes \sigma a \otimes 
id \otimes ... \otimes id.\]
\[c_j c_{k} = id \otimes ... \otimes \sigma a \otimes 
\sigma \otimes ... \otimes \sigma \otimes a \sigma \otimes 
id \otimes ... \otimes id.\]
Hence it is sufficient to see:
\[\sigma a = \left( \begin{array}{cc} 1 & 0 \\
0 & -1 \end{array}\right) \left( \begin{array}{cc} 0 & 0 \\
1 & 0 \end{array}\right) = \left( \begin{array}{cc} 0 & 0 \\
-1 & 0 \end{array}\right)\]
\[a \sigma = \left( \begin{array}{cc} 0 & 0 \\
1 & 0 \end{array}\right) \left( \begin{array}{cc} 1 & 0 \\
0 & -1 \end{array}\right) = \left( \begin{array}{cc} 0 & 0 \\
1 & 0 \end{array}\right)= - \sigma a\]
\item Let us prove the first equality (the other 
one is similar): 
\[c^{*}_{j+1} c_j = id \otimes ... \otimes id \otimes 
\sigma a \otimes a^{*} \otimes id \otimes ... \otimes id.\]
We have just seen in the last point 
that $\sigma a = - a$. As a consequence 
$c^{*}_{j+1} c_j = - a^{*}_{j+1} a_j$.
\end{itemize}
\end{proof}

\subsubsection{\label{section.action.orthogonal.matrix} Action of a symmetric orthogonal matrix}

Let us denote $c^*$
is the vector $(c^{*}_{1}, ... , c^{*}_{N})$ and $c^t$ 
is the transpose of the 
vector $(c_{1}, ... , c_{N})$.
Let us consider a symmetric and 
orthogonal matrix $U=  \left( u_{i,j} \right)_{i,j}$ 
in $\mathcal{M}_{N} (\mathbb{R})$ and 
denote $b$ and $b^{*}$ the matrices: 
\[b = U.c^{t} = (b_1, ... , b_N), \quad b^{*} = c^{*}.U^t = (b^{*}_1 , ... , b^{*}_N).\]

\begin{notation}
For all $\alpha \in \{0,1\}^N$, 
we denote: 
\[\psi_{\alpha} = (b^{*}_1)^{\alpha_1} ... 
(b^{*}_N)^{\alpha_N} \boldsymbol{\nu}_N,\]
where $\boldsymbol{\nu}_N = \ket{0,...,0}$. 
\end{notation}

\begin{lemma} \label{lemma.operateurs.b}
For all $j$ and $k \neq j$: 
\begin{itemize}
\item $b_j$ and $b^{*}_j$ anticommute with 
both $b_k$ and $b^{*}_k$ and $b_j b^{*}_j + b^{*}_j b_j 
= id$.
\item 
For all $\alpha \in \{0,1\}^N$, $\psi_{\alpha} \neq \vec{0}$.
\item For all $j$ and $\alpha$, we have:
\begin{enumerate}
\item $b^{*}_j b_j \psi_{\alpha} = \textbf{0}$ if $\alpha_j = 0$,
\item $b^{*}_j b_j \psi_{\alpha} = \psi_{\alpha}$ if $\alpha_j = 1$.
\end{enumerate}
\end{itemize}
\end{lemma}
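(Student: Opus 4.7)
The plan is to treat the three points in order, deducing points 1 and 3 by direct algebraic manipulation and then handling the non-vanishing assertion (point 2) via a dimension-counting argument rather than an unwieldy inner-product computation.

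First I would establish the anticommutation relations. Since $U$ is real symmetric, one has $b_j = \sum_k U_{j,k} c_k$ and $b^*_j = \sum_k U_{j,k} c^*_k$, and a bilinear expansion using Lemma~\ref{lemma.creation.operators} together with the orthogonality identity $UU^t = id$ yields
\[
b_j b^*_l + b^*_l b_j = \sum_{k} U_{j,k} U_{l,k} = \delta_{j,l}, \quad b_j b_l + b_l b_j = 0, \quad b^*_j b^*_l + b^*_l b^*_j = 0,
\]
where the last two use $c_k c_m + c_m c_k = c^*_k c^*_m + c^*_m c^*_k = 0$ (valid for every $k,m$, including $k = m$ since $c_k^2 = (c^*_k)^2 = 0$). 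Taking $l = j$ in the first identity gives $b_j b^*_j + b^*_j b_j = id$, while $l \neq j$ gives the stated anticommutations.

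For the third point, note that $b_j \boldsymbol{\nu}_N = 0$ since each $c_k$ kills $\boldsymbol{\nu}_N$. Using the relations of point 1 to push $b_j$ rightwards through the ordered product defining $\psi_\alpha$, and invoking $b_j b^*_j = id - b^*_j b_j$ at the step where $b_j$ meets $b^*_j$, one finds $b_j \psi_\alpha = 0$ when $\alpha_j = 0$, and
\[
b_j \psi_\alpha = \epsilon \, (b^*_1)^{\alpha_1} \cdots \widehat{(b^*_j)} \cdots (b^*_N)^{\alpha_N} \boldsymbol{\nu}_N, \qquad \epsilon = (-1)^{\alpha_1 + \cdots + \alpha_{j-1}},
\]
when $\alpha_j = 1$. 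Applying $b^*_j$ and sliding it back into the $j$th slot contributes another factor $\epsilon$, yielding $b^*_j b_j \psi_\alpha = \epsilon^2 \psi_\alpha = \psi_\alpha$, as required.

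The main obstacle is point 2, and the idea is to avoid computing $\|\psi_\alpha\|$ directly. Let $V = \mathrm{span}\{\psi_\alpha : \alpha \in \{0,1\}^N\}$. The explicit formulas for $b_j \psi_\alpha$ above, together with their mirror for $b^*_j \psi_\alpha$ (which either vanishes via $(b^*_j)^2 = 0$ when $\alpha_j = 1$, or produces $\pm \psi_{\alpha'}$ when $\alpha_j = 0$), show that $V$ is stable under every $b_j$ and every $b^*_j$. Inverting $b = U c^t$ via $U^{-1} = U^t = U$, one gets $c_k = \sum_j U_{j,k} b_j$ and $c^*_k = \sum_j U_{j,k} b^*_j$, so $V$ is also stable under every $c_k$ and $c^*_k$. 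Since $\boldsymbol{\nu}_N = \psi_{(0,\ldots,0)} \in V$ and iterated application of the $c^*_k$'s to $\boldsymbol{\nu}_N$ produces each canonical basis vector of $\Omega_N$ (each $c^*_k$ inserts a $1$ in position $k$, up to a sign from the $\sigma$ factors), we conclude $V = \Omega_N$. Hence $V$ has dimension $2^N$, but it is spanned by the $2^N$ vectors $\psi_\alpha$; these must therefore be linearly independent, and in particular each $\psi_\alpha$ is nonzero.
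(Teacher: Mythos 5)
Your argument is correct throughout. For the anticommutation relations and the action of $b^*_j b_j$ on $\psi_\alpha$ (points 1 and 3), your derivation coincides with the paper's: bilinear expansion in the $c$-operators using the orthogonality of $U$, then pushing $b_j$ (resp.\ $b^*_j$) through the ordered product while tracking signs. One detail worth highlighting, which you do address: you also need $b_j^2 = (b^*_j)^2 = 0$, which is not part of the stated anticommutation relations for $k \neq j$; deriving it from $c_k^2 = (c^*_k)^2 = 0$ as you do is the right move, and it is needed to see that $b^*_j \psi_\alpha$ vanishes when $\alpha_j = 1$.

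For the non-vanishing of $\psi_\alpha$ (point 2), your route is genuinely different from the paper's. The paper expands $b^*_{k_1} \cdots b^*_{k_s} \boldsymbol{\nu}_N$ in the canonical basis and identifies the coefficients as $s \times s$ minors of $U$ with the rows $k_1, \dots, k_s$ fixed; it then argues these minors cannot all vanish because $U$ is invertible. You instead observe that the span $V$ of the $\psi_\alpha$ is stable under all $b_j, b^*_j$ (by the explicit formulas from point 3 and its analogue), hence — inverting $b = Uc^t$ by $U^{-1} = U$ — stable under all $c_k, c^*_k$; since $\boldsymbol{\nu}_N \in V$ and iterated $c^*_k$'s generate (up to sign) every canonical basis vector, $V = \Omega_N$, so the $2^N$ spanning vectors $\psi_\alpha$ must be linearly independent and in particular nonzero. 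This avoids the explicit minor computation entirely, and as a bonus directly yields the linear independence of the family $(\psi_\alpha)_\alpha$, which the paper proves separately (in Theorem~\ref{theorem.eigenvalues.hamiltonian}) by successively applying products of $b^*_j b_j$ to a hypothetical vanishing linear combination. Your version therefore subsumes that later step as well and is arguably cleaner; the paper's approach, on the other hand, gives a more explicit description of $\psi_\alpha$ in coordinates, which can be useful information in its own right.
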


\begin{proof}
\begin{itemize}
\item \textbf{Anticommutation relations:} 

Let us prove that $b_j$ and $b^{*}_k$ 
anticommute (the other statements of the first 
point have a similar proof). We rewrite the definition 
of $b_j$ and $b^*_k$:
\[b_j = \sum_i u_{i,j} c_i\qquad \text{and} \qquad b^{*}_k = \sum_i u_{k,i} c^{*}_i = 
\sum_i u_{i,k} c^{*}_i.\]
Thus \[b_j b^{*}_k = 
\sum_{i} \sum_{l \neq i} u_{i,j} u_{l,k} c_i c^{*}_l + \sum_i 
u_{i,j} u_{i,k} c_i c^{*}_i.\]
From Lemma~\ref{lemma.creation.operators}, 
\[b_j b^{*}_k = 
- \sum_{l} \sum_{i \neq l} u_{i,j} u_{l,k} c^{*}_l c_i + \sum_i u_{i,j} u_{i,k} (id -  c^{*}_i c_i).\]
Since the matrix $U$ is orthogonal, 
\[b_j b^{*}_k = 
- \sum_{l} \sum_{i \neq l} u_{i,j} u_{l,k} c^{*}_l c_i - \sum_i u_{i,j} u_{i,k} c^{*}_i c_i = - b^{*}_k b_j.\]
\textit{Let us notice that this step is the reason why we use the 
operators $c_i$ instead of the operators $a_i$}.
\item For all $k$, $b^{*}_k = \sum_{l} u_{k,l} a^{*}_l.$
As a consequence, for a sequence $k_1, ... , k_s$, 
\[b^{*}_{k_1} ... b^{*}_{k_s}.\boldsymbol{\nu}_N 
= \sum_{l_1} ... \sum_{l_s} \left(\prod_{j=1}^s u_{k_j,l_j} \right)
\left(\prod_{j=1}^s 
a^{*}_{l_j}\right).\boldsymbol{\nu}_N.\]
Since $(a^{*})^2 = 0$, the sum can be considered on 
the integers $l_1,...,l_s$ such that they are two by 
two distinct. The operator 
$a^{*}_{l_1} ... a^{*}_{l_s}$ acts on 
$\boldsymbol{\nu}_N$ by changing the $0$ on positions 
$l_1, ..., l_s$ into symbols $1$. The coefficient 
of the image of $\boldsymbol{\nu}_N$ by 
this operator in the vector $b^{*}_{k_1} ... b^{*}_{k_s}.\boldsymbol{\nu}_N$ is thus:
\[\sum_{\sigma \in \Sigma_s} \prod_{j=1}^s u_{k_j,l_{\sigma(j)}}.\]
If this coefficient was equal to zero for all $\sigma$, 
it would mean that any size $s$ sub-matrice of $U$
have determinant equal to zero, which 
is impossible since $U$ is orthogonal, and
thus invertible. As a consequence, none of the 
vectors $\psi_{\alpha}$ is to zero.
\item When $\alpha_j = 0$, from the fact 
that when $j \neq k$, $b_j$ and $b^{*}_k$ anticommute, 
we get that 
\[b_j \psi_{\alpha} = (-1)^{|\alpha|_1} (b^{*}_1)^{\alpha_1} ... 
(b^{*}_N)^{\alpha_N} b_j \boldsymbol{\nu}_N,\]
and $b_j \boldsymbol{\nu}_N = \vec{0}$, since for all $j$, 
$a_j \boldsymbol{\nu}_N = \vec{0}$. As a consequence $b^{*}_j b_j \boldsymbol{\nu}_N = \vec{0}$.
When $\alpha_j = 1$, by the anticommutation 
relations:
\[b^{*}_j b_j \psi_{\alpha} = (b^{*}_1)^{\alpha_1} ... 
(b^{*}_{j-1})^{\alpha_{j-1}}  b^{*}_j b_j b^{*}_j 
(b^{*}_{j+1})^{\alpha_{j+1}} ...
(b^{*}_N)^{\alpha_N} \boldsymbol{\nu}_N,\]
since the coefficients 
$-1$ introduced by anticommutation are canceled 
out by the fact that we use it for $b_j$ and $b^{*}$
From the first point:
\[b^{*}_j b_j \psi_{\alpha} = (b^{*}_1)^{\alpha_1} ... 
(b^{*}_{j-1})^{\alpha_{j-1}}  b^{*}_j (id- b^{*}_j b_j) 
(b^{*}_{j+1})^{\alpha_{j+1}} ...
(b^{*}_N)^{\alpha_N}.\]

\begin{align*}
b^{*}_j b_j \psi_{\alpha} & =  (b^{*}_1)^{\alpha_1} ... 
(b^{*}_{j-1})^{\alpha_{j-1}}  b^{*}_j (id- b^{*}_j b_j) 
(b^{*}_{j+1})^{\alpha_{j+1}} ...
(b^{*}_N)^{\alpha_N} \\
& = \psi_{\alpha} - (b^{*}_1)^{\alpha_1} ... 
(b^{*}_N)^{\alpha_N} b_j \boldsymbol{\nu}_N \\
& = \psi_{\alpha}.
\end{align*}
\end{itemize}
\end{proof}

\subsubsection{Diagonalisation of the Hamiltonian}

\begin{theorem}
\label{theorem.eigenvalues.hamiltonian}
The eigenvalues of $H_N$ are exactly the numbers: 
\[2 \sum_{\alpha_j =1} \cos\left(\frac{2\pi j }{N}\right),\]
for $\alpha \in \{0,1\}^N$.
\end{theorem}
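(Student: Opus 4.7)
The plan is to reduce $H_N$ to a sum of free-fermion number operators via a Jordan–Wigner transformation followed by a real discrete Fourier transform, and then to read off the spectrum directly from the basis $(\psi_\alpha)$ supplied by Lemma~\ref{lemma.operateurs.b}. Concretely, Lemma~\ref{lemma.creation.operators}(iii) turns every bulk bilinear $a^{*}_{j}a_{j+1}+a_{j}a^{*}_{j+1}$ with $1\le j\le N-1$ into $-(c^{*}_{j}c_{j+1}+c^{*}_{j+1}c_{j})$. The cyclic term $j=N$ is not covered by that lemma because the Jordan–Wigner string $\sigma_{2}\sigma_{3}\cdots\sigma_{N-1}$ no longer cancels; a direct tensor-product computation (using $\sigma=\mathrm{id}-2a^{*}a$) shows that on the invariant subspace $\Omega_N^{(n)}$ the string acts as the scalar $(-1)^{n}$ on the support of the boundary bilinear. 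Hence, on each sector $\Omega_N^{(n)}$, $H_N$ becomes a real quadratic form $-c^{*}\!\cdot\!M_n\!\cdot\!c^{t}$, with $M_n$ the tridiagonal cyclic matrix of $1$'s when $n$ is odd and its antiperiodic variant (corner entries flipped in sign) when $n$ is even.

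Next, I would pick a real symmetric orthogonal $U$ diagonalizing $M_n$: the discrete Hartley matrix $U_{jk}=\tfrac{1}{\sqrt{N}}\bigl(\cos(2\pi jk/N)+\sin(2\pi jk/N)\bigr)$ in the periodic sector and its antiperiodic analogue in the other. Both satisfy $U=U^{t}$ and $U^{2}=\mathrm{id}$, so they fit the framework of Section~\ref{section.action.orthogonal.matrix}. A short computation then gives
\[
H_{N}\big|_{\Omega_{N}^{(n)}}\;=\;\sum_{j=1}^{N}\mu_{j}^{(n)}\,b^{*}_{j}b_{j},\qquad \mu_{j}^{(n)}=-2\cos\bigl(k_{j}^{(n)}\bigr),
\]
with $b=Uc^{t}$, $b^{*}=c^{*}U^{t}$ canonical fermions by Lemma~\ref{lemma.operateurs.b}(i), and $k_{j}^{(n)}=2\pi j/N$ (resp.\ $\pi(2j{+}1)/N$) in the odd (resp.\ even) $n$ sector.

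Finally, Lemma~\ref{lemma.operateurs.b}(ii)–(iii) produces $2^{N}$ nonzero common eigenvectors $\psi_{\alpha}$ of the commuting number operators $b^{*}_{j}b_{j}$, with pairwise distinct joint eigenvalues $(\alpha_{1},\ldots,\alpha_{N})\in\{0,1\}^{N}$; they therefore form a basis of $\Omega_{N}$, and each $\psi_\alpha$ is an eigenvector of $H_N$ with eigenvalue $\sum_{\alpha_{j}=1}\mu_{j}^{(n)}$ where $n=|\alpha|_{1}$. The involution $S\mapsto S^{c}$ on subsets flips the overall sign, since $\sum_{j=0}^{N-1}\cos(2\pi j/N)=0$, and thereby absorbs the minus in front of the cosines; an analogous collapse matches the antiperiodic momenta to the periodic ones, so that the full list of eigenvalues reduces to the advertised family $\bigl\{2\sum_{\alpha_{j}=1}\cos(2\pi j/N):\alpha\in\{0,1\}^{N}\bigr\}$.

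The main obstacle is the boundary/parity bookkeeping: the nontrivial Jordan–Wigner string makes the fermionic chain obey periodic boundary conditions on odd-particle sectors and antiperiodic ones on even-particle sectors, so two distinct families of Bloch momenta appear; showing that their union (together with the $S\mapsto S^{c}$ symmetry) collapses to the single family $\{2\pi j/N\}_{j}$ of the theorem is the delicate step. Everything else—existence of a self-inverse real symmetric Hartley matrix $U$, the bilinear identity $H_{N}=\sum_{j}\mu_{j}b^{*}_{j}b_{j}$, and the linear independence of the $\psi_{\alpha}$—is routine given Lemmas~\ref{lemma.creation.operators} and~\ref{lemma.operateurs.b}.
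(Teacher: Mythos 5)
Your proposal correctly identifies a genuine gap in the paper's argument: the paper writes ``$H_N=\sum_j c^{*}_j c_{j+1}+c^{*}_{j+1}c_j$,'' invoking Lemma~\ref{lemma.creation.operators}, but that lemma only covers the bulk terms $j=1,\dots,N-1$, and the wrap-around term $j=N$ carries a Jordan--Wigner string that does not cancel. Diagnosing this is the right instinct, and the paper's proof, as written, is incorrect for exactly this reason. Two remarks on the details, however. First, your $\sigma=\mathrm{id}-2a^{*}a$ is not the paper's $\sigma=\begin{pmatrix}1&0\\0&-1\end{pmatrix}$; in the paper's conventions ($a$ sends $\ket{0}\mapsto\ket{1}$, so $\ket{1}=e_1$, $\ket{0}=e_2$), one has $\sigma=\mathrm{id}-2aa^{*}$, and the string on $\Omega_N^{(n)}$ evaluates to $(-1)^{N-n}$ rather than $(-1)^{n}$. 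This off-by-$(-1)^{N}$ flips the periodic/antiperiodic assignment when $N$ is odd.

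The more serious issue is the final step, which you yourself flag as ``the delicate step'': the proposed collapse of periodic and antiperiodic momenta onto the single family $\{2\pi j/N\}$ does not hold, and in fact the theorem is false as stated. The correct sector-by-sector statement is that the spectrum of $H_N$ on $\Omega_N^{(n)}$ is $\bigl\{\sum_{j\in S}\lambda_j^{(n)}:|S|=n\bigr\}$ where $\lambda_j^{(n)}=-2\cos(2\pi j/N)$ when $N-n$ is odd and $\lambda_j^{(n)}=-2\cos((2j+1)\pi/N)$ when $N-n$ is even. The involution $S\mapsto S^c$ moves you to a different particle number, so it cannot be used to reconcile sectors. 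A concrete counterexample: for $N=3$ the actual spectrum of $H_3$ is $\{0,0,-1,-1,-1,-1,2,2\}$ (compute directly: on $\Omega_3^{(1)}$ and $\Omega_3^{(2)}$ the Hamiltonian is the matrix $J-I$, with eigenvalues $2,-1,-1$, and $\Omega_3^{(0)},\Omega_3^{(3)}$ contribute two zeros), whereas the theorem's formula with $\alpha=(1,1,0)$ gives $2(\cos(2\pi/3)+\cos(4\pi/3))=-2$, which is not in the spectrum at all. Likewise for $N=4$: on $\Omega_4^{(2)}$ the actual eigenvalues are $\pm2\sqrt{2},0,0,0,0$ (the antiperiodic single-particle modes $\pm\sqrt{2}$ each doubly degenerate), while the formula predicts $\pm2,0,0,\pm2$. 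The obstruction is real.

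What does survive is the restriction to \emph{odd} particle number: when $n$ is odd, the set $\{\lambda_j^{(n)}\}_j$ coincides, as a multiset, with $\{2\cos(2\pi j/N)\}_j$ (this uses $\{-\cos(2\pi j/N)\}_j=\{\cos(2\pi j/N)\}_j$ for $N$ even, and $\{-\cos((2j{+}1)\pi/N)\}_j=\{\cos(2\pi j/N)\}_j$ for $N$ odd). Since the paper only applies Theorem~\ref{theorem.eigenvalues.hamiltonian} to the sectors $\Omega_N^{(2n+1)}$ in the identification argument, the downstream use remains valid; but the theorem should be restated as holding for $\alpha$ with $|\alpha|_1$ odd, and the proof must track the parity-dependent boundary condition rather than ignore it. Your setup gets you most of the way there; the missing piece is replacing the hoped-for ``collapse'' with the sector-restricted reconciliation just described.
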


\begin{proof}

\begin{enumerate}
\item \textbf{Rewriting $H_N$:} 

From Lemma~\ref{lemma.creation.operators}, we 
can write $H_N$ as: 
\[H_N = \sum_j c^{*}_j c_{j+1} + c^{*}_{j+1} c_j.\]
The Hamiltonian $H_N$ can be then rewritten 
as $H_N= c^* M c^{t}$, where 
$M$ is the matrix defined by blocks 
\[M = \frac{1}{2} \left( \begin{array}{cccc} \textbf{0} & id & & id \\
id & \ddots & \ddots & \\
& \ddots & \ddots & id \\
id & & id & \textbf{0} \end{array}\right),\]
where $id$ denotes the 
identity matrix on $\mathbb{C}^2$, 
and \textbf{0} denotes the null 
matrix. Let 
us denote $M'$ the matrix 
of $\mathcal{M}_{2^N} (\mathbb{R})$ 
obtained from $M$ by replacing 
$\textbf{0},id$ by $0,1$. 
\item \textbf{Diagonalisation of $M$:}
The matrix $M'$ is symmetric and 
thus can be diagonalised in $\mathcal{M}_{2^N} (\mathbb{R})$ in an orthogonal 
basis. It is rather 
straightforwards to see that 
the vectors $\psi^k$, for any $k \in \{0,...,N-1\}$ are an orthonormal family of eigenvectors 
of $M'$ for the eigenvalue 
$\lambda_k = \cos(\frac{2\pi k}{N})$, 
where for all $j \in \{1,...,N\}$, 
\[\psi^k_j = \sqrt{\frac{2}{N}} \left(\sin(\frac{2\pi kj}{N}),\cos(\frac{2\pi kj}{N})\right).\]
This comes from the equalities 
\[\cos(x-y)+\cos(x+y) = 2\cos(x)\cos(y),\]
\[\sin(x-y)+\sin(x+y) = 2\cos(x)\sin(y),\]
applied to $x=k(j-1)$ and $y=k(j+1)$.
This family of vectors is free, 
since the Vandermonde matrix 
with coefficients $e^{2\pi kj/N}$ 
is invertible. As a consequence, 
one can write 
\[U' M' {U'}^t = D',\]
where $D'$ is the diagonal 
matrix whose diagonal coefficients 
are the numbers $\lambda_k$, and 
$U'$ is the orthogonal matrix 
given by the vectors $\psi^k$. 
Replacing any coefficient of these 
matrices by the product 
of this coefficient with the 
identity, one gets an orthogonal matrix 
$U$ and a diagonal one $D$ such that:
\[U M U^t = D.\]
\item \textbf{Some eigenvectors of $H_N$:}
Let us consider the vectors $\psi_{\alpha}$ 
constructed in Section~\ref{section.action.orthogonal.matrix}
for the matrix $U$ of the 
last point, which is symmetric and 
orthogonal.
From the expression of $H_N$, 
we get that \[H_N \psi_{\alpha} = \left(2 
\sum_{j:\alpha_j = 1} \cos\left( \frac{2\pi j}{N} \right)\right).\psi_{\alpha}.\]
Since $\psi_{\alpha}$ is non zero, this is an eigenvector 
of $H_N$. 

\item \textbf{The family $(\psi_{\alpha})$ is a basis of $\Omega_N$:}

From cardinality of this family (the 
number of possible $\alpha$, equal to 
$2^N$), 
this is sufficient to prove that 
this family is free. For this purpose, let us assume 
that there are exists a sequence $(x_{\alpha})_{\alpha \in \{0,1\}}$
such that 
\[\sum_{\alpha \in \{0,1\}_N} x_{\alpha}.\psi_{\alpha} = \vec{0}.\] 
We apply first $b^{*}_1 b_1 .... b^{*}_N b_N$ 
and get that $x_{(1,...,1)} \psi_{(1,...,1)} = 0,$
and thus $x_{(1,...,1)}=0$ (by Lemma~\ref{lemma.operateurs.b},
the vector $\psi_{(1,...,1)}$ is 
not equal to zero).
Then we apply successively the operators 
$\prod_{j\neq k} b^{*}_j b_j$
for all $k$, and obtain that 
for all $\alpha \in \{0,1\}_N$
such that $|\alpha|_1 = N-1$,
$x_{\alpha}=0$. By repeating this argument, 
we obtain that all the coefficient $x_{\alpha}$ 
are null. As a consequence $(\psi_{\alpha})_{\alpha}$
is a base of eigenvectors for $H_H$, and 
the eigenvalues obtained in the 
last point cover all the eigenvalues of $H_N$.
\end{enumerate}
\end{proof}

\subsection{ \label{subsection.identification} Identification}

The proofs for the 
following two lemmas can be found in~\cite{Duminil-Copin.ansatz} (respectively Lemma 5.1 and Theorem 2.3). In Lemma~\ref{lemma.commutation}, our notation $H_N$ 
corresponds to their notations $H$ for $\Delta=0$, 
and $V_N (\sqrt{2})$ corresponds to $V$ 
for $\Delta=0$. In Lemma~\ref{lemma.eigenvalue.hamiltonian}, 
the equations $(E_j)[\sqrt{2},n,N]$ correspond 
to their $(BE)$, $\psi$ to $\psi$ for $\Delta=0$.

\begin{lemma}
\label{lemma.commutation}
For all $N \ge 1$, the Hamiltonian $H_N$ and $V_N (\sqrt{2})$ 
commute: 
\[H_N . V_N (\sqrt{2}) = V_N (\sqrt{2}) . H_N.\]
\end{lemma}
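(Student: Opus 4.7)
The plan is to exploit the fact that $t = \sqrt{2}$ corresponds to $\Delta_t = (2-t^2)/2 = 0$, the free-fermion point of the six-vertex model, where both operators are simultaneously diagonalized by fermionic Fock states.

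First, I would verify that the scattering phase $\Theta_{\sqrt 2}$ vanishes identically on $I_{\sqrt 2}^2$: at $\Delta_t = 0$, the ratio $\tfrac{e^{-ix}+e^{iy}}{e^{-iy}+e^{ix}}$ simplifies to $e^{i(y-x)}$ (factoring each of numerator and denominator as $2\cos((x+y)/2)$ times a complex phase), which cancels the prefactor $e^{i(x-y)}$ in the defining equation, yielding $\exp(-i\Theta_{\sqrt 2}(x,y)) = 1$; combined with $\Theta_{\sqrt 2}(0,0) = 0$ this forces $\Theta_{\sqrt 2} \equiv 0$. Consequently the coefficient $C_\sigma(\sqrt 2)[p_1, \ldots, p_n]$ factorizes as $\epsilon(\sigma)$ times a factor symmetric in the $p_j$'s, and the Bethe vectors of Theorem~\ref{theorem.coordinate.ansatz} reduce to Slater determinants $\det\bigl(e^{i p_l q_k[\boldsymbol{\epsilon}]}\bigr)_{k,l}$ of plane waves on the particle positions.

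Next, I would identify these Slater determinants with the fermionic eigenvectors $\psi_\alpha$ of Section~\ref{section.action.orthogonal.matrix}, constructed with $U$ equal to the (appropriately twisted) discrete Fourier transform matrix diagonalizing the translation-invariant cyclic matrix $M$ of Section~\ref{section.hamiltonian}. After Jordan-Wigner, these $\psi_\alpha$ are precisely anti-symmetrized products of plane waves indexed by occupied momenta, matching the Slater determinant form above. By Theorem~\ref{theorem.eigenvalues.hamiltonian} they form a complete eigenbasis of $H_N$ on $\Omega_N$, and the identification with Bethe vectors shows they are also eigenvectors of $V_N(\sqrt 2)$. Common diagonalizability then forces $[H_N, V_N(\sqrt 2)] = 0$.

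The main obstacle is the matching of momenta: the Bethe momenta $\pi(2j-n-1)/N$ and the Fourier momenta $2\pi k/N$ that diagonalize $M$ are offset by $\pi/N$ in a way depending on the parity of $n$, reflecting the switch between periodic and antiperiodic boundary conditions imposed on the Jordan-Wigner fermions by the cyclic lattice. One may also bypass this explicit matching by proving directly that $V_N(\sqrt 2)$ is a translation-invariant quadratic expression in the fermionic operators $c_j, c^*_j$, a standard consequence of the factorization of vertex weights at the free-fermion line; any two such operators, being polynomials in the commuting family of momentum-space number operators, automatically commute.
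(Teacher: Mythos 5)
The paper does not actually prove this lemma; it refers the reader to Lemma~5.1 of Duminil-Copin et al., whose argument is the standard Yang--Baxter one: transfer matrices sharing the same anisotropy $\Delta$ commute with one another, and $H_N$ is extracted as a logarithmic derivative of $V_N$ at a shift point, so $[H_N, V_N(t)] = 0$ comes out at once, for every $t$ on the line $\Delta = 0$. Your route is genuinely different and exploits something specific to $t = \sqrt 2$. Your verification that $\Theta_{\sqrt 2} \equiv 0$ is correct (the ratio $\frac{e^{-ix}+e^{iy}}{e^{-iy}+e^{ix}} = e^{i(y-x)}$ indeed cancels the prefactor), and the reduction of Bethe vectors to Slater determinants at the free-fermion point is exactly the right picture. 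The Yang--Baxter approach buys generality and sidesteps the Jordan--Wigner boundary term altogether; yours offers more explicit control and connects naturally to Section~\ref{section.hamiltonian}, but is tied to $\Delta = 0$.

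As written, however, your argument has real gaps rather than cosmetic ones. First, simultaneous diagonalizability requires a \emph{complete} family of common eigenvectors, but Theorem~\ref{theorem.coordinate.ansatz} as stated produces only the single Bethe vector with quantum numbers $I_j = j$; you would need the full coordinate ansatz with arbitrary distinct integers $(I_j)_j$ to generate $\binom{N}{n}$ eigenvectors spanning each $\Omega_N^{(n)}$, and the paper never states or cites this stronger form. Second, the parity obstruction you flag is substantive: the Jordan--Wigner string attached to the wrap-around term $a_1^{*} a_N$ means $H_N$ becomes a free-fermion Hamiltonian with \emph{sector-dependent} boundary conditions, so the admissible momenta are $\{2\pi k/N\}$ in one parity of $n$ and $\{\pi(2k{+}1)/N\}$ in the other. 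This does match the Bethe momenta $\pi(2 I_j - n - 1)/N$ sector by sector, but the paper's Theorem~\ref{theorem.eigenvalues.hamiltonian} lists only the periodic momenta and quietly suppresses the antiperiodic sector (its Lemma~\ref{lemma.creation.operators} is proved only for non-wrap-around adjacent pairs), so you would have to repair that statement before your identification can close. Finally, the proposed ``bypass'' is not a shortcut: deriving a translation-invariant quadratic-fermion expression for $V_N(\sqrt 2)$ from the paper's purely combinatorial interlacing-relation definition is precisely the Lieb--Schultz--Mattis diagonalization of the free-fermion transfer matrix, which is a substantial computation in this framework and cannot be taken as a one-line appeal to a known fact.
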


\begin{lemma}
\label{lemma.eigenvalue.hamiltonian}
For all $N$ and $n\le N$, let us denote $(\vec{p}_j)_j$ the solution 
of the system of equations $(E_j)[\sqrt{2},n,N]$, then denoting $\psi \equiv \psi_{\sqrt{2},n,N} (\vec{p}_1, ... , \vec{p}_n)$:
\[H_N . \psi = \left( 2 \sum_{k=1}^n \cos(\vec{p}_k)\right). \psi \]
\end{lemma}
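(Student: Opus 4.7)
The plan is to specialize the general coordinate Bethe ansatz (Theorem~\ref{theorem.coordinate.ansatz}) to the free-fermion point $t=\sqrt{2}$, where the ansatz vector reduces to a Slater determinant of plane waves, and then to compute $H_N\psi$ directly.

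First I would observe that $\Delta_{\sqrt 2} = 0$, and compute from the defining equation of $\Theta$ that
\[e^{-i\Theta_{\sqrt 2}(x,y)} = e^{i(x-y)}\frac{e^{-ix}+e^{iy}}{e^{-iy}+e^{ix}} = e^{i(x-y)}\frac{e^{-ix}(1+e^{i(x+y)})}{e^{-iy}(1+e^{i(x+y)})} = 1,\]
so (using $\Theta_{\sqrt 2}(0,0)=0$ and continuity) $\Theta_{\sqrt 2}\equiv 0$. The equations $(E_j)[\sqrt 2,n,N]$ then collapse to $\vec p_j = (2j-n-1)\pi/N$, which in particular yield $e^{iN\vec p_j}=(-1)^{n+1}$ for every $j$. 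On the other hand, in the coefficients
\[C_\sigma(\sqrt 2)[p_1,\ldots,p_n] = \epsilon(\sigma)\prod_{k<l}\!\big(1+e^{i(p_{\sigma(k)}+p_{\sigma(l)})}\big),\]
each factor is symmetric in the two indices and the product is over unordered pairs, so it is independent of $\sigma$; denote its common value by $A$. Hence
\[\psi[\vec\epsilon] = A\cdot\det\!\big(e^{i\vec p_k\,q_j[\vec\epsilon]}\big)_{1\le k,j\le n},\]
a Slater determinant of plane waves.

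Next I would compute the action of $H_N=\sum_{j\in\mathbb Z/N\mathbb Z}(a_j^*a_{j+1}+a_ja_{j+1}^*)$ on $\psi$. By Remark~\ref{remark.hamiltonian}, this operator hops each particle to an adjacent empty site; denoting by $\vec\epsilon^{(k,\pm)}$ the configuration obtained from $\vec\epsilon$ (with particles at $q_1<\cdots<q_n$) by replacing $q_k$ with $q_k\pm 1$, one obtains
\[(H_N\psi)[\vec\epsilon]=\sum_k\sum_{\varepsilon=\pm}\psi[\vec\epsilon^{(k,\varepsilon)}]\cdot\mathbf 1_{q_k+\varepsilon\notin\{q_j\}_{j\ne k}}.\]
Expanding the Slater determinant along the $k$-th column, the shift $q_k\to q_k+\varepsilon$ multiplies that column by $e^{i\varepsilon\vec p_l}$ in row $l$, so $\psi[\vec\epsilon^{(k,\varepsilon)}]$ is the same determinant with column $k$ multiplied entry-wise by $(e^{i\varepsilon\vec p_l})_l$. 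If the collision and wrap-around constraints could be ignored, the $2n$ resulting determinants would sum to $\big(\sum_l(e^{i\vec p_l}+e^{-i\vec p_l})\big)\psi[\vec\epsilon] = 2\sum_l\cos(\vec p_l)\psi[\vec\epsilon]$.

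The proof then reduces to verifying that those two constraints are harmless. For \emph{collisions}, when $q_k+1=q_{k+1}$ the term with $\varepsilon=+1$ is absent from $H_N\psi$ by the indicator, but the corresponding "shifted" determinant has two equal columns and vanishes anyway; similarly at the other boundary. Thus the indicator can be dropped at no cost. For the \emph{wrap-around} between positions $N$ and $1$, one must extend $e^{i\vec p_l q}$ from $q\in\{1,\ldots,N\}$ to $q=0$ or $q=N+1$; the Bethe quantization $e^{iN\vec p_l}=(-1)^{n+1}$ provides exactly the phase needed to reconcile the periodic extension with the sign acquired by the Slater determinant under the $n$-cycle permutation of particle labels required to keep $q_1<\cdots<q_n$ after wrap. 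Since this cyclic sign equals $(-1)^{n-1}=(-1)^{n+1}$, the phases cancel and the formula $H_N\psi = 2\sum_l\cos(\vec p_l)\psi$ holds. The main obstacle is this last bookkeeping on signs: the "$-(n+1)\pi$" shift in the Bethe equations is precisely engineered so that the quantization phase matches the determinantal antisymmetry under a cyclic relabeling of the particles, and writing this out carefully is the only non-routine part of the argument.
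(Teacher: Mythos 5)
Your proof is correct. However, note that the paper itself does not prove Lemma~\ref{lemma.eigenvalue.hamiltonian}: immediately before stating it, the author writes that the proof can be found in \cite{Duminil-Copin.ansatz} (Theorem~2.3), where the corresponding statement is established for the general XXZ chain (arbitrary $\Delta$) by a direct verification on the coordinate Bethe ansatz vector. Your argument is therefore a genuinely different route, and an attractive one: you specialize to the free-fermion point $\Delta_{\sqrt 2}=0$, where $\Theta_{\sqrt 2}\equiv 0$ collapses the Bethe equations to the explicit plane-wave quantization $\vec p_j=(2j-n-1)\pi/N$, the coefficients $C_\sigma$ become $\sigma$-independent (because $\prod_{k<l}(1+e^{i(p_{\sigma(k)}+p_{\sigma(l)})})$ runs over unordered pairs of distinct indices), and $\psi$ reduces to a Slater determinant of plane waves. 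The eigenvalue computation is then the standard free-fermion one: summing the $2n$ column-shifted determinants over $k$ and $\varepsilon$ gives, by cofactor expansion (equivalently, $\sum_k\det(\ldots,BM_{:k},\ldots)=\operatorname{Tr}(B)\det M$ for a diagonal $B$), exactly $2\sum_l\cos(\vec p_l)\cdot\psi[\vec\epsilon]$. The two boundary corrections are handled exactly as you say: collisions produce a repeated column (so the forbidden determinants vanish identically, and the indicator may be dropped), and the wrap-around terms incur a cyclic sign $(-1)^{n-1}$ that is cancelled by the Bethe quantization $e^{iN\vec p_l}=(-1)^{n+1}$. Writing out the cyclic-permutation sign bookkeeping explicitly would complete the argument; I verified it works in both wrap directions. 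Compared with citing \cite{Duminil-Copin.ansatz}, your version is more elementary and would make the present paper self-contained precisely at the point where $V_N(\sqrt 2)$ enters, though it does not generalize beyond the free-fermion point (which is all that is needed here). One small remark: as stated, the lemma asserts $n\le N$, but the $\vec p_j$ only lie in $I_{\sqrt 2}=(-\pi/2,\pi/2)$ (the domain of $\psi_{\mu_t,n,N}$) when $n\le N/2$; this is already the range in Theorem~\ref{theorem.coordinate.ansatz} and in all applications, so it is a typo in the hypothesis rather than a defect of your proof.
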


Let us prove 
that for all $t \in (0,\sqrt{2})$, the greatest eigenvalue of $V_N (t)$ is given by the algebraic 
Bethe ansatz:

\begin{theorem}
For all $N$ and $n \le N/4$, and $t \in (0,\sqrt{2})$, 
\[\lambda_{2n+1,N} (t) = \Lambda_{2n+1,N} (t)[\textbf{p}_1(t) , ... , \textbf{p}_{2n+1} (t)].\]
\end{theorem}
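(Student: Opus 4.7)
The plan is to establish the identification first at $t=\sqrt{2}$ via two Perron-Frobenius arguments, and then to extend it to all of $(0,\sqrt{2})$ by analytic continuation. At $t=\sqrt{2}$, one has $\Delta_{\sqrt{2}}=0$; a short computation with the defining fraction for $e^{-i\Theta_t(x,y)}$ (multiply and divide by $e^{i(x-y)/2}$ in numerator and denominator) yields $\Theta_{\sqrt{2}}\equiv 0$, so the Bethe equations $(E_j)[\sqrt{2},2n+1,N]$ reduce to $N\vec{p}_j(\sqrt{2})=2\pi j-(2n+2)\pi$, giving the explicit arithmetic progression $\vec{p}_j(\sqrt{2})=2\pi(j-n-1)/N$ for $j=1,\dots,2n+1$. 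These are precisely the $2n+1$ values $2\pi k/N$ with $k\in\{-n,\dots,n\}$, and the hypothesis $n\le N/4$ is exactly what keeps them inside $I_{\sqrt{2}}=(-\pi/2,\pi/2)$. By Lemma~\ref{lemma.eigenvalue.hamiltonian}, the Bethe vector $\psi\equiv\psi_{\sqrt{2},2n+1,N}(\vec{p}_1(\sqrt{2}),\dots,\vec{p}_{2n+1}(\sqrt{2}))$ satisfies $H_N\psi=\bigl(2\sum_{k=-n}^{n}\cos(2\pi k/N)\bigr)\psi$; comparing with the explicit spectrum of $H_N$ on $\Omega_N^{(2n+1)}$ given by Theorem~\ref{theorem.eigenvalues.hamiltonian}, this eigenvalue is the largest one, because it sums the $2n+1$ largest cosines in $\{\cos(2\pi j/N):j\in\mathbb{Z}/N\mathbb{Z}\}$.

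Applying Theorem~\ref{theorem.perron} to $H_N$ restricted to $\Omega_N^{(2n+1)}$ (symmetric, non-negative and irreducible by Lemma~\ref{lemma.hamiltonian.properties}), the top eigenspace is one-dimensional and spanned by a vector with strictly positive coordinates, so $\psi$ has, up to a nonzero scalar, strictly positive coordinates. Because $V_N(\sqrt{2})$ commutes with $H_N$ (Lemma~\ref{lemma.commutation}) and stabilises this one-dimensional space, $\psi$ is automatically an eigenvector of $V_N(\sqrt{2})$, and by Theorem~\ref{theorem.coordinate.ansatz} its eigenvalue is $\Lambda_{2n+1,N}(\sqrt{2})[\vec{p}_1(\sqrt{2}),\dots,\vec{p}_{2n+1}(\sqrt{2})]$ evaluated through the branch of the formula tailored to the zero root $\vec{p}_{n+1}(\sqrt{2})=0$. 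A second Perron-Frobenius argument, now for $V_N(\sqrt{2})$ on $\Omega_N^{(2n+1)}$ (symmetric, non-negative and irreducible by Lemma~\ref{lemma.properties.matrix.transfer}), together with the uniqueness of the positive eigenvector proved in the excerpt, forces this eigenvalue to coincide with $\lambda_{2n+1,N}(\sqrt{2})$.

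To propagate the equality from $t=\sqrt{2}$ to every $t\in(0,\sqrt{2})$, I invoke analyticity on both sides. The map $t\mapsto\Lambda_{2n+1,N}(t)[\vec{p}_1(t),\dots,\vec{p}_{2n+1}(t)]$ is analytic because $t\mapsto\vec{p}_j(t)$ is analytic by Section~\ref{section.existence}. The Perron eigenvalue $t\mapsto\lambda_{2n+1,N}(t)$ is also analytic on $(0,\sqrt{2}\,]$: $V_N(t)$ depends polynomially on $t$ and its top eigenvalue on $\Omega_N^{(2n+1)}$ is simple (Theorem~\ref{theorem.perron} together with the uniqueness of the positive eigenvector proved in the excerpt), so classical analytic perturbation theory applies. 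Positivity of the coordinates of $\psi$ at $t=\sqrt{2}$ extends by continuity to a neighbourhood of $\sqrt{2}$, in which Perron-Frobenius again forces $\Lambda_{2n+1,N}(t)[\vec{p}(t)]=\lambda_{2n+1,N}(t)$. The identity theorem for analytic functions on the connected interval $(0,\sqrt{2})$ then yields the equality everywhere.

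The principal difficulty I foresee is the identification at $t=\sqrt{2}$: besides the combinatorial verification that the Bethe sum of cosines realises the maximum of the spectrum of $H_N$ on $\Omega_N^{(2n+1)}$ (where the odd parity $2n+1$ and the bound $n\le N/4$ both play a role), one must also ensure that $\psi$ does not accidentally vanish through cancellations in the symmetriser coefficients $C_\sigma$ at this specific value of $t$, and correctly apply the degenerate branch of Theorem~\ref{theorem.coordinate.ansatz} at the zero root $\vec{p}_{n+1}(\sqrt{2})=0$. Once these technical points are settled, the two Perron-Frobenius identifications and the analytic continuation proceed mechanically.
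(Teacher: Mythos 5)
Your strategy matches the paper's almost step for step (verify at $t=\sqrt{2}$ via the $H_N$ spectrum and two applications of Perron--Frobenius, then propagate along $(0,\sqrt{2})$ by analyticity and the simplicity of the top eigenvalue), but you have flagged rather than closed what turns out to be the load-bearing technical step: proving that the Bethe vector $\psi=\psi_{\sqrt{2},2n+1,N}(\vec{p}_1(\sqrt{2}),\dots,\vec{p}_{2n+1}(\sqrt{2}))$ is nonzero. Without that, the chain of deductions collapses: the relation $H_N\psi=(2\sum\cos)\psi$ is vacuous for $\psi=\vec{0}$, so you cannot conclude that $\psi$ lies in the Perron eigenspace, nor that it has positive coordinates, nor that $V_N(\sqrt{2})$ acts on it by $\lambda_{2n+1,N}(\sqrt{2})$.

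In the paper, roughly half of the proof is devoted precisely to this point. The argument is concrete and you should reproduce it rather than defer it: at $t=\sqrt{2}$ one has $\Delta_{\sqrt{2}}=0$, so $C_\sigma(\sqrt{2})[\vec{p}(\sqrt{2})]=\epsilon(\sigma)\prod_{k<l}\bigl(1+e^{i(\vec{p}_{\sigma(k)}+\vec{p}_{\sigma(l)})}\bigr)$. Because the product ranges over all unordered pairs $\{k,l\}$, the quantity $\epsilon(\sigma)C_\sigma$ is actually independent of $\sigma$; and it is nonzero because the explicit arithmetic progression $\vec{p}_k(\sqrt{2})=\tfrac{2\pi}{N}(k-n-1)$ combined with the hypothesis $n\le N/4$ guarantees that no pairwise sum $\vec{p}_k+\vec{p}_l$ equals $\pm\pi$. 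What remains of $\psi[\boldsymbol{\epsilon}]$ after factoring out this common coefficient is $\sum_{\sigma}\epsilon(\sigma)\prod_k e^{i\vec{p}_{\sigma(k)}q_k[\boldsymbol{\epsilon}]}$, a Vandermonde-type determinant in the distinct numbers $e^{i\vec{p}_k}$, hence nonzero. Only at this point may one apply Perron--Frobenius. The same comment applies, a fortiori, to your claim that positivity persists on a neighbourhood of $\sqrt{2}$: that follows by continuity from strict positivity at $\sqrt{2}$, which is again downstream of $\psi\ne 0$. The rest of your outline (irreducibility of $H_N$ and of $V_N(t)$ on $\Omega_N^{(2n+1)}$, the commutation $[H_N,V_N(\sqrt{2})]=0$, the maximality of $2\sum_{k=-n}^n\cos(2\pi k/N)$ in the $H_N$ spectrum on $\Omega_N^{(2n+1)}$, and the analytic continuation via simplicity of the Perron eigenvalue) is sound and matches the paper.
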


\begin{proof}

\begin{enumerate}

\item \textbf{The Bethe vector 
is $\neq0$ for $t$ in a neighborhood 
of $\sqrt{2}$:}

\begin{itemize}
\item \textbf{Limit of the Bethe vector 
in $\sqrt{2}$:}

Let us denote $(\vec{p}_j (t))_j$ the solution 
of the system of equations $(E_j)[t,2n+1,N]$.

Let us recall [Theorem~\ref{theorem.coordinate.ansatz}]
that for all $t$, and $\boldsymbol{\epsilon}$ 
in the canonical basis of $\Omega_N$,
\[\psi_{t,2n+1,N} (\vec{p}_1 (t) , ... , \vec{p}_{2n+1} (t)) [\boldsymbol{\epsilon}]
= \sum_{\sigma \in \Sigma_{2n+1}} C_{\sigma}(t)[\textbf{p} (t)]
\prod_{k=1}^{2n+1} e^{i\textbf{p}_{\sigma(k) (t)} . q_k [\boldsymbol{\epsilon}]}.\]
This expression admits a limit 
when $t \rightarrow \sqrt{2}$, given by:
\[\sum_{\sigma \in \Sigma_{2n+1}} C_{\sigma}(\sqrt{2})[\textbf{p}(\sqrt{2})]
\prod_{k=1}^{2n+1} e^{i\textbf{p}_{\sigma(k) (\sqrt{2})} . q_k [\boldsymbol{\epsilon}]},\]
where $(\textbf{p}_k (\sqrt{2}))_k$ 
is solution of the system of equations 
$(E_k)[\sqrt{2},2n+1,N]$.

\item \textbf{The term $\epsilon(\sigma) C_{\sigma} (\sqrt{2}) [\textbf{p}(\sqrt{2})]$ is independent from $\sigma$:}

Indeed, we have:
\begin{align*}
\prod_{1\le k < l \le 2n+1} (1
+ e^{i (\textbf{p}_{\sigma(k)} (\sqrt{2})+\textbf{p}_{\sigma(l)} (\sqrt{2}))}) 
& = \prod_{1\le \sigma^{-1}(k) < \sigma^{-1}(l) \le 2n+1} (1
+ e^{i (\textbf{p}_{k} (\sqrt{2})+\textbf{p}_{l} (\sqrt{2}))}) \\
& = \prod_{1\le k < l \le 2n+1} (1
+ e^{i (\textbf{p}_{k} (\sqrt{2})+\textbf{p}_{l} (\sqrt{2}))}) 
\end{align*}

Indeed, for all $l \neq k$, one of the 
conditions
$\sigma^{-1} (k) < \sigma^{-1} (l)$
or $\sigma^{-1} (l) < \sigma^{-1} (k)$ is verified, 
exclusively. This means that  $(1
+ e^{i (\textbf{p}_{k} (\sqrt{2})+\textbf{p}_{l} (\sqrt{2}))})$ appears exactly once
in the product for each $l,k$ such that $l \neq k$.

\item \textbf{This term is not equal to zero:}

Indeed, none of the 
$\textbf{p}_{k} (\sqrt{2}) + \textbf{p}_{l} (\sqrt{2})$ can be equal to $\pm \pi$. 
This comes from the fact that 
the system of Bethe equations 
$(E_k)[\sqrt{2},2n+1,N]$ has a unique 
simple solution given by:
\[\textbf{p}_{k} (\sqrt{2}) = 
\frac{\pi}{N}
\left( 2k - \frac{(2n+1+1)}{2}\right) = 
\frac{2\pi}{N}
\left( k - (n+1) \right).\]
These numbers are enframed by:
\[\textbf{p}_{1} (\sqrt{2})= - 2\pi \frac{n-1}{N}, \quad \textbf{p}_{n} (\sqrt{2}) = 2\pi \frac{n-1}{N}.\]
Since $n \le N/4$ these numbers  
are in $[-\pi/2,\pi/2]$, and the 
possible sums of two different 
of these numbers 
is in $]-\pi,\pi[$.

\item \textbf{The limit of Bethe vectors 
is non-zero:}
As a consequence of last points, we have that 
the limit of Bethe vectors when $t \rightarrow\sqrt{2}$ is, up to a non-zero constant (last point):
\[\sum_{\sigma \in \Sigma_{2n+1}} \epsilon(\sigma). 
\prod_{k=1}^{2n+1} e^{i\textbf{p}_{\sigma(k) (\sqrt{2})} . q_k [\boldsymbol{\epsilon}]},\]
which is the determinant of the
matrix $\left(e^{i\textbf{p}_{\sigma(k) (\sqrt{2})} . q_l [\boldsymbol{\epsilon}]}\right)_{k,l}$, which is a submatrix of 
the matrix 
$\left(e^{i s_k . s'_l}\right)_{k,l \in \llbracket 1,N\rrbracket }$. 
where $(s_k)$ is a sequence of distinct numbers
in $]-\pi/2,\pi/2[$ 
such that for all $k \le 2n+1$, 
\[s_k = \textbf{p}_{\sigma(k) (\sqrt{2})},\]
and $(s'_l)_l$ is a sequence of distinct 
integers such that for all $l \le n$, 
\[s'_l = q_l [\boldsymbol{\epsilon}].\]
If the determinant is non-zero, 
then the sum above is non zero. 
This is the case since this last matrix is obtained from the Vandermonde 
matrix $\left(e^{i s_k . l}\right)_{k,l \in \llbracket 1,N\rrbracket }$, 
whose determinant is 
\[\prod_{k<l} \left( e^{is_l} - e^{is_k}\right) \neq 0,\]
by a permutation of the columns.

\end{itemize}

\item \textbf{From the Hamiltonian to the 
transfer matrix:} 

\begin{itemize}
\item \textbf{Eigenvector of $V_N (\sqrt{2})$ and $H_N$:}
Since that limit of Bethe vector
is not equal to zero, it is an eigenvector of the matrix $V_N (\sqrt{2})$. It is also an eigenvector of the Hamiltonian $H_N$, 
for the eigenvalue 
\[2\left(\displaystyle{\sum_{k=1}^{n-1}} \cos(\frac{2\pi k}{N}) + \displaystyle{\sum_{k=N-n+1}^{N}} \cos(\frac{2\pi k}{N})\right).\]
This is a consequence of Lemma~\ref{lemma.eigenvalue.hamiltonian}, since 
for all $j$, $N\vec{p}_j (\sqrt{2}) = 2\pi(j-(n+1)):$ the eigenvalue is 
\begin{align*}
2\sum_{k=1}^{2n+1} \cos\left( \vec{p}_k (\sqrt{2})\right) & = 2 \sum_{k=1}^{n} \cos\left( \vec{p}_k (\sqrt{2})\right)+ 2 \sum_{k=n+1}^{2n+1} \cos\left( N - \vec{p}_k (\sqrt{2})\right)\\
& = 2\left(\displaystyle{\sum_{k=1}^{n-1}} \cos(\frac{2\pi k}{N}) + \displaystyle{\sum_{k=N-n+1}^{N}} \cos(\frac{2\pi k}{N})\right)
\end{align*}
\item \textbf{Comparison with the other 
eigenvalues of $H$:}
From Theorem~\ref{theorem.eigenvalues.hamiltonian}, 
we know that this 
is the largest eigenvalue of $H_N$ 
on $\Omega_N^{(2n+1)}$. Indeed, 
it is straightforward that $\psi_{\alpha}$ 
is in $\Omega_N^{(2n+1)}$ if and only if 
the number of $k$ such that $\alpha_k = 1$ 
is $2n+1$. The sum 
in the statement of Theorem~\ref{theorem.eigenvalues.hamiltonian} is maximal 
amongst these sequences when:
\[\alpha_1 = ... = \alpha_{n-1}=1 = 
\alpha_{N-n+1} = ... = \alpha_{N}\]
and the other $\alpha_k$ are equal to $0$.

\item \textbf{Identification:}

As a consequence, from Perron-Frobenius 
theorem, the limit of Bethe vectors in $\sqrt{2}$
is positive, thus this is also 
true for $t$ sufficiently 
close to $\sqrt{2}$.
From the same theorem, it 
is associated to the maximal eigenvalue 
of $V_N (t)$. As a consequence, 
the Bethe value $\Lambda_{2n+1,N} (t) [\textbf{p}_1 (t), ... \textbf{p}_{2n+1} (t)]$ is equal 
to the largest eigenvalue $\lambda_{2n+1,N} (t)$
of $V_N (t)$ on $\Omega_N^{(2n+1)}$ for these values 
of $t$. Since these two 
functions are analytic in $t$ (by the Implicit 
functions theorem on the characteristic 
polynomial, using the fact 
that the largest eigenvalue is simple), 
one can identify these two functions 
on the interval $(0,\sqrt{2})$. 
\end{itemize}

\end{enumerate}
\end{proof}

\section{\label{section.asymptotics} 
Asymptotic properties of Bethe roots}

Let us fix some $d \in [0,1/2]$, 
and $(N_k)_k$ and $(n_k)_k$ some sequences 
of integers such that for all $k$, 
$n_k \le N_k / 2 +1$ and 
$n_k / N_k \rightarrow d$.
In this section, we study the asymptotic behavior 
of the sequences $(\boldsymbol{\alpha}^{(k)}_j (t))_j$, where 
\[(\vec{p}^{(k)}_j (t))_j \equiv (\kappa_t ({\boldsymbol{\alpha}}^{(k)}_j(t)))_j\]
is solution of the system of Bethe equations 
$(E_j)[t,n_k,N_k]$, $j \le n_k$,
when $k$ tends towards $+\infty$. 
 For this purpose, we 
introduce in Section~\ref{section.counting.functions} 
the counting functions $\xi_t^{(k)}$ associated to the corresponding Bethe roots.
Roughly, these functions 'represent' the density 
of Bethe roots in the real line.
In Section~\ref{section.characterization.limit}, we prove that the sequence of functions 
$(\xi_t^{(k)})_k$ converges uniformly 
on any compact to a function 
$\boldsymbol{\xi}_{t,d}$. 
In Section~\ref{section.condensation}, 
we then prove the following, which will be used in the last Section~\ref{section.computation.entropy} in order to compute 
entropy of square ice: for all function $f: \mathbb{R} \rightarrow \mathbb{R}$ 
which is continuous and bounded, 

\[\frac{1}{N_k} \sum_{j=\lceil n_k /2 \rceil +1}^{n_k} f(\boldsymbol{\alpha}^{(k)}_j (t)) \rightarrow \int_{0}^{\boldsymbol{\xi}_{t,d}^{-1} (d)} f(\alpha) \boldsymbol{\xi}_{t,d} (\alpha) d\alpha.\]

\subsection{\label{section.counting.functions} 
The counting functions 
associated to Bethe roots}

In this section, we define the counting functions 
and prove some additional 
preliminary facts on the auxiliary 
functions $\theta_t$ and $\kappa_t$ that 
we will use in the following [Section~\ref{section.definition.counting.functions}].
We prove also that the number of Bethe roots vanishes as one get close to $+\infty$, with 
a speed that does not depend on $k$ [Section~\ref{section.rarefaction.bethe.roots}].

\subsubsection{\label{section.definition.counting.functions}Definition}

\begin{notation}
For all $t \in (0,\sqrt{2})$, 
and all integer $k$, let 
us denote $\xi^{(k)}_t: \mathbb{R}
\rightarrow \mathbb{R} $ the \textbf{counting function}
defined as follows:

\[\xi^{(k)}_t: \alpha \mapsto \left( \frac{1}{2\pi} \kappa_t (\alpha) + \frac{n_k+1}{2N_k} + \frac{1}{2\pi N_k} 
\sum_{j} \theta_t (\alpha,{\boldsymbol{\alpha}}^{(k)}_j(t))\right),\]
\end{notation}

\begin{fact}
Let us notice some properties of these functions, 
that we will use in the following: 
\begin{enumerate}
\item 
By Bethe equations, for all $j$ and $k$, 
\[\xi^{(k)}_t (\boldsymbol{\alpha}^{(k)}_j (t)) =\frac{j}{N_k} \equiv \rho_{j}^{(k)}.\] 
\item For all $k,t$, the derivative of $\xi^{(k)}_t$ 
is the function 
\[\alpha \mapsto \frac{1}{2\pi} \kappa'_t (\alpha) + \frac{1}{2\pi N_k} \sum_j \frac{\partial  \theta_t}{\partial \alpha} (\alpha,\boldsymbol{\alpha}_j (t)) > 0.\]
Indeed, this comes directly from 
the fact that $\mu_t \in \left( \frac{\pi}{2}, \pi\right)$. As a consequence, the counting 
functions are increasing.
\end{enumerate}
\end{fact}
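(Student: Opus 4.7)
The plan is to verify both claims by direct computation, using the Bethe equations for part (1) and the explicit formulas from Computation~\ref{computation.kprime} and Computation~\ref{computation.derivative.theta} for part (2).

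For part (1), I would start from the definition of $\xi^{(k)}_t$ and substitute $\alpha = \boldsymbol{\alpha}^{(k)}_j(t)$. Since $\vec{p}^{(k)}_l(t) = \kappa_t(\boldsymbol{\alpha}^{(k)}_l(t))$ and $\theta_t(\alpha,\beta) = \Theta_t(\kappa_t(\alpha),\kappa_t(\beta))$ by definition, the expression becomes
\[
\xi^{(k)}_t(\boldsymbol{\alpha}^{(k)}_j(t)) = \frac{1}{2\pi}\vec{p}^{(k)}_j(t) + \frac{n_k+1}{2N_k} + \frac{1}{2\pi N_k}\sum_l \Theta_t(\vec{p}^{(k)}_j(t),\vec{p}^{(k)}_l(t)).
\]
The Bethe equation $(E_j)[t,n_k,N_k]$ lets one solve for the sum of $\Theta_t$ terms as $2\pi j - (n_k+1)\pi - N_k\vec{p}^{(k)}_j(t)$. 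Substituting this expression back, the $\vec{p}^{(k)}_j(t)$ contributions cancel, as do the $(n_k+1)/(2N_k)$ terms, leaving precisely $j/N_k$.

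For part (2), differentiating the definition of $\xi^{(k)}_t$ termwise gives the stated expression for its derivative (the $(n_k+1)/(2N_k)$ term has zero derivative, and $\boldsymbol{\alpha}^{(k)}_j(t)$ does not depend on~$\alpha$). It remains to check positivity of each summand. Since $t \in (0,\sqrt{2})$, one has $\Delta_t = (2-t^2)/2 \in (0,1)$, hence $-\Delta_t \in (-1,0)$ and therefore $\mu_t = \mu(-\Delta_t) \in (\pi/2,\pi)$. Computation~\ref{computation.kprime} gives
\[
\kappa'_t(\alpha) = \frac{\sin(\mu_t)}{\cosh(\alpha)-\cos(\mu_t)},
\]
and since $\sin(\mu_t)>0$ (as $\mu_t \in (0,\pi)$) and $\cosh(\alpha) \ge 1 > 0 > \cos(\mu_t)$ (as $\mu_t > \pi/2$), this is strictly positive.

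For the second term, Computation~\ref{computation.derivative.theta} gives
\[
\frac{\partial \theta_t}{\partial \alpha}(\alpha,\beta) = -\frac{\sin(2\mu_t)}{\cosh(\alpha-\beta)-\cos(2\mu_t)}.
\]
From $\mu_t \in (\pi/2,\pi)$ we get $2\mu_t \in (\pi,2\pi)$, hence $\sin(2\mu_t)<0$, making the numerator $-\sin(2\mu_t)$ strictly positive; the denominator satisfies $\cosh(\alpha-\beta) \ge 1 > \cos(2\mu_t)$ since $\cos(2\mu_t) < 1$. Both the $\kappa'_t$ term and each of the $\partial_\alpha \theta_t$ terms are therefore strictly positive, so $(\xi^{(k)}_t)'(\alpha)>0$ for all $\alpha$, and $\xi^{(k)}_t$ is strictly increasing. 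There is no real obstacle here beyond keeping track of the signs dictated by the range of $\mu_t$; the only subtlety worth flagging is the need to check that $\cos(2\mu_t) < 1$ strictly, which holds because $2\mu_t \in (\pi,2\pi)$ excludes $0$ modulo $2\pi$.
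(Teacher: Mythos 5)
Your proof is correct and follows the same direct approach the paper indicates: for part (1), substitute $\alpha = \boldsymbol{\alpha}^{(k)}_j(t)$ into the definition of $\xi^{(k)}_t$, use that $\theta_t(\alpha,\beta) = \Theta_t(\kappa_t(\alpha),\kappa_t(\beta))$ and $\vec{p}^{(k)}_j(t)=\kappa_t(\boldsymbol{\alpha}^{(k)}_j(t))$, and apply $(E_j)[t,n_k,N_k]$; for part (2), differentiate termwise and invoke Computations~\ref{computation.kprime} and~\ref{computation.derivative.theta} together with $\mu_t\in(\pi/2,\pi)$ to get strict positivity of every term. The paper leaves the sign bookkeeping implicit (``this comes directly from $\mu_t\in(\pi/2,\pi)$''); you have simply spelled it out, which is exactly what is needed.
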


We will use also the following: 

\begin{proposition}
\label{proposition.limits.auxiliary.functions}
We have the following limits for the 
functions $\kappa_t$ and $\theta_t$ on 
the border of their domains:
\[\displaystyle{\lim_{+\infty} \kappa_t = -\lim_{-\infty} \kappa_t} = \pi-\mu_t\] and that for all $\beta \in \mathbb{R}$, 
\[\lim_{+\infty} \theta_t (\alpha,y)  = - \lim_{-\infty} \theta_t (\alpha,y) = 
2\mu_t - \pi.\] 
\end{proposition}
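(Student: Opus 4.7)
The plan is to reduce both statements to facts already established in the paper: the image computation for $\kappa_t$ from Proposition~\ref{proposition.k}, the boundary value of $\Theta_t$ from Computation~\ref{computation.theta.border}, and the antisymmetry relation $\Theta_t(-x,-y) = -\Theta_t(x,y)$ noted after the definition of $\Theta_t$.

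First, for $\kappa_t$, the limit at $+\infty$ was essentially extracted already in the proof of Proposition~\ref{proposition.k}; I would make it explicit. Starting from the defining relation
\[e^{i\kappa_t(\alpha)} = \frac{e^{i\mu_t} - e^{\alpha}}{e^{i\mu_t + \alpha} - 1},\]
dividing numerator and denominator by $e^{\alpha}$ shows that the right-hand side tends to $-e^{-i\mu_t} = e^{i(\pi - \mu_t)}$ as $\alpha \to +\infty$. Since $\kappa_t$ is continuous, strictly increasing, takes values in $I_t = (-(\pi-\mu_t), \pi-\mu_t)$, and satisfies $\kappa_t(0) = 0$, the only accumulation value in $\overline{I_t}$ compatible with $e^{i(\pi-\mu_t)}$ is $\pi - \mu_t$ itself. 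The limit at $-\infty$ then follows immediately from the odd symmetry $\kappa_t(-\alpha) = -\kappa_t(\alpha)$ observed right after the definition of $\kappa_t$.

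Second, for the $+\infty$ limit of $\theta_t$, I would write $\theta_t(\alpha,\beta) = \Theta_t(\kappa_t(\alpha), \kappa_t(\beta))$ and observe that as $\alpha \to +\infty$ one has $\kappa_t(\alpha) \to \pi - \mu_t$, while $\kappa_t(\beta)$ stays in the open interval $I_t$ and is therefore bounded away from the boundary point $\pi - \mu_t$. Hence the pair $(\kappa_t(\alpha), \kappa_t(\beta))$ remains in the set $I_t^2 \setminus \{(x,x) : x \in \partial I_t\}$ on which $\Theta_t$ extends continuously, and so by Computation~\ref{computation.theta.border},
\[\lim_{\alpha \to +\infty} \theta_t(\alpha, \beta) = \Theta_t(\pi - \mu_t, \kappa_t(\beta)) = 2\mu_t - \pi.\]

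Third, for the $-\infty$ limit of $\theta_t$, as $\alpha \to -\infty$ we have $\kappa_t(\alpha) \to -(\pi - \mu_t)$, and the identity $\Theta_t(-x, -y) = -\Theta_t(x, y)$ (which extends by continuity to the boundary since $-\kappa_t(\beta) \in I_t$ as well) gives
\[\Theta_t(-(\pi-\mu_t), \kappa_t(\beta)) = -\Theta_t(\pi - \mu_t, -\kappa_t(\beta)) = -(2\mu_t - \pi) = \pi - 2\mu_t,\]
where the second equality reuses Computation~\ref{computation.theta.border}. This yields the sign-flipped value and completes the statement. No serious obstacle arises; the only point requiring a brief justification is that $\kappa_t(\beta)$, for fixed real $\beta$, lies strictly inside $I_t$, which is needed to invoke the continuous extension of $\Theta_t$ away from the diagonal boundary.
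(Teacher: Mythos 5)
Your proof is correct. For $\kappa_t$ it is essentially the paper's own argument: the paper extracts the limit of $e^{i\kappa_t(\alpha)}$ from the defining relation and then pins down the value, except the paper establishes finiteness of the limit via integrability of $\kappa'_t$ whereas you use boundedness of the range $I_t$ together with monotonicity and $\kappa_t(0)=0$; both work and the difference is purely stylistic.

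For $\theta_t$ you take a genuinely more direct route than the paper. The paper simply asserts that the same reasoning applies, meaning one should re-run the derivative/integrability/monotonicity argument on $\alpha\mapsto\theta_t(\alpha,\beta)$ using the derivative formula from Computation~\ref{computation.derivative.theta}. You instead invoke the continuous extension of $\Theta_t$ to $\overline{I_t}^2\setminus\{(x,x):x\in\partial I_t\}$ together with the explicit boundary value from Computation~\ref{computation.theta.border}, and handle the $-\infty$ limit via the antisymmetry relation $\Theta_t(x,-y)=-\Theta_t(-x,y)$. This avoids re-establishing existence and finiteness of the limit from scratch and reuses material already proved, which is cleaner. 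The one thing you correctly flag and should keep explicit is that $\kappa_t(\beta)$ lies strictly inside $I_t$ for any real $\beta$ (immediate from Proposition~\ref{proposition.k}), so that the limit point $(\pi-\mu_t,\kappa_t(\beta))$ avoids the excluded diagonal corner where $\Theta_t$ cannot be extended.
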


\begin{proof}
Let us prove this property for $\kappa_t$, 
the limits for $\theta_t$ are obtained applying 
the same reasoning. Let us recall that for 
all $\alpha \in \mathbb{R}$,  
\[\kappa'_t (\alpha) = \frac{\sin(\mu_t)}{\cosh(\alpha)-\cos(\mu_t)}.\]
Since this function is positive, $\kappa_t$ is 
increasing, and thus admits a limit in $\pm \infty$. Since $\kappa'_t$ is integrable, these limits are finite.
Since for all $\alpha$,
\[e^{i\kappa_t (\alpha)} = \frac{e^{i\mu_t} - e^{\alpha}}{e^{i\mu_t + \alpha}-1},\]
and the limit of this expression when 
$\alpha$ tends to $+\infty$ 
is $-e^{-i\mu_t}$, then there exists some $k \in \mathbb{Z}$ such that: 
\[\displaystyle{\lim_{+\infty}} \kappa_t 
= 2k\pi + \pi-\mu_t\]
Since $\kappa_t$ is a bijective map 
from $\mathbb{R}$ to $I_t$ [Proposition~\ref{proposition.k}], then $k=0$. Thus 
we have 
\[\displaystyle{\lim_{+\infty}} \kappa_t 
=  \pi-\mu_t.\]
The limit in $-\infty$ is obtained by symmetry.
\end{proof}

\begin{notation}
For any compact interval $I \subset \mathbb{R}$, we denote 
\[\mathcal{V}_I (\epsilon,\eta) = \{z \in \mathbb{C}: |\text{Im}(z)|<\eta, d(\text{Re}(z),I)<\epsilon\}.\]
\end{notation}

\subsubsection{\label{section.rarefaction.bethe.roots}Rarefaction of Bethe roots 
near infinities}

For all $k$, $t$ and $M >0$, we denote 
\[P_t^{(k)} (M) \equiv \left\{j \in \llbracket 1 , n_k\rrbracket: \boldsymbol{\alpha}_j^{(k)} (t) \notin [-M,M]\right\}.\]

\begin{theorem}
\label{theorem.rarefaction}
For all $t \in (0, \sqrt{2})$, $\epsilon>0$, there exists some $M>0$ and $k_0$ 
such that for all $k \ge k_0$, 
\[\frac{1}{N_k} \left|P_t^{(k)}(M)\right| \le \epsilon.\]
\end{theorem}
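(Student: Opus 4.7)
The plan is to relate the number of Bethe roots outside $[-M,M]$ to the gap $\lim_{+\infty} \xi_t^{(k)} - \xi_t^{(k)}(M)$, and then iterate a geometric contraction coming from the decay of $\kappa_t$ and $\theta_t$ at infinity.

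First, the antisymmetry $\boldsymbol{\alpha}_{n_k-j+1}^{(k)}(t) = -\boldsymbol{\alpha}_j^{(k)}(t)$ from Section~\ref{section.existence} gives a bijection $\{j: \boldsymbol{\alpha}_j^{(k)}(t) < -M\} \leftrightarrow \{j: \boldsymbol{\alpha}_j^{(k)}(t) > M\}$, so it suffices to control $f^{(k)}(M) := |\{j: \boldsymbol{\alpha}_j^{(k)}(t) > M\}|/N_k$ and show it is at most $\epsilon/2$ for $M$ and $k$ large. Since $\xi_t^{(k)}$ is strictly increasing with $\xi_t^{(k)}(\boldsymbol{\alpha}_j^{(k)}(t)) = j/N_k$, counting gives $f^{(k)}(M) \leq n_k/N_k - \xi_t^{(k)}(M) + 1/N_k$; and a short algebraic computation using Proposition~\ref{proposition.limits.auxiliary.functions} gives $\lim_{\alpha \to +\infty} \xi_t^{(k)}(\alpha) - n_k/N_k = \frac{(\pi-\mu_t)(N_k - 2n_k)}{2\pi N_k} + \frac{1}{2N_k} \geq 0$ whenever $n_k \leq N_k/2 + 1$ and $\mu_t > \pi/2$, so
\[f^{(k)}(M) \leq \lim_{\alpha \to +\infty} \xi_t^{(k)}(\alpha) - \xi_t^{(k)}(M) + \frac{1}{N_k}.\]

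Second, I define the non-negative decreasing functions $K(\alpha) := (\pi - \mu_t) - \kappa_t(\alpha)$ and $G(u) := (2\mu_t - \pi) - \theta_t(u, 0)$, both vanishing at $+\infty$ with $G(-\infty) = 2(2\mu_t - \pi)$ (from Proposition~\ref{proposition.limits.auxiliary.functions} combined with the sign analysis in Computations~\ref{computation.kprime} and \ref{computation.derivative.theta}). The translation identity $\theta_t(M, \boldsymbol{\alpha}_j) = \theta_t(M - \boldsymbol{\alpha}_j, 0)$ of Lemma~\ref{lemma.theta.translative} then rewrites
\[\lim_{\alpha \to +\infty} \xi_t^{(k)}(\alpha) - \xi_t^{(k)}(M) = \frac{K(M)}{2\pi} + \frac{1}{2\pi N_k} \sum_{j=1}^{n_k} G(M - \boldsymbol{\alpha}_j^{(k)}(t)).\]
Splitting this sum at a cutoff $A < M$, with $G \leq G(M-A)$ on the roots $\leq A$ and $G \leq G(-\infty) = 2(2\mu_t - \pi)$ on the roots $> A$, produces the key recursive inequality
\[f^{(k)}(M) \leq c \, f^{(k)}(A) + \frac{K(M) + G(M-A)}{2\pi} + \frac{1}{N_k},\]
where $c := (2\mu_t - \pi)/\pi$ lies in $(0,1)$ because $\mu_t \in (\pi/2, \pi)$ for $t \in (0, \sqrt{2})$.

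Finally, given $\epsilon > 0$, I pick $M_0$ with $K(M_0) \leq \epsilon \pi (1-c)/4$, then $L$ with $G(L) \leq \epsilon \pi (1-c)/4$, and set $M_n := M_0 + nL$. Iterating the recursion with $(M, A) = (M_{n+1}, M_n)$, using that $K$ is decreasing, and summing the resulting geometric series yields
\[f^{(k)}(M_n) \leq c^n + \frac{\epsilon}{4} + \frac{1}{N_k (1-c)}.\]
Picking $n$ so that $c^n \leq \epsilon/8$, then $k_0$ so that $1/[N_k(1-c)] \leq \epsilon/8$ for $k \geq k_0$, yields $f^{(k)}(M_n) \leq \epsilon/2$, hence $|P_t^{(k)}(M_n)|/N_k \leq \epsilon$. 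The main obstacle is the contraction $c < 1$: it degenerates as $t \to 0$ (where $\mu_t \to \pi$ and $c \to 1$), so the number of iterations blows up in that limit, but remains finite for each fixed $t \in (0,\sqrt{2})$; a minor subtlety is confirming $\lim_{+\infty}\xi_t^{(k)} \geq n_k/N_k$ even in the boundary case $n_k = \lceil N_k/2 \rceil + 1$, for which the explicit formula above gives a slack of order $1/N_k$ with the correct sign.
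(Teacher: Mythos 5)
Your proof is correct, and it takes a genuinely different route from the paper's. The paper's argument is a proof by contradiction via compactness: it defines $q_t$ as a double $\limsup$ (over $k$ and $M$) of the proportion of Bethe roots outside $[-M,M]$, extracts subsequences $(M_l)$ and $(k_l)$ realizing that $\limsup$, bounds the ``cutting integer'' that separates roots below $M_l$ from roots above it, sums the Bethe equations over the tail of the roots, and then lets $l\to\infty$ to land on the inequality $1-q_t\ge\frac{1}{2d}\ge1$, forcing $q_t=0$. Your argument instead works directly with the counting-function gap $\lim_{+\infty}\xi_t^{(k)}-\xi_t^{(k)}(M)$, reorganizes it using the translation identity $\theta_t(M,\boldsymbol{\alpha}_j)=\theta_t(M-\boldsymbol{\alpha}_j,0)$, and splits the resulting sum at a cutoff $A$ to obtain the contraction $f^{(k)}(M)\le c\,f^{(k)}(A)+\text{small}$ with $c=(2\mu_t-\pi)/\pi\in(0,1)$, which you then iterate along $M_n=M_0+nL$. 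Both arguments rest on the same structural facts---monotonicity and the $\pm\infty$-limits of $\kappa_t$ and $\theta_t$, the translation invariance of $\theta_t$, and the antisymmetry of the roots---but your version is more quantitative (it delivers an explicit geometric rate and an explicit choice of $M$), avoids the subsequence extraction, and does not need the subcase split between bounded and unbounded $\limsup_k\boldsymbol{\alpha}_{n_k}^{(k)}$. Two small bookkeeping points that are correct but worth flagging: the coefficient on $G(M-A)$ uses $n_k/N_k\le1$, which is valid but a touch lossier than $n_k/N_k\approx1/2$; and the bound $f^{(k)}(M_0)\le1$ to seed the iteration is immediate from $f^{(k)}(M_0)\le n_k/N_k\le1$. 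The observation that $c\to1$ as $t\to0$ (where $\mu_t\to\pi$) correctly identifies why the number of iterations, and hence the eventual $M$, must depend on $t$, exactly as in the statement.
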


\noindent \textbf{Idea of the proof:}
\textit{In order to prove 
this statement, we formulate it
as the equality of a number $q_t$ 
defined as a $\limsup$ (of an expression 
depending on an integer and an interval) to zero. We 
extract a sequence of integers $(\nu(k_l))_l$ 
and $(I_l)_l = \left([-M_l,M_l]\right)_l$  that realises 
this $\limsup$. For these sequences, we 
enframe the smallest (resp. greatest) 
integer such that 
the corresponding Bethe root is greater 
than $M_l$ (resp. smaller than $-M_l$), by 
a lower bound and an upper bound. Using 
Bethe equations and properties of $\kappa_t$ 
and $\theta_t$ (boundedness and monotonicity), 
we prove a inequality relating these 
two bounds. Taking the limit $ l \rightarrow + \infty$, we obtain an inequality that 
forces $q_t=0$.
}

\begin{proof}

In this proof, we assume, in order 
to simplify the computations, that 
for all $k$, $n_k$ is even, and we 
denote $n_k = 2m_k$. However, similar 
arguments are valid for any sequence 
$(n_k)_k$. Moreover, if $d =0$, the 
statement is trivial, and as a consequence, 
we assume in the remaining of the proof 
that $d>0$. It is sufficient to prove 
then that for all $\epsilon>0$, 
there exists some $M$ and $k_0$ such that 
for all $k \ge k_0$ 
\[\frac{1}{n_k} \left|P_t^{(k)}(M)\right| \le \epsilon.\]

\begin{itemize}
\item \textbf{Formulation with superior limits:} 

If $\limsup_m \boldsymbol{\alpha}_{n_k}^{(k)}$ is finite, then the Bethe roots are bounded independently from $k$ (from below 
this comes from the asymmetry of $\boldsymbol{\alpha}^{(k)}$), and thus 
the statement is verified. 

Let us thus assume that $\limsup_k \boldsymbol{\alpha}_{n_k}^{(k)} = +\infty,$
meaning that there exists some $\nu : \mathbb{N}
\rightarrow \mathbb{N}$ such that 
\[\boldsymbol{\alpha}_{n_{\nu(k)}}^{(\nu(k))}
\rightarrow +\infty \]
Let us denote for all $k,t$ and $M>0$ the 
proportion $q_t^{(k)} (M)$ of positive Bethe roots $\boldsymbol{\alpha}_j^{(k)}$ that are greater than 
$M$. Since for all $k$, $\boldsymbol{\alpha}^{(k)}$ is an antisymmetric and increasing 
sequence, $\boldsymbol{\alpha}_j^{(k)}>0$
implies that $j \ge m_k +1$, and we define
this proportion as:
\[q_t^{(k)} (M) = \frac{1}{m_k} \left| 
\left\{ j \in \llbracket m_k +1, 2m_k\rrbracket : 
\boldsymbol{\alpha}_{j}^{(k)} \ge M\right\}
\right|.\]
We also denote $q_t (M) = \limsup_k q_t^{(\nu(k))} (M)$ 
and 
\[q_t = \limsup_M q_t (M) \le 1.\] 
By 
construction, there exists an increasing sequence 
$(M_l)_l$ of real numbers and a 
sequence $(k_{l})_{l}$ of integers 
such that for 
all $\epsilon>0$, there exists some $l_0$ 
and for all $l \ge l_0$:
\[ q_t - \epsilon < q_t (M_l) - \frac{\epsilon}{2} < q_t^{(\nu(k_{l}))} (M_l) < q_t(M_l) + 
\frac{\epsilon}{2} < q_t + \epsilon.\]

The proof of the statement reduces 
to prove that $q_t = 0$. 

\item \textbf{Bounds for the 
cutting integers sequence:}

\begin{figure}[!h]
\[\begin{tikzpicture}[scale=0.6]
\fill[gray!10] (8.75,-0.5) rectangle (12,0.5); 
\draw[dashed] (8.75,-0.5) rectangle (12,0.5); 
\draw[dashed] (8,-1) -- (8,1);
\draw[dashed] (9.5,-2) -- (9.5,2);

\draw (0,0) -- (12,0); 
\foreach \x in {0,0.5,1,1.5,12, 11.5,11,10.5,5.5,8,10}
{\draw[line width =0.3mm] (\x,-0.15) -- (\x,0.15);}
\node at (0,-1) {1};
\node at (12.5,-1) {$2m_{\nu(k_l)}$};
\node at (10,-1) {j};
\node at (5.5,-1) {$m_{\nu(k_l)} +1$};
\draw[latex-latex] (0,0.75) -- (8,0.75);
\node at (4,1.25) {$\underline{a}_{l}$};
\draw[latex-latex] (0,-1.75) -- (9.5,-1.75);
\node at (5.5,-2.25) {$\overline{a}_{l}$};
\draw[-latex] (13,1.25) -- (10,1.25) -- (10,0.5);
\node at (15.5,1.25) {$\boldsymbol{\alpha}_j^{(\nu(k_{l}))} \ge M$};
\end{tikzpicture}\]
\caption{\label{figure.lower.bound.cutting}Illustration of the definition 
and lower bound of the cutting integer.}
\end{figure}
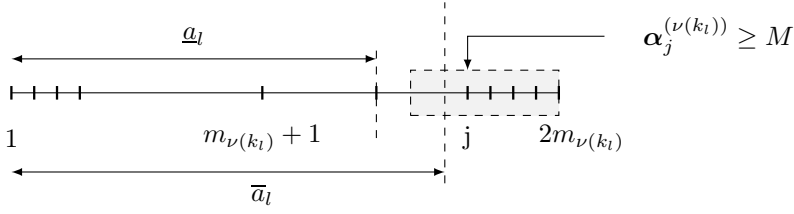

\begin{enumerate}

\item \textbf{Lower bound:}

As a consequence of the first point, 

\[(q_t +\epsilon)  m_{\nu(k_{l})} \ge 
\left|\left\{ 
j \in \llbracket   2m_{\nu(k_{l})} +1 , 2m_{\nu(k_{l})} \rrbracket: 
\boldsymbol{\alpha}_{j}^{(\nu(k_l))} \ge M_l
\right\}\right|.\]

\begin{align*}
\left|\left\{ 
j \in \llbracket  m_{\nu(k_{l})} +1 , 2m_{\nu(k_{l})} \rrbracket: 
\boldsymbol{\alpha}_{j}^{(\nu(k_{l}))} < M_l
\right\}\right| & = m_{\nu(k_{l})}  \\
& \quad - \left|\left\{ 
j \in \llbracket  m_{\nu(k_{l})}+1 , 2m_{\nu(k_{l})} \rrbracket: 
\boldsymbol{\alpha}_{j}^{(\nu(k_{l}))} \ge M_l
\right\}\right| & \\
& \ge   m_{\nu(k_{l})}. (1-q_t - \epsilon).
\end{align*}

Thus the cutting integer (which separates 
the Bethe roots according to their position 
relative to $M_l$, or equivalently 
the greatest $j$ such that the 
associated Bethe root satisfies the inequality 
$\boldsymbol{\alpha}_{j}^{(\nu(k_{l}))} < M_l$) is bounded
from below by:

\[m_{\nu(k_{l})} +  m_{\nu(k_{l})} . \max ( 0, 1-\epsilon-q_t) 
\ge  \max (0, 2 m_{\nu(k_{l})} (1-\epsilon-q_t)).\]
Since it is an integer, it is also greater than 
\[\underline{a}_{l} \equiv \max \left(0, \lfloor 2 m_{\nu(k_{l})} .(1-\epsilon
-q_t) \rfloor \right).\]

\item \textbf{Upper bound:}

Let us also denote 
$\overline{a}_{l} = \lfloor 2  m_{\nu(k_{l})} .(1+\epsilon
-q_t) \rfloor +1$.
For a similar reason, the cutting integer 
is smaller than $\overline{a}_{l}$.
See a schema on Figure~\ref{figure.lower.bound.cutting}. 

\item \textbf{Another similar bound:}

Moreover, since $l \ge l_0$, 
\[q_t ^{(\nu(k_l))} (M_{l_0}) \ge q_t ^{(\nu(k_l))} (M_{l})> q_t (M_l) - \frac{\epsilon}{2}.\]

As a consequence of a reasoning 
similar to the first point, 
\[\left|\left\{ 
j \in \llbracket  m_{\nu(k_{l})} +1 , 2 m_{\nu(k_{l})} \rrbracket: 
\boldsymbol{\alpha}_{j}^{(\nu(k_{l}))} < M_{l_0}
\right\}\right| \ge    m_{\nu(k_{l})} . (1-q_t - \epsilon),\]
and thus for all $j \le \underline{a}_l$, 
$\boldsymbol{\alpha}_j^{\nu(k_l)} < M_{l_0}$. 

\end{enumerate}

\item \textbf{Inequality involving $\underline{a}_{l}$ and $\overline{a}_{l}$ through Bethe equations:}

By summing values of the counting function, 
\begin{align*}
\sum_{k=\overline{a}_{l}}^{2 m_{\nu(k_{l})}} \xi_t^{(\nu(k_{l}))} 
(\boldsymbol{\alpha}_{k}^{(\nu(k_{l}))}) & = 
\frac{1}{2\pi} \sum_{k=\overline{a}_{l}}^{2 m_{\nu(k_{l})}} 
\kappa_t (\boldsymbol{\alpha}_{k}^{(\nu(k_{l}))}) 
+ \frac{2 m_{\nu(k_{l})}+1}{2 N_{\nu(k_{l})}} (2 m_{\nu(k_{l})} + 1 - \overline{a}_{l})
\\ & \qquad + \frac{1}{2\pi N_{\nu(k_{l})}} \sum_{k=\overline{a}_{l}}^{2 m_{\nu(k_{l})}} \sum_{k'=1}^{2 m_{\nu(k_{l})}}
\theta_t ( \boldsymbol{\alpha}_{k}^{(\nu(k_{l}))}, \boldsymbol{\alpha}_{k'}^{(\nu(k_{l}))}).
\end{align*}

By Bethe equations, 
\[\sum_{k=\overline{a}_{l}}^{2 m_{\nu(k_{l})}} \xi_t^{(\nu(k_{l}))} 
(\boldsymbol{\alpha}_{k}^{(\nu(k_{l}))}) = \frac{1}{N_{\nu(k_{l})}}
\sum_{k=\overline{a}_{l}}^{2 m_{\nu(k_{l})}} k =  \frac{ (2 m_{\nu(k_{l})}+\overline{a}_{l})(2 m_{\nu(k_{l})} - \overline{a}_{l} +1)}{2 N_{\nu(k_{l})}}.\]

As a direct consequence, and since $\theta_t$ 
is increasing in its first variable and $\theta_t (\alpha,\alpha) = 0$ for all $\alpha$,
\begin{align*}
\frac{(2 m_{\nu(k_{l})}-\overline{a}_{l} +1)(\overline{a}_{l}-1)}{2 N_{\nu(k_{l})}}
& = \frac{1}{2\pi} \sum_{k=\overline{a}_{l}}^{2  m_{\nu(k_{l})}} 
\kappa_t (\boldsymbol{\alpha}_{k}^{(\nu(k_{l}))}) 
+ \frac{1}{2\pi N_{\nu(k_{l})}} \sum_{k=\overline{a}_{l}}^{2 m_{\nu(k_{l})}} \sum_{k'=1}^{2 m_{\nu(k_{l})}}
\theta_t ( \boldsymbol{\alpha}_{k}^{(\nu(k_{l}))}, \boldsymbol{\alpha}_{k'}^{(\nu(k_{l}))})\\
& \ge \frac{1}{2\pi} \sum_{k=\overline{a}_{l}}^{2 m_{\nu(k_{l})}} 
\kappa_t (\boldsymbol{\alpha}_{k}^{(\nu(k_{l}))}) 
+ \frac{1}{2\pi N_{\nu(k_{l})}} \sum_{k=\overline{a}_{l}}^{2 m_{\nu(k_{l})}} \sum_{k'<k}
\theta_t ( \boldsymbol{\alpha}_{k}^{(\nu(k_{l}))}, \boldsymbol{\alpha}_{k'}^{(\nu(k_{l}))}).
\end{align*}
As well, using again the fact that $\theta_t$ 
is increasing in its first variable, 
we use the bound 
\[\theta_t ( \boldsymbol{\alpha}_{k}^{(\nu(k_l))}, \boldsymbol{\alpha}_{k'}^{(\nu(k_l))}) \ge \theta_t (M_l, M_{l_0})\]
when $k \ge \overline{a}_l$ and $k' \le \overline{a}_l$ (this is a consequence of 
the third bound proved in the last point).
The terms corresponding to other pairs $(k,k')$ 
are bounded by $0$. We also 
use the fact that $\kappa_t$ is increasing. 
This is written: 

\begin{align*}
\frac{(2 m_{\nu(k_{l})}-\overline{a}_{l} +1)(\overline{a}_{l}-1)}{2  N_{\nu(k_{l})}} & \ge (2 m_{\nu(k_{l})}-\overline{a}_{l} +1) \frac{1}{2\pi} \kappa_t (M_l) + (2 m_{\nu(k_{l})}-\overline{a}_{l} +1) \frac{\underline{a}_l}{2\pi  N_{\nu(k_{l})}} \theta_t (M_l, M_{l_0}).\\
\frac{(\overline{a}_{l}-1)}{2 N_{\nu(k_{l})}} & \ge  \frac{1}{2\pi} \kappa_t (M_l) + \frac{\underline{a}_l}{2\pi N_{\nu(k_{l})}} \theta_t (M_l, M_{l_0}).\\
\end{align*}

We take the limit when $l \rightarrow +\infty$, 
and obtain: 

\[\frac{d}{2} (1-\epsilon-q_t) \ge \frac{\pi-\mu_t}{2\pi} + \frac{d}{2} \frac{2\mu_t - \pi}{\pi} (1-q_t).\]

Taking the limit when $\epsilon \rightarrow 0$, 
\[\frac{d}{2} (1-q_t) \ge \frac{\pi-\mu_t}{2\pi} + \frac{d}{2} \frac{2\mu_t - \pi}{\pi} (1-q_t).\]
This inequality can be rewritten: 

\[ (1-q_t) \left( \frac{d}{2} 
- \frac{d}{2} \frac{2\mu_t -\pi}{\pi}\right) \ge \frac{\pi-\mu_t}{2\pi}\]
Finally: $1-q_t \ge \frac{1}{2d} \ge 1$, and thus $q_t = 0$.
\end{itemize}
\end{proof}

\subsection{\label{section.characterization.limit} Convergence of the sequence of counting functions $(\xi_t ^{(k)})_k$}

In this section, we prove that 
the sequence of functions $(\xi_t ^{(k)})_k$ 
converges uniformly on any compact 
to a function $\boldsymbol{\xi}_{t,d}$. 
After some recalls on complex analysis [Section~\ref{section.complex.background}], 
we prove that if a subsequence of 
this sequence of functions converge on any compact of their domain 
towards a function, then 
this function verifies a Fredholm integral equation [Section~\ref{section.limit.subsequences}], which is solved through Fourier analysis, and 
the solution is proved to be unique, in Section~\ref{section.continuous.bethe.equation}, by solving a similar equation verified by the 
derivative of this function.
We deduce in Section~\ref{section.convergence.counting.functions} that this 
fact implies that the sequence of 
counting functions 
converge to $\boldsymbol{\xi}_{t,d}$. \bigskip

For all $t$, here exists $\tau_t > 0$ such that for all $k$, 
the functions $\kappa_t$, 
$\Theta_t$ and $\xi^{(k)}_t$ 
can be extended analytically on the 
set $\mathcal{I}_{\tau_t} 
\equiv \{z \in \mathbb{C}: \left|\text{Im}(z)\right| < \tau_t \} \subset \mathbb{C}$. For the purpose of notation, the extended functions are denoted like their restriction on $\mathbb{R}$. 

\subsubsection{\label{section.complex.background} Some complex analysis background}

Let us recall some results of complex analysis 
that we will use in the following of this section.
Let $U$ be an open subset of $\mathbb{C}$. 

\begin{definition}
We say that a sequence $(f_m)_m$ of functions
$U \rightarrow \mathbb{C}$ is \textbf{locally bounded} when for all $z \in U$, 
the sequence $(|f_m (z)|)_m$ is bounded.
\end{definition}

\begin{theorem}[Montel]
Let $(f_m)_m$ be a locally bounded sequence of holomorphic functions 
$U \rightarrow \mathbb{C}$. There 
exists a subsequence of $(f_m)_m$ 
which converges uniformly on any compact subset
of $U$.  
\end{theorem}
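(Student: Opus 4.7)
The plan is to follow the standard proof of Montel's theorem, which combines Cauchy's integral formula (to pass from pointwise boundedness to equicontinuity) with the Arzelà--Ascoli theorem and a diagonal extraction over an exhaustion of $U$ by compacts.

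The first step is to upgrade the hypothesis to uniform boundedness on compact subsets and then to equicontinuity. Indeed, the intended reading of ``locally bounded'' in the statement is that each point of $U$ has a neighborhood on which $(f_m)$ is uniformly bounded in $m$; by a finite covering argument, this yields, for every compact $K \subset U$, a constant $M_K$ with $|f_m(z)| \le M_K$ for all $z \in K$ and all $m$. Given a compact $K$, choose $r>0$ so small that $K_r = \{z : d(z,K) \le r\} \subset U$, and set $M = M_{K_r}$. For $z_1,z_2 \in K$ with $|z_1-z_2| < r/2$, Cauchy's integral formula on the circle $\gamma$ of radius $r$ centered at $z_1$ gives
\[
f_m(z_2) - f_m(z_1) = \frac{1}{2i\pi} \oint_\gamma f_m(w)\left(\frac{1}{w-z_2} - \frac{1}{w-z_1}\right) dw,
\]
and since $|w-z_2| \ge r/2$ on $\gamma$, bounding $|f_m| \le M$ gives $|f_m(z_2) - f_m(z_1)| \le 2M|z_1 - z_2|/r$. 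Thus $(f_m|_K)$ is uniformly bounded and uniformly Lipschitz (with a constant depending only on $K$), in particular uniformly equicontinuous.

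The second step is Arzelà--Ascoli together with a diagonal extraction. Fix an exhaustion $K_1 \subset K_2 \subset \dots$ of $U$ by compacts with $K_n \subset \mathrm{int}(K_{n+1})$ and $\bigcup_n K_n = U$ (for example $K_n = \{z \in U : |z| \le n,\ d(z,\partial U) \ge 1/n\}$). The family $(f_m|_{K_1})$ is uniformly bounded and equicontinuous by the previous step, so Arzelà--Ascoli yields a subsequence $(f_{\varphi_1(m)})$ converging uniformly on $K_1$. Extract from it a further subsequence $(f_{\varphi_2(m)})$ converging uniformly on $K_2$, and iterate. The diagonal sequence $g_k = f_{\varphi_k(k)}$ is, from index $n$ onwards, a subsequence of $(f_{\varphi_n(m)})$, hence converges uniformly on every $K_n$. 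Any compact $K \subset U$ is covered by the open sets $\mathrm{int}(K_{n+1})$ and therefore lies in some $K_n$, so $(g_k)$ converges uniformly on $K$.

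There is no deep obstacle here; the argument is entirely standard. The one subtlety worth flagging is the reading of ``locally bounded'': the proof genuinely uses local uniform boundedness on a neighborhood of each point (equivalently, uniform boundedness on compacts), which is stronger than the literal pointwise boundedness in the paper's definition. This should be stated explicitly at the start of the proof so that the application of Cauchy's formula to obtain the uniform Lipschitz estimate goes through without requiring any auxiliary result (such as Osgood's or Vitali--Porter's theorem) to bridge the two formulations.
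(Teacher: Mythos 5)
The paper states Montel's theorem as recalled background in Section~\ref{section.complex.background} and does not prove it, so there is no internal proof to compare against; what matters is whether your proof is correct, and it is. You give the standard argument: local uniform boundedness plus the Cauchy integral formula yields a uniform Lipschitz estimate on each compact, Arzel\`a--Ascoli then gives a uniformly convergent subsequence on each member of an exhaustion of $U$ by compacts, and a diagonal extraction produces a single subsequence converging uniformly on every compact of $U$. The estimates (the factorisation of the kernel difference, the lower bound $|w-z_2|\ge r/2$ on $\gamma$, the resulting bound $2M|z_1-z_2|/r$) are carried out correctly.

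Your remark about the hypothesis is the one genuinely useful point, and you should keep it. As literally written, the paper's definition of \emph{locally bounded} asks only that the scalar sequence $(|f_m(z)|)_m$ be bounded for each fixed $z$, i.e.\ pointwise boundedness; the step where you bound $|f_m|$ by a single constant $M$ on the circle $\gamma$ genuinely needs the stronger hypothesis that $\sup_m |f_m|$ is finite on a neighbourhood of each point, equivalently on every compact subset of $U$. You are right to say explicitly that you are using this stronger reading. It is worth adding that in the two places the paper actually invokes Montel --- the proof of Lemma~\ref{lemma.convergence}, and the argument of Section~\ref{section.convergence.counting.functions} that the counting functions $\xi_t^{(k)}$ converge --- the relevant sequences satisfy uniform bounds with constants independent of $k$, so the intended hypothesis is indeed the stronger one and your proof covers those applications as written.
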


\begin{lemma}
\label{lemma.convergence}
Let $(f_m)_m$ be a locally bounded sequence of continuous functions 
$U \rightarrow \mathbb{C}$ and $f : U \rightarrow \mathbb{C}$ such that any
subsequence of $(f_m)_m$ which converges uniformly on any 
compact subset of $U$ towards some function, 
then this limit is $f$. Then $(f_m)_m$ converges uniformly 
on any compact towards $f$. 
\end{lemma}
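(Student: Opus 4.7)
The plan is a standard contradiction-and-compactness argument. Suppose $(f_m)_m$ does not converge to $f$ uniformly on compacts. Then there exist a compact set $K \subset U$, a constant $\epsilon > 0$, and a subsequence $(f_{\varphi(m)})_m$ such that
\[
\sup_{z \in K} \left| f_{\varphi(m)}(z) - f(z) \right| \ge \epsilon
\]
for every $m$. This is just the negation of uniform convergence on $K$ written as the failure of a Cauchy-type bound.

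Next I would apply Montel's theorem to the subsequence $(f_{\varphi(m)})_m$. Since this subsequence inherits local boundedness from $(f_m)_m$, and (in the intended setting of this section) consists of holomorphic functions on $U$, Montel yields a further subsequence $(f_{\varphi(\psi(m))})_m$ that converges uniformly on every compact subset of $U$ to some limit $g : U \to \mathbb{C}$. The hypothesis of the lemma now forces $g = f$, since every convergent subsequence of $(f_m)_m$ has $f$ as its limit.

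The conclusion then follows immediately by contradiction: on the one hand, uniform convergence on $K$ gives
\[
\sup_{z \in K} \left| f_{\varphi(\psi(m))}(z) - f(z) \right| \longrightarrow 0,
\]
while on the other hand this same quantity is bounded below by $\epsilon$ for every $m$, by the choice of $(f_{\varphi(m)})_m$. This contradiction shows that $(f_m)_m$ must converge uniformly on every compact subset of $U$ to $f$.

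There is no real obstacle here; the only delicate point is to make sure Montel's theorem is applicable, which in the context of this section is clear because the $\xi_t^{(k)}$ are holomorphic on $\mathcal{I}_{\tau_t}$ and the local boundedness hypothesis is exactly what Montel requires. The lemma is stated for continuous functions only so that it can be invoked after Montel's compactness step has already been extracted, the role of the lemma being to upgrade ``every convergent subsequence has limit $f$'' to ``the full sequence converges to $f$''.
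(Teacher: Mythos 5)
Your argument is the same contradiction-plus-Montel argument the paper gives: negate uniform convergence on some compact $K$ to extract a subsequence staying $\epsilon$-away from $f$, apply Montel to get a further subsequence converging on compacts, invoke the hypothesis to identify the limit as $f$, and contradict. Your remark that the lemma as stated speaks only of \emph{continuous} functions while Montel requires holomorphy is a fair observation — the paper's own proof has the same implicit reliance on holomorphy, which is harmless because the only application is to the holomorphic counting functions $\xi_t^{(k)}$ on $\mathcal{I}_{\tau_t}$.
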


\begin{proof}
Let us assume that $(f_m)_m$ does not converge towards $f$. Then there 
exists some $\epsilon>0$, compact $K \subset U$ and a non-decreasing function $\nu : \mathbb{N} \rightarrow \mathbb{N}$
such that for all $m$, 
\[||\left(f_{\nu(m)}- f\right)_K||_{\infty} \ge \epsilon.\]
From Montel theorem, one can extract a subsequence of $(f_{\nu(m)})_m$ 
which converges towards $f$ uniformly on any compact of $U$, and 
in particular on the compact $K$. This is in contradiction to the 
above inequality, and we deduce that $(f_m)_m$ converges towards $f$. 
\end{proof}

\begin{theorem}[Cauchy formula]
Let us assume that $U$ is simply connected and let $f: U \rightarrow \mathbb{C}$ 
be a holomorphic function and $\gamma$ a lace included in $U$ that is homeomorphic 
to a circle positively oriented. Then for all $z$ which in the interior domain of 
the lace, 
\[f(z) = \frac{1}{2\pi i} \oint_{\gamma} \frac{f(s)}{s-z}ds.\]
\end{theorem}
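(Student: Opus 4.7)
The plan is to reduce the statement, via an elementary manipulation, to the two building blocks of complex analysis at this level: the Cauchy--Goursat theorem (vanishing of contour integrals of holomorphic functions on a simply connected domain) and the explicit value of the winding integral of $1/(s-z)$ around a positively oriented loop encircling $z$. I will freely assume Cauchy--Goursat here, since it is the standard prerequisite at this point in any complex analysis course.

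First, I fix $z$ in the interior domain of the lace $\gamma$, and I consider the auxiliary function
\[
g(s) = \frac{f(s) - f(z)}{s - z}
\]
defined for $s \in U \setminus \{z\}$. Since $f$ is holomorphic at $z$, the function $g$ admits a removable singularity at $s = z$, with extended value $f'(z)$, so $g$ extends to a holomorphic function on the whole simply connected open set $U$. By the Cauchy--Goursat theorem applied to $g$ and the lace $\gamma$, we get $\oint_\gamma g(s)\, ds = 0$, which rewrites as
\[
\oint_\gamma \frac{f(s)}{s-z}\, ds \;=\; f(z) \oint_\gamma \frac{ds}{s-z}.
\]

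The remaining step is therefore to establish that $\oint_\gamma \frac{ds}{s-z} = 2\pi i$ whenever $\gamma$ is a lace in $U$ homeomorphic to a positively oriented circle and $z$ lies in its interior domain. I would do this by homotopy: since $U$ is simply connected and the function $s \mapsto 1/(s-z)$ is holomorphic on $U \setminus \{z\}$, I deform $\gamma$ inside $U \setminus \{z\}$ to a small positively oriented circle $C_\varepsilon$ of radius $\varepsilon$ centred at $z$, which is possible because $\gamma$ encircles $z$ with winding number one. The integral over $\gamma$ equals the integral over $C_\varepsilon$ by Cauchy--Goursat applied on an annular region (cutting along a slit to obtain a simply connected piece). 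On $C_\varepsilon$, parametrising by $s = z + \varepsilon e^{i\vartheta}$ with $\vartheta \in [0,2\pi]$ yields $ds = i\varepsilon e^{i\vartheta} d\vartheta$ and the integrand becomes $i\, d\vartheta$, so the integral evaluates directly to $2\pi i$, independently of $\varepsilon$.

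Combining the two steps gives $\oint_\gamma \frac{f(s)}{s-z}\, ds = 2\pi i \cdot f(z)$, which is the stated identity after dividing by $2\pi i$. The main obstacle is not analytic but topological: justifying that the deformation of $\gamma$ to $C_\varepsilon$ can be carried out inside $U \setminus \{z\}$ and that it preserves the value of the integral. This is clean because of the two stated hypotheses (simple connectedness of $U$ and the assumption that $\gamma$ is homeomorphic to a positively oriented circle around $z$), which together ensure the winding number of $\gamma$ around $z$ is exactly one and that the complement of a small disk around $z$ in the region bounded by $\gamma$ is regular enough to apply Cauchy--Goursat on it.
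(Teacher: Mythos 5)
The paper recalls this Cauchy integral formula purely as standard background and gives no proof of it, so there is no paper argument to compare against. Your proof — passing to the auxiliary function $g(s)=(f(s)-f(z))/(s-z)$, removing the singularity at $s=z$, applying Cauchy--Goursat on the simply connected domain to kill $\oint_\gamma g$, and then evaluating $\oint_\gamma \frac{ds}{s-z}=2\pi i$ by deforming $\gamma$ to a small circle around $z$ — is the standard textbook derivation and is correct. The one point worth making explicit is that the winding-number-equals-one claim rests on the Jordan curve theorem together with the "positively oriented" hypothesis (and, implicitly, that the interior domain of $\gamma$ lies in $U$ because $U$ is simply connected); you gesture at this as the topological content of the argument, which is the right diagnosis.
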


\noindent Let us also recall a sufficient condition for a holomorphic function 
to be biholomorphic: 

\begin{theorem}
\label{theorem.condition.holomorphism}
Let $f : U \rightarrow \mathbb{C}$ be a holomorphic function onto an open 
and simply connected set $U$. Let $V \subset U$ 
and $\gamma$ a lace included in $U$ that is homeomorphic 
to a circle positively oriented, and such that $V$ is included 
in the interior domain of $\gamma$. We assume that:  

\begin{enumerate}
\item for all $z \in V$ 
and $s \in \gamma$, $f(z) \neq f(s)$, 
\item and for all $z \in V$, $f'(z) \neq 0$. 
\end{enumerate}

Then $f$ is a \textbf{biholomorphism} from $V$ onto its image, 
meaning that there exists some holomorphic function $g : f(V) \rightarrow U$ 
such that for all $z \in f(V)$, $f(g(z)) = z$ 
and for all $z \in U$, $g(f(z))= z$.  
Moreover, for all $z \in f(V)$, 
\[g(z) = \frac{1}{2\pi i} \int_{\gamma} 
s \frac{f'(s)}{f(s)-z}ds.\]
\end{theorem}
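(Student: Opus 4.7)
The plan is to deduce the theorem from Cauchy's argument principle together with the residue theorem. For any $w \in \mathbb{C} \setminus f(\gamma)$, set
\[N(w) = \frac{1}{2\pi i}\int_{\gamma} \frac{f'(s)}{f(s)-w}\, ds,\]
which by the argument principle equals the number of zeros of $s \mapsto f(s)-w$ inside $\gamma$, counted with multiplicity. Hypothesis (1) ensures that $N$ is well-defined on $f(V)$ (the integrand never blows up on $\gamma$ when $w \in f(V)$), and hypothesis (2) together with the inverse function theorem gives that each preimage $z_0 \in V$ of $w$ is a simple zero of $f - w$, so $N(f(z_0)) \geq 1$.

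The next step is to upgrade this to $N \equiv 1$ on $f(V)$. Since $N$ is continuous on $\mathbb{C} \setminus f(\gamma)$ and takes integer values, it is locally constant. By (2) and the inverse function theorem, $f$ is locally biholomorphic at every point of $V$, so $f(V)$ is open and for each $z_0 \in V$ there is a neighborhood of $f(z_0)$ on which $N$ already counts the local inverse coming from $z_0$; combined with the fact that $V$ lies in the interior of $\gamma$ and with the positive orientation of $\gamma$, one rules out the presence of any additional preimage inside $\gamma$, forcing $N \equiv 1$ on $f(V)$. This is the step I expect to be the main obstacle, since the statement as given does not explicitly specify connectedness of $V$ nor that it exhausts the interior of $\gamma$, and the discreteness/continuity argument has to be combined with the local biholomorphy to exclude additional zeros.

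Once $N \equiv 1$ is established, for each $w \in f(V)$ there is a unique $z(w) \in V$ inside $\gamma$ with $f(z(w)) = w$, and $z(w)$ is a simple pole of the meromorphic function $s \mapsto s\, f'(s)/(f(s)-w)$ with residue precisely $z(w)$. Applying the residue theorem then yields
\[\frac{1}{2\pi i}\int_\gamma s\, \frac{f'(s)}{f(s)-w}\, ds \;=\; z(w),\]
so this integral defines a map $g : f(V) \to V$ with $f \circ g = \mathrm{id}_{f(V)}$ by construction, and $g \circ f = \mathrm{id}_{V}$ since $g(f(z_0)) = z_0$ for $z_0 \in V$ by uniqueness of the preimage.

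To close, I would establish that $g$ is holomorphic. This follows by differentiation under the integral sign: the integrand depends holomorphically on the parameter $w$ (the denominator $f(s)-w$ is bounded away from zero on the compact contour $\gamma$ for $w$ in any compact subset of $f(V)$), so the contour integral inherits holomorphy in $w$. Combined with the two inverse relations, this shows $f : V \to f(V)$ is a biholomorphism with the claimed integral representation of its inverse.
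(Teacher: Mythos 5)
Your overall strategy --- the argument principle to count preimages, the residue theorem to extract the inverse, and holomorphic dependence on the parameter to get holomorphy of $g$ --- is the standard approach for this integral inverse formula, and your residue computation in the third paragraph is exactly right. The paper states this theorem as a recalled background fact in Section~\ref{section.complex.background} without any proof, so there is nothing in the text to compare your argument against. You are also right to single out the step $N\equiv 1$ as the delicate point.

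However, the argument you sketch for that step does not go through, and in fact cannot: hypotheses (1) and (2) as written constrain $f$ only on $V$ and on $\gamma$, not on the rest of the interior domain of $\gamma$, so positive orientation and $V\subset \mathrm{int}(\gamma)$ do not rule out further preimages. For instance, take $f(z)=e^z$, $\gamma$ the boundary of the rectangle $[0.1,0.9]\times[0.1,7]$, and $V$ a small disk around $z_0 = 0.5+0.5i$. Then $f'\neq 0$ everywhere, $V$ can be chosen small enough that $f(V)\cap f(\gamma)=\emptyset$, yet $z_1=z_0+2\pi i$ also lies inside $\gamma$ with $f(z_1)=f(z_0)$, so $N(f(z_0))=2$ and the integral evaluates to $z_0+z_1\neq z_0$. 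So the theorem is not correct as stated, and your proof inherits that gap. To close it one needs an extra hypothesis, e.g.\ that $f^{-1}(f(V))$ intersected with the interior of $\gamma$ is contained in $V$, or that $f(V)$ is connected and $N(w_0)=1$ for some $w_0\in f(V)$; in the paper's sole application (inside the proof of Theorem~\ref{theorem.convergence.subsequences}), the facts that $\xi_t$ is strictly increasing on $\mathbb{R}$ and non-real on the off-real part of the region bounded by $\gamma_t$ supply exactly that missing uniqueness, but this geometric input is not encoded in conditions (1)--(2). With such an added hypothesis the rest of your argument (local constancy of the integer-valued $N$, residue evaluation, differentiation under the integral) is sound; for the same reason, note that (2) alone gives only local injectivity of $f$ on $V$, so even the claim that $f$ is a bijection from $V$ to $f(V)$ needs this reinforcement.
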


\subsubsection{\label{section.limit.subsequences} The limits of subsequences of $(\xi_t^{(k)})_k$ satisfy a Fredholm 
integral equation}

In this section, we prove the following: 

\begin{theorem}
\label{theorem.convergence.subsequences}
Let $\nu : \mathbb{N} \rightarrow \N$ 
be a non-decreasing function, and assume that 
$\left( \xi_t^{(\nu(k))}\right)_m$ converges uniformly on any 
compact of $\mathcal{I}_{\tau_t}$ towards a function $\xi_t$. Then 
this function satisfies the following equation for all $\alpha \in \mathcal{I}_{\tau}$:
\[\xi'_t (\alpha) = \frac{1}{2\pi} \kappa'_t (\alpha) + 
\int_{\mathbb{R}} \frac{\partial \theta_t}{\partial \alpha} (\alpha,\beta) \xi'_t  (\beta) d\beta.\]
Moreover, $\xi_t (0) = d/2$.
\end{theorem}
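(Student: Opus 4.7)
The plan is to take the derivative of the defining relation
\[\xi^{(k)}_t(\alpha) = \frac{1}{2\pi} \kappa_t(\alpha) + \frac{n_k+1}{2N_k} + \frac{1}{2\pi N_k} \sum_{j=1}^{n_k} \theta_t(\alpha, \boldsymbol{\alpha}_j^{(k)}(t))\]
and pass to the limit along the subsequence $\nu$, converting the finite sum over Bethe roots into an integral weighted by the limiting density $\xi'_t$. First, since every $\xi_t^{(\nu(k))}$ extends holomorphically to $\mathcal{I}_{\tau_t}$ and the sequence converges uniformly on compact subsets to $\xi_t$, Cauchy's integral formula gives that $\xi_t$ is holomorphic and that $(\xi_t^{(\nu(k))})'$ converges uniformly on compact subsets to $\xi'_t$. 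In particular,
\[\xi'_t(\alpha) = \frac{1}{2\pi}\kappa'_t(\alpha) + \lim_{k} \frac{1}{2\pi N_{\nu(k)}} \sum_{j=1}^{n_{\nu(k)}} \frac{\partial \theta_t}{\partial \alpha}\bigl(\alpha, \boldsymbol{\alpha}_j^{(\nu(k))}(t)\bigr).\]

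The heart of the argument is identifying this limit of sums as an integral. The key observation is the exact discrete density relation: from the Bethe equations one has
\[\xi_t^{(\nu(k))}\bigl(\boldsymbol{\alpha}_{j+1}^{(\nu(k))}\bigr) - \xi_t^{(\nu(k))}\bigl(\boldsymbol{\alpha}_{j}^{(\nu(k))}\bigr) = \frac{1}{N_{\nu(k)}},\]
so that after the change of variables $\beta = \boldsymbol{\alpha}_j^{(\nu(k))}$, summing only over those $j$ with $\boldsymbol{\alpha}_j^{(\nu(k))} \in [-M,M]$ produces a Riemann–Stieltjes sum for $\int_{-M}^{M} F(\beta)\, (\xi_t^{(\nu(k))})'(\beta)\, d\beta$ with $F(\beta) = \frac{\partial \theta_t}{\partial \alpha}(\alpha,\beta)$. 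The uniform convergence $(\xi_t^{(\nu(k))})' \to \xi'_t$ on $[-M,M]$ together with the continuity and boundedness of $F$ then yields
\[\frac{1}{N_{\nu(k)}} \sum_{|\boldsymbol{\alpha}_j^{(\nu(k))}| \le M} \frac{\partial \theta_t}{\partial \alpha}\bigl(\alpha, \boldsymbol{\alpha}_j^{(\nu(k))}\bigr) \;\longrightarrow\; \int_{-M}^{M} \frac{\partial \theta_t}{\partial \alpha}(\alpha,\beta)\, \xi'_t(\beta)\, d\beta.\]

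To extend the integration to all of $\mathbb{R}$, I would use Theorem~\ref{theorem.rarefaction}: for any $\varepsilon > 0$ there is $M$ such that for $k$ large the number of indices $j$ with $|\boldsymbol{\alpha}_j^{(\nu(k))}| > M$ is at most $\varepsilon N_{\nu(k)}$. Since $\frac{\partial \theta_t}{\partial \alpha}(\alpha,\cdot)$ is uniformly bounded on $\mathbb{R}$ (as $\theta_t$ is bounded by Proposition~\ref{proposition.limits.auxiliary.functions} and its derivative has an explicit expression by Computation~\ref{computation.derivative.theta}), the tail contribution to the sum is $O(\varepsilon)$ uniformly in $k$; the corresponding tail of the integral is also $O(\varepsilon)$ since the total mass of $\xi'_t$ on $[-M,M]^c$ is controlled in the same way by passing to the limit in $\frac{1}{N_{\nu(k)}}|\{j : |\boldsymbol{\alpha}_j^{(\nu(k))}|>M\}|$. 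Letting $\varepsilon \to 0$ yields the Fredholm equation. For the boundary condition, evaluate at $\alpha = 0$: $\xi_t^{(\nu(k))}(0) = \frac{n_{\nu(k)}+1}{2N_{\nu(k)}} + \frac{1}{2\pi N_{\nu(k)}} \sum_j \theta_t(0,\boldsymbol{\alpha}_j^{(\nu(k))})$, and the sum vanishes by antisymmetry (since $\theta_t(0,\cdot)$ is odd by the identities following the definition of $\Theta_t$, and the sequence $(\boldsymbol{\alpha}_j^{(\nu(k))})_j$ is antisymmetric), so passing to the limit gives $\xi_t(0) = d/2$.

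The main obstacle is the Riemann sum step: the function $\frac{\partial \theta_t}{\partial \alpha}$ is not compactly supported, and justifying the interchange of limit and integration requires both the tail control from Theorem~\ref{theorem.rarefaction} and the fact that the limiting density $\xi'_t$ is itself integrable on $\mathbb{R}$. The latter requires extracting from the rarefaction estimate a uniform integrability statement for the discrete densities before passing to the limit, which is the delicate part of the argument.
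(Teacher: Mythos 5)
Your proposal is correct in its essential strategy but takes a genuinely different route from the paper. Where the paper deforms the sum over Bethe roots into a contour (``lace'') integral in the image of the counting function, passes through biholomorphism estimates on $\mathcal{V}_{J_t}(\eta_t,\epsilon_t)$, and invokes the residue theorem to recover a real integral in the limit, you instead observe that $\xi_t^{(\nu(k))}(\boldsymbol{\alpha}_{j+1})-\xi_t^{(\nu(k))}(\boldsymbol{\alpha}_j)=1/N_{\nu(k)}$ makes the scaled sum a Riemann--Stieltjes sum for $\int F\,d\xi_t^{(\nu(k))}$, and then uses local uniform convergence of $(\xi_t^{(\nu(k))})'$ together with Theorem~\ref{theorem.rarefaction} for tail control. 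This avoids essentially all of the complex-analytic machinery (biholomorphisms, residue calculus, the multi-step $\sigma\to0$, $k\to\infty$ approximations) and is more elementary. Your treatment of the boundary condition is also cleaner than the paper's: antisymmetry of $\theta_t(0,\cdot)$ combined with antisymmetry of the Bethe roots gives $\xi_t^{(k)}(0)=\tfrac{n_k+1}{2N_k}$ exactly, rather than the squeeze $\lfloor n_k/2\rfloor/N_k\le\xi_t^{(k)}(0)\le(\lceil n_k/2\rceil+2)/N_k$ used in the paper.

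Two points warrant more care. First, you justify the smallness of the tail integral $\int_{[M,\infty)}|F|\xi'_t\,d\beta$ by ``passing to the limit'' in $\tfrac{1}{N_{\nu(k)}}|P_t^{(\nu(k))}(M)|$, but the discrete rarefaction bound and the mass of $\xi'_t$ outside $[-M,M]$ are not a priori comparable. What you actually need is the integrability of $\xi'_t$ on $\mathbb{R}$, which follows from the uniform boundedness of the family $(\xi_t^{(\nu(k))})$ (each is increasing with range of length $\le1$), hence $\xi_t$ is bounded and increasing, so $\int_{\mathbb{R}}\xi'_t=\lim_{+\infty}\xi_t-\lim_{-\infty}\xi_t<\infty$ and the tail vanishes as $M\to\infty$. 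Second, and more structurally: your Riemann--Stieltjes argument makes visible that the limit of $\tfrac1{N_k}\sum_j F(\boldsymbol{\alpha}_j^{(k)})$ is the integral of $F\xi'_t$ over the set where the Bethe roots actually condense. When $d<1/2$, the extreme root $\boldsymbol{\alpha}_{n_k}^{(k)}$ converges to $\xi_t^{-1}(d)<+\infty$ (since otherwise monotonicity of $\xi_t^{(k)}$ and uniform convergence on compacts would force $d\ge\lim_{+\infty}\xi_t=d/2+1/4$), so all roots lie asymptotically in a fixed compact interval and your Riemann sum converges to $\int_{\xi_t^{-1}(0)}^{\xi_t^{-1}(d)}F\xi'_t$, not to $\int_{\mathbb{R}}F\xi'_t$. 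This is only the same thing when $d=1/2$. Your delicate-point remark is therefore well placed: the step extending the integral to all of $\mathbb{R}$ requires, for $d<1/2$, an argument that the contribution of $\xi'_t$ on $[\xi_t^{-1}(0),\xi_t^{-1}(d)]^c$ is zero, which it is not, since $\xi'_t\ge\tfrac1{2\pi}\kappa'_t>0$ everywhere. The paper's contour-integral version does not obviously circumvent this; you are not introducing a new gap, but your more transparent real-variable treatment exposes one that should be addressed (by restricting the statement to $d=1/2$, or by tracking the finite integration bounds explicitly as the paper in fact does later in Theorem~\ref{theorem.convergence.relative}).
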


\begin{proof}

\begin{itemize}

\item \textbf{Convergence of the derivative 
of the counting functions:}

Since any compact of $\mathcal{I}_{\tau_t}$ can be included in the interior domain 
of a rectangle lace, through derivation of Cauchy formula,
the derivative of $\xi_t^{(\nu(k))}$ converges also uniformly on 
any compact, towards $\xi'_t$. Since 
$\left|\left(\xi_t^{(k)}\right)'\right|$ is bounded by a constant 
that does not depend on $m$, and that 
$s \mapsto |\theta_t (\alpha,s)|$ 
is integrable on $\mathbb{R}$ for all $\alpha$, 
then $s \mapsto \theta_t (\alpha,s) \xi'_t (s) ds$ is integrable on 
$\mathbb{R}$.

\item \textbf{Some notations:}

Let us fix some $\epsilon>0$, and $\alpha_0 \in \mathbb{R}$.
In the following, we consider some 
\textit{irrational} number (and 
as a consequence not the image 
of a Bethe root)  $M>1$ such that: 

\begin{enumerate}
\item $M \in \overset{\circ}{\xi_t(\mathbb{R})}$
\item such that: $|P_t^{(k)} (M)| \le \frac{\epsilon}{2(2\mu_t-\pi)}$ for all $k$ greater 
than some $k_0$ (in virtue of Theorem~\ref{theorem.rarefaction}),
\item  
and $\alpha_0 \in \xi_t^{-1} ([-M,M])$.
\end{enumerate}

Since $\xi_t (\mathbb{R})$ 
is an interval (this function is increasing on $\mathbb{R}$), one can take $M$ arbitrarily 
close to the supremum of this interval. 
When $M$ tends towards this supremum, 
$\xi_t^{-1} (M)$ tends 
to $+\infty$: if it did not, then this would contradict the fact that this is the supremum (again by monotonicity).
One can assume that $M$ is such that 

\[\frac{1}{2\pi}\left|\int_{(\xi_t^{-1}([-M,M]))^c} \theta_t (\alpha,\beta) \xi'_t (\beta) d\beta \right| \le
\frac{\epsilon}{4}.\]

Let us also denote $J_{t} = \xi_t^{-1} ([-M,M])$.

\item \textbf{The derivative of $\xi_t$ 
relative to the axis $i\mathbb{R}$ is non-zero when close enough to $\mathbb{R}$:}

Indeed, for all $\alpha,\lambda \in \mathbb{R}$, 
\[\xi_t^{(k)} (\alpha+i \lambda) = \frac{1}{2\pi} \kappa_t (\alpha+i\lambda) 
+ \frac{n_k+1}{2N_k} + \frac{1}{2\pi N_k} \sum_j \theta_t (\alpha+i\lambda, 
\boldsymbol{\alpha}_j^{(k)} (t)).\]
As a direct consequence the derivative of the function $\lambda \mapsto -i \xi_t^{(k)} (\alpha+i \lambda)$ in $0$ is:
\[\frac{1}{2\pi} \kappa'_t (\alpha) + \frac{1}{2 \pi N_k } \sum_j 
\theta_t (\alpha,\boldsymbol{\alpha}_j^{(k)} (t)) = (\xi_t^{(k)})' (\alpha) \ge \frac{1}{2\pi} \kappa'_t (\alpha) > 0.\]
Thus for all $\alpha$, the 
derivative of the function 
$\lambda \mapsto - i \xi_t (\alpha+i\lambda)$ in $0$ is greater than 
\[\frac{1}{2\pi} \kappa'_t (\alpha).\]

Moreover, since the second 
derivative of $\lambda \mapsto -i \xi_t^{(k)} (\alpha+i \lambda)$ is 
a bounded function of $\alpha$, with 
a bound that is independant from $k$, through 
Taylor integral formula, there exists a constant $p_t >0$ such that 
for all $\lambda \in \mathbb{R}$ and $\alpha \in \mathbb{R}$:

\[|\xi_t(\alpha+i\lambda) - i \xi'_t (\alpha) . \lambda
- \xi_t (\alpha)| \le p_t \lambda^2,\]
which implies:
\[\left|\text{Im}\left(\xi_t(\alpha+i\lambda)\right) - \xi'_t (\alpha) . \lambda
\right|\le p_t \lambda^2.\]

\[\text{Im}\left(\xi_t(\alpha+i\lambda)\right) \ge  \xi'_t (\alpha) . \lambda
- p_t \lambda^2.\]

\item \textbf{The restriction of 
$\xi_t$ on some $\mathcal{V}_{J_{t}} (\eta_t,\epsilon_t)$ is a biholomorphism onto 
its image:}

Since $M$ is defined so that 
\[M \in \overset{\circ}{\xi_t(\mathbb{R})},\] 
then $J_{t}$ is compact. This means, as 
a consequence of last point, that there exists some positive number  $\sigma_t < \tau_t$ such that 
for all $z \in  \mathcal{V}_{J_{t}}(\sigma_t,1) \backslash \mathbb{R}$, then 
$\xi_t (z) \notin \mathbb{R}$. 

Let us consider the lace $\gamma_{t} = \partial \mathcal{V}_{J_{t}} (\sigma_{t},1)$ (see 
an illustration on Figure~\ref{figure.proof.biholomorphism}). 

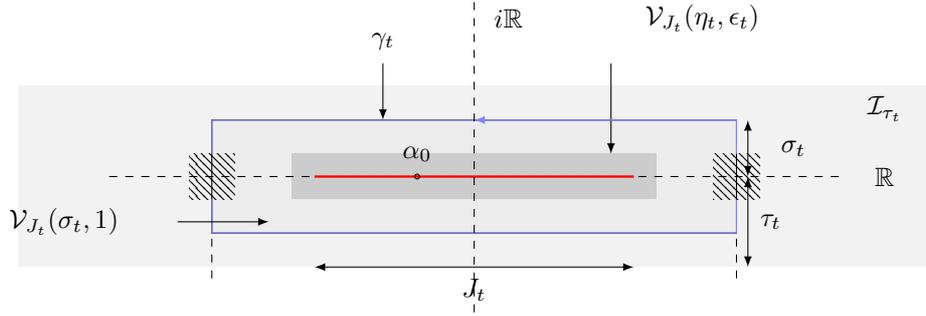
\begin{figure}[!h]

\[\begin{tikzpicture}[scale=0.6]
\fill[gray!10] (-5,-0.5) rectangle (15,3.5); 
\fill[gray!15] (-0.75,0.25) rectangle (10.75,2.75);
\fill[gray!40] (1,1) rectangle(9,2);

\draw[dashed] (-3,1.5) -- (13,1.5);
\node at (14,1.5) {$\mathbb{R}$};

\draw (-0.75,0.25) rectangle (10.75,2.75);

\draw[color=blue!50] (-0.75,1) -- (-0.75,0.25) -- (10.75,0.25) -- (10.75,1);

\draw[-latex] (8,4) -- (8,2);
\node at (10,5) {$\mathcal{V}_{J_{t}} (\eta_t,\epsilon_t)$};

\node at (3,4.5) {$\gamma_{t}$};
\draw[-latex] (3,4) -- (3,2.75);

\fill[pattern=north west lines] (-1.25,1) rectangle (-0.25,2);
\fill[pattern=north west lines] (11.5-1.25,1) rectangle (11.5-0.25,2);

\draw[line width = 0.3mm,color=red] (1.5,1.5) -- (8.5,1.5);

\draw[-latex] (-1.5,0.5) -- (0.5,0.5);
\node at (-4,0.5) {$\mathcal{V}_{J_{t}} (\sigma_t,1)$};
\node at (5,-1) {$J_{t}$};
\draw[latex-latex] (11,1.5) -- (11,2.75);
\draw[latex-latex] (11,1.5) -- (11,-0.5);
\node at (12,2.125) {$\sigma_t$};

\node at (11.5,0.5) {$\tau_t$};

\draw[-latex,color=blue!50] (10.75,2.75) -- (5,2.75);
\draw[color=blue!50] (10.75,2) -- (10.75,2.75) -- (-0.75,2.75) -- (-0.75,2);

\draw[latex-latex] (1.5,-0.5) -- (8.5,-0.5);

\draw (3.75,1.5) circle (1.5pt);
\node at (3.75,2) {$\alpha_0$};

\draw[dashed] (-0.75,-0.75) -- (-0.75,0.25);
\draw[dashed] (10.75,-0.75) -- (10.75,0.25);
\draw[dashed] (5,-1.5) -- (5,5.5);
\node at (5.75,5) {$i\mathbb{R}$}; 
\node at (14,3) {$\mathcal{I}_{\tau_t}$};
\end{tikzpicture}\]
\caption{\label{figure.proof.biholomorphism} Illustration of the proof 
that $\xi_t$ is a biholomorphism on a neighborhood 
of $J_{t}$.}
\end{figure}

Let us prove that there exist some $\epsilon_t >0$ and $\eta_t >0$ such that 
the values taken by 
the function $\xi_t$ on $\mathcal{V}_{J_{t}} (\eta_t,\epsilon_t)$ are distinct from 
any value taken by the same function on the lace $\gamma_{t}$. This is done 
in two steps, as follows:

\begin{enumerate}

\item First, we consider open neighbourhoods (illustrated by 
dashed squares on Figure~\ref{figure.proof.biholomorphism}) for the two points of 
$\gamma_{t} \cap \mathbb{R}$ such that the values taken by $\xi_t$
on these sets are distant by more than a positive constant 
from the values taken on $J_{t}$. 
This is possible since $\xi_t$ is strictly increasing on $\mathbb{R}$. 

\item On the part of $\gamma_{t}$ that is not included in these two open sets, 
the function $\xi_t$ takes non-real values, and the set of values taken is 
compact, by continuity. As a consequence, the set of values taken on the lace $\gamma_{t}$ 
is included into a compact that does not intersect the set of values taken 
on $J_{t}$. Thus one can separate these two sets of values 
with open sets, meaning that there exist some $\epsilon_t >0$ 
and $\eta_t >0$ such that the set of 
values taken by $\xi_t$ on $\mathcal{V}_{J_{t}} (\eta_t ,\epsilon_t)$ 
does not intersect the set of values taken 
by this function on $\gamma_t$.
\end{enumerate}

In virtue of Theorem~\ref{theorem.condition.holomorphism}, this means that $\xi_t$ is 
a biholomorphism from $\mathcal{V}_{J_{t}} (\eta_t,\epsilon_t)$ 
onto its image on this set. As a consequence, it is also an open function, 
and its image on $\mathcal{V}_{J_{t}} (\eta_t,\epsilon_t)$ 
contains the image of $J_{t}$, which 
$[-M,M]$ by definition.

\item \textbf{Asymptotic biholomorphism property 
for $\xi_t^{(\nu(k))}$:}

It derives from the last point that there 
exists some $k_1 \ge k_0 $ such that for all $k \ge k_1$, 
the values of $\xi_t^{(\nu(k))}$ on $\gamma_t$ are distinct from the values of $\xi_t^{(k)}$ on 
$\mathcal{V}_{J_t} (\eta_t,\epsilon_t)$, 
and as a consequence, for the same reason
as the last point, $\xi_t^{(\nu(k))}$ is 
a biholomorphism from $\mathcal{V}_{J_t} (\eta_t,\epsilon_t)$ onto its image on this set. 
Moreover, since $\xi_t^{(\nu(k))}$ converges 
uniformly to $\xi_t$ on $\overline{\mathcal{V}_{J_t} (\eta_t,\epsilon_t)}$, it converges 
also uniformly on $\mathcal{V}_{J_t} (\eta_t,\epsilon_t)$, and $\xi_t (\mathcal{V}_{J_t} (\eta_t,\epsilon_t)) $ contains $\mathcal{V}_{[-M,M]}(\eta'_t,\epsilon'_t)$, then there exists some $\eta'_t, \epsilon'_t >0$ and some $k_2 \ge k_1$ such that 
for all $k \ge k_2$, $\xi_t^{(\nu(k))} (\mathcal{V}_{J_t} (\eta_t,\epsilon_t))$ contains 
$\mathcal{V}_{[-M,M]}(\eta'_t,\epsilon'_t)$.

\item \textbf{Lace integral expression of the counting functions and approximation 
of $\xi_t^{(\nu(k))}$:}

We deduce that for all $k \ge k_2$, and $\sigma
< \eta'_t$, 
positive such that the lace:
\begin{align*}
\Gamma_{t}^{\sigma} & \equiv 
\left\{-M,M \right\} \times 
\llbracket -\sigma, \sigma \rrbracket 
\bigcup \left[ -M,
M\right]
\times \{ -\sigma , \sigma \} 
\end{align*} 

is included into $\xi_t^{(\nu(k))} (\mathcal{V}_{J_{t}} (\eta_t,\epsilon_t))$.
See Figure~\ref{figure.definition.lace} for an illustration.

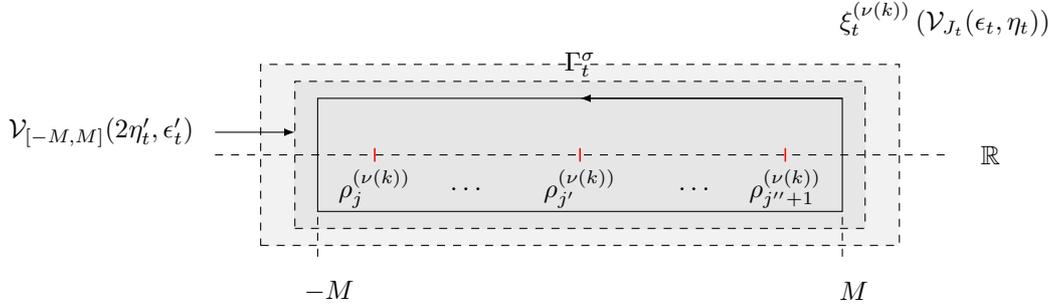
\begin{figure}[!h]
\[\begin{tikzpicture}[scale=0.6]
\fill[gray!10] (-2,-0.5) rectangle (12,3.5); 
\draw[dashed] (-2,-0.5) rectangle (12,3.5); 
\fill[gray!20] (-1.25,-0.125) rectangle 
(11.25,3.125);
\draw[dashed] (-1.25,-0.125) rectangle 
(11.25,3.125);
\node at (13,4.5) {$\xi_t^{(\nu(k))} \left( \mathcal{V}_{J_{t}} (\epsilon_t,\eta_t)\right)$};
\draw[dashed] (-3,1.5) -- (13,1.5);
\node at (14,1.5) {$\mathbb{R}$};
\draw (-0.75,0.25) rectangle (10.75,2.75);
\node at (5,3.5) {$\Gamma_t^{\sigma}$};
\draw[-latex] (10.75,2.75) -- (5,2.75);
\node at (-0.5,-1.5) {$-M$};
\node at (11,-1.5) {$M$};
\draw[dashed] (-0.75,-0.75) -- (-0.75,0.25);
\draw[dashed] (10.75,-0.75) -- (10.75,0.25);
\draw[color=red, line width =0.2mm] (0.5,1.35) -- (0.5,1.65);
\node at (0.5,0.75) {$\rho_j^{(\nu(k))}$};
\draw[color=red, line width =0.2mm] (9.5,1.35) -- (9.5,1.65);
\draw[color=red, line width =0.2mm] (5,1.35) -- (5,1.65);
\draw[-latex] (-3,2) -- (-1.25,2);
\node at (-5.5,2) {$\mathcal{V}_{[-M,M]} (2\eta'_t,\epsilon'_t)$};
\node at (9.5,0.75) {$\rho_{j''+1}^{(\nu(k))}$};
\node at (2.5,0.75) {$\hdots$};
\node at (5,0.75) {$\rho_{j'}^{(\nu(k))}$};
\node at (7.5,0.75) {$\hdots$};
\end{tikzpicture}\]
\caption{\label{figure.definition.lace} 
Illustration for the definition of 
the lace $\Gamma^{\sigma}_t$.}
\end{figure}

We then have, since $\alpha_0 \in J_t$ the following equation for all $k$, $t,\sigma$:
\[\frac{1}{2\pi N_{\nu(k)} } \sum_{j \in P_t^{(\nu(k))}(M)} \theta_t (\alpha_0, 
\boldsymbol{\alpha}_j^{(\nu(k))} (t)) = \frac{1}{2\pi} \bigointssss_{\Gamma_t^{\sigma}} \theta_t\left(\alpha_0,\left(\xi_t^{(\nu(k))}\right)^{-1} (s)\right) \frac{e^{2i\pi N_{\nu(k)} s}}{\left(e^{2 i\pi N_{\nu(k)} s}-1\right)}  ds \]

Indeed, there are no poles 
for $\xi_t^{(\nu(k))}$ on $\Gamma_t^{\sigma}$ since 
$M$ is irrational. The poles of the function 
inside the domain delimited by $\Gamma_t^{\sigma}$ are exactly the 
numbers $\rho_j^{(\nu(k))}$. 
By the residues theorem, and since for all $j$, 
$\xi_t^{(\nu(k))} (\boldsymbol{\alpha}_j (t))
= \rho_j^{(\nu(k))}$:

\begin{align*}\bigointssss_{\Gamma_t^{\sigma}} \theta_t\left(\alpha_0,\left(\xi_t^{(\nu(k))}\right)^{-1} (s)\right) \frac{e^{2 i\pi N_{\nu(k)} s}}{\left(e^{2 i\pi N_{\nu(k)} s}-1\right)}  ds & = 2\pi i \sum_{j \in P_t^{(\nu(k))}(M)} \frac{1}{2i\pi N_{\nu(k)}} \theta_t (\alpha_0,\boldsymbol{\alpha}_j^{(\nu(k))} (t))\\
\end{align*}

\item \textbf{Approximations:}

We deduce that for all $k \ge k_2$ and all $\sigma< \eta'_t$, 

\[\left|\xi_t^{(\nu(k))}(\alpha_0) - 
\frac{1}{2\pi} \kappa_t (\alpha_0) - \frac{n_{\nu(k)}+1}{2N_{\nu(k)}} - \frac{1}{2\pi} \bigointssss_{\Gamma_t^{\sigma}} \theta_t\left(\alpha_0,\left(\xi_t^{(\nu(k))}\right)^{-1} (s)\right) \frac{e^{2 i\pi N_{\nu(k)} s}}{\left(e^{2 i\pi N_{\nu(k)} s}-1\right)} ds\right|\]
is smaller than 
\[\sum_{j \notin P_t^{(\nu(k))}(M)} \left|\theta_t (\alpha_0,
\boldsymbol{\alpha}_j^{(\nu(k))} (t))\right|
\le (2\mu_t -\pi) \left| \left\{ j
\in \llbracket 1, n_{\nu(k)} \rrbracket : \boldsymbol{\alpha}_k^{(\nu(k))} (t) \notin [-M,M]
\right\}\right| \le \frac{\epsilon}{2},\]
by notations of the second point of this proof.
Let us also note $k_3 \ge k_2$ some integer 
such that for all $m \ge k_3$, 
\[\left|\frac{n_{\nu(k)}+ 1}{2N_{\nu(k)}} - \frac{d}{2} \right|\le \frac{\epsilon}{8}.\]

We then evaluate convergence of various 
terms: 

\begin{enumerate}

\item \textbf{Convergence of the bottom part 
of the lace integral to an integral 
on a real segment when $\sigma \rightarrow 0$:} 

By continuity of $\xi_t^{-1}$, 
there exists some $\sigma_0>0$ 
such that for all $k \ge k_3$, $\sigma \le \sigma_0$, 
\[\left|\int_{[-M,M]} \theta_t (\alpha_0, \xi_t^{-1} (\beta - i \sigma)) d \beta
- \int_{[-M,M]} \theta_t(\alpha_0,\xi_t^{-1} (\beta)) d \beta \right| \le 
\frac{\epsilon}{16}.\]
By change of variable in the second integral: 
\[\left|\int_{[-M,M]} \theta_t (\alpha_0,\xi_t^{-1} (\beta - i \sigma)) d \beta 
- \int_{\xi_t^{-1}([-M,M])} \theta_t(\alpha_0,\beta) \xi'_t (\beta) d \beta \right| \le 
\frac{\epsilon}{16}.\]

\item \textbf{Bounding the lateral parts 
of the lace integral for $\sigma \rightarrow 0$:} 

There exists some $\sigma_1 > 0$ 
such that $\sigma_1 \le \sigma_0$ such that 
for all $\sigma \le \sigma_1$,
$k \ge k_3$, 
\[\left|\frac{1}{2\pi} \int_{-\sigma}^{\sigma}
 \theta_t(\alpha_0,\left(\xi_t^{(\nu(k))}\right)^{-1} (\pm M+i\lambda))) \frac{e^{2i\pi N_{\nu(k)} (\pm M+i\lambda) }}{\left(e^{2i\pi N_{\nu(k)} (\pm M+i\lambda)}-1\right)}  d \lambda \right| \le \frac{\epsilon}{64}.\]

\item \textbf{Convergence of the top and bottom parts of 
the lace integral when $k \rightarrow +\infty$:} 

Then there exists some $k_4 \ge k_3$ 
such that for all $k \ge k_4$, 

\[\left|\frac{1}{2\pi} \int_{-M}^{M}
 \theta_t(\alpha_0,\left(\xi_t^{(\nu(k))}\right)^{-1} (\beta + i \sigma_1 )) \frac{e^{2i\pi N_{\nu(k)}(\beta + i \sigma_1) }}{\left(e^{2i\pi N_{\nu(k)} (\pm (\beta + i \sigma_1)}-1\right)}  d \beta \right| \le \frac{\epsilon}{64}\]
 
 \[\left|\frac{1}{2\pi} \int_{-M}^{M}
 \theta_t(\alpha_0,\left(\xi_t^{(\nu(k))}\right)^{-1} (\beta - i \sigma_1 )) \left( \frac{e^{2i\pi N_{\nu(k)} (\beta - i \sigma_1) }}{\left(e^{2i\pi N_{\nu(k)} (\pm (\beta - i \sigma_1)}-1\right)} - 1 \right)  d\beta \right|\le \frac{\epsilon}{64}.\]
 \end{enumerate}
 
 All these inequalities together with 

\[\frac{1}{2\pi} \left| \int_{(\xi_t^{-1} (-[M,M]))^c}  \theta_t (\alpha_0,\beta) \xi'_t (\beta) d\beta \right| \le \frac{\epsilon}{4}\]

 imply, by multiple applications of the triangular 
 inequality,
 that for all $k \ge k_4$, 
 
 \[\left| \xi_t^{(\nu(k))} (\alpha_0) - \frac{1}{2\pi} \kappa_t (\alpha_0) - \frac{d}{2} -
 \frac{1}{2\pi} \int_{-\infty}^{\infty} 
 \theta_t (\alpha_0, \beta) \xi'_t(\beta) d\beta \right| \le 
 \frac{\epsilon}{2} + \frac{\epsilon}{8} 
 + 2 \frac{\epsilon}{16} + 3\frac{\epsilon}{64}
 = \epsilon.\]
 
 \item \textbf{Integral equations:}
 
 As a consequence, since this is true 
 for all $\epsilon>0$
 we have the following equality for all $\alpha \in \mathbb{R}$: 
 
 \[\xi_t (\alpha_0) =  \frac{1}{2\pi} \kappa_t (\alpha) + \frac{d}{2} +
 \frac{1}{2\pi} \int_{-\infty}^{\infty} 
 \theta_t (\alpha_0, \beta) \xi'_t(\beta) d\beta.\]

Moreover, this equality is verified for any $\alpha$,
and differentiating it 
relatively to $\alpha$:

 \[\xi'_t (\alpha) =  \frac{1}{2\pi} \kappa'_t (\alpha) + 
 \frac{1}{2\pi} \int_{-\infty}^{\infty} 
 \frac{\partial \theta_t}{\partial \alpha} (\alpha, \beta) \xi'_t(\beta) d\beta.\]

\item \textbf{Value of $\xi_t (0)$:}

Since $\xi_t^{(k)}$ is increasing for all $k$, 
we have directly: 

\[\frac{\lfloor n_k /2 \rfloor}{N_{k}} = \xi_t^{(k)} (\boldsymbol{\alpha}_{\lfloor n_k /2 \rfloor}^{(k)} (t)) \le \xi_t^{(k)} (0) \le \xi_t^{(k)} (\boldsymbol{\alpha}_{\lceil n_k /2 \rceil+1}^{(k)} (t))= \frac{\lceil n_k /2 \rceil+2}{N_k}.\]

As a consequence $\xi_t (0) = d/2$.

\end{itemize}
\end{proof}

\subsubsection{\label{section.continuous.bethe.equation} Solution of the Fredholm equation}

In this section, we prove that 
the integral equation on $\xi_t$ 
in the statement of Theorem~\ref{theorem.convergence.subsequences} 
is unique and compute its solution:

\begin{proposition} \label{proposition.continuous.bethe.equation}
 Let $t \in (0,\sqrt{2})$ and 
 $\rho$ a continuous 
function in $L^1(\mathbb{R}, \mathbb{R})$ 
such that for all $\alpha \in \mathbb{R}$, 
\[\rho(\alpha) = \frac{1}{2\pi} \kappa'_t (\alpha) + \frac{1}{2\pi} \int_{-\infty}^{+\infty} 
\frac{\partial \theta_t}{\partial \alpha} 
(\alpha,\beta) \rho(\beta) d\beta.\]
Then for all $\alpha$, 
\[\rho(\alpha) =\frac{1}{4\mu_t \cosh\left( \pi \alpha / 2\mu_t\right)}.\]

\end{proposition}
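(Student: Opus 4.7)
The plan is to solve this Fredholm equation by Fourier analysis, exploiting the fact that by Computation~\ref{computation.derivative.theta} the kernel $(\alpha,\beta) \mapsto \frac{\partial \theta_t}{\partial \alpha}(\alpha,\beta)$ depends only on $\alpha - \beta$, so the equation is of convolution type on $\R$. Concretely, set $K_t(x) = -\sin(2\mu_t)/(\cosh x - \cos(2\mu_t))$, so that the equation reads $\rho = (2\pi)^{-1}\kappa'_t + (2\pi)^{-1}(K_t \star \rho)$. Since $t \in (0,\sqrt{2})$ forces $\mu_t \in (\pi/2,\pi)$, one has $2\mu_t \in (\pi,2\pi)$, and $K_t$ may be rewritten as the \emph{positive} Poisson-type kernel $\sin b/(\cosh x - \cos b)$ with $b \equiv 2\pi - 2\mu_t \in (0,\pi)$; similarly, by Computation~\ref{computation.kprime}, $\kappa'_t$ is the Poisson-type kernel at angle $\mu_t \in (0,\pi)$.

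First I would establish the auxiliary formula: for any $a \in (0,\pi)$,
\[\int_{-\infty}^{+\infty} \frac{\sin a}{\cosh x - \cos a}\, e^{-i\omega x}\, dx \;=\; \frac{2\pi\, \sinh((\pi - a)|\omega|)}{\sinh(\pi|\omega|)}.\]
This is obtained by contour integration: for $\omega > 0$, close in the lower half-plane, enumerate residues at the poles $z = -i(a + 2\pi k)$ and $z = i a - 2\pi i k$, and collect the two resulting geometric series. Applied to $\kappa'_t$ (with $a = \mu_t$) and $K_t$ (with $a = b = 2\pi - 2\mu_t$) this produces
\[\widehat{\kappa'_t}(\omega) = \frac{2\pi\,\sinh((\pi-\mu_t)|\omega|)}{\sinh(\pi|\omega|)},\qquad \widehat{K_t}(\omega) = \frac{2\pi\,\sinh((2\mu_t - \pi)|\omega|)}{\sinh(\pi|\omega|)}.\]

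Taking the Fourier transform of the integral equation gives the algebraic relation $(2\pi - \widehat{K_t})\widehat{\rho} = \widehat{\kappa'_t}$. Applying the identity $\sinh A - \sinh B = 2\cosh((A+B)/2)\sinh((A-B)/2)$ with $A = \pi|\omega|$ and $B = (2\mu_t - \pi)|\omega|$ yields $\sinh(\pi|\omega|) - \sinh((2\mu_t - \pi)|\omega|) = 2\cosh(\mu_t|\omega|)\sinh((\pi - \mu_t)|\omega|)$, producing a complete cancellation
\[\widehat{\rho}(\omega) \;=\; \frac{1}{2\cosh(\mu_t\,\omega)}.\]
Inversion via the classical pair $\int_{-\infty}^{+\infty} e^{i\omega x}/\cosh x\, dx = \pi/\cosh(\pi\omega/2)$, together with the substitution $u = \mu_t\omega$, then gives $\rho(\alpha) = 1/(4\mu_t \cosh(\pi\alpha/(2\mu_t)))$, as claimed. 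Uniqueness is immediate since the Fourier multiplier $2\pi - \widehat{K_t}(\omega) = 4\pi\cosh(\mu_t|\omega|)\sinh((\pi - \mu_t)|\omega|)/\sinh(\pi|\omega|)$ is strictly positive on $\R$ with positive limit $4(\pi-\mu_t)$ at the origin, so the operator $I - (2\pi)^{-1}K_t\star$ is injective on continuous $L^1$ functions.

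The main obstacle will be bookkeeping in the residue calculation and sign/range conventions: one must rewrite $K_t$ using $b = 2\pi - 2\mu_t$ so that the Poisson-kernel formula applies with $\sin b > 0$, carefully track the absolute value $|\omega|$ produced by choosing the closing contour according to $\mathrm{sgn}(\omega)$, and justify pointwise Fourier inversion — which is legitimate here because both $\kappa'_t$ and $K_t$ lie in $L^1(\R)$ (integrable Poisson kernels), hence $\widehat{\rho}$ is well-defined, continuous, and exponentially decaying, and lies in every $L^p(\R)$.
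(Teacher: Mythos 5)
Your proof is correct and takes essentially the same route as the paper's: both recognize the equation as a convolution on $\mathbb{R}$, compute the Fourier transform of the Poisson-type kernels $\sin a/(\cosh x - \cos a)$ by residues, obtain $\hat{\rho}(\omega) = 1/(2\cosh(\mu_t\omega))$ after the algebraic cancellation, and invert via the classical $1/\cosh$ pair. The only cosmetic differences are that you rewrite the kernel with $b = 2\pi - 2\mu_t \in (0,\pi)$ to stay in the standard domain of the Poisson-kernel transform (the paper applies its residue computation directly at $\mu = 2\mu_t > \pi$, which works but is slightly less transparent) and that you state explicitly the injectivity of $I - (2\pi)^{-1}K_t\star$, which the paper leaves implicit.
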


\begin{proof}

The proof consists essentially in the application 
of Fourier transform techniques. We will denote, for convenience, for all $\alpha$ and $\mu$, 
\[\Xi_{\mu} (\alpha) = \frac{\sin(\mu)}{\cosh(\alpha)-\cos(\mu)}.\]

\begin{itemize}
\item \textbf{Application of Fourier transform:}

Let us denote $\hat{\rho}$ the Fourier transform 
of $\rho$: for all $\omega$,
\[\hat{\rho} (\omega) = \int_{-\infty}^{+\infty}
\rho(\alpha) e^{i\omega \alpha} d \alpha,\]
which exists since $\rho$ is $L^1(\mathbb{R})$.
As well, denote ${\hat{\Xi}}_{\mu}$ the Fourier transform
of $\Xi_{\mu}$.
Thus, since 
\[\int_{-\infty}^{+\infty} \frac{\partial \theta_t}{\partial \alpha} (\alpha,\beta) \rho(\beta) d \beta = -\int_{-\infty}^{+\infty} \Xi_{\mu} (\alpha-\beta) \rho(\beta) d \beta ,\] 
this defines a convolution product, which 
is transformed in a simple product 
through the Fourier transform, so that for all $\omega$: \[\hat{\rho } (\omega)
= \frac{1}{2\pi} \hat{\Xi}_{\mu_t} (\omega) - \frac{1}{2\pi} \hat{\Xi}_{2\mu_t} (\omega) \hat{\rho} (\omega).\]
\[2\pi \hat{\rho} (\omega) = 
\frac{\hat{\Xi}_{\mu_t} (\omega)}{1+ \frac{1}{2\pi} \hat{\Xi}_{2\mu_t} (\omega)}\]

\item \textbf{Computation of $\hat{\Xi}_{\mu}$:}

\begin{itemize}

\item \textbf{Singularities of this function:}
The singularities of the 
function $\Xi_{\mu}$ 
are exactly 
the numbers $i(\mu+2k\pi)$ 
for $k \ge 0$ and $i(-\mu+2k\pi)$ 
for $k \ge 1$, 
since for $\alpha \in \mathbb{C}$, 
$\cosh(\alpha)=\cos(\mu)$ 
if and only if 
\[\cos(i\alpha) = \cos(\mu),\]
and this implies that 
$\alpha = i(\pm \mu + 2k\pi)$ for some $k$.

\item \textbf{Computation of the residues:}

For all $k$, the residue of 
$\Xi_{\mu}$ in $i(\mu+2k\pi)$ 
is 

\[\text{Res}(\Xi_{\mu},i\mu+2k\pi)= \frac{e^{i\gamma.i(\mu+2k\pi)}}{i} = \frac{1}{i}
e^{-\gamma (\mu+2k\pi)}.\]
As well, 
\[\text{Res}(\Xi_{\mu},-i\mu+2k\pi)= \frac{e^{i\gamma.i(-\mu+2k\pi)}}{i} = -\frac{1}{i}
e^{-\gamma (-\mu+2k\pi)}.\]

We have, for all $\gamma$:
\[\int_{-\infty}^{+\infty} \Xi_{\mu} (\alpha) e^{i\alpha\gamma} d\alpha = 2\pi \frac{\sinh[(\pi-\mu)\gamma]}{\sinh(\pi\gamma)}\]

\item \textbf{Residue theorem:}

Let us denote, for all integer $n$, 
the lace $\Gamma_n = [-n,n] + i[0,n]$. 
The residues $\Xi_{\mu}$ inside the domain 
delimited by this lace are the $i(\mu + 
2k\pi)$ with $k \ge 0$, and the $i(-\mu + 
2k\pi)$ with $k\ge 1$. For all $n$,

\[\int_{\Gamma_n} \Xi_{\mu} (\alpha) e^{i\alpha\gamma} d\alpha = \int_{\Gamma_n} \frac{\sinh(i\mu)}{i( \cosh(\alpha)-\cosh(i\mu))} e^{i\alpha\gamma} d\alpha
\]

By the residue theorem, 
\[\int_{\Gamma_N}  \Xi_{\mu} (\alpha) e^{i\alpha\gamma} d\alpha = 2\pi i \left(\sum_{k \ge 0} \text{Res}(\Xi_{\mu},i(\mu+2k\pi))- \sum_{k\ge 1} \text{Res}(\Xi_{\mu},i(-\mu+2k\pi))\right).\]

\item \textbf{Asymptotic behavior:}

Since only the contribution 
on $[-n,n]$ of the integral 
is non zero asymptotically, 
and by convergence of the integral 
and the sums, 

\begin{align*} \int_{-\infty}^{+\infty} \Xi_{\mu} (\alpha) e^{i\alpha\gamma} d\alpha 
& = 2\pi e^{-\gamma \mu}+ 2\pi \sum_{k=1}^{+\infty} (-e^{\gamma \mu} + e^{-\gamma \mu}) e^{-2 \gamma k \pi} \\
& = 
2\pi e^{-\gamma \mu}+ 2\pi (-e^{\gamma \mu} + e^{-\gamma \mu})\left(\frac{1}{1-e^{-2\gamma\pi}}-1\right) \\
& = 
2\pi e^{-\gamma \mu} + 2\pi (-e^{\gamma \mu}
+ e^{-\gamma \mu}) \frac{e^{-\gamma \pi}}{e^{\gamma \pi} - e^{-\gamma \pi}} \\
& = 
2\pi \frac{e^{-\gamma (-\pi+\mu)} 
- e^{\gamma(-\pi-\mu)} 
- e^{\gamma (\mu-\pi)} + e^{\gamma (-\pi-\mu)}}{e^{\gamma \pi} - e^{-\gamma \pi}} \\
& = 2\pi \frac{\sinh(\gamma(\pi-\mu))}{\sinh(\gamma \pi)}.
\end{align*}
\end{itemize}

\item \textbf{Computation of $\hat{\rho}$:}

Using this expression of the Fourier transform of 
$\Xi_{\mu}$, for all $\omega$, 
\begin{align*}
2\pi \hat{\rho}(\omega) & = \frac{2\pi\sinh(\omega(\pi-\mu_t))}{\sinh(\pi\omega)+\sinh(\omega(\pi-2\mu_t))}\\
& = \frac{4\pi\sinh(\omega(\pi-\mu_t))}{e^{\omega\pi}.(1+ 
e^{-2\mu_t \omega}) - e^{-\omega\pi}.(1+
e^{2\mu_t \omega})}\\
& = \frac{4\pi\sinh(\omega(\pi-\mu_t))}{e^{\omega(\pi-\mu_t)}.(e^{\mu_t \omega}+ 
e^{-\mu_t \omega}) - e^{-\omega(\pi-\mu_t)}.(e^{-\mu_t \omega}+
e^{\mu_t \omega})}\\
& = \frac{\pi}{\cosh(\mu_t \omega)}.
\end{align*}

\item \textbf{Inverse transform:}

We thus have for all $\alpha$:

\[2\pi \rho(\alpha) = \frac{1}{2\pi} \int_{-\infty}^{\infty}
\frac{\pi}{\cosh(\mu_t \omega)} e^{-i\omega \alpha} d\omega = \frac{1}{\mu_t}\int_{-\infty}^{\infty}
\frac{1}{2\cosh(u)} e^{-i\frac{u}{\mu_t} \alpha} du,\]
where we used the variable change $u=\mu_t \omega$.
Using the computation 
of the Fourier transform of $\Xi_{\mu}$
for $\mu=\pi/2$, 
\[\int_{-\infty}^{+\infty} \frac{1}{\cosh(\alpha)} e^{i\alpha\gamma} d\alpha = 
2\pi \frac{\sinh(\pi\gamma/2)}{\sinh(\pi\gamma)} = \frac{\pi}{\cosh(\pi\gamma/2)}.\]
Thus we have 
\[2\pi \rho(\alpha) = \frac{1}{2\mu_t} \frac{\pi}{\cosh(\pi\alpha /2\mu_t)} = \frac{\pi}{2\mu_t} \frac{1}{\cosh(\pi\alpha /2\mu_t)}\]
\end{itemize}
\end{proof}

\subsubsection{\label{section.convergence.counting.functions}  Convergence of $\xi_t^{(k)}$:}

\begin{theorem}
There exists a function $\boldsymbol{\xi}_{t,d} : \mathbb{R} \rightarrow \mathbb{R}$ 
such that $\xi_t^{(k)}$ converges uniformly 
on any compact towards $\boldsymbol{\xi}_{t,d}$. 
Moreover, this function satisfies the following 
equation for all $\alpha$: 

\[\boldsymbol{\xi}'_{t,d} (\alpha) = \frac{1}{2\pi} \kappa'_t (\alpha) + \frac{1}{2\pi} \int_{\mathbb{R}} \frac{\partial \theta_t}{\partial \alpha} (\alpha,\beta) 
\boldsymbol{\xi}'_{t,d} (\beta) d\beta,\]
and $\boldsymbol{\xi}_{t,d} (0) = d/2$.

\end{theorem}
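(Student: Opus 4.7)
The strategy is to combine Montel's theorem (to produce convergent subsequences on $\mathcal{I}_{\tau_t}$), Theorem~\ref{theorem.convergence.subsequences} (to identify any subsequential limit as a solution of the Fredholm equation), Proposition~\ref{proposition.continuous.bethe.equation} together with the value at $0$ (to show the solution is unique), and finally Lemma~\ref{lemma.convergence} (to upgrade subsequential convergence to convergence of the whole sequence).

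First I would verify that the sequence $\bigl(\xi_t^{(k)}\bigr)_k$ is locally bounded on $\mathcal{I}_{\tau_t}$. For each $z \in \mathcal{I}_{\tau_t}$, one has
\[
\xi_t^{(k)}(z) \;=\; \tfrac{1}{2\pi}\,\kappa_t(z) \;+\; \tfrac{n_k+1}{2N_k} \;+\; \tfrac{1}{2\pi N_k}\sum_{j=1}^{n_k}\Theta_t\!\bigl(\kappa_t(z),\kappa_t(\boldsymbol{\alpha}^{(k)}_j(t))\bigr).
\]
Since $\kappa_t$ and $\Theta_t$ are bounded on compact subsets of $\mathcal{I}_{\tau_t}$ and of $\mathcal{I}_{\tau_t}^2$ respectively (in fact $\Theta_t$ is globally bounded once extended by continuity), and $n_k/N_k \to d$, the sum of $n_k$ terms divided by $N_k$ remains bounded uniformly in $k$ on any compact of $\mathcal{I}_{\tau_t}$. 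Each $\xi_t^{(k)}$ being holomorphic on $\mathcal{I}_{\tau_t}$, Montel's theorem then applies: from any subsequence one can extract a sub-subsequence converging uniformly on every compact of $\mathcal{I}_{\tau_t}$ towards some holomorphic function $\xi_t$.

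Next I would invoke Theorem~\ref{theorem.convergence.subsequences}: any such limit $\xi_t$ must satisfy
\[
\xi'_t(\alpha) \;=\; \tfrac{1}{2\pi}\kappa'_t(\alpha) \;+\; \tfrac{1}{2\pi}\int_{\mathbb{R}} \tfrac{\partial \theta_t}{\partial \alpha}(\alpha,\beta)\,\xi'_t(\beta)\,d\beta
\]
and $\xi_t(0)=d/2$. By Proposition~\ref{proposition.continuous.bethe.equation}, the derivative $\xi'_t$ is uniquely determined; together with the prescribed value $\xi_t(0)=d/2$, this determines $\xi_t$ itself uniquely. Call this unique function $\boldsymbol{\xi}_{t,d}$; in particular, the explicit formula
\[
\boldsymbol{\xi}'_{t,d}(\alpha) \;=\; \frac{1}{4\mu_t\cosh(\pi\alpha/2\mu_t)}
\]
holds. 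Hence every subsequential limit of $\bigl(\xi_t^{(k)}\bigr)_k$ equals $\boldsymbol{\xi}_{t,d}$.

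Finally, applying Lemma~\ref{lemma.convergence} to the locally bounded sequence $(\xi_t^{(k)})_k$ of holomorphic (in particular continuous) functions on $\mathcal{I}_{\tau_t}$, whose unique possible subsequential limit is $\boldsymbol{\xi}_{t,d}$, one concludes that the whole sequence converges to $\boldsymbol{\xi}_{t,d}$ uniformly on every compact subset of $\mathcal{I}_{\tau_t}$, and in particular on every compact of $\mathbb{R}$. The main obstacle I anticipate is the careful verification of local boundedness; the bound on the $\Theta_t$-sum requires using the fact (from Computation~\ref{computation.theta.border} and Proposition~\ref{proposition.limits.auxiliary.functions}) that $\Theta_t$ and its continuous extension remain bounded, so that the normalisation $\tfrac{1}{N_k}\sum_j$ genuinely stays $O(n_k/N_k) = O(d)$ uniformly in $k$, and this estimate persists in a complex strip $|\mathrm{Im}(z)|<\tau_t$ for a sufficiently small $\tau_t$. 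Once this is in place, all other steps are direct invocations of the results already established.
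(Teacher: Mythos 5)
Your proof plan is correct and follows essentially the same route as the paper: extract a convergent subsequence (Montel), identify the limit via Theorem~\ref{theorem.convergence.subsequences} and Proposition~\ref{proposition.continuous.bethe.equation}, then apply Lemma~\ref{lemma.convergence} to upgrade to convergence of the whole sequence. Your explicit verification of local boundedness of $(\xi_t^{(k)})_k$ on the strip is a useful addition that the paper glosses over.

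There is, however, one small gap: Proposition~\ref{proposition.continuous.bethe.equation} assumes its input $\rho$ is a \emph{continuous function in} $L^1(\mathbb{R},\mathbb{R})$, and you apply it to $\xi'_t$ without checking this hypothesis. Continuity is immediate (the limit $\xi_t$ is holomorphic on the strip, so $\xi'_t$ is analytic on $\mathbb{R}$), but integrability needs an argument. The paper supplies it: since each $\xi_t^{(k)}$ is bounded uniformly in $k$ and in its argument (being a sum of the bounded function $\frac{1}{2\pi}\kappa_t$, the bounded constant $\frac{n_k+1}{2N_k}$, and a normalized sum of the bounded function $\theta_t$), the limit $\xi_t$ is bounded on $\mathbb{R}$; and since $(\xi_t^{(k)})'>0$ for every $k$, the limit derivative $\xi'_t$ is non-negative. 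A non-negative function whose primitive is bounded is in $L^1(\mathbb{R},\mathbb{R})$. Adding this observation makes your argument complete.
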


\begin{proof}
Consider any subsequence of $(\xi_t^{(k)})_k$ 
which converges uniformly on any compact of $\mathcal{I}_{\tau_t}$ to 
a function $\xi_t$. Via Cauchy formula, 
the derivative of $(\xi_t^{(k)})$ 
converges uniformly on any compact to 
$\xi'_t$. Since the functions $\xi_t^{(k)}$ 
are uniformly bounded by a constant which is 
independant from $k$, and that for all $k$, $(\xi_t^{(k)})$, $\xi'_t$ is positive and $\xi_t$ is bounded, and thus $\xi_t$ is in $L^1(\mathbb{R},\mathbb{R})$. 
From Theorem~\ref{theorem.convergence.subsequences}, we get that 
$\xi'_t$ verifies a Fredholm equation, which 
has a unique solution in $L^1 (\mathbb{R},\mathbb{R})$ [Proposition~\ref{proposition.continuous.bethe.equation}]. 
From Theorem~\ref{theorem.convergence.subsequences}, $\xi_t$, as 
a function on $\mathbb{R}$, is the unique 
primitive function of this one which has value 
$d/4$ on $0$. Since this function is analytic, 
it determines its values on the whole stripe $\mathcal{I}_{\tau_t}$. In virtue of Lemma~\ref{lemma.convergence}, 
$(\xi_t^{(k)})_k$ converge towards this function.
\end{proof}

\begin{proposition}
\label{limits.xi.limit}
The limit of the function $\boldsymbol{\xi}_{t,d}$
in $+\infty$
is $\frac{d}{2}+\frac{1}{4}$, and 
the limit in $-\infty$ is $d/2-1/4$.
\end{proposition}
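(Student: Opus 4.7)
The plan is to integrate the explicit formula for $\boldsymbol{\xi}_{t,d}'$ provided by Proposition~\ref{proposition.continuous.bethe.equation} and use the normalization $\boldsymbol{\xi}_{t,d}(0) = d/2$ established in the previous subsection. By the theorem just proved, $\boldsymbol{\xi}_{t,d}'$ satisfies the same Fredholm equation as the function $\rho$ in Proposition~\ref{proposition.continuous.bethe.equation}, and the uniqueness statement there (together with the fact that $\boldsymbol{\xi}_{t,d}'$ is continuous and in $L^1(\mathbb{R})$, which follows from the uniform bound on the $\xi_t^{(k)}$ and monotonicity) forces
\[\boldsymbol{\xi}_{t,d}'(\alpha) = \frac{1}{4\mu_t \cosh(\pi\alpha/(2\mu_t))}\]
for every $\alpha \in \mathbb{R}$.

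First, I would compute
\[\int_{0}^{+\infty} \boldsymbol{\xi}_{t,d}'(\alpha)\, d\alpha = \int_{0}^{+\infty} \frac{d\alpha}{4\mu_t \cosh(\pi\alpha/(2\mu_t))}.\]
The change of variable $u = \pi\alpha/(2\mu_t)$ eliminates the dependence on $\mu_t$ and reduces the integral to $\frac{1}{2\pi}\int_{0}^{+\infty} \operatorname{sech}(u)\, du$. Using the classical antiderivative $\int \operatorname{sech}(u)\, du = \arctan(\sinh(u)) + C$, one gets $\int_{0}^{+\infty} \operatorname{sech}(u)\, du = \pi/2$, so the integral above equals $1/4$. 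Since $\boldsymbol{\xi}_{t,d}'$ is positive and integrable, $\boldsymbol{\xi}_{t,d}$ admits a finite limit at $+\infty$, given by
\[\lim_{+\infty} \boldsymbol{\xi}_{t,d} = \boldsymbol{\xi}_{t,d}(0) + \frac{1}{4} = \frac{d}{2} + \frac{1}{4}.\]

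The limit at $-\infty$ then follows by symmetry: the density $\boldsymbol{\xi}_{t,d}'$ is an even function of $\alpha$ (the expression above is manifestly even), hence $\int_{-\infty}^{0} \boldsymbol{\xi}_{t,d}'(\alpha)\, d\alpha = 1/4$ as well, giving $\lim_{-\infty} \boldsymbol{\xi}_{t,d} = d/2 - 1/4$.

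No step in this argument is a real obstacle: the only delicate point is justifying that the continuous $L^1$ solution of the Fredholm equation from Proposition~\ref{proposition.continuous.bethe.equation} is indeed $\boldsymbol{\xi}_{t,d}'$, which requires checking that the derivatives of the counting functions are uniformly bounded (so that the limit derivative lies in $L^1$) and that one may pass to the limit under the integral in the Fredholm equation; both facts were already secured in the proof of Theorem~\ref{theorem.convergence.subsequences}. After that, everything reduces to the classical evaluation $\int_0^{+\infty} \operatorname{sech}(u)\, du = \pi/2$.
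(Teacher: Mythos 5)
Your proof is correct and follows the same route as the paper: write $\boldsymbol{\xi}_{t,d}(\alpha) = d/2 + \int_0^\alpha \boldsymbol{\xi}_{t,d}'(x)\,dx$ with $\boldsymbol{\xi}_{t,d}'$ given by the explicit density from Proposition~\ref{proposition.continuous.bethe.equation}, reduce by a linear change of variable to $\frac{1}{2\pi}\int_0^\infty \operatorname{sech}$, and evaluate that to $\pi/2$. The only cosmetic difference is the choice of antiderivative of $\operatorname{sech}$ ($\arctan\circ\sinh$ here versus $\arctan\circ\exp$ in the paper), which changes nothing.
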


\begin{proof}
For all $\alpha$, 
\[\boldsymbol{\xi}_{t,d} (\alpha) = \frac{d}{2} 
+ \frac{1}{4 \mu_t} \int_{0}^{\alpha} 
\frac{1}{\cosh(\pi x /2\mu_t)} dx 
= \frac{d}{2} + \frac{1}{2\pi} \int_{0}^{2\mu_t \alpha / \pi} \frac{1}{\cosh(x)} dx.
\]
This converges in $+\infty$ to: 
\[\frac{d}{2} + \frac{1}{\pi} \int_{0}^{+\infty}
\frac{e^x}{e^{2x}+1} dx = \frac{d}{2} + \frac{1}{\pi} \int_{0}^{+\infty}
(\arctan (\exp))'(x) dx = \frac{d}{2} + 
\frac{1}{2} - \frac{1}{\pi} \frac{\pi}{4} = \frac{d}{2} + \frac{1}{4}.\]
For the same reason, the limit in 
$-\infty$ is $d/2-1/4$.
\end{proof}

\begin{remark}
As a consequence, this limit is $>d$ 
when $d<1/2$ and equal to $d$ when $d=1/2$.
\end{remark}

\subsection{\label{section.condensation} Condensation of Bethe roots relative 
to some functions}

In this section, we prove that if 
$f$ is a continuous function  
$(0,+\infty) \rightarrow (0,+\infty)$, decreasing
and integrable, then the scaled sum 
of the values of $f$ on the Bethe roots 
converges to an integral involving $f$ 
and $\boldsymbol{\xi}_{t,d}$ [Theorem~\ref{theorem.convergence.relative}].
Let us denote, for all $t,m$ and $M>0$: 
\[Q_t^{(k)} (M) \equiv \left\{j \in \llbracket 1 , n_k\rrbracket: \boldsymbol{\xi}_{t,d}^{-1} \left(\frac{j}{N_k}\right) \notin [-M,M]\right\},\]
and for two fine sets $S,T$, we denote $S \Delta T = S \backslash T \cup T \backslash S$.
For a compact set $K \subset \mathbb{R}$, 
we denote its diameter $\delta (K) 
\equiv \max_{x,y \in K} |x-y|$. For $I$ 
a bounded interval of $\mathbb{R}$, 
we denote $\mathit{l} (I)$ its length.
When 
\[J= \displaystyle{\bigcup_j I_j}\] 
with $I_k$ bounded and disjoint intervals, the 
length of $J$ is 
\[\mathit{l} (J) 
= \sum_j \mathit{l} (I_j).\]

\begin{theorem}
\label{theorem.convergence.relative}
Let $f: (0,+\infty) \rightarrow (0,+\infty)$ 
a continuous, decreasing and integrable function.
Then:
\[\frac{1}{N_k} \sum_{j=\lceil n_k /2 \rceil+1}^{n_k} f(\boldsymbol{\alpha}^{(k)}_j (t)) 
\rightarrow \int_{0}^{\boldsymbol{\xi}_{t,d}^{-1} (d)} f(\alpha) \boldsymbol{\xi}'_{t,d} (\alpha) d\alpha,\]
where we denote $\boldsymbol{\xi}_{t,1/2}^{-1}(1/2) = +\infty$.
\end{theorem}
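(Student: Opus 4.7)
The plan is to interpret the left-hand side as a Riemann sum associated to the partition of the positive reals induced by the Bethe roots, using the fundamental identity $\xi_t^{(k)}(\boldsymbol{\alpha}_j^{(k)}(t)) = j/N_k$, and then pass to the limit via the uniform convergence of $\xi_t^{(k)}$ to $\boldsymbol{\xi}_{t,d}$ together with the rarefaction of Bethe roots at infinity established in Theorem~\ref{theorem.rarefaction}.

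First, I would introduce a truncation parameter $M > 0$ and split the sum according to whether $\boldsymbol{\alpha}_j^{(k)}(t) \le M$ or $\boldsymbol{\alpha}_j^{(k)}(t) > M$. Since $f$ is decreasing on $(0,+\infty)$, the tail contribution is bounded by $f(M) \cdot |P_t^{(k)}(M)|/N_k$, which by Theorem~\ref{theorem.rarefaction} can be made arbitrarily small uniformly in $k$ by taking $M$ large. The tail of the target integral is controlled analogously, using the integrability of $f$ together with the boundedness of $\boldsymbol{\xi}'_{t,d}$ (given explicitly by $\frac{1}{4\mu_t \cosh(\pi \alpha / 2\mu_t)}$).

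On the compact part, I would exploit the identity $\xi_t^{(k)}(\boldsymbol{\alpha}_{j+1}^{(k)}(t)) - \xi_t^{(k)}(\boldsymbol{\alpha}_j^{(k)}(t)) = 1/N_k$ to rewrite the truncated sum as
\[\sum_{j} f(\boldsymbol{\alpha}_j^{(k)}(t)) \bigl( \xi_t^{(k)}(\boldsymbol{\alpha}_{j+1}^{(k)}(t)) - \xi_t^{(k)}(\boldsymbol{\alpha}_j^{(k)}(t)) \bigr),\]
and apply the mean value theorem to each increment to obtain a Riemann sum approximating $\int_0^M f(\alpha) (\xi_t^{(k)})'(\alpha)\,d\alpha$. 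Since $(\xi_t^{(k)})'(\alpha) \ge \kappa'_t(\alpha)/(2\pi)$ is bounded below by a positive constant on $[0,M]$, the mesh $\max_j (\boldsymbol{\alpha}_{j+1}^{(k)}(t) - \boldsymbol{\alpha}_j^{(k)}(t))$ tends to zero uniformly in $k$; combined with the uniform continuity of $f$ on $[0,M]$ and with the uniform convergence $(\xi_t^{(k)})' \to \boldsymbol{\xi}'_{t,d}$ on $[0,M]$ (obtained from Cauchy's formula applied to the uniform convergence of $\xi_t^{(k)}$), this Riemann sum converges to $\int_0^M f(\alpha) \boldsymbol{\xi}'_{t,d}(\alpha)\,d\alpha$.

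Letting $M$ tend to $\boldsymbol{\xi}_{t,d}^{-1}(d)$ (respectively to $+\infty$ when $d=1/2$) and combining both estimates yields the theorem. The main obstacle is the tail control uniformly in $k$: when $d=1/2$, the target integral is over an unbounded interval, so one must coordinate the growth of $M$ with the index $k$. This is achieved by invoking Theorem~\ref{theorem.rarefaction} jointly with the decay $\int_M^\infty f(\alpha) \boldsymbol{\xi}'_{t,d}(\alpha)\,d\alpha \to 0$ as $M \to \infty$, guaranteed by the integrability of $f$ and the explicit form of $\boldsymbol{\xi}'_{t,d}$.
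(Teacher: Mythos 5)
Your plan correctly identifies the two main ingredients — tail control via Theorem~\ref{theorem.rarefaction}, and a Riemann-sum interpretation of the truncated sum via the identity $\xi_t^{(k)}(\boldsymbol{\alpha}_j^{(k)}(t)) = j/N_k$ and uniform convergence of $(\xi_t^{(k)})'$. The observation that the mesh $\max_j(\boldsymbol{\alpha}_{j+1}^{(k)}-\boldsymbol{\alpha}_j^{(k)})$ tends to zero uniformly in $k$, because $(\xi_t^{(k)})'$ is bounded below on $[0,M]$ by $\min_{[0,M]}\kappa'_t/(2\pi) > 0$, is correct and is a clean way to see the convergence of the Riemann sum.

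There is, however, a genuine gap at the compact part. You invoke ``the uniform continuity of $f$ on $[0,M]$,'' but $f$ is only assumed continuous, decreasing, positive and integrable on $(0,+\infty)$ — it is \emph{not} assumed bounded near $0$, and in the paper's application $f(\alpha) = -\log_2(2|\sin(\kappa_t(\alpha)/2)|)$ does blow up as $\alpha \to 0^+$. Since the Bethe root $\boldsymbol{\alpha}^{(k)}_{\lceil n_k/2\rceil+1}(t)$ approaches $0$, the Riemann sum you build really does sample $f$ arbitrarily close to its singularity, and the mean-value / uniform-continuity argument breaks down there: no mesh control helps when the integrand is unbounded on the interval. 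To close the gap you must treat a neighborhood $(0,\sigma]$ of the origin separately, which is exactly what the paper does. The key tool the paper uses, and which is missing from your proposal, is the integral majorant: since $f\circ(\xi_t^{(k)})^{-1}$ is decreasing, each term satisfies
\[\frac{1}{N_k}\, f\!\left(\left(\xi_t^{(k)}\right)^{-1}\!\left(\tfrac{j}{N_k}\right)\right) \le \int_{[(j-1)/N_k,\, j/N_k]} f\!\left(\left(\xi_t^{(k)}\right)^{-1}(x)\right) dx,\]
which after a change of variable lets the integrability of $f$ control the contribution of the indices near $\lceil n_k/2\rceil+1$ uniformly in $k$. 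Only after removing this neighborhood of $0$ can one use uniform convergence of $(\xi_t^{(k)})^{-1}$ (or of $(\xi_t^{(k)})'$) on a compact $[\sigma,M]$ together with the (genuine, there) uniform continuity of $f$ on $[\sigma,M]$. Apart from this missing step, your route is a legitimate variant of the paper's: you work directly with a Riemann sum for $\int_0^M f (\xi_t^{(k)})'$ rather than passing through the intermediate index set $Q_t^{(k)}(M)$ defined via $\boldsymbol{\xi}_{t,d}$ as the paper does, and that substitution is sound once the singular endpoint is handled.
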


\begin{remark}
This is another version of a statement proved 
in~\cite{kozlowski} for bounded continuous and 
Lipshitz functions, which is not sufficient for 
the proof of Theorem~\ref{theorem.main}.
\end{remark}

\begin{proof}
In all the proof, the indexes $j$ in 
the sums are in $\llbracket \lceil n_k /2 \rceil+1,n_k\rrbracket$.
\begin{itemize}
\item \textbf{Setting:}
Let $\epsilon>0$ and $t \in (0,\sqrt{2})$. Let us fix some $M$ 
such that for all $k$ greater than some $k_0$: 
\[\frac{1}{N_k} |P_t^{(k)}(M)|\le \frac{\epsilon}{2||f_{[M,+\infty)}||_{\infty}+1},\] 
\[\left|\int_{[M,+\infty)} f(\alpha) \boldsymbol{\xi}'_{t,d} (\alpha) d\alpha
\right| \le \frac{\epsilon}{2}.\]
and if $d<1/2$,
\[M>\boldsymbol{\xi}_{t,d}^{-1}(d),\]
which is possible in virtue of Proposition~\ref{limits.xi.limit}.

\item \textbf{Using the rarefication of Bethe roots:}

\begin{align*}
\frac{1}{N_k} \sum_{j= \lceil n_k /2 \rceil+1}^{n_k} f(\boldsymbol{\alpha}^{(m)}_j (t)) & = \frac{1}{N_k} \sum_{j= \lceil n_k /2 \rceil+1}^{n_k} f\left(\left(\xi_t^{(k)}\right)^{-1}\left(\frac{j}{N_k}\right)\right)\\
& = \frac{1}{N_k} \sum_{j \notin P_t^{(k)}(M)} f\left(\left(\xi_t^{(k)}\right)^{-1}\left(\frac{j}{N_k}\right)\right) \\
& \quad + \frac{1}{N_k} \sum_{j \in P_t^{(k)}(M)} f\left(\left(\xi_t^{(k)}\right)^{-1}\left(\frac{j}{N_k}\right)\right)
\end{align*}

As a consequence of the first point
\begin{align*}
\left| \frac{1}{N_k} \sum_{j=\lceil n_k /2 \rceil+1}^{n_k} f(\boldsymbol{\alpha}^{(k)}_j) - \frac{1}{N_k} \sum_{j \notin P_t^{(k)}(M)} f\left(\left(\xi_t^{(k)}\right)^{-1}\left(\frac{j}{N_k}\right)\right)\right| & \le \frac{1}{N_k}\left|P_t^{(k)}(M)\right|.||f_{[M,+\infty)}||_{\infty} \\
& \le  \frac{\epsilon}{2||f_{[M,+\infty)}||_{\infty}+1} ||f_{[M,+\infty)}||_{\infty}\\
&  \le \frac{\epsilon}{2},
\end{align*}
since by definition, if $j \in P_t^{(k)}(M)$ 
and $j \ge \lceil n_k /2 \rceil+1$, then 
\[\left(\xi_t^{(k)}\right)^{-1} \left( \frac{j}{N_k}\right) 
\ge M.\]

\item \textbf{On the asymptotic 
cardinality of $(P_t^{(k)}(M))^c \Delta (Q_t^{(k)}(M))^c$:}

\[\frac{1}{N_k} |(P_t^{(k)}(M))^c \Delta (Q_t^{(k)}(M))^c| \rightarrow 0.\]

Indeed, $(P_t^{(k)}(M))^c \Delta (Q_t^{(k)}(M))^c$ is equal to the set
\[\left\{ j \in \llbracket 1,n_k \rrbracket : \frac{j}{N_k} \in  \left(\xi_t ^{(k)} ([-M,M])\right) \Delta \left(\boldsymbol{\xi}_{t,d} ([-M,M])\right)\right\},\]
thus its cardinality is smaller than 
\[\delta \left( N_k \left( \left(\xi_t ^{(k)} ([-M,M])\right) \Delta \left(\boldsymbol{\xi}_{t,d} ([-M,M])\right)\right) \right) +1,\] 
which is equal to 
\[N_k \delta  \left( \left(\xi_t ^{(k)} ([-M,M])\right) \Delta \left(\boldsymbol{\xi}_{t,d} ([-M,M])\right) \right)+1.\]
As a consequence: 

\[\frac{1}{N_k} \left| (P_t^{(k)}(M))^c \Delta (Q_t^{(k)}(M))^c\right| \le \delta  \left( \left(\xi_t ^{(k)} ([-M,M])\right) \Delta \left(\boldsymbol{\xi}_{t,d} ([-M,M])\right) \right) + 
\frac{1}{N_k}.\]

Since $\xi_t^{(k)}$ converges to $\boldsymbol{\xi}_{t,d}$ on any compact, and in particular $[-M,M]$, 
the diameter on the right of this inequality 
converges to $0$ when $k$ tends towards $+\infty$.

\item \textbf{Replacing $P_t^{(k)}(M)$ 
by $Q_t^{(k)}(M)$ in the sum:} 

Since $f$ is decreasing and positive, for all $j \in \llbracket \lceil n_k /2 \rceil+1,n_k \rrbracket$, 
\[\frac{1}{N_k} \left| f\left(\left(\xi_t^{(k)}\right)^{-1}\left(\frac{j}{N_k}\right)\right) \right| \le 
\int_{\left[\frac{j-1}{N_k},\frac{j}{N_k}\right]} 
f\left(\left(\xi_t^{(k)}\right)^{-1} (x) \right) dx.\]

As a consequence, the difference 

\[\frac{1}{N_k} \left| \sum_{j \notin P_t^{(k)}(M)} f\left(\left(\xi_t^{(k)}\right)^{-1}\left(\frac{j}{N_k}\right)\right) - \sum_{j \notin Q_t^{(k)}(M)} f\left(\left(\xi_t^{(k)}\right)^{-1}\left(\frac{j}{N_k}\right)\right) \right|\]
is smaller than 
\[\int_{J_k} \left|f\left( \left(\xi_t^{(k)}\right)^{-1}(x)\right)\right| dx = 
\int_{\xi_t^{(k)} (I_k)} f(x) 
\left(\xi_t^{(k)}\right)' (x) dx,\]
where $J_k$ is the union of the intervals
\[ \left[\frac{j-1}{N_k},
\frac{j}{N_k}\right],\]
for $j \in (P_t^{(k)}(M))^c \Delta (Q_t^{(k)}(M))^c$. Since the functions $\xi_t^{(k)}$ 
are uniformly bounded independently of $k$, there 
exists a constant $C_t >0$ such that for 
all $k$: 

\[\int_{\xi_t^{(k)} (J_k)} f(x) 
\left(\xi_t^{(k)}\right)' (x) dx \le C_t \int_{\xi_t^{(k)} (J_k)} f(x) dx,
\]

Since $f$ is 
decreasing,
\[\int_{\xi_t^{(k)} (J_k)} f(x) 
\le \int_{[0,\mathit{l}(\xi_t^{(k)} (J_k))]} f(x),\]
From the fact that $\xi_t^{(k)}$ is increasing:
\[\mathit{l} (\xi_t^{(k)} (J_k)) 
= \int_{J_k} (\xi_t^{(k)})'(\alpha) d\alpha\]
Since the 
derivative of $\xi_t^{(k)}$ is bounded uniformly 
and independently of $k$, and that 
the length of $J_k$ is smaller than 
$\frac{1}{N_k} \left| (P_t^{(k)}(M))^c \Delta (Q_t^{(k)}(M))^c\right|$, 
\[\mathit{l}(\xi_t^{(k)} (J_k)) \rightarrow 0.\]
From the integrability of $f$ on $(0,+\infty)$:
\[\int_{[0,\mathit{l}(\xi_t^{(k)} (J_k))]} f(x) 
\rightarrow 0.\]
As a consequence, there exists exists some $k_1 \ge k_0$ 
such that for all $k \ge k_1$, 

\[\frac{1}{N_k} \left| \sum_{j \notin P_t^{(k)}(M)} f\left(\left(\xi_t^{(k)}\right)^{-1}\left(\frac{j}{N_k}\right)\right) - \sum_{j \notin Q_t^{(k)}(M)} f\left(\left(\xi_t^{(k)}\right)^{-1}\left(\frac{j}{N_k}\right)\right) \right| \le \frac{\epsilon}{4}.\]

\item \textbf{Approximating 
$\xi_t^{(k)}$ by $\boldsymbol{\xi}_{t,d}$ 
in the sum:}

\begin{enumerate}

\item \textbf{Bounding the contribution 
in a neighborhood of $0$:}

With an argument similir to the one used in 
the last point (bounding with integrals), 
there exists $\sigma>0$ smaller than $M$
such that for all $k$,

\[\frac{1}{N_k} \sum_{j \in (Q_t^{(k)} (\sigma))^c \cap (Q_t^{(k)}(M))^c} \left| f\left(\left(\xi_t^{(k)}\right)^{-1}\left(\frac{j}{N_k}\right)\right) \right| \le \frac{\epsilon}{8}.\]

\item \textbf{Using the convergence of $\xi_t^{(k)}$ on a compact away from $0$:}

There exists some $k_2 \ge k_1$ such that 
for all $k \ge k_2$:

\[\frac{1}{N_k} \sum_{j \in Q_t^{(k)} (\sigma) \cap (Q_t^{(k)}(M))^c} \left| f\left(\left(\xi_t^{(k)}\right)^{-1}\left(\frac{j}{N_k}\right)\right) - f\left(\boldsymbol{\xi}_{t,d}^{-1}\left(\frac{j}{N_k}\right)\right) \right| \le \frac{\epsilon}{16}.\]

Indeed, for all the integers $j$ in the 
sum, $\boldsymbol{\xi}_{t,d}^{-1}\left(\frac{j}{N_k}\right)
\in [\sigma,M]$, and by uniform 
convergence of $\left(\xi_t^{(k)}\right)^{-1}$ on 
the compact $\boldsymbol{\xi}_{t,d} ([\sigma,M])$, for 
$k$ great enough, the real numbers $\left(\xi_t^{(k)}\right)^{-1} \left(\frac{j}{N_k}\right)$ and 
$\boldsymbol{\xi}_{t,d}^{-1} \left(\frac{j}{N_k}\right)$
for these integers $k$ and these 
indexes $j$ all lie in the same 
compact interval. Since $f$ 
is continuous, there exists some $\eta>0$ such that 
whenever for $x,y$ lie in this compact interval and
$|x-y| \le \eta$, then $|f(x)-f(y)|\le \frac{\epsilon}{8}$.
Since $\left(\xi_t^{(k)}\right)^{-1}$ 
converges uniformly towards $\boldsymbol{\xi}_{t,d}^{-1}$ on 
the compact $\boldsymbol{\xi}_{t,d} ([\sigma,M])$, 
there exists some $k_3 \ge k_2$ such 
that for all $k \ge k_3$, and for all $j$ 
such that $j \in 
Q_t^{(k)} (\sigma)$ and $j \notin Q_t^{(k)} (M)$,
\[\left| \left(\xi_t^{(k)}\right)^{-1} \left( \frac{j}{N_k}\right) - \boldsymbol{\xi}_{t,d}^{-1} \left( \frac{j}{N_k}\right)\right| \le \eta.\]
As a consequence, we obtain the announced inequality.

\end{enumerate}

\item \textbf{Convergence of the remaining sum:}

The following sum is a Riemman sum:  
\[\sum_{j \notin Q_t^{(k)}(M)} f\left(\boldsymbol{\xi}_{t,d}^{-1}\left(\frac{j}{4m}\right)\right),\]
and if $d=1/2$ it converges towards 
\[\int_{\boldsymbol{\xi}_{t,d} (0)}^{\boldsymbol{\xi}_{t,d}(M)} f \left(\boldsymbol{\xi}_{t,d}^{-1} (\alpha)\right) d \alpha= \int_{0}^M 
f (\alpha) \boldsymbol{\xi}'_{t,d} (\alpha) d\alpha,\]
by a change of variable. If $d<1/2$, 
it converges towards 
\[\int_{\boldsymbol{\xi}_{t,d} (0)}^{d} f \left(\boldsymbol{\xi}_{t,d}^{-1} (\alpha)\right) d \alpha= \int_{0}^{d} 
f (\alpha) \boldsymbol{\xi}'_{t,d} (\alpha) d\alpha,\]
since in this case, $M$ was chosen such that $M>\boldsymbol{\xi}_{t,d}^{-1} (d)$.

As a consequence, there exists some $k_4 \ge k_3$ 
such that for all $ k \ge k_4$, if $d=1/2$:

\[\left| \sum_{j \notin Q_t^{(k)}(M)} f\left(\boldsymbol{\xi}_{t,d}^{-1}\left(\frac{j}{N_k}\right)\right) - \int_{0}^M 
f (\alpha) \boldsymbol{\xi}'_{t,d} (\alpha) d\alpha\right| \le \frac{\epsilon}{16} \]

If $d<1/2$:

\[\left| \sum_{j \notin Q_t^{(k)}(M)} f\left(\boldsymbol{\xi}_{t,d}^{-1}\left(\frac{j}{N_k}\right)\right) - \int_{0}^d 
f (\alpha) \boldsymbol{\xi}'_{t,d} (\alpha) d\alpha\right| \le \frac{\epsilon}{16} \]

\item \textbf{Assembling the inequalities:}

All put together, we have for all $k \ge k_4$, 

\[\left| \frac{1}{N_k} \sum_{j=\lceil n_k /2 \rceil+1}^{n_k} 
f(\boldsymbol{\alpha}_j^{(k)} (t))- \int_{0}^{\boldsymbol{\xi}^{-1}_t (d)} 
f (\alpha) \boldsymbol{\xi}'_{t,d} (\alpha) d\alpha\right| \le \frac{\epsilon}{2} + \frac{\epsilon}{4} + \frac{\epsilon}{8} +\frac{\epsilon}{16} +\frac{\epsilon}{16}  = \epsilon. \]

Since for all $\epsilon>0$ there exists 
such an integer $k_4$, this proves the statement.

\end{itemize}
\end{proof}

\section{\label{section.computation.entropy} Computation of square ice entropy}

In this last section, 
we compute the entropy of 
the square ice model.

\begin{notation}
For all $d \in [0,1/2]$, we denote:
\[F(d) = - 2 \int_{0}^{\boldsymbol{\xi}_{t,d}^{-1} (d)} 
\log_2 \left( 2 |\sin(\kappa_t (\alpha)/2)|\right) \rho_t (\alpha)d\alpha.\]
\end{notation}

\begin{lemma}
\label{lemma.F}
Let us consider $(N_k)$ some sequence 
of integers, and $(n_k)$ another 
sequence such that for all $k$, $n_k \le (N_k-1) /4 $, and $(2n_k+1)/ N_k \rightarrow d \in [0,1/2]$.
Then 
\[\log_2 (\lambda_{2n_k+1,N_k} (1)) \rightarrow F(d). \]
\end{lemma}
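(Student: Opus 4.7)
The plan is to combine the identification $\lambda_{2n+1,N}(1) = \Lambda_{2n+1,N}(1)[\vec{p}_1(1),\ldots,\vec{p}_{2n+1}(1)]$ established in Section~\ref{subsection.identification} with the condensation Theorem~\ref{theorem.convergence.relative}, both evaluated at $t=1$ (so $\Delta_1 = 1/2$, $\mu_1 = 2\pi/3$, and $I_1 = (-\pi/3,\pi/3)$). Because $2n_k+1$ is odd and the Bethe roots are antisymmetric around the middle index $j=n_k+1$ (Section~\ref{section.existence}), one has $\vec{p}_{n_k+1}(1)=0$, so it is the alternative branch of Theorem~\ref{theorem.coordinate.ansatz} that applies; the linear-in-$N_k$ prefactor it carries contributes only $O(\log N_k)/N_k = o(1)$ to the normalized logarithm.

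The first step is to expand $\Lambda_{2n_k+1,N_k}(1)$ and take $\frac{1}{N_k}\log_2$. A direct computation starting from the explicit form of $L_t,M_t$ at $\Delta_1=1/2$ (as found in~\cite{Duminil-Copin.ansatz}), combined with the fact that $\Lambda$ is a positive real eigenvalue (Perron--Frobenius, Theorem~\ref{theorem.perron}), should yield
\[\tfrac{1}{N_k}\log_2\lambda_{2n_k+1,N_k}(1) = \tfrac{1}{N_k}\sum_{j \neq n_k+1}\log_2\bigl(2|\sin(\vec{p}_j(1)/2)|\bigr) + o(1).\]
Substituting $\vec{p}_j(1) = \kappa_1(\boldsymbol{\alpha}_j(1))$ and exploiting the antisymmetry $\boldsymbol{\alpha}_{2n_k+2-j}(1) = -\boldsymbol{\alpha}_j(1)$ together with the oddness of $\kappa_1$ (hence evenness of $|\sin(\kappa_1(\cdot)/2)|$), the summand is symmetric around the middle index, and the sum over $j \neq n_k+1$ equals twice the sum over $j \in [n_k+2, 2n_k+1]$.

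The resulting half-sum is precisely of the form to which Theorem~\ref{theorem.convergence.relative} applies, taking the sequence $n_k' := 2n_k+1$ (so that $n_k'/N_k \to d$, and the index range $\lceil n_k'/2\rceil + 1 = n_k+2$ up to $n_k' = 2n_k+1$ matches the theorem's bounds) and the test function $f(\alpha) = -\log_2(2|\sin(\kappa_1(\alpha)/2)|)$. Before invoking the theorem one has to verify that $f$ meets its hypotheses on $(0,+\infty)$: continuity, positivity, and strict monotonicity all follow from $\kappa_1$ being a strictly increasing odd bijection onto $(-\pi/3,\pi/3)$, so that $2|\sin(\kappa_1(\alpha)/2)|$ ranges in $[0,1]$ strictly increasingly on $(0,+\infty)$; integrability near $0$ comes from the asymptotic $f(\alpha) \sim -\log_2(\kappa_1'(0)\alpha)$ (logarithmic singularity, hence integrable); and integrability near $+\infty$ follows from the exponential convergence $\kappa_1(\alpha)\to \pi/3$ read off Computation~\ref{computation.kprime}, which forces $f$ to decay exponentially. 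Applying the theorem then delivers
\[\tfrac{1}{N_k}\log_2\lambda_{2n_k+1,N_k}(1) \longrightarrow -2\int_0^{\boldsymbol{\xi}_{1,d}^{-1}(d)} \log_2\bigl(2|\sin(\kappa_1(\alpha)/2)|\bigr)\rho_1(\alpha)\,d\alpha = F(d).\]

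The main obstacle I anticipate lies in the first step, namely the precise asymptotic reduction of $\log_2\Lambda$ to $\sum_j \log_2|M_1(e^{i\vec{p}_j(1)})|$. When $\vec{p}_{n_k+1}(1)=0$, $\Lambda$ takes the alternative form of Theorem~\ref{theorem.coordinate.ansatz}, and one must both show that the prefactor contributes $o(N_k)$ after taking logarithms and verify by direct computation the identification $|M_1(e^{ip})| = 2|\sin(p/2)|$; obtaining this identity requires tracking the phase contributions dictated by the Bethe equations and using Perron--Frobenius to identify the sign of the sum of the two (conjugate) products so that no spurious cancellation occurs. Once these algebraic manipulations are settled, the convergence reduces entirely to Theorem~\ref{theorem.convergence.relative}, which is why the latter was stated in the generality of decreasing integrable functions rather than the Lipschitz form of~\cite{kozlowski}.
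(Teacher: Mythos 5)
Your proposal takes essentially the same route as the paper: antisymmetry forces $\vec{p}_{n_k+1}(1)=0$, so the alternative branch of Theorem~\ref{theorem.coordinate.ansatz} applies; the linear prefactor contributes only $O(\log_2 N_k)/N_k$ to $\frac{1}{N_k}\log_2\Lambda$; the remaining product over Bethe roots is paired by antisymmetry and, after writing $M_1(z)=z/(z-1)$ and using $\prod e^{i\vec{p}_j/2}=1$, reduces to a sum of $-\log_2\bigl(2|\sin(\vec{p}_j/2)|\bigr)$ over one half of the roots, to which Theorem~\ref{theorem.convergence.relative} applies with the test function $f(\alpha)=-\log_2\bigl(2|\sin(\kappa_1(\alpha)/2)|\bigr)$, verified to be continuous, positive, decreasing, and integrable. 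One small correction: by direct computation $|M_1(e^{ip})|=\frac{1}{2|\sin(p/2)|}$, not $2|\sin(p/2)|$; the sign of $F(d)$ already absorbs this, and the rest of your argument (including the re-indexing $n_k':=2n_k+1$, which is actually written out more carefully than in the paper) is sound.
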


\begin{proof}

In this  
proof, for all $k$ we denote: 
\[(\vec{p}_j^{(k)})_j = 
(\kappa_t (\boldsymbol{\alpha}_j^{(k)}))_j,\]
the solution of the system of Bethe 
equations $(E_k)[1,2n_k+1,N_k]$.
The for all $k$:

\[\lambda_{2n_k+1,N_k} (1) = \Lambda_{2n_k+1,N_k} [\vec{p}^{(k)}] = \left( 2 + (N_k-1) 
+ \sum_{j \neq (n_k+1)} 
\frac{\partial \Theta_1}{\partial x} \left(0,\vec{p}_j^{(k)}\right)\right)\prod_{j=1}^{n_k} M_1(e^{i\vec{p}_j^{(k)}}),\]
since by antisymmetry of $\vec{p}^{(k)}$, 
and that $2n_k +1$ is odd, $\vec{p}^{(k)}_{n_k+1}=0$.

For all $z$ such that $|z|=1$, 
\[M_1(z) = \frac{z}{z-1}.\]
By antisymmetry of the sequences $\vec{p}^{(k)}$, 
for all $k$:
\[\displaystyle{\prod_{j=1}^{n_k} e^{i\vec{p}_j^{(k)}} = \prod_{j=1}^{n_k} e^{i\vec{p}_j^{(k)}/2}= 1}.\] As a consequence: 
\begin{align*}
\Lambda_{2n_k+1,N_k} [\vec{p}^{(k)}] & = 
\left( 2 + (N_k-1) 
+ \sum_{j \neq (n_k+1)} 
\frac{\partial \Theta_1}{\partial x} \left(0,\vec{p}_j^{(k)}\right)\right) \prod_{j=1}^{n_k} \frac{1}{e^{i\vec{p}_{j}^{(k)}}-1} \\ 
& = \left( 2 + (N_k-1) 
+ \sum_{j \neq (n_k+1)} 
\frac{\partial \Theta_1}{\partial x} \left(0,\vec{p}_j^{(k)}\right) \right) \prod_{j=1}^{n_k} \frac{e^{-i\vec{p}_{j}^{(k)}/2}}{e^{i\vec{p}_{j}^{(k)}/2}-e^{-i\vec{p}_{j}^{(k)}/2}}\\
\end{align*}

Since this eigenvalue is positive, 
\[\Lambda_{2n_k+1,N_k} [\vec{p}^{(k)}] = |\Lambda_{2n_k+1,N_k} [\vec{p}^{(k)}]|
= \left| 2 + (N_k-1) 
+ \sum_{j \neq (n_k+1)} 
\frac{\partial \Theta_1}{\partial x} \left(0,\vec{p}_j^{(k)}\right) \right|\prod_{j=1}^{n_k} \frac{1}{2\left|\sin(\vec{p}_{j}^{(k)}/2)\right|}.\]
As a consequence, since $\partial \Theta_1 / \partial x$ is a bounded function, 
\begin{align*}
\lim_k \log_2 (\lambda_{2n_k+1,N_k} (1)) & = - \lim_k \left(\frac{1}{N_k} \sum_{j=1}^{n_k} \log_2 \left(2\left|\sin(\vec{p}_{j}^{(k)}/2)\right|\right) + O\left( \frac{\log_2 (N_k)}{N_k}\right)\right) \\
& =  - 2\lim_k \frac{1}{N_k} \sum_{j=\lceil n_k /2\rceil+1}^{n_k} \log_2 \left(2\left|\sin(\kappa_t(\boldsymbol{\alpha}_{k}^{(k)})/2)\right|\right)\\
& = - 2\int_{0}^{\boldsymbol{\xi}^{-1}_{t,\vec{d}} (\vec{d})} 
\log_2 (2|\sin(\kappa_t (\alpha)/2)|)\rho_t(\alpha)d\alpha\\
& = F(\vec{d}).
\end{align*}
where $\rho_t = \boldsymbol{\xi}'_{t,d}$, 
and we used the antisymmetry of the Bethe 
roots vectors in the second equality.
For the other equalities, they are a consequence of Theorem~\ref{theorem.convergence.relative}, 
since the function defined as $\alpha \mapsto 
-\log_2 (2|\sin(\kappa_t (\alpha)/2)|)$ 
on $(0,+\infty)$ is 
continuous, integrable, decreasing and positive: 

\begin{enumerate}
\item \textbf{Positive:} 
For all $\alpha >0$, $\kappa_t (\alpha)$ 
is in 
\[(0,\pi-\mu_t) = \left(0, \frac{\pi}{3} \right).\]
As a consequence, $2\sin(\kappa_t (\alpha)/2)$ 
is in $(0,1)$, and this implies that for all $\alpha >0$, 
\[-\log_2 (2|\sin(\kappa_t (\alpha)/2)|) > 0.\]
\item \textbf{Decreasing:}

This comes from the fact that $-\log_2$ is 
decreasing, and $\kappa_t$ is increasing, 
and the sinus is increasing on $(0,\pi/6)$.

\item \textbf{Integrable:}

Since $\kappa_t (0) = 0$ and $\kappa'_t(0) >0$,
for $\alpha$ positive  sufficiently close to $0$  
$2\sin(\kappa_t (\alpha)/2) \le 
2\kappa'_t (0) \alpha$. As a consequence, 
\[-\log_2 (2|\sin(\kappa_t (\alpha)/2)|) \le 
- \log_2 ( 2 \kappa'_t (0) \alpha).\]
Since the logarithm is integrable on any bounded neighborhood of $0$, the function 
$\alpha \mapsto -\log_2 (2|\sin(\kappa_t (\alpha)/2)|) $ is integrable.

\end{enumerate}

The other limit is obtained by antisymmetry of $\kappa_t$. 
\end{proof}

\begin{reptheorem}{theorem.main}
The entropy of square ice is 
\[h(X^s) = \frac{3}{2} \log_2 \left( \frac{4}{3} \right).\]
\end{reptheorem}

\begin{remark}
This value corresponds 
to $\log_2 (W)$ in~\cite{Lieb67}.
\end{remark}

\begin{proof}
Here we fix $t=1 \in (0,\sqrt{2})$.
As a consequence, $\mu_t = 2\pi/3$.

\begin{itemize}

\item \textbf{Entropy of $X^s$ and asymptotics of 
the maximal eigenvalue:} 

Let us recall that the entropy of 
$X^s$ is given by:
\[h(X^s) = \lim_N \frac{1}{N} \max_{n \le (N-1)/4} \log_2(\lambda_{2n+1,N}(1)).\]

For all $N$, we denote $\nu(N)$ the smallest 
$\le (N-1)/4$ such that for all $n \le (N-1) /4$, 
\[\lambda_{2\nu(N)+1,N}(1) \ge \lambda_{2n+1,N}(1).\] 
By compacity, there exists an increasing sequence $(N_k)$ such that $(2\nu (N_k)+1)/N_k$ converges
towards some non-negative real 
number $\textbf{d}$. 
Since for all $k$, $\nu(N_k) \le (N_k-1) /4$, 
then $\textbf{d} \le 1/2$.
In virtue of Lemma~\ref{lemma.F}, 
$h(X^s) = F(\vec{d})$.

\item \textbf{Comparison with the asymptotics 
of other eigenvalues:} 

Moreover, if $d$ is another number 
$d \in [0,1/2]$, there exists 
$\nu' : \mathbb{N} \rightarrow \mathbb{N}$ such 
that
\[(2\nu'(N)+1) / N \rightarrow d.\]
For all $k$, 
\[\lambda_{2\nu'(N_k)+1,N_k} (1) \le \lambda_{2\nu(N_k)+1,N_k} (1).\]

Also in virtue of Lemma~\ref{lemma.F}, 
$h(X^s) \ge F(\vec{d})$, and thus 
\[F(\vec{d}) = \max_{d \in [0,1/2]} F(d).\]

This maximum is realized only for $d=1/2$. 
As a consequence $\vec{d}=1/2$.

\item \textbf{Rewritings:}

As a consequence, 
\[h(X^s) = - 2 \int_{0}^{+\infty} \log_2 (2|\sin(\kappa_t (\alpha) /2 |) \rho_t (\alpha) d\alpha\]

Let us rewrite this expression of $h(X^s)$ 
using
\[|\sin(x/2)| = \sqrt{\frac{1-\cos(x)}{2}}.\]

This leads to:
\[h(X^s) = - \frac{\log_2(2)}{2} \int_{-\infty}^{+\infty} \rho_t (\alpha)d\alpha - \frac{1}{2}
\int_{-\infty}^{+\infty} \log_2 (1-\cos(\kappa_t (\alpha))).\rho_t (\alpha)d\alpha.\]
Thus,
\[h(X^s) = -\frac{1}{2}\int_{-\infty}^{+\infty} \log_2 (2-2\cos(\kappa_t(\alpha))).
\rho_t (\alpha)d\alpha.\]

Let us recall that for all $\alpha$, 
\[\rho(\alpha) = \frac{1}{4\mu_t \cosh(\pi\alpha/2\mu_t)} = \frac{3}{8 \pi \cosh(3\alpha/4)} \]
\[\cos(\kappa_t (\alpha))= \frac{\sin^2(\mu_t)}{\cosh(\alpha)-\cos(\mu_t)} 
- \cos(\mu_t) = \frac{3}{4(\cosh(\alpha)+1/2)} + \frac{1}{2},\]
We have that: 
\[h(X) = -\frac{3}{16\pi} \int_{-\infty}^{+\infty} \log_2 \left(1-\frac{3}{2\cosh(\alpha)+1}\right)
\frac{1}{\cosh(3\alpha/4)}d\alpha.\]
Using the variable change $e^{\alpha} = x^4$, $d\alpha .x = 4dx$,

\[h(X) = -\frac{3}{16\pi} \int_{0}^{+\infty} \log_2 \left(1-\frac{3}{x^4+1/x^4+1}\right)
\frac{2}{(x^3 + 1/x^3)}\frac{4}{x} dx.\]

By symmetry of the integrand:

\[h(X) = -\frac{3}{4\pi} \int_{-\infty}^{+\infty} \frac{x^2dx}{x^6+1} 
\log_2 \left( \frac{(2x^4 - 1-x^8) }{1+x^4+x^8}\right)dx\]

\[h(X) = -\frac{3}{4\pi} \int_{-\infty}^{+\infty} \frac{x^2dx}{x^6+1} 
\log_2 \left( \frac{(x^2-1)^2 (x^2+1)^2}{1+x^4+x^8}\right)dx\]

\item \textbf{Application of 
the residues theorem:}

In the following, we use the standard determination of 
the logarithm on $\mathbb{C} \backslash \mathbb{R}_{-}$.

We apply the residue 
theorem to obtain (the poles of 
the integrand are $e^{i\pi/6}, 
e^{i\pi/2}, e^{i5\pi/6})$:
\[\int_{-\infty}^{+\infty} \frac{x^2\log_2 (x+i)}{x^6+1} dx=
2\pi i \left(\sum_{k=1,3,5} \frac{e^{ik\pi/3} \log_2 (e^{ik\pi/6}+i)}{6 e^{i5k\pi/6}}\right).\]

\[\int_{-\infty}^{+\infty} \frac{x^2\log_2 (x-i)}{x^6+1} dx = -
2\pi i \left(\sum_{k=7,9,11} \frac{e^{ik\pi/3} \log_2 (e^{ik\pi/6}-i)}{6 e^{i5k\pi/6}}\right)\]

By summing these two equations, 
we obtain that 
$\int_{-\infty}^{+\infty} \frac{x^2 \log_2 (x^2+1)}{x^6+1} dx$ 
is equal to:

\[\begin{array}{c} 
\frac{\pi}{3} \left[\log_2 (e^{i\pi/6}+i) 
- \log_2 (e^{i\pi/2}+i) + \log_2 (e^{i5\pi/6}+i) \right.\\
\left.
+\log_2 (e^{i7\pi/6} -i) - \log_2 (e^{i9\pi/6} -i)+\log_2 (e^{i11\pi/6} -i)\right]\end{array}\]
This is equal to 
\[\frac{\pi}{3} (\log_2 (|e^{i\pi/6}+i|^2))-\log_2 (|e^{i\pi/2}+i|)
+ \log_2 (|e^{i5\pi/6}+i|^2) = \frac{2\pi}{3}\log_2\left(\frac{3}{2}\right).\]

\item \textbf{Other computations:}

We do not include the following
computation, since it is very similar to 
the previous one: 

\[\int_{-\infty}^{+\infty} \frac{x^2}{x^6+1} \log_2 (1+x^4x^8)dx
= \frac{2\pi}{3} \log_2 \left( \frac{8}{3}\right).\]

For the last integral, 
we write $\log_2 ((x^2-1)^2) = 2 \text{Re}(\log_2 (x-1)+\log_2 (x+1))$
and obtain:
\begin{align*}
\int_{-\infty}^{+\infty} \frac{x^2}{x^6+1} 
\log_2 ((x^2-1)^2) & = \text{Re}\left( \int_{-\infty}^{+\infty} \frac{x^2}{x^6+1} 
\log_2 (x-1) + \int_{-\infty}^{+\infty} \frac{x^2}{x^6+1} 
\log_2 (x+1)\right) \\
& = \frac{2\pi}{3} \log_2 \left(\frac{1}{2}\right)\end{align*}

\item \textbf{Summing these integrals:}

As a consequence 
\[h(X^s) =-\frac{3}{4\pi} \frac{2\pi}{3}
\left(\log_2 \left(\frac{1}{2}\right)+ 2 \log_2 \left(\frac{3}{2}\right)- \log_2 \left(\frac{8}{3}\right)\right) =\frac{1}{2}\log_2 \left(\frac{4^3}{3^3}\right)= \frac{3}{2} \log_2 \left( \frac{4}{3} \right).\]

\end{itemize} 

\end{proof}

\section{\label{section.comments} Comments}

\subsection{On the limits of the 
computing method}

This text is meant as a ground 
for further research, that would aim at extending 
the computation method that we exposed 
to a broader set 
of multidimensional SFT, including 
for instance Kari-Culik 
tilings~\cite{KC96}, the monomer-dimer model [see for instance \cite{FriedlandPeled}], subshifts of square ice~\cite{GS17}, 
the hard square shift~\cite{P12}, the eight-vertex model~\cite{baxter} or 
a three-dimensional 
version of the six vertex model.
Adaptations for these models 
may be possible, but would 
not be immediate at all. We explain here 
at which points the method has limitations, 
each of them coinciding with a specific property 
of square ice. 

\subsubsection{Symmetry and irreducibility}

Let us recall that we called Lieb path 
an analytic function 
of transfer matrices $t \mapsto V_N (t)$ 
such that for all $t$, $V_N (t)$ is 
an irreducible non-negative and symmetric matrix
on $\Omega_N$.

\paragraph{Implications of the symmetry 
and mixing properties of square ice.}
Although the definition of transfer matrices 
admits straightforward generalization to multidimensional SFT and their non-negativity does not seem difficult to achieve, the 
property of \textit{symmetry} of the matrices $V_N (t)$ relies on symmetries 
of the alphabet and local rules of the SFT. 
S.Friedland~\cite{F97} proved 
that under these symmetry constraints (which 
are verified for instance by the monomer-dimer 
and hard square models, but a priori 
not by Kari-Culik tilings), entropy 
is algorithmically computable, through
a generalisation of the gluing 
argument exposed in Lemma~\ref{lemma.toroidal.ice}. 
Outside of the class of SFT defined 
by these symmetry restrictions, as 
far as we know, only strong mixing or 
measure theoretic conditions 
ensure algorithmic
computability of entropy, leading 
for instance to relatively efficient 
algorithms approximating the hard square 
shift entropy~\cite{P12}.
On the other hand, the \textit{irreducibility} 
of the matrices $V_N (t)$  
derives from the irreducibility property of 
the stripes subshifts $X^s_N$ [Definition~\ref{definition.stripes.subshift}], 
that can be derived from the \textit{linear block 
gluing property} of $X^s$~\cite{GS17}. 
This property consists in the possibility 
for any pair of pattern on $\mathbb{U}^{(2)}_N$ 
to be glued in any relative positions, 
provided that the distance between the 
two patterns is greater than a minimal 
distance, which is $O(N)$. 

\paragraph{Possible relaxations of 
some arguments.}

Lemma~\ref{lemma.toroidal.ice}, 
which relies on a horizontal 
symmetry of the model,
is a simplification in the 
proof of Theorem~\ref{theorem.main}, whose implication is that entropy 
of $X^s$ can be computed through 
entropies of subshifts $\overline{X}^s_N$, 
and thus simplifies Algebraic Bethe ansatz, 
that we will expose in another text.
One can see in~\cite{VieiraLimaSantos} that 
it is possible to use the ansatz without 
Lemma~\ref{lemma.toroidal.ice}.
However, this application of 
the ansatz would lead to different Bethe equations, and it is not clear if these
equations admits solutions, and if we can 
evaluate their asymptotic behavior.
The symmetry is also involved in the 
equality of entropy of $\overline{X}^s_{n,N}$ 
and entropy of $\overline{X}^s_{N-n,N}$. 
Without this equality, we don't know how 
to identify the greatest eigenvalue of $V_N (t)$ 
with the candidate eigenvalue obtained via 
the ansatz.

\subsection{On the gap between 
mathematics and mathematical physics}

The difficulties that were encountered in 
proving Theorem~\ref{theorem.main}, besides 
partial arguments, were related primarily to the form of the literature on the subject, as 
a field of research in mathematical physics,
and the gap that there exists with mathematical 
literature. In Section~\ref{section.gap.mathematics.physics}, we provide 
a short analysis of this gap, that relies 
on the concept of \textit{discursive formation} developped by M. Foucault 
in the \textit{Archaeology of knowledge}, 
which generalises the 
particularly organised and socialy 
structured forms of 
discourse that are sciences, or philosophy, 
and important aspects of discursive formations 
that are the mode of creation, existence and coexistence of concepts within it and the 
conception of units of meaning. We describe 
there mathematics and mathematical physics 
as distinct discursive formation: we analyse, 
from our point of view, their contemporary 
form and the consequence it has on how 
the units of meaning and objects 
of discourse (theorems, proofs) are conceived.
A significant part of 
the work done in the core of the 
present text is an analog of translation, from 
discursive formation of to another, 
and we provide some examples, 
in Section~\ref{section.translation.difficulties}, of the difficulties implied in the translation 
process by the distinction of mathematics 
and mathematical physics as discursive formations.

\subsubsection{\label{section.gap.mathematics.physics} Distinct discursive formations}

From our point of view, 
the most saillant difference 
that separate mathematics from mathematical 
physics - which use the same elements of meanings, 
the same formalism - lies on 
the use and the structuration of language. 

\paragraph{On mathematics:}

As far as we understand it, 
contemporary 
mathematics could be conceived as a space 
of constant exchange, accumulation and communication of proof techniques (communicated 
as proofs of theorems) that derive from the 
impossibility to foresee the use that 
tools which emerged in another area
can have to solve a problem. This 
implies a logic of accumulation of the texts, 
and their (cognitive) content. 
This constant exchange and the inflation of 
accessible information that follows seems to imply 
a shift in the function of the mathematician, 
who has to understand primarily which technique 
is suitable to which problem. The 
universality of the language of mathematics 
is fundamental for these exchanges, 
but there is also a necessity for 
the mathematical text to match the 
functions of memory, in particular 
the optimisation of the 
time and attention allocated to 
reading, and thus the accessibility 
of the (cognitive) content of the text.
The text thus is assumed to allow the extraction 
of pertinent information, from the 
point of view of the reader, whose 
background is a priori unknown. The text 
relates how the various techniques 
involved are articulated and the context 
in which they are applicable, etc.
It exhibits a hierarchisation of its 
content, from an overall point of view 
that includes motivations for the reader 
to get into the text, to the many details, 
and including markers of the function of 
each articulation in this hierarchy (including 
the functions of lemma, definition, comments, 
section abstract, etc), but also markers 
for the possibility of further development.
The necessity for a motivation for the reader 
implies in particular that the ability 
of a technique or a set of techniques to 
actually prove a theorem defines the unity 
of discourse (the article). The dynamics 
of contemporary mathematics seems 
also to have an impact on the way concepts 
are formed (or equivalently named), since 
the name is a marker (that helps 
for bibliographical orientation) 
of deepness and level 
of connectivity to other notions, and 
on the inclusion in the pair theorem-proof 
of the a priori implementability in 
another context of the articulation 
of techniques involved in the proof, 
given as prerequisite only the understanding 
of elementary mathematical objects involved. 

\paragraph{On mathematical physics:}

On the other hand, as far as we understand it, physics are 
structured around the contradiction of 
general theories explaining domains of phenomena 
appearing in the world, and the 
value of a theory is subjected 
to experimentation, which selects (as in 
natural selection) the technical tools 
that are adapted to explanation. 
Any theory is thus temporary, as well 
as any technical development within it, 
which is supposed to provide 
technical tools to compute some 
caracteristics of the physical 
system studied. These technical 
developments, when appearing in 
a recurrent way, are then turned into 
an intuition on the behavior of 
these systems, by essentialisation.
The same principle of optimisation of 
information treatment that leads 
in mathematics to memory-structured texts 
seems to lead in physics to meaning units (texts) 
that are centered on isolated (computing) non-rigorous techniques that presuppose a knowledge of 
their context. These techniques are selected 
by their reccurence in various directions of 
research within this context, 
before an attempt of a rigorous version, 
the expectation of which relies on 
the close relation to objects that have 
a meaning in the reality. 

\subsubsection{\label{section.translation.difficulties}Translation difficulties} 

From the point of view of information 
treatment, an aspect of the relation between 
mathematics and mathematical physics 
is analogical to the relation in 
the brain between myelin and neurons.
While neurons seem to be formed by an a priori production followed 
by selection, the myelin is 
constructed selectively around some 
neurons in order to accelerate 
their information processing. 
Following this analogy, the 
general process of \textit{myelinisation}, 
of which the present text is an element, 
exhibits a lot of difficulties, 
that are not related to grammar of 
the respective languages of mathematics 
and mathematical physics (since they are 
the same) but to their structures as 
\textit{discursive formations}.

Indeed, a primary effect of the conception of 
meaning units as defined by
isolated techniques is the 
non-neglectable distance that there exists 
between an existing group of techniques 
and an actual proof of a theorem using 
these techniques, for a subject within 
the space of mathematics. This effect 
is due in part to the fluctuation of 
notations (there is an interesting analogy 
between the use of common notations 
and prototyping in programming) and terms used 
from a text to another to designate the 
same objects that come 
along with non-explicit one-to-one 
transformations of the objects considered 
(as it is for instance from~\cite{YY66}
to \cite{kozlowski} for the 
definition of the function 
$\theta_t$ [Section~\ref{section.notations}]), in part also to non-explicit 
reference to definitions or 
other techniques, the knowledge 
of which is presupposed in the text. 
The difficulties that come from the fluctuation 
of notations have a particular effect on 
cases distinctions, since we tend, in order 
to accelerate reading, 
to identify cases (for an example, 
the phases of the six-vertex model, 
that corresponds to domains for the parameter $t$ in the present text) to properties of 
the objects considered in these cases: 
this acceleration becomes an obstacle 
in the presence of a change of notations. 

Some other difficulties come from the 
multiplicity of methods whose mode of 
coexistence is not explicit in 
the literature (for instance the coordinate 
Bethe ansatz, exposed for instance in~\cite{Duminil-Copin.ansatz}, 
and the algebraic Bethe ansatz (does 
it worth to invest time in understanding the 
other technique when one is more directly 
accessible?). Also, the absence of markers 
for the generality of the techniques used 
can be misleading, as well as 
a formulation of the generality of the method, 
where the degree of generality is ambiguous, 
since it is dependant upon implicit 
prerequisite of the knowledge of the field 
(for the algebraic Bethe ansatz in the 
literature for instance). 
Some particular difficulties come more 
directly from the absence of separation 
of statements having different functions 
(definition or lemma), underlocalised autoreference to parts of the text, in 
particular in the case of multiple references, 
or the absence of parastructural comments or 
object typing for the various mathematical 
objects considered, which swipe off the 
possible ambiguities in writing. 
 
At a higher level, the ambiguity of the 
distinction between mathematics and 
mathematical physics is itself a difficulty, 
that demand specific tools, such as 
the ones developed by M. Foucault in 
the \textit{Archaeology of knowledge}, 
to make visible the border between the 
discursive formations and explain it, 
in order to develop general 
translation tools for this border, 
from discursive formation to discursive 
formation, such as, simply, a change of 
perception of the unity of the text 
or the multiplication of the sources 
in order to understand the concepts involved.

\end{document}